\numberwithin{equation}{section}
\newcommand{\rHom}{{\bfR}\mathrm{Hom}}
\newcommand{\BDC}{{\mathbf{D}}^{\mathrm{b}}}
\newcommand{\Mod}{\mathrm{Mod}}
\newcommand{\Hom}{\mathrm{Hom}}
\newcommand{\shom}{{\mathcal{H}}om}
\newcommand{\ihom}{\mathcal{I}hom}
\newcommand{\CC}{\mathbb{C}}
\newcommand{\RR}{\mathbb{R}}
\newcommand{\ZZ}{\mathbb{Z}}
\newcommand{\D}{\mathcal{D}}
\newcommand{\F}{\mathcal{F}}
\newcommand{\G}{\mathcal{G}}
\newcommand{\M}{\mathcal{M}}
\newcommand{\N}{\mathcal{N}}
\newcommand{\R}{\mathcal{R}}
\newcommand{\CP}{\mathcal{P}}
\newcommand{\PP}{{\mathbb P}}
\newcommand{\scrS}{\mathscr{S}}
\newcommand{\calS}{\mathcal{S}}
\newcommand{\dist}{{\rm dist}}
\newcommand{\LL}{{\mathbb L}}
\renewcommand{\(}{\left(}
\renewcommand{\)}{\right)}
\newcommand{\Ker}{\operatorname{Ker}}
\newcommand{\Coker}{\operatorname{Coker}}
\newcommand{\Image}{\operatorname{Im}}
\newcommand{\id}{{\rm id}}
\newcommand{\Sol}{{\rm Sol}}
\newcommand{\sub}{{\rm sub}}
\newcommand{\Db}{\D b}
\newcommand{\tl}[1]{\widetilde{#1}}
\newcommand{\simto}{\overset{\sim}{\longrightarrow}}
\newcommand{\op}{{\mbox{\scriptsize op}}}
\newcommand{\SD}{\mathcal{D}}
\newcommand{\SO}{\mathcal{O}}
\newcommand{\Ob}{\SO b}
\newcommand{\SA}{\mathcal{A}}
\newcommand{\SM}{\mathcal{M}}
\newcommand{\SL}{\mathcal{L}}
\newcommand{\SK}{\mathcal{K}}
\newcommand{\SE}{\mathcal{E}}
\newcommand{\SF}{\mathcal{F}}
\newcommand{\SG}{\mathcal{G}}
\newcommand{\SH}{\mathcal{H}}
\newcommand{\SC}{\mathcal{C}}
\newcommand{\U}{\mathcal{U}}
\newcommand{\Cov}{\SC ov}
\newcommand{\Op}{\SO p}
\newcommand{\BDCcoh}{{\mathbf{D}}^{\mathrm{b}}_{\mbox{\rm \scriptsize coh}}}
\newcommand{\BDChol}{{\mathbf{D}}^{\mathrm{b}}_{\mbox{\rm \scriptsize hol}}}
\newcommand{\BDCrh}{{\mathbf{D}}^{\mathrm{b}}_{\mbox{\rm \scriptsize rh}}}
\newcommand{\DD}{\mathbb{D}}
\newcommand{\Dotimes}{\overset{D}{\otimes}}
\newcommand{\Potimes}{\overset{+}{\otimes}}
\newcommand{\rhom}{{\bfR}{\mathcal{H}}om}
\newcommand{\rihom}{{\bfR}{\mathcal{I}}hom}
\newcommand{\Prihom}{{\bfR}{\mathcal{I}}hom^+}
\newcommand{\Prihomsub}{{\bfR}{\mathcal{I}}hom^{+, \sub}}
\newcommand{\I}{{\rm I}}
\newcommand{\che}[1]{\check{#1}}
\newcommand{\var}[1]{\overline{#1}}
\newcommand{\BEC}{{\mathbf{E}}^{\mathrm{b}}}
\newcommand{\Q}{\mathbf{Q}}
\newcommand{\q}{\mathbf{q}}
\newcommand{\bfr}{\mathbf{r}}
\newcommand{\bfl}{\mathbf{l}}
\newcommand{\EE}{\mathbb{E}}
\newcommand{\T}{{\mathsf{T}}}
\newcommand{\bfR}{\mathbf{R}}
\newcommand{\bfL}{\mathbf{L}}
\newcommand{\bfD}{\mathbf{D}}
\newcommand{\rmR}{{\rm R}}
\newcommand{\rmE}{{\rm E}}
\newcommand{\rmD}{{\rm D}}
\newcommand{\rmt}{{\rm t}}
\newcommand{\bfE}{\mathbf{E}}
\newcommand{\sh}{{\rm sh}}
\newcommand{\RH}{{\rm RH}}
\newcommand{\pt}{{\rm pt}}
\newtheorem{theorem}{Theorem}[section]
\newtheorem{corollary}[theorem]{Corollary}
\newtheorem{lemma}[theorem]{Lemma}
\newtheorem{proposition}[theorem]{Proposition}
\theoremstyle{definition}
\newtheorem{definition}[theorem]{Definition}
\theoremstyle{remark}
\newtheorem{remark}[theorem]{\sc Remark}
\newtheorem{example}[theorem]{\sc Example}
\title{Note on Relation between Enhanced Ind-Sheaves and Enhanced Subanalytic Sheaves\footnote{{\bf 2020 Mathematics 
Subject Classification: }18F10, 32C38, 35Q15, 32S60}}
\author{Yohei ITO\footnote{Department of Mathematics, Faculty of Science Division II, Tokyo University of Science, 1-3, Kagurazaka, Shinjuku-ku, Tokyo, 162-8601, Japan. E-mail: yitoh@rs.tus.ac.jp }}
\date{}
\begin{document}
\maketitle

\begin{abstract}
In this paper, %although it may be known by experts,
we will explain a relation between \cite[Thm.\:9.5.3]{DK16} and \cite[Thm.\:6.3]{Kas16}.
For this purpose, we will prove that there exists a fully faithful functor
from the triangulated category of enhanced subanalytic sheaves to the one of enhanced ind-sheaves.
%Although it may be known by experts, it is not in the literature to our knowledge.
\end{abstract}

\section{Introduction}
In \cite{KS01}, M.\:Kashiwara and P.\:Schapira introduced the notion of ind-sheaves and subanalytic sheaves
to treat ``sheaves" of functions with tempered growth conditions.
Ind-sheaves are defined as ind-objects of the category of sheaves of vector spaces with compact support.
Subanalytic sheaves are defined as sheaves on subanalytic sites.
Moreover, the authors proved that there exists a fully faithful functor
from the category of subanalytic sheaves to the category of ind-sheaves,
and its essentially image is equal to the category of ind-objects of $\RR$-constructible sheaves with compact support.
%See also \cite{Pre08} for subanalytic sheaves.

%The notion of ind-sheaves was introduced to treat ``sheaves" of functions with tempered growth conditions.

After a groundbreaking development in the theory of irregular meromorphic connections
by K.\:S.\:Kedlaya \cite{Ked10, Ked11} and T.\:Mochizuki \cite{Mochi09, Mochi11},
A.\:D'Agnolo and M.\:Kashiwara introduced the notion of enhanced ind-sheaves extending the notion of ind-sheaves
and established the Riemann--Hilbert correspondence for analytic irregular holonomic $\D$-modules
in \cite{DK16} as below (see \cite{Ito21} for the algebraic case).
Let $X$ be a complex manifold.
Then there exists a fully faithful functor $\Sol^\rmE_X$ which is called enhanced solution functor
(see \cite[Def.\:9.1.1]{DK16}, also Definition \ref{def3.33} for the definition)
from the full triangulated subcategory $\BDChol(\D_X)$
of the derived category of $\D_X$-modules
consisting of objects with holonomic cohomologies 
to the triangulated category $\BEC_{\RR-c}(\I\CC_X)$ of $\RR$-constructible enhanced ind-sheaves:
\begin{align}\label{1}
\Sol_X^{\rmE} : \BDChol(\D_X)^{\op}\hookrightarrow\BEC_{\RR-c}(\I\CC_X).
\end{align}
Moreover, 
T.\:Mochizuki characterized its essentially image by the curve test \cite[Thm.\:12.1]{Mochi16}.
In \cite{Ito20}, the author defined $\CC$-constructability for enhanced ind-sheaves
and proved that they are nothing but objects of its essentially image.
Namely, we obtain an equivalence of categories
between the triangulated category $\BDChol(\D_X)$
and the one $\BEC_{\CC-c}(\I\CC_X)$ of $\CC$-constructible enhanced ind-sheaves:
\[\Sol_X^{\rmE} \colon \BDChol(\D_X)^{\op}\simto \BEC_{\CC-c}(\I\CC_X).\]

At the 16th Takagi Lectures\footnote{
The 16th Takagi Lectures took place at Graduate School of Mathematical Sciences,
The University of Tokyo, on November 28 and 29, 2015.},
M.\:Kashiwara explained a similar result of (\ref{1})
%the Riemann--Hilbert correspondence for analytic irregular holonomic $\SD$-modules
by using ``enhanced subanalytic sheaves" instead of enhanced ind-sheaves as below.
We denote by $\BDC(\CC_{X\times\RR_\infty}^{\sub})$ the derived category of subanalytic sheaves
on a bordered space $X\times\RR_\infty$, see \S \ref{subsec3.1} for the definition. 
Then there exists a fully faithful functor $\Sol_X^{\T}$ (see \cite[\S 5.4]{Kas16}, also Definition \ref{def3.36} for the definition)
from $\BDChol(\D_X)$ to $\BDC(\CC_{X\times\RR_\infty}^{\sub})$:
\begin{align}\label{2}
\Sol_X^{\T} : \BDChol(\D_X)^{\op}\hookrightarrow\BDC(\CC^\sub_{X\times \RR_\infty}).
\end{align}
%See \cite[Thm.\:6.2]{Kas16} for the details.

In this paper, we will explain a relation between (\ref{1}) and (\ref{2}).
For this purpose, we will prove that there exists a fully faithful functor
from the triangulated category of enhanced subanalytic sheaves to the one of enhanced ind-sheaves.
Although it may be known by experts, it is not in the literature to our knowledge.
The main results of this paper are
Theorems \ref{main1}, \ref{main2}, \ref{main3} and \ref{main4}.
One can summarize these results in the following commutative diagram:
\[\xymatrix@M=7pt@R=35pt@C=60pt{
{}&{}&\BDC(\CC_{X\times\RR_\infty}^\sub) & {}\\
\BDChol(\D_X)^\op\ar@{^{(}->}[r]_-{\Sol_X^{\rmE, \sub}}
\ar@{^{(}->}[rru]^-{\Sol_X^{\T, \sub}(\cdot)[1]}\ar@{^{(}->}[rd]_-{\Sol_X^{\rmE}}
 & \BEC_{\RR-c}(\CC_X^\sub)\ar@{}[r]|-{\text{\large $\subset$}}
 \ar@<-1.0ex>@{->}[d]_-{I_X^\rmE}\ar@{}[d]|-\wr
  & \BEC(\CC_X^\sub)\ar@{^{(}->}[u]_-{\bfR_X^{\rmE, \sub}}
  \ar@<-1.0ex>@{^{(}->}[rd]_-{I_X^\rmE}
   \ar@<-1.0ex>@{->}[d]_-{I_X^\rmE}\ar@{}[d]|-\wr\\
{}&\BEC_{\RR-c}(\I\CC_X)\ar@{}[r]|-{\text{\large $\subset$}}
\ar@<-1.0ex>@{->}[u]_-{J_X^\rmE}
&\BEC_{\I\RR-c}(\I\CC_X)\ar@{}[r]|-{\text{\large $\subset$}}
\ar@<-1.0ex>@{->}[u]_-{J_X^\rmE}
& \BEC(\I\CC_X).\ar@<-1.0ex>@{->}[lu]_-{J_X^\rmE}
}\]
See \S\ref{subsec2.7} for the definition of $\BEC(\I\CC_X)$,
\S\ref{subsec3.1} for the definition of $\BDC(\CC_{X\times\RR_\infty}^\sub)$,
\S\ref{subsec3.3} for the definitions of $\BEC(\CC_X^\sub), \bfR_X^{\rmE, \sub}$,
\S\ref{subsec3.4} for the definitions of $\BEC_{\I\RR-c}(\CC_X^\sub), I_X^\rmE, J_X^\rmE, \BEC_{\RR-c}(\I\CC_X)$,
Definition \ref{def3.21} for $\BEC_{\RR-c}(\CC_X^\sub)$,
Definition \ref{def3.33} for $\Sol_X^\rmE$,
Definition \ref{def3.36} for $\Sol_X^{\T, \sub}$,
Definition \ref{def3.37} for $\Sol_X^{\rmE, \sub}$.

\section*{Acknowledgement}
I would like to thank Dr. Tauchi of Kyushu University
for many discussions and giving many comments.

This work was supported by Grant-in-Aid for Research Activity Start-up (No. 21K20335), 
Japan Society for the Promotion of Science.

\newpage

\section{Preliminary Notions and Results}\label{sec-2}
In this section,
we briefly recall some basic notions
and results which will be used in this paper. 

\subsection{Subanalytic Subsets}
The theory of semi-analytic and subanalytic sets originates
in the work of S.\:{\L}ojasiewicz \cite{Loj59, Loj64a, Loj64b}
and has been elaborated by A.\:M.\:Gabrielov \cite{Gab68}, H.\:Hironaka \cite{Hiro73a, Hiro73b}
and R.\:M.\:Hardt \cite{Har75, Har77} for subanalytic sets. 
See also \cite{BM88}.
In this subsection, we just recall the definition of semi-analytic subsets and subanalytic subsets,
and some properties.

Let $M$ be a real analytic manifold and denote by $\SA_M$ the sheaf of rings of real analytic functions.
For an open subset $U$ of $M$, we denote by $\scrS(\SA_M(U))$
the family of subsets of $M$ of the form $$\bigcup_{i=1}^p\bigcap_{j=1}^qX_{ij},$$
where each $X_{ij}$ is either $\{x\in U\ | f_{ij}(x) = 0, \ f_{ij}\in\SA_M(U)\} \text{ or } \{x\in U\ | f_{ij}(x) > 0,\ f_{ij}\in\SA_M(U)\}.$

\begin{definition}
A subset $A$ of $M$ is called semi-analytic
if for any $x\in M$ there exist an open neighborhood $U$ of $x$
%and finitely many real analytic functions $f_{ij}, g_{ij}\in\SA_M(U) \quad (i = 1, \ldots, p,\ j = 1, \ldots, q)$ on $U$
such that $A\cap U \in \scrS(\SA_M(U))$.
%\bigcup_{i=1}^p \{x\in U\ |\ f_{i1}(x) = \cdots = f_{iq}(x) = 0,\ g_{i1}(x) > 0,\ldots, g_{iq}(x) > 0\}
%A function $f\colon M\to \RR$ is semi-analytic if its graph is semi-analytic in $M\times \RR$.
\end{definition}

Note that for any open subset $U$ of $M$,
 $\scrS(\SA_M(U))$ is stable under finite intersection, finite union and complement.
 Hence, a finite union and a finite intersection of semi-analytic subsets are semi-analytic
 and the complement of a semi-analytic subset is also semi-analytic.
 %Moreover, it is well known that
 Furthermore, the closure and the interior of a semi-analytic subset are semi-analytic,
see \cite[Cor.\:2.8]{BM88} for the details.
Although, the inverse image of a semi-analytic subset by a morphism of real analytic manifolds is semi-analytic, 
the operation of direct image by a proper morphism of real analytic manifolds does not in general
preserve semi-analyticity.
 However, the class of subanalytic subsets is closed with respect to those operations as below.
 Subanalytic subsets are ``locally proper projections of relatively compact semi-analytic sets".

 \begin{definition}
%Let $S$ be a subset of $M$.
A subset $S$ of $M$ is called subanalytic
if for any $x\in M$ there exist an open neighborhood $U$ of $x$,
a real analytic manifold $N$ and a relatively compact semi-analytic subset $A$ of $M\times N$
such that $S\cap U = {\rm pr}_1(A)$, where ${\rm pr}_1\colon M\times N\to M$ is the first projection.
 \end{definition}
 
From the basic properties of semi-analytic subsets,
a finite union and a finite intersection of subanalytic subsets are subanalytic,
and the closure of a subanalytic subset is subanalytic.
Furthermore,  the complement (and thus the interior) of a subanalytic subset is also subanalytic,
see \cite[Thm.\:3.10]{BM88} for details.
Note also that the inverse image of a subanalytic subset by a morphism of real analytic manifolds is subanalytic.
Moreover, the direct image of a subanalytic subset by a proper morphism of real analytic manifolds is subanalytic,
see \cite[Prop.\:3.8]{Hiro73a} for details.

%Note that a subset $S$ of $M$ is subanalytic if and only if
%for any $x\in M$ there exist an open neighborhood $U$ of $x$ and 
%, see \cite[Prop.\:3.13]{BM88} for details.

%Subanalytic sets inherit the following property.
%\begin{proposition}
%\begin{itemize}
%\item[\rm (1)]
%\item[\rm (2)]
%\item[\rm (3)]
%\end{itemize}
%ref
%propにしなくてもよいかな
%\end{proposition}

%\newpage
\subsection{Sheaves on Sites}\label{subsec2.2}
The theory of sheaves on topological spaces was created by J.\:Leray \cite{Ler50}
and this notion was extended to sheaves on sites by A.\:Grothendieck \cite{SGA4}.
A site is a category endowed with a ``Grothendieck topology" on it.
The Grothendieck topology was introduced by A.\:Grothendieck in order to have a cohomology theory on algebraic varieties.
In this subsection, we shall briefly recall the definition of sheaves on sites and some properties based on \cite[\S2]{KS01}.
See \cite[\S\S16, 17, 18]{KS06} for more general settings.

Let $\U$ be a universe.
A set is called $\U$-small if it is isomorphic to a set belonging to $\U$.
A small category\footnote{In \cite{KS06}, a category means a small category.}
 $\SC$ is called $\U$-category if for any objects $X, Y$ of $\SC$ the set $\Hom_{\SC}(X, Y)$ is $\U$-small.
If moreover the family $\Ob(\SC)$ of objects (a set in bigger universe) of $\SC$ is $\U$-small, then $\SC$ is called $\U$-small.

\begin{definition}\label{def2.3}
Let $\SC$ be a $\U$-small category admitting finite products and fiber products.
\begin{itemize}
\item[(1)]
For an object $U$ of $\SC$, we denote by $\SC_U$ the category of arrows $V\to U$.
Namely, the category $\SC_U$ is given by
\begin{align*}
\Ob(\SC_U) &:= 
\{(V, i_V)\ |\ V\in\Ob(\SC), i_V\in\Hom_{\SC}(V, U)\},\\
\Hom_{\SC_U}\big((V, i_V), (W, i_W)\big) &:=
\{\varphi\in\Hom_{\SC}(V, W)\ |\ i_W\circ\varphi = i_V\}.
\end{align*}
%Namely, an object of $\SC_U$ is pair $(V, i_V)$ of an object $V$ of $\SC$ and a morphism $i_V\colon V\to U$,
%and a morphism of objects of $\SC_U$ from $(V, i_V)$ to $(W, i_W)$ is a morphism $\varphi\colon V\to W$
%such that $i_W\circ \varphi = i_V$.
For simplicity, we sometimes write $V\to U$ instead of $(V, i_V)$.
%\medskip

\item[(2)]
For an object $(V, i_V)\in\Ob(\SC_U)$ and a subset $S\subset \Ob(\SC_U)$,
let us set $$V\times_U S := \{(V\times_U W, {\rm pr}_V)\ |\ W\in S\}\subset \Ob(\SC_V),$$
where ${\rm pr}_V\colon V\times_U W\to V$ is the projection.
%\medskip

\item[(3)]
For $S_1, S_2\subset \Ob(\SC_U)$,
$S_1$ is said to be a refinement of $S_2$
if for any $(V_1, i_{V_1})\in S_1$ there exist $(V_2, i_{V_2})\in S_2$
and a morphism $\varphi\in\Hom_{\SC}(V_1, V_2)$ such that $i_{V_2}\circ\varphi = i_{V_1}$.
In such a situation, we write $S_1\preceq S_2$.
\end{itemize}
\end{definition}
%Note that the category $\SC_U$ admits finite products and fiber products.
%なんかコメントする
%Sites are defined as follows.
\begin{definition}\label{def-GT}
Let $\SC$ be a $\U$-small category admitting finite products and fiber products.
A Grothendieck topology on $\SC$ is the data associating to any $U\in \Ob(\SC)$
a family $\Cov(U)$ of subsets of $\Ob(\SC_U)$ satisfying the axioms
\begin{itemize}
\item[(GT1)]
$\{\id_U\}\in\Cov(U)$,

\item[(GT2)]
if $S_1\in\Cov(U)$ is a refinement of $S_2\subset \Ob(\SC_U)$,
then $S_2\in\Cov(U)$,

\item[(GT3)]
if $S\in\Cov(U)$, then $V\times_U S\in \Cov(V)$ for any $(V, i_V)\in\Ob(\SC_U)$,

\item[(GT4)]
if $S_1, S_2\subset\Ob(\SC_U)$, $S_1\in \Cov(U)$ and 
$V\times_U S_2\in \Cov(V)$ for any $(V, i_V)\in S_1$,
then $S_2\in \Cov(U)$.
 \end{itemize}
A subset $S\in\Cov(U)$ is called a covering of $U$.

%C_Xは混乱する
A site $X$ is a $\U$-small category $\SC_X$\footnote{Remark that the category $\SC_X$ is different from the category $\SC_X$ in Definition \ref{def2.3} (1).}
 admits finite products and fiber products
and endowed with a Grothendieck topology on $\SC_X$.
\end{definition}

\begin{example}\label{ex2.5}
Let $X$ be a topological space and denote by $\Op_X$ the category of open subsets of $X$.
Namely, the category $\Op_X$ is given by
\begin{align*}
\Ob(\Op_X) := \{U\in\CP(X)\ |\ U\text{ is open }\},\hspace{17pt}
\Hom_{\Op_X}(U, V) :=
\begin{cases}
\ \{\pt\}\quad&(U\subset V),\\
\ \emptyset\quad &(\text{otherwise}),
\end{cases}
\end{align*}
where $\CP(X)$ is the power set of $X$.
Note that for any $U\in\Op_X$ we have $(\Op_X)_U = \Op_U$.
We can endow $\Op_X$ with the following Grothendieck topology:
a subset $S\subset \Ob((\Op_X)_U)$ is a covering of $U\in\Ob(\Op_X)$
if $U = \bigcup_{V\in S}V$.
We will keep the same symbol $X$ to denote its site.

\end{example}

Let $\Bbbk$ be a commutative unital ring and denote by $\Mod(\Bbbk)$ the category of $\Bbbk$-modules.

\begin{definition}
A presheaf of $\Bbbk$-modules on a site $X$ is nothing but a contravariant functor from $\SC_X$ to $\Mod(\Bbbk)$.
We denote by $s|_V$ the image of $s\in\SF(U)$
by the restriction morphism $\SF(U)\to\SF(V)$ associated a morphism $V\to U$ in $\SC_X$.

Let us denote by ${\rm Psh}(\Bbbk_X)$ the category of presheaves of $\Bbbk$-modules on a site $X$. 
\end{definition}
Note that the category ${\rm Psh}(\Bbbk_X)$ is abelian because the category $\Mod(\Bbbk)$ is abelian.
%see \cite[Rem.\:8.3.6 (ii)]{KS06}

\begin{definition}
Let $X$ be a site.
A presheaf $\SF$ of $\Bbbk$-modules on $X$ is called separated
if for any $U\in\Ob(\SC_X)$ and any covering $S\in\Cov(U)$ of $U$,
the natural morphism
$$\SF(U) \to \ker\left(\prod_{V\in S}\SF(V)\rightrightarrows\prod_{V', V''\in S}\SF(V'\times_U V'')\right)$$
is a monomorphism.
Here, the two arrows are associated to $\SF(V')\to \SF(V'\times_UV'')$,
 and $\SF(V'')\to \SF(V'\times_UV'')$
and the kernel of the double arrow is the kernel of the difference of the two arrows.

If moreover the natural morphism is an isomorphism, a presheaf $\SF$ is a sheaf of $\Bbbk$-modules on $X$.
Let us denote by $\Mod(\Bbbk_X)$ the category of shaves of $\Bbbk$-modules on $X$.
\end{definition}

Note that the category $\Mod(\Bbbk_X)$ is a full additive subcategory of ${\rm Psh}(\Bbbk_X)$.
Furthermore, it is abelian as below.
To explain this, let us recall the sheaf associated with a presheaf.
Let $X$ be a site.
For a presheaf $\SF$ of $\Bbbk$-modules on $X$ and $U\in \Ob(\SC_X)$,
we set
$$\SF^+(U) :=
\varinjlim_{S\in\Cov(U) } \SF(S),$$
where $\SF(S) := \ker\left(\prod_{V\in S}\SF(V)\rightrightarrows\prod_{V', V''\in S}\SF(V'\times_U V'')\right)$.
Remark that the relation $\preceq$ is a pre-order on $\Cov(U)$
and hence $\Cov(U)$ inherits a structure of a small category.
Moreover, $\Cov(U)^\op$ is filtrant.
See \cite[p.\,19]{KS01} for the details.
Then we have the presheaf $$\SF^+\colon \SC_X^\op\to\Mod(\Bbbk),\hspace{7pt}U\mapsto\SF^+(U)$$
and a functor $$(\cdot)^+\colon{\rm Psh}(\Bbbk_X)\to {\rm Psh}(\Bbbk_X),\hspace{7pt}\SF\mapsto \SF^+.$$
%The following theorems proved by Kashiwara and Schapira.
%Moreover, we have:
\newpage
\begin{theorem}[{\cite[Thm.\:2.1.7]{KS01}}, see also {\cite[Thm.\:17.4.7 (i), (iii)]{KS06}} for (3), (4)]\label{thm2.8}
Let $X$ be a site.
\begin{itemize}
\item[\rm (1)]
The functor $(\cdot)^+\colon{\rm Psh}(\Bbbk_X)\to {\rm Psh}(\Bbbk_X)$ is left exact.

\item[\rm (2)]
For any $\SF\in{\rm Psh}(\Bbbk_X)$, $\SF^+$ is a separated presheaf.

\item[\rm (3)]
For any separated presheaf $\SF$, $\SF^+$ is a sheaf.

\item[\rm (4)]
The functor $(\cdot)^{++}\colon{\rm Psh}(\Bbbk_X)\to\Mod(\Bbbk_X)$ is a left adjoint
to the embedding functor $\Mod(\Bbbk_X)\to {\rm Psh}(\Bbbk_X)$.
Namely, for any $\SF\in{\rm Psh}(\Bbbk_X)$ and any $\SG\in\Mod(\Bbbk_X)$, we have
$$\Hom_{{\rm Psh}(\Bbbk_X)}(\SF, \SG) \simeq \Hom_{\Mod(\Bbbk_X)}(\SF^{++}, \SG).$$ 
\end{itemize}
\end{theorem}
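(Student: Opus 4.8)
The plan is to prove the four statements in order, since (2) depends on (1), (3) builds on the structure underlying (2), and (4) combines (2) and (3). Throughout I would work with the explicit description $\SF^+(U) = \varinjlim_{S\in\Cov(U)}\SF(S)$ and keep in mind that $\Cov(U)^\op$ is filtrant, so that the colimit is a filtrant colimit in $\Mod(\Bbbk)$ and hence commutes with finite limits.

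\emph{Step 1 (left exactness of $(\cdot)^+$).} For a fixed $U$, the functor $\SF\mapsto\SF(S) = \ker\big(\prod_{V\in S}\SF(V)\rightrightarrows\prod_{V',V''\in S}\SF(V'\times_UV'')\big)$ is left exact, being a kernel of a morphism between products of (left) exact functors $\SF\mapsto\prod\SF(V)$. Then $\SF\mapsto\SF^+(U)$ is a filtrant colimit over $\Cov(U)^\op$ of left exact functors; since filtrant colimits in $\Mod(\Bbbk)$ are exact, they commute with the finite limits defining left exactness, so $\SF\mapsto\SF^+(U)$ is left exact. As this holds for every $U$, and exactness in ${\rm Psh}(\Bbbk_X)$ is checked objectwise, $(\cdot)^+$ is left exact.

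\emph{Step 2 (separatedness of $\SF^+$).} Given $U$, a covering $S\in\Cov(U)$, and $s\in\SF^+(U)$ restricting to $0$ on every $V\in S$, I would represent $s$ by an element $\tilde s\in\SF(S')$ for some covering $S'\in\Cov(U)$. The hypothesis that $s|_V = 0$ means, for each $(V,i_V)\in S$, that the image of $\tilde s$ in $\SF^+(V)=\varinjlim_{T\in\Cov(V)}\SF(T)$ vanishes, so there is a covering $T_V$ of $V$ refining $V\times_U S'$ on which $\tilde s$ already restricts to $0$ in $\SF(T_V)$, i.e.\ componentwise in $\prod_{W\in T_V}\SF(W)$. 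Now one assembles $\bigcup_{V\in S}T_V$; using axioms (GT3) and (GT4) this is a covering of $U$ refining $S'$, and $\tilde s$ restricts to $0$ componentwise on it, hence represents $0$ in the colimit $\SF^+(U)$. This shows the separation map for $\SF^+$ is a monomorphism. The delicate point here — and I expect this to be the main obstacle — is the careful bookkeeping of refinements and the verification, via (GT3)/(GT4), that the union of the local coverings $T_V$ is a legitimate covering of $U$ compatible with all the restriction maps; this is where the site axioms are genuinely used rather than just the colimit formalism.

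\emph{Step 3 (if $\SF$ separated then $\SF^+$ is a sheaf).} One already knows $\SF^+$ is separated by Step 2, so it remains to prove surjectivity of $\SF^+(U)\to\SF^+(S)$ for each covering $S$. Starting from a compatible family $(s_V)_{V\in S}$ with $s_V\in\SF^+(V)$, each $s_V$ is represented by some $t_V\in\SF(T_V)$ for a covering $T_V$ of $V$; compatibility on overlaps $V'\times_UV''$, combined with separatedness of $\SF$ (which makes the relevant restriction maps injective), lets one glue the $t_V$ into a single element of $\SF(\bigcup_V T_V)$, where $\bigcup_V T_V$ is a covering of $U$ by (GT4). This produces the required preimage in $\SF^+(U)$. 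Again the substance is the refinement manipulation and the use of (GT3)/(GT4).

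\emph{Step 4 (adjunction).} By Steps 2 and 3, $\SF^{++}\in\Mod(\Bbbk_X)$ for every presheaf $\SF$, and the canonical map $\SF\to\SF^+$ is natural, giving a natural transformation $\SF\to\SF^{++}$. For $\SG\in\Mod(\Bbbk_X)$ the map $\SG\to\SG^+$ is an isomorphism (a sheaf already satisfies the equalizer condition, so the defining colimit is constant), hence so is $\SG\to\SG^{++}$. Given any morphism $\SF\to\SG$ with $\SG$ a sheaf, applying $(\cdot)^{++}$ and composing with $(\SG^{++})^{-1}\colon\SG^{++}\simto\SG$ yields a factorization through $\SF\to\SF^{++}$; uniqueness follows because $\SF\to\SF^{++}$ has dense image in a suitable sense, or more directly because two morphisms $\SF^{++}\to\SG$ agreeing on the image of $\SF$ must agree by separatedness of $\SF^{++}$ together with surjectivity from Step 3. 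This gives the desired bijection $\Hom_{{\rm Psh}}(\SF,\SG)\simeq\Hom_{\Mod}(\SF^{++},\SG)$, functorial in both arguments. For the full statement one would cite \cite[Thm.\:2.1.7]{KS01} and \cite[Thm.\:17.4.7]{KS06}; the point of reproving it here is only to fix notation and conventions used in the rest of the paper.
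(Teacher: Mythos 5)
The paper gives no proof of Theorem \ref{thm2.8}: it is quoted directly from \cite[Thm.\:2.1.7]{KS01} (see also \cite[Thm.\:17.4.7]{KS06}), so the only thing to compare your argument against is those references, and your outline is exactly the standard argument found there --- left exactness from the filtrancy of $\Cov(U)^{\op}$ and the exactness of filtrant colimits in $\Mod(\Bbbk)$, the refinement bookkeeping via (GT2)--(GT4) for (2) and (3), and the adjunction deduced from $\SG\simto\SG^{+}$ for a sheaf $\SG$. The outline is correct. One small repair in Step 4: the uniqueness of the factorization $\SF^{++}\to\SG$ rests on the separatedness of the \emph{target} $\SG$ (every section of $\SF^{++}$ agrees, on a suitable covering, with sections coming from $\SF$, and two sections of the sheaf $\SG$ that agree on a covering coincide), not on the separatedness of $\SF^{++}$ as you wrote.
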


For a presheaf $\SF\in{\rm Psh}(\Bbbk_X)$, the sheaf $\SF^{++}$ is called the sheaf associated with $\SF$.
Let us call the functor $(\cdot)^{++}$ the sheafification functor.
Hence, we have:
\begin{theorem}[{\cite[Thm.\:2.1.10]{KS01}},
see also {\cite[Thms.\:17.4.9, 17.4.7 (iv), 18.1.6 (v)]{KS06}}]\label{thm2.9}
Let $X$ be a site.
\begin{itemize}
\item[\rm (1)]
The category $\Mod(\Bbbk_X)$ admits projective limits.
More precisely, for any projective system $\{\SF_i\}_{i\in I}$ of sheaves,
its projective limit in ${\rm Psh}(\Bbbk_X)$ is a sheaf and is a projective limit in $\Mod(\Bbbk_X)$.

\item[\rm (2)]
The category $\Mod(\Bbbk_X)$ admits inductive limits.
More precisely, for any inductive system $\{\SF_i\}_{i\in I}$ of sheaves,
its inductive limit in $\Mod(\Bbbk_X)$ is the sheaf associated with its inductive limit in ${\rm Psh}(\Bbbk_X)$.

\item[\rm (3)]
The category $\Mod(\Bbbk_X)$ is abelian.

\item[\rm (4)]
The embedding functor $\Mod(\Bbbk_X)\to {\rm Psh}(\Bbbk_X)$ is fully faithful and left exact,
and the functor $(\cdot)^{++}\colon{\rm Psh}(\Bbbk_X)\to\Mod(\Bbbk_X)$ is exact.

\item[\rm (5)]
Filtrant inductive limits in $\Mod(\Bbbk_X)$ are exact.

\item[\rm (6)]
The $\U$-category $\Mod(\Bbbk_X)$ admits enough injective.
\end{itemize}
\end{theorem}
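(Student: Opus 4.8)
The only real input is Theorem \ref{thm2.8}; beyond it, the statement is a formal consequence of the fact that $\Mod(\Bbbk_X)$ is a reflective subcategory of the abelian category ${\rm Psh}(\Bbbk_X)$ with left exact reflector $(\cdot)^{++}$. First I would prove (1): since ${\rm Psh}(\Bbbk_X)$ is the category of contravariant functors from $\SC_X$ to $\Mod(\Bbbk)$ and $\Mod(\Bbbk)$ is complete, projective limits in ${\rm Psh}(\Bbbk_X)$ exist and are computed object-wise, and for a projective system $\{\SF_i\}_i$ of sheaves the presheaf $\varprojlim_i\SF_i$ still satisfies the sheaf condition because the kernel and the (possibly infinite) products occurring in that condition commute with $\varprojlim_i$; being a full subcategory, $\Mod(\Bbbk_X)$ then inherits this projective limit. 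For (2) I would form the presheaf colimit $\SF:=\varinjlim_i\SF_i$ of an inductive system of sheaves, put $\widetilde{\SF}:=\SF^{++}$, and check, using Theorem \ref{thm2.8}\,(4) and full faithfulness of the embedding, that
\begin{align*}
\Hom_{\Mod(\Bbbk_X)}(\widetilde{\SF},\SG)
&\simeq\Hom_{{\rm Psh}(\Bbbk_X)}(\SF,\SG)
\simeq\varprojlim_i\Hom_{{\rm Psh}(\Bbbk_X)}(\SF_i,\SG)\\
&\simeq\varprojlim_i\Hom_{\Mod(\Bbbk_X)}(\SF_i,\SG)
\end{align*}
for every sheaf $\SG$, so that $\widetilde{\SF}$ is the inductive limit in $\Mod(\Bbbk_X)$.

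For (3) and (4) the plan is to transport the (object-wise) abelian structure of ${\rm Psh}(\Bbbk_X)$ through the adjunction. Kernels of morphisms of sheaves, being finite projective limits, are sheaves by (1), so the embedding preserves kernels and finite products, hence is left exact; it is fully faithful by definition; and by (2) cokernels in $\Mod(\Bbbk_X)$ are the sheafifications of presheaf cokernels. The reflector $(\cdot)^{++}$ is left exact by Theorem \ref{thm2.8}\,(1) applied twice, and, being a left adjoint by Theorem \ref{thm2.8}\,(4), it preserves cokernels, finite coproducts and epimorphisms; applying it to the presheaf factorization $\SF\twoheadrightarrow\Coimage(f)\xrightarrow{\ \sim\ }\Image(f)\hookrightarrow\SG$ of a morphism $f$ of sheaves then shows that the canonical morphism $\Coimage(f)\to\Image(f)$ computed in $\Mod(\Bbbk_X)$ is again an isomorphism. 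Hence $\Mod(\Bbbk_X)$ is abelian and $(\cdot)^{++}$ is exact.

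Part (5): since the inductive-limit functor is automatically right exact, I only need it to preserve monomorphisms. A morphism of sheaves is a monomorphism exactly when it is object-wise injective (kernels being computed object-wise), so for a filtrant system of monomorphisms of sheaves the presheaf colimit is object-wise injective by exactness of filtrant inductive limits in $\Mod(\Bbbk)$, and applying the left exact — hence mono-preserving — functor $(\cdot)^{++}$ and using the description in (2) gives a monomorphism between the colimit sheaves. Finally, for (6) I would observe that by (2), (3) and (5) the category $\Mod(\Bbbk_X)$ is cocomplete, abelian, with exact filtrant inductive limits, and that it possesses a generator, namely the direct sum over $U\in\Ob(\SC_X)$ of the sheafifications $\Bbbk_U^{++}$ of the presheaves $\Bbbk_U$ freely generated by $\Hom_{\SC_X}(-,U)$ (for which $\Hom_{\Mod(\Bbbk_X)}(\Bbbk_U^{++},\SG)\simeq\SG(U)$); thus $\Mod(\Bbbk_X)$ is a Grothendieck category, and Grothendieck categories have enough injectives.

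The steps that will take the most care are (3)–(4), where one must establish exactness of $(\cdot)^{++}$ and the identification of image with coimage in $\Mod(\Bbbk_X)$ while scrupulously avoiding circular appeals to the not-yet-established abelianness of $\Mod(\Bbbk_X)$, and (6), which rests on the structure theory of Grothendieck categories. By contrast, the apparently delicate point in (5) — that infinite products need not commute with filtrant inductive limits of $\Bbbk$-modules — never arises, because everything is reduced to preservation of monomorphisms.
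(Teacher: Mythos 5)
The paper does not prove this statement at all: Theorem \ref{thm2.9} is recalled verbatim from the literature with the citations {\cite[Thm.\:2.1.10]{KS01}} and {\cite[Thms.\:17.4.9, 17.4.7, 18.1.6]{KS06}}, so there is no in-paper proof to compare against. Your proposal is the standard argument from those references and is correct: limits in $\Mod(\Bbbk_X)$ computed in ${\rm Psh}(\Bbbk_X)$, colimits via sheafification and the adjunction of Theorem \ref{thm2.8}\,(4), exactness of $(\cdot)^{++}$ from left exactness of $(\cdot)^{+}$ plus colimit-preservation as a left adjoint, the coimage--image comparison transported through the exact reflector, reduction of (5) to preservation of monomorphisms, and the Grothendieck-category generator $\bigoplus_{U}\Bbbk_U^{++}$ for (6). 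The points you flag as delicate (avoiding circularity in (3)--(4), and the fact that infinite products need not commute with filtrant colimits never being an issue in (5)) are exactly the right ones, and your treatment of them is sound.
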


For $M\in\Mod(\Bbbk)$, we denote by $M_X$ the sheaf associated with the presheaf $U\mapsto M$
and call $M_X$ the constant sheaf with stalk $M$.  

There are many operations for sheaves on sites as similar to classical sheaves.
\begin{definition}
Let $X$ and $Y$ be sites.
A morphism $f\colon X\to Y$ of sites from $X$ to $Y$
is a functor ${}^t\!f\colon \SC_Y\to \SC_X$ such that 
it commutes with fiber products
and if for any $V\in \Ob(\SC_Y)$ and any $S\in\Cov(V)$ then ${}^t\!f(S)\in\Cov({}^t\!f(V))$.
\end{definition}
For a morphism $f\colon X\to Y$ of sites associated with a functor ${}^t\!f\colon \SC_Y\to\SC_X$
and $\SF\in{\rm Psh}(\Bbbk_X), \SG\in{\rm Psh}(\Bbbk_Y)$,
we set
\begin{align*}
(f_\ast\SF)(V) &:= \SF({}^t\!f(V))\hspace{50pt} \text{ for any } V\in\SC_Y,\\
(f^{-1}_{\rm pre})\SG(U) &:= \varinjlim_{U\to {}^t\!f(V)}\SG(V)\hspace{50pt} \text{ for any } U\in\SC_X.
\end{align*} 
Then we have functors:
\begin{align*}
f_\ast&\colon {\rm Psh}(\Bbbk_X)\to {\rm Psh}(\Bbbk_Y),\\
f^{-1}_{\rm pre} &\colon {\rm Psh}(\Bbbk_Y)\to{\rm Psh}(\Bbbk_X).
\end{align*}
Note that if $\SF\in\Mod(\Bbbk_X)$ then the presheaf $f_\ast\SF$ is a sheaf,
see \cite[Prop.\:17.5.1]{KS06}.
\begin{definition}\label{def2.11}
Let $f\colon X\to Y$ be a morphism of sites.
The direct image functor $f_\ast$ and inverse image functor $f^{-1}$ are defined by
\begin{align*}
f_\ast&\colon\Mod(\Bbbk_X)\to\Mod(\Bbbk_Y),\hspace{7pt}
\SF\mapsto f_\ast\SF,\\
f^{-1}&\colon\Mod(\Bbbk_Y)\to\Mod(\Bbbk_X),\hspace{7pt}
\SF\mapsto \left(f^{-1}_{\rm pre}\SF\right)^{++}.
\end{align*}
\end{definition}

They have the following properties:
\begin{proposition}[{\cite[Thm.\:2.2.1]{KS01}}, see also {\cite[Thm.\:17.5.2]{KS06}}]\label{prop2.12}
Let $f\colon X\to Y$ be a morphism of sites.
\begin{itemize}
\item[\rm (1)]
The inverse image functor $f^{-1}\colon\Mod(\Bbbk_Y)\to\Mod(\Bbbk_X)$ is left adjoint to 
the direct image functor $f_\ast\colon\Mod(\Bbbk_X)\to\Mod(\Bbbk_Y)$.
Namely, for any $\SF\in\Mod(\Bbbk_X)$ and any $\SG\in\Mod(\Bbbk_Y)$ we have
$$\Hom_{\Mod(\Bbbk_X)}(f^{-1}\SF, \SG)\simeq \Hom_{\Mod(\Bbbk_Y)}(\SF, f_\ast\SG).$$

\item[\rm (2)]
The direct image functor $f_\ast$ is left exact and commutes with small projective limits.

\item[\rm (3)]
The inverse image functor $f^{-1}$ is exact and commutes with small inductive limits.
\end{itemize}

\end{proposition}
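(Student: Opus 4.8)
The plan is to deduce all three assertions from two ingredients already available: the sheafification adjunction of Theorem~\ref{thm2.8}(4), and the elementary behaviour of the presheaf-level functors $f_\ast$ and $f^{-1}_{\rm pre}$. First I would establish that, \emph{on presheaves}, $f^{-1}_{\rm pre}\colon{\rm Psh}(\Bbbk_Y)\to{\rm Psh}(\Bbbk_X)$ is left adjoint to $f_\ast\colon{\rm Psh}(\Bbbk_X)\to{\rm Psh}(\Bbbk_Y)$. This is the usual left Kan extension computation: since $(f^{-1}_{\rm pre}\SG)(U)=\varinjlim_{U\to{}^t\!f(V)}\SG(V)$, a presheaf morphism $f^{-1}_{\rm pre}\SG\to\SF$ is the same as a compatible family of maps $\SG(V)\to\SF(U)$ indexed by the arrows $U\to{}^t\!f(V)$, and since $(f_\ast\SF)(V)=\SF({}^t\!f(V))$ this is precisely a presheaf morphism $\SG\to f_\ast\SF$. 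Along the way I would record that the categories indexing these inductive limits are filtrant --- this is where one uses that $\SC_Y$ admits finite products and fiber products and that ${}^t\!f$ commutes with fiber products --- so that each $(f^{-1}_{\rm pre}\SG)(U)$ is a \emph{filtrant} inductive limit of $\Bbbk$-modules.

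For part~(1) I would then use that $f_\ast\SG$ is a sheaf whenever $\SG$ is (noted after Definition~\ref{def2.11}) and chain the adjunctions: for $\SF\in\Mod(\Bbbk_Y)$ and $\SG\in\Mod(\Bbbk_X)$,
\begin{align*}
\Hom_{\Mod(\Bbbk_X)}(f^{-1}\SF,\SG)
&\simeq\Hom_{{\rm Psh}(\Bbbk_X)}(f^{-1}_{\rm pre}\SF,\SG)\\
&\simeq\Hom_{{\rm Psh}(\Bbbk_Y)}(\SF,f_\ast\SG)
\simeq\Hom_{\Mod(\Bbbk_Y)}(\SF,f_\ast\SG),
\end{align*}
where the first isomorphism is the sheafification adjunction of Theorem~\ref{thm2.8}(4) applied to $f^{-1}\SF=(f^{-1}_{\rm pre}\SF)^{++}$ and the sheaf $\SG$, the second is the presheaf adjunction just established, and the last is full faithfulness of $\Mod(\Bbbk_Y)\hookrightarrow{\rm Psh}(\Bbbk_Y)$ (Theorem~\ref{thm2.9}(4)) together with $f_\ast\SG$ being a sheaf; naturality in both variables is evident from the construction.

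Part~(2) is then immediate: projective limits of sheaves are computed as in presheaves (Theorem~\ref{thm2.9}(1)), while on presheaves $f_\ast$ is given objectwise by precomposition with ${}^t\!f$ and hence commutes with all small projective limits; left exactness is the special case of kernels (or simply the fact that $f_\ast$ is a right adjoint by part~(1)). For part~(3), the adjunction of part~(1) already yields that $f^{-1}$ commutes with small inductive limits --- concretely, such limits of sheaves are sheafifications of the presheaf inductive limits (Theorem~\ref{thm2.9}(2)), $(\cdot)^{++}$ preserves colimits, and $f^{-1}_{\rm pre}$ does too, presheaf colimits being objectwise and filtrant colimits commuting with colimits. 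Exactness of $f^{-1}$ then follows by writing $f^{-1}=(\cdot)^{++}\circ f^{-1}_{\rm pre}$: the functor $(\cdot)^{++}$ is exact (Theorem~\ref{thm2.9}(4)), and $f^{-1}_{\rm pre}$ is exact because it is computed by the filtrant inductive limits noted above and filtrant inductive limits in $\Mod(\Bbbk)$ are exact.

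I expect the only genuine obstacle to be the filtrancy of the indexing categories for $f^{-1}_{\rm pre}$: it underlies both the colimit formula and the exactness claim in part~(3), and it is precisely where the hypotheses ``${}^t\!f$ commutes with fiber products'' and ``$\SC_Y$ admits finite products'' get consumed. I would verify it by the standard diagram chase --- bounding two objects of the relevant comma category through a fiber product formed in $\SC_Y$, and coequalizing two parallel morphisms likewise --- keeping track of where ${}^t\!f$ must be applied to such fiber products.
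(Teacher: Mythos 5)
The paper does not prove this proposition: it is recalled verbatim from \cite[Thm.\:2.2.1]{KS01} and \cite[Thm.\:17.5.2]{KS06}, so there is no in-text argument to compare against. Your proof is the standard one and is essentially correct: the presheaf-level adjunction $f^{-1}_{\rm pre}\dashv f_\ast$ (the Kan-extension computation), composed with the sheafification adjunction of Theorem \ref{thm2.8} (4) and the full faithfulness of the embedding $\Mod(\Bbbk_Y)\hookrightarrow{\rm Psh}(\Bbbk_Y)$, yields (1) --- and you rightly read the statement with $\SF\in\Mod(\Bbbk_Y)$, $\SG\in\Mod(\Bbbk_X)$, correcting the swap in the displayed formula --- while (2) and (3) follow as you describe. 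Two points deserve sharpening. First, for the cofiltrancy of the comma categories $(U\downarrow{}^t\!f)$, bounding $(V_1,u_1)$ and $(V_2,u_2)$ by $V_1\times V_2$ requires a map $U\to{}^t\!f(V_1\times V_2)$ lifting $(u_1,u_2)$, and similarly for equalizing parallel arrows; this needs ${}^t\!f$ to commute with finite products (equivalently, to be weakly left exact in the sense of \cite{KS06}), not merely with fiber products as in the abbreviated definition of a morphism of sites recalled in \S\ref{subsec2.2}. The extra hypothesis is part of the definition in the cited references, so this is a defect of the recalled definition rather than of your argument, but you should invoke it explicitly rather than only ``${}^t\!f$ commutes with fiber products''. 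Second, in deducing exactness of $f^{-1}$ from the factorization $f^{-1}=(\cdot)^{++}\circ f^{-1}_{\rm pre}$, note that a short exact sequence in $\Mod(\Bbbk_Y)$ is in general only left exact when regarded in ${\rm Psh}(\Bbbk_Y)$ (Theorem \ref{thm2.9} (4)); the factorization therefore gives left exactness (preservation of monomorphisms), and right exactness should be quoted from the adjunction in (1). With these two adjustments the proof is complete.
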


Note that for a site $X$ and $U\in\Ob(\SC_X)$,
we can endow $(\SC_X)_U$ with the following Grothendieck topology:
for $V\in \Ob((\SC_X)_U)$, a subset of $\Ob((\SC_X)_V)$ is a covering of $V$ if it is a covering in $\SC_X$.
We denote by $U$ its site and by $i_U\colon U\to X$ the natural morphism of sites from $U$ to $X$
associated with a functor ${}^ti_U\colon \SC_X\to(\SC_X)_U,\hspace{5pt}V\mapsto (U\times V, {\rm pr}_1)$,
where ${\rm pr}_1\colon U\times V\to U$ is the first projection.
On the other hand,
we have the morphism $j_U\colon X\to U$ of sites from $X$ to $U$ associated
with a functor ${}^tj_U\colon(\SC_X)_U\to \SC_X,\ V\to V$.
Hence we get functors:
\[\xymatrix@M=7pt@C=45pt{
\Mod(\Bbbk_U)\ar@<0.7ex>@{->}[r]^-{j_U^{-1}}
&
\Mod(\Bbbk_X)
\ar@<0.7ex>@{->}[r]^-{i_U^{-1}}
\ar@<0.7ex>@{->}[l]^-{j_{U\ast}}
&
\Mod(\Bbbk_U)
\ar@<0.7ex>@{->}[l]^-{i_{U\ast}}.
}\]

\begin{definition}
Let $X$ be a site and $U\in\Ob(\SC_X)$.
\begin{itemize}
\item[(1)]
The exact functor $i_U^{-1}\colon \Mod(\Bbbk_X)\to\Mod(\Bbbk_U)$
is called the restriction functor to $U$.
For simplicity, we some times write $\SF|_U$ instead of $i_U^{-1}\SF$ for $\SF\in\Mod(\Bbbk_X)$. 

\item[(2)]
The exact functor $j_U^{-1}\colon \Mod(\Bbbk_U)\to\Mod(\Bbbk_X)$ is called the extension functor from $U$.
Let us set $i_{U!} := j_U^{-1}$.

\item[(3)]
We set $\Gamma_U\SF := i_{U\ast}i_U^{-1}\SF$ and $\SF_U := i_{U!}i_U^{-1}\SF$
for $\SF\in\Mod(\Bbbk_X)$. 
\end{itemize}
\end{definition}

Clearly, the functor $\Gamma_U$ is left exact and the functor $(\cdot)_U$ is exact,
and there exist a canonical morphism $\id\to \Gamma_U$ of functors
and an isomorphism $\Gamma(X;\ \cdot\ )\circ\Gamma_U \simeq \Gamma(U;\ \cdot\ )$.
Note that $j_{U\ast} = i_U^{-1}$ and hence the functor $i_{U!}$ is a left adjoint to the functor $i_U^{-1}$.
Namely, for any $\SF\in\Mod(\Bbbk_U)$ and any $\SG\in\Mod(\Bbbk_X)$ we have:
$$\Hom_{\Mod(\Bbbk_X)}(i_{U!}\SF, \SG)\simeq \Hom_{\Mod(\Bbbk_U)}(\SF, i_U^{-1}\SG).$$
See \cite[Prop.\:2.3.4]{KS01} for the details.
Moreover we have:
\begin{proposition}[{\cite[Prop.\:2.3.6]{KS01}}]
Let $X$ be a site and $U\in\Ob(\SC_X)$.
We assume that for any $V\in\Ob(\SC_X)$, $\Hom_{\SC_X}(V, U)$ has at most one element.
Then we have
\begin{itemize}
\item[\rm (1)]
$i_U^{-1}\circ i_{U\ast}\simto \id,\hspace{5pt}
\id \simto i_U^{-1}\circ i_{U!}$ 

\item[\rm (2)]
Functors $i_{U\ast}$ and $i_{U!}$ are fully faithful. 

\item[\rm (3)]
There exist a canonical morphism $(\cdot)_U\to\id$ of functors.
Moreover, for any $\SF\in\Mod(\Bbbk_X)$ the canonical morphism $\SF_U\to \SF$ is a monomorphism .
\end{itemize}
\end{proposition}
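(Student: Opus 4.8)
The plan is to reduce the whole statement to a single computation on the underlying sites --- the only place the hypothesis is used --- and then to deduce (1), (2), (3) by formal adjunction arguments.

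For the site computation: since $\SC_X$ admits finite products, the forgetful functor ${}^t j_U\colon(\SC_X)_U\to\SC_X$, $(W,i_W)\mapsto W$, has a right adjoint, namely ${}^t i_U\colon\SC_X\to(\SC_X)_U$, $V\mapsto(U\times V,{\rm pr}_1)$, and the unit $\eta\colon\id\to{}^t i_U\circ{}^t j_U$ is at $(W,i_W)$ the morphism $(i_W,\id_W)\colon W\to U\times W$. The key point --- the only use of the hypothesis --- is that $\eta$ is an isomorphism, equivalently that ${}^t j_U$ is fully faithful: the candidate inverse of $\eta_{(W,i_W)}$ is ${\rm pr}_2\colon U\times W\to W$, and the one nontrivial thing to check is that ${\rm pr}_2$ is a morphism $(U\times W,{\rm pr}_1)\to(W,i_W)$ in $(\SC_X)_U$, i.e. $i_W\circ{\rm pr}_2={\rm pr}_1$; this holds because both sides belong to $\Hom_{\SC_X}(U\times W,U)$, which is nonempty and has at most one element. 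Thus ${}^t i_U\circ{}^t j_U\simeq\id$, i.e. $j_U\circ i_U\simeq\id_U$ as morphisms of sites. Part (1) now follows by applying the direct and inverse image functors, which are compatible with composition of morphisms of sites, together with the identities $j_{U\ast}=i_U^{-1}$ and $i_{U!}=j_U^{-1}$ recalled before the statement: $i_U^{-1}\circ i_{U\ast}=j_{U\ast}\circ i_{U\ast}\simeq(j_U\circ i_U)_\ast\simeq\id$ and $i_U^{-1}\circ i_{U!}=i_U^{-1}\circ j_U^{-1}\simeq(j_U\circ i_U)^{-1}\simeq\id$.

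For part (2) I would use the elementary fact that if $F\dashv G$ is an adjoint pair of additive functors with $G\circ F\simeq\id$ (resp. $F\circ G\simeq\id$) as functors, then $F$ (resp. $G$) is fully faithful: by naturality of the given isomorphism, the composite $\Hom(\SF,\SG)\xrightarrow{F}\Hom(F\SF,F\SG)\xrightarrow{G}\Hom(GF\SF,GF\SG)\xrightarrow{\sim}\Hom(\SF,\SG)$ is the identity, so $F$ is a split monomorphism on $\Hom$-groups; and $G$ is faithful on $\Hom(F\SF,F\SG)$, because a morphism there with $G(\phi)=0$ has vanishing adjunct in $\Hom(\SF,GF\SG)$, hence $\phi=0$; combining, $F$ is bijective on $\Hom$-groups. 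Applying this to $(F,G)=(i_{U!},i_U^{-1})$ and $(F,G)=(i_U^{-1},i_{U\ast})$ --- using $i_{U!}\dashv i_U^{-1}$ (see \cite[Prop.\:2.3.4]{KS01}), $i_U^{-1}\dashv i_{U\ast}$ (Proposition \ref{prop2.12}\,(1)) and the two isomorphisms of part (1) --- shows that $i_{U!}$ and $i_{U\ast}$ are fully faithful.

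For part (3), the canonical morphism $(\,\cdot\,)_U=i_{U!}i_U^{-1}\to\id$ is the counit of $i_{U!}\dashv i_U^{-1}$ (equivalently, the morphism adjoint to $\id_{i_U^{-1}\SF}$), whose existence needs no hypothesis. To see that $\SF_U\to\SF$ is a monomorphism: since $\Mod(\Bbbk_X)\hookrightarrow{\rm Psh}(\Bbbk_X)$ is left exact it suffices to check that $\SF_U(V)\to\SF(V)$ is injective for every $V\in\Ob(\SC_X)$, and for this one uses the explicit description of $i_{U!}=j_U^{-1}$ (\cite[Prop.\:2.3.4]{KS01}) together with the hypothesis (which identifies $\SF_U$ with the sub-sheaf of $\SF$ of sections ``supported on $U$''); in particular $\SF_U(V)\hookrightarrow\SF(V)$. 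I expect this monomorphism assertion in (3) to be the only genuinely delicate step to write up carefully: unlike the topological case, where it is immediate from a stalkwise argument, it must be carried out at the level of sections on the site. Everything else is formal once $j_U\circ i_U\simeq\id_U$ has been established, and that --- the single step where the hypothesis is needed --- is short.
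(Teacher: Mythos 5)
The paper gives no proof of this proposition: it is quoted directly from \cite[Prop.\:2.3.6]{KS01}, so there is no internal argument to compare against, and your proposal has to be judged on its own. Parts (1) and (2) are correct. The single substantive use of the hypothesis is exactly where you locate it: the unit of the adjunction ${}^tj_U\dashv{}^ti_U$ at $(W,i_W)$ is $(i_W,\id_W)\colon W\to U\times W$, and its inverse candidate ${\rm pr}_2$ is a morphism $(U\times W,{\rm pr}_1)\to (W,i_W)$ of $(\SC_X)_U$ precisely because $i_W\circ{\rm pr}_2$ and ${\rm pr}_1$ both lie in the at-most-one-element set $\Hom_{\SC_X}(U\times W,U)$. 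From ${}^ti_U\circ{}^tj_U\simeq\id$, part (1) follows by functoriality of $(\cdot)_\ast$ and $(\cdot)^{-1}$ together with $j_{U\ast}=i_U^{-1}$, $i_{U!}=j_U^{-1}$. Your adjunction lemma in (2) — that $GF\simeq\id$ via \emph{some} natural isomorphism already forces $F$ fully faithful — is proved correctly, including the essential step that $G$ is faithful on $\Hom(F\SF,F\SG)$ via the adjunct. (Strictly, (1) asserts that the \emph{canonical} adjunction morphisms are isomorphisms, so one should verify that your isomorphisms agree with the unit and counit; this is routine, and your lemma deliberately makes (2) independent of it.)

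The one genuine gap is the monomorphism assertion in (3). Reducing to injectivity of $\SF_U(V)\to\SF(V)$ for every $V$ is legitimate, but it is not an effective reduction: $\SF_U$ is the sheafification of the presheaf $\SG$ given by $\SG(V)=\SF(V)$ if $\Hom_{\SC_X}(V,U)\neq\emptyset$ and $\SG(V)=0$ otherwise, and this presheaf is in general not separated (an object $V$ with no morphism to $U$ may well be covered by objects admitting one), so $\SF_U(V)$ is \emph{not} $\SG(V)$ and the claim ``$\SF_U(V)\hookrightarrow\SF(V)$ by the explicit description'' is circular as written. The argument should instead be run one level down: the canonical morphism of presheaves $\SG\to\SF$ is on each $V$ either an isomorphism or the map $0\to\SF(V)$, hence a monomorphism in ${\rm Psh}(\Bbbk_X)$; since the sheafification functor $(\cdot)^{++}$ is exact (Theorem \ref{thm2.9} (4)) and $\SF^{++}\simeq\SF$, the induced morphism $\SF_U=\SG^{++}\to\SF$ is a monomorphism in $\Mod(\Bbbk_X)$. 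With this substitution your proof is complete; everything else, as you say, is formal once ${}^ti_U\circ{}^tj_U\simeq\id$ is established.
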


If sites $X$ and $Y$ have terminal objects which are denoted by same symbols,
and a morphism $f\colon X\to Y$ of sites associated
the functor ${}^t\!f\colon \SC_Y\to\SC_X$ satisfies ${}^t\!f(Y)=X$,
for any $V\in\SC_Y$ we have the following commutative diagrams:
\[\xymatrix@M=5pt@R=20pt@C=40pt{
X\ar@{->}[r]^-{f} & Y\\
U\ar@{->}[u]^-{i_U}\ar@{->}[r]_-{f|_U}  & V \ar@{->}[u]_-{i_V}
,}\hspace{17pt}
\xymatrix@M=5pt@R=20pt@C=40pt{
X\ar@{->}[r]^-{f}\ar@{->}[d]_-{j_U} & Y\ar@{->}[d]^-{j_V}\\
U\ar@{->}[r]_-{f|_U}  & V,}\]
where we set $U:={}^t\!f(V)$ and a morphism $f|_U\colon U\to V$ of sites is induced by $f$.
In this case, we have
$$i_{U!}\circ(f|_U)^{-1}\simeq f^{-1}\circ i_{V!}\ ,\hspace{20pt}
(f|_U)_\ast\circ i_U^{-1}\simeq i_V^{-1}\circ f_\ast\ .$$
By using the first one and the fact that $f^{-1}\Bbbk_Y\simeq \Bbbk_X$,
we have $f^{-1}(\Bbbk_{YV}) \simeq \Bbbk_{XU}$.

For $\SF, \SG\in{\rm Psh}(\Bbbk_X)$,
we set
\begin{align*}
\big(\shom_{\Bbbk_X}(\SF, \SG)\big)(U) &:=
\Hom_{\Mod(\Bbbk_U)}(\SF|_U, \SG|_U)\hspace{19pt} \text{ for any } U\in\SC_X,\\
\left(\SF\underset{\Bbbk_X}{\overset{\rm pre}{\otimes}}\SG\right)(U) &:=
\SF(U)\underset{\Bbbk}{\otimes}\SG(U)\hspace{39pt} \text{ for any } U\in\SC_X.
\end{align*}
Then we have bifunctors:
\begin{align*}
 \shom_{\Bbbk_X}(\cdot, \cdot) &\colon
 {\rm Psh}(\Bbbk_X)^\op\times{\rm Psh}(\Bbbk_X)\to{\rm Psh}(\Bbbk_X),\hspace{7pt}
 (\SF, \SG)\mapsto\shom_{\Bbbk_X}(\SF, \SG),\\
 \Hom_{\Mod(\Bbbk_X)}(\cdot, \cdot) &\colon
 {\rm Psh}(\Bbbk_X)^\op\times{\rm Psh}(\Bbbk_X)\to\Mod(\Bbbk),\hspace{7pt}
 (\SF, \SG)\mapsto\Hom_{\Mod(\Bbbk_X)}(\SF, \SG),\\
 (\cdot)\overset{\rm pre}{\underset{\Bbbk_X}{\otimes}}(\cdot) &\colon
 {\rm Psh}(\Bbbk_X)\times{\rm Psh}(\Bbbk_X)\to{\rm Psh}(\Bbbk_X),\hspace{7pt}
 (\SF, \SG)\mapsto\SF \overset{\rm pre}{\underset{\Bbbk_X}{\otimes}}\SG.
 \end{align*}

Note that if $\SF, \SG\in\Mod(\Bbbk_X)$ then the presheaf $\shom_{\Bbbk_X}(\SF, \SG)$ is a sheaf,
see \cite[Prop.\:17.7.1 (i)]{KS06}.

\begin{definition}
%Let $\SF, \SG\in\Mod(\Bbbk_X)$.
The internal hom functor $ \shom_{\Bbbk_X}$, the hom functor $\Hom_{\Mod(\Bbbk_X)}$
and the tensor product functor $\underset{\Bbbk_X}{\otimes}$ are defined by
\begin{align*}
 \shom_{\Bbbk_X}(\cdot, \cdot) &\colon
 \Mod(\Bbbk_X)^\op\times\Mod(\Bbbk_X)\to\Mod(\Bbbk_X),\hspace{7pt}
 (\SF, \SG)\mapsto\shom_{\Bbbk_X}(\SF, \SG),\\
  \Hom_{\Mod(\Bbbk_X)}(\cdot, \cdot) &\colon
 \Mod(\Bbbk_X)^\op\times\Mod(\Bbbk_X)\to\Mod(\Bbbk),\hspace{7pt}
 (\SF, \SG)\mapsto\Hom_{\Mod(\Bbbk_X)}(\SF, \SG),\\
 (\cdot)\underset{\Bbbk_X}{\otimes}(\cdot) &\colon
 \Mod(\Bbbk_X)\times\Mod(\Bbbk_X)\to\Mod(\Bbbk_X),\hspace{7pt}
 (\SF, \SG)\mapsto\left(\SF \overset{\rm pre}{\underset{\Bbbk_X}{\otimes}}\SG\right)^{++}.
 \end{align*}
 We sometimes write $\shom,\ \otimes$ instead of $\shom_{\Bbbk_X},\ \underset{\Bbbk_X}{\otimes}$, respectively.
\end{definition}
Remark that the internal hom functor is left exact in each variable
and the tensor product functor is right exact in each variable.
They have the following properties.
\begin{proposition}[{\cite[Props.\:2.4.2, 2.4.3]{KS01}},
see also {\cite[Prop.\:17.7.3, Thms.\:18.2.3, Lem.\:18.3.1]{KS06}}]
Let $f\colon X\to Y$ be a morphism of sites
and $\SF, \SF_1, \SF_2, \SH\in\Mod(\Bbbk_X),$ $\SG, \SG_1, \SG_2\in\Mod(\Bbbk_Y)$.
Then we have
\begin{itemize}
\item[\rm (1)]
for any $U\in\Ob(\SC_X)$, we have $i_U^{-1}\shom(\SF, \SG) \simeq \shom(i_U^{-1}\SF, i_U^{-1}\SG)$,

\item[\rm (2)]
$\shom_{\Bbbk_X}(\Bbbk_X, \SF) \simeq \SF$,

\item[\rm (3)]
$\Bbbk_X\otimes \SF \simeq \SF$,

\item[\rm (4)]
$\shom_{\Bbbk_X}(\SF_1\otimes \SF_2, \SH)\simeq \shom_{\Bbbk_X}(\SF_1, \shom_{\Bbbk_X}(\SF_2, \SH))$,

\item[\rm (5)]
$\shom_{\Bbbk_Y}(\SG, f_\ast\SF)\simto f_\ast\shom_{\Bbbk_X}(f^{-1}\SG, \SF)$,

\item[\rm (6)]
$f^{-1}(\SG_1\underset{\Bbbk_X}{\otimes}\SG_2)\simto f^{-1}\SG_1\underset{\Bbbk_X}{\otimes} f^{-1}\SG_2$.
\end{itemize}
\end{proposition}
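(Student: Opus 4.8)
The plan is to deduce each isomorphism from its counterpart at the level of presheaves---where it is either elementary or a pointwise statement about $\Bbbk$-modules---and then to descend to sheaves using the adjunctions already recorded (Theorems~\ref{thm2.8} and \ref{thm2.9}, Proposition~\ref{prop2.12}), the exactness of $(\cdot)^{++}$, and the Yoneda lemma. Three standing facts are used throughout: (i) for $\SF,\SG\in\Mod(\Bbbk_X)$ the presheaf $\shom_{\Bbbk_X}(\SF,\SG)$ is a sheaf (so it agrees with the internal hom formed in ${\rm Psh}(\Bbbk_X)$), and on sheaves $i_U^{-1}$ is computed by restricting the underlying functor along $(\SC_X)_U\hookrightarrow\SC_X$; (ii) for $\SF\in\Mod(\Bbbk_X)$ the presheaf $f_\ast\SF$ is a sheaf; (iii) for $\SH\in{\rm Psh}(\Bbbk_X)$ and any sheaf $\SG$ one has $\Hom_{{\rm Psh}(\Bbbk_X)}(\SH^{++},\SG)\simeq\Hom_{{\rm Psh}(\Bbbk_X)}(\SH,\SG)$, both computing $\Hom_{\Mod(\Bbbk_X)}(\SH^{++},\SG)$ by Theorem~\ref{thm2.8}\,(4). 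From (iii), together with the fact that $\shom_{{\rm Psh}(\Bbbk_X)}(\SG,\SF)$ is a sheaf whenever $\SF$ is, one obtains by Yoneda that $(\cdot)^{++}$ commutes with $f^{-1}_{\rm pre}$ and with $(-)\overset{\rm pre}{\otimes}(-)$; I will use this freely below.

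The one substantive input is the presheaf-level tensor--hom adjunction, obtained by a direct unwinding of the definitions from $\Hom_{\Bbbk}(M_1\otimes_{\Bbbk}M_2,N)\simeq\Hom_{\Bbbk}(M_1,\Hom_{\Bbbk}(M_2,N))$. Combined with (i) and (iii) it yields, for $\SF_1,\SF_2,\SH\in\Mod(\Bbbk_X)$,
\[
\Hom_{\Mod(\Bbbk_X)}(\SF_1\otimes\SF_2,\SH)\simeq\Hom_{{\rm Psh}(\Bbbk_X)}\bigl(\SF_1\overset{\rm pre}{\otimes}\SF_2,\SH\bigr)\simeq\Hom_{\Mod(\Bbbk_X)}\bigl(\SF_1,\shom_{\Bbbk_X}(\SF_2,\SH)\bigr),
\]
and, similarly, the commutativity and associativity of $\otimes$ (inherited from $\otimes_{\Bbbk}$). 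From here: (4) follows by testing both sides against an arbitrary sheaf, using associativity and the displayed adjunction, and applying Yoneda; for (2) one computes via (i) the sections of $\shom_{\Bbbk_X}(\Bbbk_X,\SF)$ over $U$, which reduce to $\SF(U)$ naturally in $U$ (restriction sends a constant sheaf to a constant sheaf and $\Hom_{\Bbbk}(\Bbbk,-)\simeq\id$); and (3) follows from (2), the displayed adjunction with $\SF_2=\Bbbk_X$, and commutativity of $\otimes$, by Yoneda. For (1): by (i) the sections of $\shom_{\Bbbk_X}(\SF,\SG)$ over $(V,i_V)\in\Ob((\SC_X)_U)$ are $\Hom_{\Mod(\Bbbk_V)}(\SF|_V,\SG|_V)$ and $i_U^{-1}$ merely restricts these to $(\SC_X)_U$; transitivity of restriction then identifies the result with $\shom_{\Bbbk_U}(i_U^{-1}\SF,i_U^{-1}\SG)$, functorially in $V$.

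For (5) and (6) one must break a circularity, since each of the two follows from the other by the adjunctions already in hand; I would establish (6) directly. At the presheaf level $f^{-1}_{\rm pre}$ is computed sectionwise by a filtrant inductive limit and $\overset{\rm pre}{\otimes}$ is the pointwise tensor product over $\Bbbk$; since $\otimes_{\Bbbk}$ commutes with filtrant inductive limits in each variable and the diagonal of a filtrant category is cofinal, one gets $f^{-1}_{\rm pre}(\SG_1\overset{\rm pre}{\otimes}\SG_2)\simeq f^{-1}_{\rm pre}\SG_1\overset{\rm pre}{\otimes}f^{-1}_{\rm pre}\SG_2$; sheafifying and moving $(\cdot)^{++}$ past $f^{-1}_{\rm pre}$ and past $\overset{\rm pre}{\otimes}$ gives (6). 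Then (5) is formal: for a sheaf $\SM$ on $Y$,
\[
\Hom_{\Mod(\Bbbk_Y)}\bigl(\SM,\shom_{\Bbbk_Y}(\SG,f_\ast\SF)\bigr)\simeq\Hom_{\Mod(\Bbbk_X)}\bigl(f^{-1}\SM\otimes f^{-1}\SG,\SF\bigr)\simeq\Hom_{\Mod(\Bbbk_Y)}\bigl(\SM,f_\ast\shom_{\Bbbk_X}(f^{-1}\SG,\SF)\bigr),
\]
where the isomorphisms use the displayed tensor--hom adjunction, the adjunction $f^{-1}\dashv f_\ast$ of Proposition~\ref{prop2.12}, and, for the first one, (6); Yoneda then gives (5).

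The real work is not any single isomorphism but the bookkeeping of the presheaf-to-sheaf passage: justifying that the internal hom of sheaves is already a sheaf and is compatible with localization (used in (1)), and that $(\cdot)^{++}$ commutes with $f^{-1}_{\rm pre}$ and with $\overset{\rm pre}{\otimes}$ (used for associativity and for (6)). The only genuinely non-formal point inside this is the cofinality/colimit computation underlying (6); once (6) is in hand, everything else is adjunction-chasing together with Yoneda.
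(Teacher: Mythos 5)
The paper does not prove this proposition: it is recalled verbatim from \cite[Props.\ 2.4.2, 2.4.3]{KS01} (see also \cite[Prop.\ 17.7.3]{KS06}) in the preliminaries, with no argument given, so there is no in-paper proof to compare yours against. On its own terms your proof is correct and is essentially the standard derivation from the cited sources: reduce to presheaf-level statements, use that internal hom into a sheaf is a sheaf and that $f_\ast$ preserves sheaves, pass sheafification across $\overset{\rm pre}{\otimes}$ and $f^{-1}_{\rm pre}$ via the adjunction of Theorem \ref{thm2.8}\,(4) and Yoneda, and note correctly that (5) and (6) are interderivable so one of them must be proved by hand. Two small points deserve explicit justification rather than a passing mention. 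First, in (6) the commutation of $\otimes_{\Bbbk}$ with the colimit computing $f^{-1}_{\rm pre}$ requires that the relevant index category be filtrant and that its diagonal be cofinal; this uses that $\SC_Y$ admits finite products and fiber products and that ${}^t\!f$ is compatible with them, which is part of the standing hypotheses on sites here but is the only place where the argument is not purely formal. Second, the Yoneda arguments for (3), (5) and (6) produce \emph{some} isomorphism; one should check (routinely) that it is induced by the canonical morphism asserted in the statement, e.g.\ the natural arrow $\shom_{\Bbbk_Y}(\SG, f_\ast\SF)\to f_\ast\shom_{\Bbbk_X}(f^{-1}\SG, \SF)$ in (5).
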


%\newpage
Let us recall the notion of (sheaves of) $\R$-modules.
\begin{definition}
Let $X$ be a site and $\Bbbk$ a commutative unital ring.
A sheaf of $\Bbbk$-algebras is
 a sheaf $\R$ of $\Bbbk$-modules on a site $X$ such that
for any $U\in\Ob(\SC_X)$, $\R(U)$ is a $\Bbbk$-algebra and 
for any morphism $V\to U$ in $\SC_X$,
the restriction morphism $\R(U)\to\R(V)$ is a morphism of $\Bbbk$-algebras.

For sheaves $\R, \calS$ of $\Bbbk$-algebras,
a morphism of sheaves of $\Bbbk$-algebras from $\R$ to $\calS$
is a morphism $f\colon \R\to\calS$ of sheaves
such that for any $U\in\SC_X$ the morphism $f(U)\colon\R(U)\to\calS(U)$ is a morphism of $\Bbbk$-algebras.
%A morphism of sheaves of $\Bbbk$-algebras is naturally defined.
\end{definition}

\begin{example}
The constant sheaf $\Bbbk_X$ on a site $X$ is a sheaf of  $\Bbbk$-algebras. 
\end{example}

A sheaf of $\Bbbk$-algebras is called a $\Bbbk_X$-algebra. 
A sheaf of $\ZZ$-algebras is simply called a sheaf of rings.
Moreover, for a $\Bbbk_X$-algebra $\R$, we denote $\R^\op$ the opposite $\Bbbk_X$-algebra
which is defined by $\R^\op(U) := \R(U)^\op$ for any $U\in\SC_X$.
Here, $\R(U)^\op$ is the opposite ring of $\R(U)$.

\newpage
\begin{definition}
Let $X$ be a site and $\R$ a $\Bbbk_X$-algebra.

A presheaf of $\R$-modules is 
a presheaf $\SF\in{\rm Psh}(\Bbbk_X)$ such that 
for any $U\in\SC_X$, $\SF(U)$ is a left $\R(U)$-module and 
for any morphism $V\to U$ in $\SC_X$,
the restriction morphism $\SF(U)\to\SF(V)$ commutes with the action $\R$,
that is, $(r\cdot s)|_V = r|_V\cdot s|_V$ for any $s\in\SF(U)$ and $r\in\R(U)$.

For presheaves $\M, \N$ of $\R$-modules,
a morphism of presheaves of $\R$-modules from $\M$ to $\N$
is a morphism $\varphi\colon \M\to\N$ of presheaves
such that for any $U\in\SC_X$ the morphism $\varphi(U)\colon \M(U)\to\N(U)$ is a morphism of $\R(U)$-modules.

Let us denote by ${\rm Psh}(\R)$ the category of presheaves of $\R$-modules.
%and by ${\rm Psh}(\R^op)$ the category of right $\R$-modules.
\end{definition}
Clearly, the category ${\rm Psh}(\R)$ is abelian.

\begin{definition}
Let $X$ be a site and $\R$ be a $\Bbbk_X$-algebra.

A sheaf of $\R$-modules is a presheaf of $\R$-modules which is a sheaf of $\Bbbk$-modules.
For simplicity, a sheaf of $\R$-modules is called a $\R$-module.
A $\R^\op$-module is called a right $\R$-module.

A morphism of $\R$-modules is a morphism of presheaves of $\R$-modules.
Let us denote by $\Mod(\R)$ the category of $\R$-modules.
\end{definition}
Note that the category $\Mod(\R)$ has same properties of Theorem \ref{thm2.9}.
For example, it is abelian, see also \cite[Thm.\:18.1.6 (1)]{KS06}.
Moreover, the category $\Mod(\R)$ is a Grothendieck category,
see \cite[Def.\:8.3.24, Thm.\:18.1.6 (v)]{KS06} for details.
In particular, it admits enough injectives, see also \cite[Thm.\:9.6.2]{KS06}.

%Note that for a $\Bbbk_X$-algebra $\R$,
Note also that the forgetful functor $$for\colon\Mod(\R)\to \Mod(\Bbbk_X)$$ is faithful
and conservative but not fully faithful in general.

The sheafification functor $(\cdot)^{++}\colon {\rm Psh}(\Bbbk_X)\to\Mod(\Bbbk_X)$ induces
a functor $$(\cdot)^{++}\colon{\rm Psh}(\R)\to\Mod(\R),$$
which will be denoted by the same symbol.
Note that it is also left adjoint to
the embedding functor $\Mod(\R)\to {\rm Psh}(\R)$.
See \cite[Lem.\:18.1.4]{KS06} for details.
Moreover, for any $U\in\SC_X$, we have functors: 
\begin{align*}
(\cdot)_U&\colon\Mod(\R)\to\Mod(\R),\\
\Gamma_U&\colon\Mod(\R)\to\Mod(\R),\\
\Gamma(U;\ \cdot\ )&\colon\Mod(\R)\to\Mod(\R(U))
\end{align*}
and the functor $(\cdot)_U$ is exact and the functors $\Gamma_U, \Gamma(U;\ \cdot\ )$ are left exact,
see \cite[\S18.1]{KS06} for details.

For $\M, \N\in{\rm Psh}(\R)$ and $\SL\in{\rm Psh}(\R^\op)$,
we set
\begin{align*}
\big(\shom_{\R}(\M, \N)\big)(U) &:=
\Hom_{{\rm Psh}(\R|_U)}(\M|_U, \N|_U)\hspace{19pt} \text{ for any } U\in\SC_X,\\
\left(\SL\underset{\R}{\overset{\rm pre}{\otimes}}\N\right)(U) &:=
\SL(U)\underset{\R(U)}{\otimes}\N(U)\hspace{39pt} \text{ for any } U\in\SC_X.
\end{align*}
Then we have bifunctors:
\begin{align*}
 \shom_{\R}(\cdot, \cdot) &\colon
 {\rm Psh}(\R)^\op\times{\rm Psh}(\R)\to{\rm Psh}(\Bbbk_X),\hspace{7pt}
 (\M, \N)\mapsto\shom_{\R}(\M, \N),\\
  \Hom_{{\rm Psh}(\R)}(\cdot, \cdot) &\colon
 {\rm Psh}(\R)^\op\times{\rm Psh}(\R)\to\Mod(\Bbbk),\hspace{7pt}
 (\M, \N)\mapsto\Hom_{{\rm Psh}(\R)}(\M, \N),\\
 (\cdot)\overset{\rm pre}{\underset{\R}{\otimes}}(\cdot) &\colon
 {\rm Psh}(\R^\op)\times{\rm Psh}(\R)\to{\rm Psh}(\Bbbk_X),\hspace{7pt}
 (\M, \N)\mapsto\M \overset{\rm pre}{\underset{\R}{\otimes}}\N.
 \end{align*}

Note that if $\M, \N\in\Mod(\R)$ then the presheaf $\shom_{\R}(\M, \N)$ is a sheaf,
see \cite[Lem.\:18.2.1 (i)]{KS06}.

\begin{definition}
%Let $\SF, \SG\in\Mod(\Bbbk_X)$.
The internal hom functor $\shom_{\R}$, the hom functor $ \Hom_{\Mod(\R)}$
 and the tensor product functor $\underset{\R}{\otimes}$ are defined by
\begin{align*}
 \shom_{\R}(\cdot, \cdot) &\colon
 \Mod(\R)^\op\times\Mod(\R)\to\Mod(\Bbbk_X),\hspace{7pt}
 (\M, \N)\mapsto\shom_{\R}(\M, \N),\\
 \Hom_{\Mod(\R)}(\cdot, \cdot) &\colon
 \Mod(\R)^\op\times\Mod(\R)\to\Mod(\Bbbk_X),\hspace{7pt}
 (\M, \N)\mapsto\Hom_{\Mod(\R)}(\M, \N),\\
 (\cdot)\underset{\R}{\otimes}(\cdot) &\colon
 \Mod(\R^\op)\times\Mod(\R)\to\Mod(\Bbbk_X),\hspace{7pt}
 (\M, \N)\mapsto\left(\M \overset{\rm pre}{\underset{\R}{\otimes}}\N\right)^{++}.
 \end{align*}
\end{definition}

Note that the internal hom functor is left exact in each variable
 and the tensor product functor is right exact in each variable.
Note also that if $\R$ is a sheaf of commutative rings
then $\shom_{\R}(\M, \N),\ \M\underset{\R}{\otimes}\N\in\Mod(\R)$ for $\M, \N\in\Mod(\R)$.

Moreover we have:
\begin{proposition}[{\cite[Rem.\:18.2.6]{KS06}}]
Let $\R_{i}\ (i=1,2,3,4)$ be $\Bbbk_X$-algebras.
Then we have left exact functors in each variable
\begin{align*}
 \shom_{\R_1}(\cdot, \cdot) &\colon
 \Mod(\R_1\otimes\R_2^\op)^\op\times\Mod(\R_1\otimes\R_3^\op)
 \to\Mod(\R_2\otimes\R_3^\op),\\
 \Hom_{\Mod(\R_1)}(\cdot, \cdot) &\colon
 \Mod(\R_1\otimes\R_2^\op)^\op\times\Mod(\R_1\otimes\R_3^\op)
 \to\Mod((\R_2\otimes\R_3^\op)(X))
 \end{align*}
 and a right exact functor in each variable
 $$(\cdot)\underset{\R_2}{\otimes}(\cdot) \colon
 \Mod(\R_1\otimes\R_2^\op)\times\Mod(\R_2\otimes\R_3^\op)\to\Mod(\R_1\otimes\R_3^\op).$$
 
Moreover, for any ${}_i\M_j\in\Mod(\R_i\otimes\R_j^\op)\ (i, j = 1,2,3,4)$,
there exist natural isomorphisms in $\Mod(\R_1\otimes \R_4^\op)$
\begin{align*}
\left({}_1\M_2\underset{\R_2}{\otimes}{}_2\M_3\right)\underset{\R_3}{\otimes}{}_3\M_4
&\ \simeq\
{}_1\M_2\underset{\R_2}{\otimes}\left({}_2\M_3\underset{\R_3}{\otimes}{}_3\M_4\right),\\
\shom_{\R_2}\left({}_2\M_1,\ \shom_{\R_3}({}_3\M_2,\ {}_3\M_4)\right)
&\ \simeq\
\shom_{\R_3}\left({}_3\M_2\underset{\R_2}{\otimes}{}_2\M_1,\ {}_3\M_4\right).
\end{align*}
\end{proposition}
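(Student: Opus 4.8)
The plan is to deduce every assertion from the corresponding fact about the underlying sheaves of $\Bbbk_X$-modules, which has already been recorded, combined with classical module algebra performed on sections at the presheaf level. Recall first that, for $\Bbbk_X$-algebras $\R_i$ and $\R_j$, a left $(\R_i\otimes\R_j^\op)$-module structure on a sheaf of $\Bbbk$-modules is the same datum as a pair of commuting structures of left $\R_i$-module and of right $\R_j$-module. Now let $\M\in\Mod(\R_1\otimes\R_2^\op)$ and $\N\in\Mod(\R_1\otimes\R_3^\op)$. By \cite[Lem.\:18.2.1 (i)]{KS06} the presheaf $U\mapsto\Hom_{\Mod(\R_1|_U)}(\M|_U,\N|_U)$ is already a sheaf, namely $\shom_{\R_1}(\M,\N)$. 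I equip it with the left $\R_2$-action obtained from the right $\R_2$-action on $\M$ by precomposition (which reverses the side) and with the right $\R_3$-action obtained from the right $\R_3$-action on $\N$ by postcomposition; that these commute, are compatible with the restriction morphisms and satisfy the sheaf-of-modules axioms is verified on sections exactly as for modules over a ring, and yields an object of $\Mod(\R_2\otimes\R_3^\op)$, contravariant in $\M$ and covariant in $\N$. Similarly, for $\M\in\Mod(\R_1\otimes\R_2^\op)$ and $\N\in\Mod(\R_2\otimes\R_3^\op)$ the presheaf $U\mapsto\M(U)\otimes_{\R_2(U)}\N(U)$ is a presheaf of $(\R_1,\R_3)$-bimodules (the restriction morphisms being morphisms of rings and of modules), and $\M\otimes_{\R_2}\N$ is its sheafification, which is again an $(\R_1\otimes\R_3^\op)$-module since $(\cdot)^{++}$ carries presheaves of $\R$-modules to $\R$-modules. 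Finally $\Hom_{\Mod(\R_1)}(\M,\N)=\Gamma(X;\shom_{\R_1}(\M,\N))$ is a module over $\Gamma(X;\R_2\otimes\R_3^\op)=(\R_2\otimes\R_3^\op)(X)$ via the functor $\Gamma(X;\cdot)\colon\Mod(\R_2\otimes\R_3^\op)\to\Mod((\R_2\otimes\R_3^\op)(X))$.

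For the exactness statements the key point is that the forgetful functor $\Mod(\R)\to\Mod(\Bbbk_X)$ is exact and conservative---kernels and cokernels in $\Mod(\R)$ being those of the underlying sheaves---so a complex in $\Mod(\R)$ is exact if and only if its image in $\Mod(\Bbbk_X)$ is. Hence the left exactness in each variable of $\shom_{\R_1}$ as a functor into $\Mod(\R_2\otimes\R_3^\op)$ reduces to its left exactness as a functor into $\Mod(\Bbbk_X)$, which has already been noted; left exactness of $\Hom_{\Mod(\R_1)}$ then follows by composing further with the left exact functor $\Gamma(X;\cdot)$; and right exactness of $\otimes_{\R_2}$ follows from right exactness of the presheaf tensor product---which, cokernels of presheaves being computed on sections, is just right exactness of $\otimes$ over a ring---composed with the exact sheafification functor.

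It remains to construct the two natural isomorphisms. For associativity, the classical isomorphism $(A\otimes_B C)\otimes_D E\simeq A\otimes_B(C\otimes_D E)$ of bimodules, applied on sections over each $U$ and compatibly with restriction, gives an isomorphism of presheaves $({}_1\M_2\otimes^{\mathrm{pre}}_{\R_2}{}_2\M_3)\otimes^{\mathrm{pre}}_{\R_3}{}_3\M_4\simeq{}_1\M_2\otimes^{\mathrm{pre}}_{\R_2}({}_2\M_3\otimes^{\mathrm{pre}}_{\R_3}{}_3\M_4)$; sheafifying and using that $(\cdot)^{++}$ commutes with the presheaf tensor product---concretely $(\F^{++}\otimes^{\mathrm{pre}}_{\R}\G)^{++}\simeq(\F\otimes^{\mathrm{pre}}_{\R}\G)^{++}$, which follows from the universal property of sheafification, since $\R$-bilinear morphisms into a sheaf depend only on the sheafification of each argument---one obtains the asserted isomorphism in $\Mod(\R_1\otimes\R_4^\op)$. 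For the $\shom$--$\otimes$ adjunction I would not argue on sections, because $\shom$ of sheaves is not the sectionwise $\Hom$; instead, evaluating both sides on an arbitrary $U\in\Ob(\SC_X)$ and using that $\shom$ and $\otimes$ are compatible with restriction to $U$, the claim reduces, for every $U$, to the global adjunction $\Hom_{\Mod(\R_2|_U)}({}_2\M_1|_U,\shom_{\R_3|_U}({}_3\M_2|_U,{}_3\M_4|_U))\simeq\Hom_{\Mod(\R_3|_U)}(({}_3\M_2\otimes_{\R_2}{}_2\M_1)|_U,{}_3\M_4|_U)$ for sheaves of modules on the site $U$; and this global adjunction is the composite of the identifications ``$\R_3$-linear morphisms out of ${}_3\M_2\otimes_{\R_2}{}_2\M_1$'' $=$ ``$\R_2$-balanced $\R_3$-linear families'' $=$ ``$\R_2$-linear morphisms into $\shom_{\R_3}({}_3\M_2,{}_3\M_4)$'', the first step being the universal property of the sheaf tensor product and the last the definition of $\shom_{\R_3}$. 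Naturality and $(\R_1\otimes\R_4^\op)$-linearity of the resulting isomorphism are then routine to check.

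The only genuinely non-formal ingredients are that sheafification must be known to commute with the presheaf tensor product, and that $\shom$ on a site is not computed on sections---so the hom--tensor adjunction must be obtained through universal properties, localized over every $U$, rather than by a naive sectionwise formula. Everything else is the bookkeeping of commuting actions, compatibility with restriction morphisms, and linearity in the auxiliary arguments, which follows the classical module-theoretic templates verbatim.
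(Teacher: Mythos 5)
The paper does not prove this proposition: it is imported wholesale from \cite[Rem.~18.2.6]{KS06} and used as a black box, so there is no in-paper argument to compare against. Your proof is the standard one and is correct. The bimodule structures (left $\R_2$-action on $\shom_{\R_1}(\M,\N)$ by precomposition with the right $\R_2$-action on $\M$, right $\R_3$-action by postcomposition; sectionwise tensor over $\R_2(U)$ followed by sheafification) are the intended constructions, and reducing all exactness claims through the exact, conservative forgetful functor to $\Mod(\Bbbk_X)$ — where the paper has already recorded left exactness of $\shom_{\R}$ and right exactness of $\otimes_{\R}$ — is legitimate. You also correctly isolated the only two points where a naive argument would break: that the sheaf tensor product is a sheafified presheaf tensor, so associativity (and, implicitly, right exactness on cokernels of sheaves) needs the comparison $(\F^{++}\otimes^{\mathrm{pre}}_{\R}\G)^{++}\simeq(\F\otimes^{\mathrm{pre}}_{\R}\G)^{++}$, which you justify via the universal property of sheafification applied to the sheaf $\shom(\G,\SH)$; and that $\shom$ is not the sectionwise $\Hom$ of sections, so the hom--tensor adjunction must be obtained by currying $\Hom_{\Mod(\R_3|_U)}$ over every $U$ (using that restriction commutes with $\shom$ and $\otimes$) rather than by a sectionwise formula. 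The remaining verifications (commutation of the two actions, compatibility with restriction, naturality and $(\R_1\otimes\R_4^{\op})$-linearity) are indeed routine, so I see no gap.
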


Let $f\colon X\to Y$ be a morphism of sites.
Then $f_\ast\R$ is a $\Bbbk_Y$-algebra for a $\Bbbk_X$-algebra $\R$, see \cite[Lem.\:18.3.1 (i)]{KS06}.
However, $f^{-1}\calS$ is not necessarily a ring for a $\Bbbk_Y$-algebra $\calS$.
If the morphism $f\colon X\to Y$ of sites associated to the functor ${}^t\!f\colon \SC_Y\to\SC_X$ is left exact,
that is, the functor ${}^t\!f$ is left exact, then the sheaf $f^{-1}\calS$ is a $\Bbbk_X$-algebra.
%see \cite[Lem.\:18.3.1 (ii)(a)]{KS06} for details.
Moreover, in this case,
the direct image and the inverse image functors $f_\ast, f^{-1}$
in the sense of sheaves of $\Bbbk$-modules induce the functors:
\begin{align*}
f_\ast&\colon\Mod(f^{-1}\calS)\to\Mod(\calS),\\
f^{-1}&\colon\Mod(\calS)\to\Mod(f^{-1}\calS).
\end{align*}
Note also that the direct image functor $f_\ast$ is left exact and the inverse image functor $f^{-1}$ is exact,
see \cite[Lem.\:18.3.1 (ii)]{KS06} for the details of these results.
Moreover, we have:
\begin{proposition}[{\cite[Lems.\:18.3.1 (ii) (c), 18.3.2]{KS06}}]
Let $f\colon X\to Y$ be a left exact morphism of sites and $\calS$ be a $\Bbbk_Y$-algebra.
Then we have
\begin{itemize}
\item[\rm (1)]
for any $\M\in\Mod(\calS^\op)$ and any $\N\in\Mod(\calS)$,
$$f^{-1}\left(\M\underset{\calS}{\otimes}\N\right)\simeq
f^{-1}\M\underset{f^{-1}\calS}{\otimes}f^{-1}\N,$$

\item[\rm (2)]
for any $\M\in\Mod(\calS)$ and any $\N\in\Mod(f^{-1}\calS)$,
\begin{align*}
f_\ast\shom_{f^{-1}\calS}(f^{-1}\M, \N) &\simeq
\shom_{\calS}(\M, f_\ast\N),\\
\Hom_{\Mod(f^{-1}\calS)}(f^{-1}\M, \N) &\simeq
\Hom_{\Mod(\calS)}(\M, f_\ast\N).
\end{align*}
In particular, the functor $f^{-1}$ is left adjoint to the functor $f_\ast$.
\end{itemize}
\end{proposition}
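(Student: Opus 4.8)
The plan is to deduce both parts from the compatibilities for sheaves of $\Bbbk$-modules on sites recalled above (that $f^{-1}$ is exact and $f_\ast$ left exact, Proposition \ref{prop2.12}; that sheafification is exact, Theorem \ref{thm2.9}; that $f^{-1}$ commutes with $\underset{\Bbbk}{\otimes}$; and that $f_\ast\shom_{\Bbbk_X}(f^{-1}(-),-)\simeq\shom_{\Bbbk_Y}(-,f_\ast-)$), by rewriting the $\calS$-linear operations as (co)kernels of morphisms between $\Bbbk$-linear ones. At the outset I would note that the left-exactness of $f$ is used precisely to guarantee that $f^{-1}\calS$ is a $\Bbbk_X$-algebra and that $f^{-1}$, $f_\ast$ restrict to functors between the module categories, so that every object below makes sense. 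For (1), recall first that $\M\underset{\calS}{\otimes}\N$ is by definition the sheafification of the presheaf $U\mapsto\M(U)\underset{\calS(U)}{\otimes}\N(U)$, and that for each $U$ this module is the cokernel of the difference of the pair $\M(U)\underset{\Bbbk}{\otimes}\calS(U)\underset{\Bbbk}{\otimes}\N(U)\rightrightarrows\M(U)\underset{\Bbbk}{\otimes}\N(U)$ given by the right $\calS$-action on $\M$ and the left $\calS$-action on $\N$. Passing to presheaves and sheafifying (using that $(\cdot)^{++}$ is exact and compatible with tensor products of sheaves) exhibits $\M\underset{\calS}{\otimes}\N$ as the cokernel of the difference of $\M\underset{\Bbbk_X}{\otimes}\calS\underset{\Bbbk_X}{\otimes}\N\rightrightarrows\M\underset{\Bbbk_X}{\otimes}\N$ in $\Mod(\Bbbk_X)$. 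Applying $f^{-1}$, which is exact and commutes with $\underset{\Bbbk}{\otimes}$, gives that $f^{-1}\big(\M\underset{\calS}{\otimes}\N\big)$ is the cokernel of the difference of $f^{-1}\M\underset{\Bbbk_X}{\otimes}f^{-1}\calS\underset{\Bbbk_X}{\otimes}f^{-1}\N\rightrightarrows f^{-1}\M\underset{\Bbbk_X}{\otimes}f^{-1}\N$; since the $f^{-1}\calS$-module structures on $f^{-1}\M$ and $f^{-1}\N$ are by construction $f^{-1}$ applied to the action morphisms (transported through the $\Bbbk$-module compatibility), this cokernel is exactly $f^{-1}\M\underset{f^{-1}\calS}{\otimes}f^{-1}\N$.

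For (2), I would use the dual description: for left modules $\M,\N$ over a $\Bbbk_X$-algebra $\R$, a $\Bbbk_X$-linear morphism $\M\to\N$ is $\R$-linear exactly when the two $\Bbbk_X$-linear morphisms $\R\underset{\Bbbk_X}{\otimes}\M\to\N$ it induces (precomposition with the action on $\M$, versus postcomposition with the action on $\N$) coincide, so that $\shom_{\R}(\M,\N)$ is the kernel of the difference of a natural pair $\shom_{\Bbbk_X}(\M,\N)\rightrightarrows\shom_{\Bbbk_X}(\R\underset{\Bbbk_X}{\otimes}\M,\N)$. Apply this upstairs with $\R=f^{-1}\calS$, then apply $f_\ast$, which is left exact and hence commutes with this kernel, and rewrite the two outer terms by means of $f_\ast\shom_{\Bbbk_X}(f^{-1}(-),-)\simeq\shom_{\Bbbk_Y}(-,f_\ast-)$ together with the $\Bbbk$-module compatibility $f^{-1}\calS\underset{\Bbbk_X}{\otimes}f^{-1}\M\simeq f^{-1}\big(\calS\underset{\Bbbk_Y}{\otimes}\M\big)$. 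This presents $f_\ast\shom_{f^{-1}\calS}(f^{-1}\M,\N)$ as the kernel of the difference of $\shom_{\Bbbk_Y}(\M,f_\ast\N)\rightrightarrows\shom_{\Bbbk_Y}(\calS\underset{\Bbbk_Y}{\otimes}\M,f_\ast\N)$, which is $\shom_{\calS}(\M,f_\ast\N)$, the $\calS$-module structure on $f_\ast\N$ being the canonical one induced through the unit $\calS\to f_\ast f^{-1}\calS$. The global $\Hom$ identity follows by taking sections over the terminal object of $\SC_Y$, which $f$ preserves because it is left exact, so that $\Gamma(Y;f_\ast(-))=\Gamma(X;-)$; and the final assertion is just this last isomorphism read as an adjunction, its naturality in $\M$ and $\N$ being inherited from that of the $\Bbbk$-module adjunction.

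The conceptual content here is slight, so the step I expect to be the main obstacle is purely the bookkeeping: checking that every identification is compatible with the module structures and with the two parallel arrows at each stage. Concretely, one must verify that $f^{-1}$ applied to the action morphism $\calS\underset{\Bbbk_Y}{\otimes}\M\to\M$ becomes, under the compatibility isomorphism, precisely the $f^{-1}\calS$-action on $f^{-1}\M$, and analogously that the two morphisms defining $\shom_{f^{-1}\calS}(f^{-1}\M,\N)$ are carried to the two morphisms defining $\shom_{\calS}(\M,f_\ast\N)$; granting these naturality checks, the chain of isomorphisms above assembles into the claimed statement.
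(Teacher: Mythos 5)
Your argument is correct. Note that the paper itself gives no proof of this proposition: it is quoted from Kashiwara--Schapira, \emph{Categories and Sheaves}, Lemmas 18.3.1 and 18.3.2, so the only comparison available is with that reference, and your proof is essentially the argument given there: present $\M\underset{\calS}{\otimes}\N$ as a cokernel, and $\shom_{f^{-1}\calS}(f^{-1}\M, \N)$ as a kernel, of pairs of morphisms between the corresponding $\Bbbk$-linear objects, and then invoke exactness of $f^{-1}$ and of sheafification, left exactness of $f_\ast$, the compatibility $f^{-1}(\cdot\otimes\cdot)\simeq f^{-1}(\cdot)\otimes f^{-1}(\cdot)$, and the $\Bbbk$-linear formula $f_\ast\shom_{\Bbbk_X}(f^{-1}(\cdot), \cdot)\simeq \shom_{\Bbbk_Y}(\cdot, f_\ast(\cdot))$. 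Two small points are worth flagging. In part (1) the intermediate tensor products of $\M$, $\calS$, $\N$ should be taken over $\Bbbk_Y$ rather than $\Bbbk_X$, since these sheaves live on $Y$; this is only a slip of notation. In passing from the sheaf-hom isomorphism to the $\Hom$-set isomorphism you evaluate at the terminal object and use $\Gamma(Y; f_\ast(\cdot))\simeq\Gamma(X;\cdot)$; this is legitimate in the paper's conventions, because sites are required to admit finite products (hence terminal objects) and a left exact ${}^t\!f$ preserves them, but it deserves an explicit word, since one could alternatively derive the adjunction directly from the presheaf-level adjunction for $f^{-1}_{\rm pre}$ combined with the sheafification adjunction, which works without any appeal to terminal objects. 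The remaining work is, as you acknowledge, the routine verification that the various identifications intertwine the two parallel arrows at each stage; granting that, the proof is complete.
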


%Remark that for any $U\in\SC_X$, we have functors:
%Moreover, we have operations for $\R$-modules as sheaves of $\Bbbk$-modules:
%\begin{align*}
%i_{U\ast}&\colon\Mod(\R|_U)\to\Mod(\R),\\
%i_U^{-1}&\colon\Mod(\R)\to\Mod(f^{-1}\R),\\
%i_{U!}&\colon\Mod(\R|_U)\to\Mod(\R).
%\end{align*}

At the end of this subsection, let us briefly recall the derived category
$\bfD^\ast(\Mod(\R))$ ($\ast = {\rm ub}, +, -, {\rm b}$)
of sheaves on a site. 
Let $\R$ be a $\Bbbk_X$-algebra which is flat as a $\Bbbk_X$-module.
We shall often write for short
$\bfD(\R)$ (resp.\,$\bfD^\ast(\R)$\ for $\ast = +, -, {\rm b}$)
instead of $\bfD^{\rm ub}(\Mod(\R))$ (resp.\,$\bfD^\ast(\Mod(\R))$ for $\ast = +, -, {\rm b}$).
For a left exact morphism $f\colon Z\to X$ of sites and $U\in\SC_X$,
we have (derived) functors
\begin{align*}
\bfR f_\ast&\colon\bfD(f^{-1}\R)\longrightarrow \bfD(\R),\\
f^{-1}&\colon\bfD(\R)\longrightarrow\bfD(f^{-1}\R),\\
(\cdot)\underset{\R}{\overset{\bfL}{\otimes}}(\cdot)&\colon
\bfD(\R^\op)\times \bfD(\R)\longrightarrow\bfD(\Bbbk_X),\\
\rhom_{\R}(\cdot, \cdot)&\colon
\bfD(\R)^\op\times \bfD(\R)\longrightarrow\bfD(\Bbbk_X),\\
\rHom_{\R}(\cdot, \cdot)&\colon
\bfD(\R)^\op\times \bfD(\R)\longrightarrow\bfD(\Bbbk),\\
(\cdot)_U&\colon\bfD(\R)\longrightarrow\bfD(\R),\\
\bfR\Gamma_U&\colon\bfD(\R)\longrightarrow\bfD(\R),\\
\bfR\Gamma(U;\ \cdot\ )&\colon\bfD(\R)\longrightarrow\bfD(\R(U)).
\end{align*}
Note that these functors have same properties of the case of classical sheaves.
We shall skip the explanation of it.
See \cite[\S\S18.4, 18.5, 18.6]{KS06} for the details.

\newpage
\subsection{Subanalytic Sheaves}
The notion of subanalytic sheaves are introduced by M.\:Kashiwara and P.\:Schapira in \cite{KS01}
to treat functions with growth conditions in the formalism of sheaves.
They are defined as sheaves on a subanalytic site and have examples such as
the subanalytic sheaf of tempered distributions,
the one of tempered $\SC^\infty$-functions, the one of Whitney $\SC^\infty$-functions.
In this subsection, 
let us briefly recall the notion of subanalytic sheaves.
Reference are made to \cite[\S 6]{KS01}, \cite{Pre08},
see also Subsection \ref{subsec2.2}.

Throughout this subsection, we assume that $\Bbbk$ is a field.
Let $M$ be a real analytic manifold
and denote by $\Op^\sub_M$ the category of subanalytic open subsets of $M$.
We can endow $\Op^\sub_M$ with the following Grothendieck topology:
a subset $S\subset \Ob((\Op_M^\sub)_U)$ is a covering of $U\in\Op_M^{\sub}$ 
if for any compact subset $K$ of $M$ there exists a finite subset $S_0\subset S$ of $S$
such that $U\cap K = \left(\cup_{V\in S_0}V\right)\cap K$.
We denote by $M^{\sub}$ such a site and call it the subanalytic site.

A subanalytic sheaf (resp.\,presheaf) of $\Bbbk$-modules on $M$ is
a sheaf (resp.\,presheaf) of $\Bbbk$-modules on the subanalytic site $M^{\sub}$.
We shall write $\Mod(\Bbbk_{M}^\sub)$ (resp.\,${\rm Psh}(\Bbbk_{M}^\sub$))
instead of $\Mod(\Bbbk_{M^\sub})$ (resp.\,${\rm Psh}(\Bbbk_{M^\sub})$).
For simplicity, we shall call subanalytic sheaves (resp.\,presheaves) of $\Bbbk$-modules
subanalytic sheaves (resp.\,presheaves).

Note that the forgetful functor induces an equivalence of categories (see e.g. \cite[Rem.\:1.1.2]{Pre08})
$$\Mod(\Bbbk_{M^{\sub}})\simto \Mod(\Bbbk_{M^{\sub,\,c}}),$$
where the site $M^{\sub,\,c}$ is the category $\Op_M^{\sub,\,c}$ of relatively compact subanalytic open subsets
with the following Grothendieck topology:
a subset $S\subset \Ob((\Op_M^\sub)_U)$ is a covering of $U\in\Op_M^{\sub,\,c}$ 
if for any compact subset $K$ of $M$ there exists a finite subset $S_0\subset S$ of $S$
such that $U\cap K = \left(\cup_{V\in S_0}V\right)\cap K$.
Hence we have:
\begin{proposition}[{\cite[Prop.\:1.1.4]{Pre08}}]
Let $M$ be a real analytic manifold and $\SF$ a presheaf on $M^{\sub,\,c}$.
If $\SF$ satisfies the following conditions then it is a sheaf on $M^{\sub,\,c}$,
that is, it is an object of $\Mod(\Bbbk_{M^{\sub,\,c}})\ \left(\xleftarrow{\ \sim\ }\Mod(\Bbbk_M^\sub)\right)$
\begin{itemize}
\item[\rm(1)]
$\SF(\emptyset)=0$,

\item[\rm(2)]
for any relatively compact subanalytic open subsets $U, V$,
the sequence
\[\xymatrix@M=5pt@R=0pt@C=20pt{
0\ar@{->}[r] & \SF(U\cup V)\ar@{->}[r]&\SF(U)\oplus\SF(V)\ar@{->}[r]&\SF(U\cap V)\\
{}& s \ar@{|->}[r] & (s|_U, s|_V) & {}\\
{}& {}  & (t, u) \ar@{|->}[r] & t|_{U\cap V}- u|_{U\cap V}
}\]
is exact.
\end{itemize}
\end{proposition}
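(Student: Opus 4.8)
The plan is to verify the two conditions of \cite[Prop.\:1.1.4]{Pre08} are sufficient for a presheaf on $M^{\sub,\,c}$ to be a sheaf, by reducing the general covering condition of the subanalytic site to the special case of two-element coverings. First I would recall that the Grothendieck topology on $M^{\sub,\,c}$ declares $S \subset \Ob((\Op_M^\sub)_U)$ to be a covering of $U$ precisely when, for every compact $K \subset M$, some \emph{finite} subfamily $S_0 \subset S$ satisfies $U \cap K = \left(\bigcup_{V \in S_0} V\right) \cap K$. Since the sheaf condition for $\SF$ at a covering $S$ only involves the diagram $\SF(U) \to \prod_{V \in S}\SF(V) \rightrightarrows \prod_{V',V'' \in S}\SF(V' \times_U V'')$, and since a section is determined and patched by its behaviour on each compact $K$ (here one uses that $M$, being a real analytic manifold, is locally compact and $\sigma$-compact, so compacts are cofinal in a suitable sense), the key reduction is that it suffices to check the sheaf axiom for \emph{finite} coverings by relatively compact subanalytic open subsets.

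The main step is then an induction on the number of elements of a finite covering $\{V_1, \dots, V_n\}$ of $U$, where $U = V_1 \cup \cdots \cup V_n$. The base case $n = 1$ is trivial (and $n=0$ forces $U = \emptyset$, handled by condition (1): $\SF(\emptyset) = 0$). For the inductive step, write $U = (V_1 \cup \cdots \cup V_{n-1}) \cup V_n =: W \cup V_n$, noting $W$ and $V_n$ are again relatively compact subanalytic open subsets. Condition (2) applied to the pair $(W, V_n)$ gives exactness of
\[
0 \to \SF(U) \to \SF(W) \oplus \SF(V_n) \to \SF(W \cap V_n).
\]
Meanwhile the inductive hypothesis applies to the $(n-1)$-element covering $\{V_i\}_{i<n}$ of $W$, and also to the covering $\{V_i \cap V_n\}_{i<n}$ of $W \cap V_n$; here one must check that $\{V_i \cap V_n\}_{i<n}$ is indeed a covering of $W \cap V_n$ in the site, which is immediate since intersection distributes over union. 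A diagram chase (the standard Mayer--Vietoris-style argument that patches the sheaf condition for a union of two covers) then yields the sheaf condition for $\{V_i\}_{i \le n}$ on $U$: a compatible family on the $V_i$ restricts to compatible families on $W$ (glued via the hypothesis to a section on $W$) and on $V_n$, agreeing on $W \cap V_n$ by checking against the cover $\{V_i \cap V_n\}$, hence gluing to a unique section on $U$ by the displayed exact sequence.

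The hard part will be the careful passage from arbitrary coverings of the subanalytic site to finite ones — that is, justifying that separatedness and the gluing property need only be tested on finite relatively compact coverings. One has to argue that for a general covering $S$ of $U$ and a compatible family $(s_V)_{V \in S}$, one can use the compacts $K$ exhausting $M$ together with the finite subcovers $S_0(K) \preceq S$ they provide to produce local sections on the relatively compact opens $U \cap \Int(K')$ for $K \subset \Int(K')$, then glue these across the exhaustion using the $n=2$ case repeatedly (or a colimit argument), invoking uniqueness to see the glued section is independent of choices. This bookkeeping with the exhaustion and the interplay between "covering on $K$" and "honest covering of a relatively compact open" is where the real content lies; once finite coverings are reduced to the two-element case, the rest is the routine Mayer--Vietoris induction sketched above, and conditions (1) and (2) are exactly what that induction consumes.
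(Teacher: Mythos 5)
Your core induction is the same as the paper's: establish exactness of
\[
0\to\SF\Bigl(\bigcup_{i=1}^nU_i\Bigr)\to\bigoplus_{i=1}^n\SF(U_i)\to\bigoplus_{i<j}\SF(U_i\cap U_j)
\]
for finite families by induction on $n$, splitting off $V_n$ from $W=V_1\cup\dots\cup V_{n-1}$, using condition (2) for the pair $(W,V_n)$, the inductive hypothesis for $\{V_i\}_{i<n}$ on $W$, and the injectivity of $\SF(W\cap V_n)\to\bigoplus_{i<n}\SF(V_i\cap V_n)$. That part is correct and is exactly the diagram chase in the paper.

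The genuine gap is in what you call ``the hard part,'' the reduction from arbitrary coverings to finite ones. On the site $M^{\sub,\,c}$ every object $U$ is relatively compact, so you may simply take $K=\var{U}$ in the definition of the Grothendieck topology: the finite subfamily $S_0\subset S$ with $U\cap K=\bigl(\bigcup_{V\in S_0}V\bigr)\cap K$ then satisfies $U=\bigcup_{V\in S_0}V$ outright, because each $V\subset U\subset K$. No exhaustion of $M$ by compacts and no gluing ``across the exhaustion'' is needed, and the argument you sketch in its place does not work as stated: the opens $U\cap\mathrm{Int}(K')$ need not be subanalytic, hence need not be objects of the site, and gluing sections of a mere presheaf over an increasing infinite union is not licensed by anything you have established (it would presuppose the sheaf property you are trying to prove). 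What you do still have to do, and what your write-up only gestures at via ``independence of choices,'' is to handle the members of $S$ outside the finite subcover: having produced $s\in\SF(U)$ with $s|_{U_i}=t_{U_i}$ for the finitely many $U_i$, you must check $s|_V=t_V$ for every other $V\in S$. This follows by applying the already-proved injectivity of $\SF(V)\to\bigoplus_i\SF(V\cap U_i)$ to the finite cover $\{V\cap U_i\}_i$ of $V$, together with the compatibility of the family $(t_V)_{V\in S}$; this step should be made explicit rather than absorbed into the exhaustion bookkeeping.
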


Clearly, we have the natural morphism $\rho_M\colon M\to M^\sub$ of sites.
Hence, there exist functors
\begin{align*}
\rho_{M\ast}&\colon\Mod(\Bbbk_M)\to\Mod(\Bbbk_M^\sub),\\
\rho_{M}^{-1}&\colon\Mod(\Bbbk_M^\sub)\to\Mod(\Bbbk_M).
\end{align*}
We have already seen in Subsection \ref{subsec2.2},
a pair $(\rho_{M}^{-1}, \rho_{M\ast})$ is an adjoint pair, 
the functor $\rho_{M\ast}$ is left exact and the functor $\rho_{M}^{-1}$ is exact.
Note that there exists a canonical isomorphism $\rho_{M}^{-1}\circ\rho_{M\ast}\simto \id$ of functors.
Hence the functor $\rho_{M\ast}$ is a fully faithful functor, see \cite[Prop.\:1.1.8]{Pre08}.
Note that the restriction of $\rho_{M\ast}$ to the category of $\RR$-constructible sheaves is exact,
see \cite[Prop.\:1.1.9]{Pre08}.
Let us denote by $\rho_{M\ast}^{\RR-c}$ the exact functor.
Moreover there exists a left adjoint functor $\rho_{M!}$ of the functor $\rho_{M}^{-1}$
which is defined  by the following:
for any $\SF\in\Mod(\Bbbk_M)$, $\rho_{M!}\SF$ is given
by the sheaf associated to the presheaf $U\mapsto \Gamma\left(\var{U}; \SF|_{\var{U}}\right)$,
where $\var{U}$ is the closure of $U$ in $M$.
See \cite[Prop.\:1.1.13 (i)]{Pre08} for the details.
Note that the functor $\rho_{M!}$ is exact and fully faithful,
and there exist the canonical isomorphism $\id\simto\rho_{M}^{-1}\circ\rho_{M!}$ of functors 
and an isomorphism
$$\shom^\sub(\rho_{M!}\SF, \SG)\simeq\shom(\SF, \rho_{M}^{-1}\SG)$$
 in $\Mod(\Bbbk_M)$ for any $\SF\in\Mod(\Bbbk_M)$ and any $\SG\in\Mod(\Bbbk_M^\sub)$,
 %where the functor $\shom^\sub$ is defined at the following.
see \cite[Props.\:1.1.14, 1.1.5]{Pre08}.
Here, the functor $\shom^\sub$ is defined as below.

Let $f\colon M\to N$ be a morphism of real analytic manifolds.
Then it induces the morphism $f\colon M^\sub\to N^\sub$ of subanalytic sites from $M^\sub$ to $N^\sub$
which is denoted by the same symbol.
We have already seen in Subsection \ref{subsec2.2},
there exist many functors
\begin{align*}
(\cdot)^{++}&\colon{\rm Psh}(\Bbbk_M^\sub)\to\Mod(\Bbbk_M^\sub),\\
f_\ast&\colon\Mod(\Bbbk_M^\sub)\to \Mod(\Bbbk_N^\sub),\\
f^{-1}&\colon\Mod(\Bbbk_N^\sub)\to\Mod(\Bbbk_M^\sub),\\
(\cdot)\otimes(\cdot)&\colon
\Mod(\Bbbk_M^\sub)\times \Mod(\Bbbk_M^\sub)\to\Mod(\Bbbk_M^\sub),\\
\ihom^\sub(\cdot, \cdot)&\colon
\Mod(\Bbbk_M^\sub)^\op\times \Mod(\Bbbk_M^\sub)\to\Mod(\Bbbk_M^\sub),\\
\Hom_{\Mod(\Bbbk_M^\sub)}(\cdot, \cdot)&\colon
\Mod(\Bbbk_M^\sub)^\op\times \Mod(\Bbbk_M^\sub)\to\Mod(\Bbbk),\\
(\cdot)_U&\colon\Mod(\Bbbk_M^\sub)\to\Mod(\Bbbk_M^\sub),\\
\Gamma_U&\colon\Mod(\Bbbk_M^\sub)\to\Mod(\Bbbk_M^\sub),\\
\Gamma(U;\ \cdot\ )&\colon\Mod(\Bbbk_M^\sub)\to\Mod(\Bbbk).
\end{align*}
In this paper, we shall write $\ihom^\sub$ instead of the internal hom functor on $\Mod(\Bbbk_M^\sub)$,
that is, $\shom_{\Bbbk_{M^\sub}}(\cdot, \cdot)$ in the notion of Subsection \ref{subsec2.2}.
Let us set $\shom^\sub := \rho_{M}^{-1}\circ\ihom^\sub$.
Moreover, we have a functor which is called the proper direct image functor: 
$$f_{!!}\colon\Mod(\Bbbk_M^\sub)\to \Mod(\Bbbk_N^\sub),
\hspace{7pt}
f_{!!}\SF := \varinjlim_Uf_\ast\SF_U\simeq \varinjlim_Kf_\ast\Gamma_K\SF,$$
where $U$ ranges through the family of relatively compact open subanalytic subsets and
$K$ ranges through the family of subanalytic compact subsets. 
Note that the functor $f_{!!}$ is left exact and
if $f\colon M\to N$ is proper then we have $f_\ast\simeq f_{!!}$.
Moreover, these functors have many properties as similar to classical sheaves, see \cite{Pre08} for the details.
We shall skip the explanation of it.

We shall write $\bfD^\ast(\Bbbk_M^\sub)$ ($\ast = {\rm ub}, +, -, {\rm b}$)
instead of $\bfD^\ast(\Bbbk_{M^\sub})$.
Then there exist (derived) functors of them,
we have already seen in Subsection \ref{subsec2.2} for several ones.
Moreover, the functors below are well defined,
see \cite[Cor.\:2.3.3]{Pre08} for the functors $\bfR f_\ast, \bfR f_{!!}$:
\begin{align*}
\bfR\rho_{M\ast}&\colon\BDC(\Bbbk_M)\hookrightarrow\BDC(\Bbbk_M^\sub),\\
\rho_{M\ast}^{\RR-c}&\colon\BDC_{\RR-c}(\Bbbk_M)\hookrightarrow\BDC(\Bbbk_M^\sub),\\
\rho_{M}^{-1}&\colon\BDC(\Bbbk_M^\sub)\longrightarrow\BDC(\Bbbk_M),\\
\rho_{M!}&\colon\BDC(\Bbbk_M)\hookrightarrow\BDC(\Bbbk_M^\sub),\\
(\cdot)^{++}&\colon\BDC({\rm Psh}(\Bbbk_M^\sub))\longrightarrow\BDC(\Bbbk_M^\sub),\\
\bfR f_\ast&\colon\BDC(\Bbbk_M^\sub)\longrightarrow \BDC(\Bbbk_N^\sub),\\
f^{-1}&\colon\BDC(\Bbbk_N^\sub)\longrightarrow\BDC(\Bbbk_M^\sub),\\
\bfR f_{!!}&\colon\BDC(\Bbbk_M^\sub)\longrightarrow \BDC(\Bbbk_N^\sub),\\
(\cdot)\otimes(\cdot)&\colon
\BDC(\Bbbk_M^\sub)\times \BDC(\Bbbk_M^\sub)\longrightarrow\BDC(\Bbbk_M^\sub),\\
\rihom^\sub(\cdot, \cdot)&\colon
\BDC(\Bbbk_M^\sub)^\op\times \BDC(\Bbbk_M^\sub)\longrightarrow\bfD^+(\Bbbk_M^\sub),\\
\rhom^\sub(\cdot, \cdot)&\colon
\BDC(\Bbbk_M^\sub)^\op\times \BDC(\Bbbk_M^\sub)\longrightarrow\bfD^+(\Bbbk_M),\\
\rHom(\cdot, \cdot)&\colon
\BDC(\Bbbk_M^\sub)^\op\times \BDC(\Bbbk_M^\sub)\longrightarrow\bfD^+(\Bbbk),\\
(\cdot)_U&\colon\BDC(\Bbbk_M^\sub)\longrightarrow\BDC(\Bbbk_M^\sub),\\
\bfR\Gamma_U&\colon\BDC(\Bbbk_M^\sub)\longrightarrow\BDC(\Bbbk_M^\sub),\\
\bfR\Gamma(U;\ \cdot\ )&\colon\BDC(\Bbbk_M^\sub)\longrightarrow\BDC(\Bbbk),
\end{align*}
where $U$ is a subanalytic open subset of $M$.
Note that the functor $\bfR f_{!!}$ admits a right adjoint functor (see \cite[\S2.4]{Pre08} for the details):
$$f^!\colon \BDC(\Bbbk_N^\sub)\longrightarrow\BDC(\Bbbk_M^\sub).$$

Moreover, these functors have many properties as similar to classical sheaves.
In this subsection, we just recall the following properties,
see \cite{Pre08} for the details.

\begin{theorem}\label{thm2.25}
Let $f\colon M\to N$ be a morphism of real analytic manifolds
and $\SF, \SF_1, \SF_2\in\BDC(\Bbbk_M^\sub),
\SG, \SG_1, \SG_2\in\BDC(\Bbbk_N^\sub),
\SK\in\BDC(\Bbbk_M), \SL\in\BDC(\Bbbk_N), \mathcal{J}\in\BDC_{\RR-c}(\Bbbk_M)$.
\begin{itemize}
\setlength{\itemsep}{-1pt}
\item[\rm (1)]
$\bfR \rho_{M\ast}\shom(\rho_{M}^{-1}\SF, \SK)
\simeq
\rihom^\sub(\SF, \bfR \rho_{M\ast}\SK),\\[5pt]
\rhom^\sub(\rho_{M!}\SK, \SF)
\simeq
\rhom(\SK, \rho_{M}^{-1}\SF),$

\item[\rm (2)]
$\rihom^\sub(\bfR f_{!!}\SF, \SG)
\simeq
\bfR f_\ast\rihom^\sub(\SF, f^!\SG),\\[5pt]
\bfR f_\ast\rihom^\sub(f^{-1}\SG, \SF)
\simeq
\rihom^\sub(\SG, \bfR f_\ast\SF),$

\item[\rm (3)]
$\rihom^\sub(\SF_1\otimes\SF_2, \SF)
\simeq
\rihom^\sub\left(\SF_1, \rihom^\sub(\SF_2, \SF)\right),$

\item[\rm(4)]
$f^{-1}(\SF_1\otimes\SF_2)\simeq f^{-1}\SF_1\otimes f^{-1}\SF_2,\\[5pt]
\bfR f_{!!}\left(\SF\otimes f^{-1}\SG\right)\simeq \bfR f_{!!}\SF\otimes \SG,$

\item[\rm (5)]
$f^!\rihom^\sub(\SG_1, \SG_2)\simeq\rihom^\sub(f^{-1}\SG_1, f^!\SG_2),$

\item[\rm (6)]
for a cartesian diagram 
\[\xymatrix@M=5pt@R=20pt@C=40pt{
M'^{\,\sub}\ar@{->}[r]^-{f'}\ar@{->}[d]_-{g'} & N'^{\,\sub}\ar@{->}[d]^-{g}\\
M^\sub\ar@{->}[r]_-{f}  & N^\sub}\]
we have $g^{-1}f_{!!}\SF\simeq f'_{!!}g'^{-1}\SF,\hspace{3pt} g^{!}f_{\ast}\SF\simeq f'_{\ast}g'^{!}\SF$,

\item[\rm (7)]
$f^!(\SG\otimes\rho_{N!}\SL)\simeq f^!\SG\otimes \rho_{M!}f^{-1}\SL$,

\item[\rm (8)]
$\rihom^\sub(\mathcal{J}, \SF)\otimes\rho_{M!}\SK\simeq
\rihom^\sub(\mathcal{J}, \SF\otimes \rho_{M!}\SK)$.
\end{itemize}
\end{theorem}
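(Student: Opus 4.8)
The eight isomorphisms are the derived-category upgrades of underived adjunction and compatibility statements for subanalytic sheaves, and almost all of them already appear in Prelli's work \cite{Pre08}; the proof is a compilation together with a few formal derivations. My plan is to split the list into three groups. The first group consists of those isomorphisms that are purely formal consequences of the site-theoretic machinery recalled in \S\ref{subsec2.2} and of \cite[Props.\,2.4.2, 2.4.3]{KS01}: for each one I would replace the complexes occurring in it by resolutions simultaneously adapted to the functors involved (injective resolutions for the left exact functors $\rihom^\sub$, $\bfR f_\ast$, $\bfR\rho_{M\ast}$; flat, respectively $\bfR f_{!!}$-adapted, resolutions for $\otimes$ and $\bfR f_{!!}$), which is possible because $\Mod(\Bbbk_{M^\sub})$ is a Grothendieck category and the functors $f_{!!}$, $\rho_{M!}$ are exact, and thereby reduce to the corresponding underived identity. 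The second group consists of the statements that belong genuinely to the six-operation formalism on $M^\sub$, for which I would simply quote the relevant results of \cite{Pre08}. The third group, namely items (5), (7) and (8), I would deduce from the first two groups by Yoneda-type arguments.

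Concretely, the first isomorphism in (1) is the derived form of $\ihom^\sub(\SF,\rho_{M\ast}\SK)\simeq\rho_{M\ast}\shom(\rho_M^{-1}\SF,\SK)$, which is the case $f=\rho_M$ of the internal-hom projection formula of \cite[Prop.\,2.4.2]{KS01}; the second isomorphism in (1) is the derived form of $\shom^\sub(\rho_{M!}\SK,\SF)\simeq\shom(\SK,\rho_M^{-1}\SF)$ recalled in the present subsection (see \cite[\S1.1]{Pre08}). Likewise the second isomorphism in (2), item (3) and the first isomorphism in (4) are the derived forms of the isomorphisms of \cite[Props.\,2.4.2, 2.4.3]{KS01} applied to $f$ (and to $\rho_M$). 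The first isomorphism in (2), the projection formula in (4), and the two base-change isomorphisms in (6) are exactly the points where the specific geometry of the subanalytic site is used — the finiteness of coverings on compact sets, the construction of $f^!$ as the right adjoint of $\bfR f_{!!}$, and proper base change — and for these I would refer to \cite[\S\S2.3, 2.4]{Pre08}. Item (5) then follows formally: applying $\bfR f_\ast\rihom^\sub(\SF,-)$ to each of the two candidate objects and using the $(\bfR f_{!!},f^!)$-adjunction form of (2), the tensor-hom adjunction (3) and the projection formula (4), one obtains $\rihom^\sub(\bfR f_{!!}\SF\otimes\SG_1,\SG_2)$ in both cases, naturally in $\SF$; since $\bfR\Gamma\bigl(N;\bfR f_\ast\rihom^\sub(\SF,-)\bigr)\simeq\rHom(\SF,-)$ and $\SF$ ranges in particular over the generators $(\Bbbk_{M^\sub})_U$, $U$ a subanalytic open subset, this identifies the two objects.

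For (7) and (8) I would argue the same way, starting from the compatibility of $\rho_{M!}$ with inverse image, $f^{-1}\rho_{N!}\SL\simeq\rho_{M!}f^{-1}\SL$, so that (7) becomes a projection formula $f^!(\SG\otimes\SH)\simeq f^!\SG\otimes f^{-1}\SH$ for $\SH$ of the form $\rho_{N!}\SL$, which one extracts from (5) together with (1); and (8) is a projection formula for the functor $\rihom^\sub(\mathcal J,-)$ with $\mathcal J$ an $\RR$-constructible sheaf, the point being that $\RR$-constructibility makes $\rho^{\RR-c}_{M\ast}\mathcal J$ behave as the image of a dualizable object, so that $\rihom^\sub(\mathcal J,-)$ commutes with the filtered colimit built into $(\cdot)\otimes\rho_{M!}\SK$ (again one checks the statement after $\bfR\Gamma$ on subanalytic opens). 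The step I expect to be the main obstacle is not any single isomorphism but the bookkeeping that makes these reductions rigorous: choosing resolutions adapted at once to the several functors occurring in (5) and (7); keeping control of the boundedness of all complexes (on the subcategories at hand the subanalytic site has finite cohomological dimension, which is what keeps the derived functors between the bounded derived categories); and justifying the ``test against all $\SF$'' step, i.e.\ that $\rHom(\SF,-)$ for $\SF$ running over the $(\Bbbk_{M^\sub})_U$ detects isomorphisms. Once these points are settled, each of (1)--(8) reduces either to a result of \cite{Pre08} or to a one-line manipulation of adjunctions.
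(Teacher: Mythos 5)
You should first be aware that the paper does not actually prove this theorem: it is introduced with the sentence ``we just recall the following properties, see \cite{Pre08} for the details,'' so every item is simply quoted from Prelli's construction of the six Grothendieck operations on the subanalytic site. Your proposal therefore does strictly more than the text, and its architecture is sound: items (1), the second half of (2), (3) and the first half of (4) are indeed the derived upgrades of the underived adjunction identities of \cite[Props.~2.4.2, 2.4.3]{KS01} (applied to $f$ and to $\rho_M$), obtained by choosing resolutions adapted to the functors involved, which is legitimate since $\Mod(\Bbbk_{M^\sub})$ is a Grothendieck category; the first half of (2), the projection formula in (4) and the base change in (6) are exactly the points where the subanalytic geometry enters and must be quoted from \cite[\S\S 2.3--2.4]{Pre08}; and your Yoneda derivation of (5) is correct, because both sides carry a canonical comparison morphism and testing with $\rHom\bigl((\Bbbk_{M^\sub})_U,-\bigr)$ for $U$ running over subanalytic open subsets detects isomorphisms. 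The one place where your sketch falls short of a proof is (7) and (8): ``extracting (7) from (5) together with (1)'' is not an argument, and the commutation in (8) of $\rihom^\sub(\mathcal J,-)$ with the filtrant inductive limit hidden in $-\otimes\rho_{M!}\SK$ is precisely the content of the theorem, not a consequence of dualizability in any formal sense. In \cite{Pre08} (and in the ind-sheaf analogue in \cite{KS01}) both statements are proved by a d\'evissage reducing $\SL$, respectively $\mathcal J$, to constant sheaves on locally closed subanalytic subsets, for which $\RR$-constructibility of $\mathcal J$ is used in an essential, non-formal way. Since you already quote \cite{Pre08} for the geometric items, the cleanest repair --- and what the paper itself does --- is to quote it for (7) and (8) as well, or else to supply the d\'evissage explicitly.
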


Let us summarize the commutativity of the various functors in the table below. 
Here, $``\circ"$ means that the functors commute,
and $``\times"$ they do not.
\begin{table}[h]
\begin{equation*}
   \begin{tabular}{l||c|c|c|c|c|c|c}
    {} & $\otimes$ & $f^{-1}$ & $\bfR f_\ast$ & $f^!$ & $\bfR f_{!!}$\\ \hline \hline 
    $\overset{}{\underset{}{\bfR\rho_{\ast}}}$ & $\times$ & $\times$ & $\circ$ & $\circ$ & $\times$ \\ \hline
$\overset{}{\underset{}{\rho_{\ast}^{\RR-c}}}$ & $\circ$ & $\circ$ & $\circ$ & $\circ$ & $\times$ \\ \hline
    $\overset{}{\underset{}{\rho^{-1}}}$ & $\circ$ & $\circ$ & $\circ$ & $\times$ & $\circ$ \\ \hline
    $\overset{}{\underset{}{\rho_{!}}}$ & $\circ$ & $\circ$ & $\times$  
& $\times$ & $\times$ \\ \hline 
   \end{tabular}
   \end{equation*}
\end{table}

Recall that we assume that $\Bbbk$ is a field in this subsection.
See Subsection \ref{subsec2.2} for the details of $\Bbbk_{M^\sub}$-algebras and modules over them.
See also \cite[\S\S 3.1, 3.2]{Pre08}.
We just remark that we have the following facts by the definitions.
\begin{remark}\label{rem2.26}
\begin{itemize}
\item[(1)]
Let $\R$ be a subanalytic sheaf.
Then, the following two conditions are equivalent:
\begin{itemize}
\item[(i)]
a subanalytic sheaf $\R$ is a $\Bbbk_{M^\sub}$-algebra,
\item[(ii)]
there exist morphisms
$\mu_{\R}\colon\R\otimes\R\to\R,\
\varepsilon_\R\colon \Bbbk_{M^\sub}\to\R$
 of subanalytic sheaves
 such that the following diagrams are commutative:
 \[\hspace{-27pt}
\xymatrix@M=5pt@R=20pt@C=40pt{
\R\ar@{->}[r]^-\sim\ar@{->}[d]_-{\rm id_{\R}} & \Bbbk_{M^\sub}\otimes\R\ar@{->}[d]^-{\varepsilon_{\R}\otimes\R}\\
\R & \R\otimes\R\ar@{->}[l]^-{\mu_{\R}},
}\hspace{11pt}
\xymatrix@M=5pt@R=20pt@C=40pt{
\R\ar@{->}[r]^-\sim\ar@{->}[d]_-{\rm id_{\R}} & \R\otimes\Bbbk_{M^\sub}\ar@{->}[d]^-{\R\otimes\varepsilon_{\R}}\\
\R & \R\otimes\R\ar@{->}[l]^-{\mu_{\R}}
},\hspace{11pt}
\xymatrix@M=5pt@R=20pt@C=40pt{
\R\otimes\R\otimes\R\ar@{->}[r]^-{\mu_\R\otimes\R}\ar@{->}[d]_-{\R\otimes\mu_\R} &
\R\otimes\R\ar@{->}[d]^-{\mu_{\R}}\\
\R\otimes\R \ar@{->}[r]_-{\mu_{\R}} & \R.
}\]
\end{itemize}
\item[(2)]
Let $\R$ be a $\Bbbk_{M^\sub}$-algebra and $\SM$ a subanalytic sheaf.
Then the following two conditions are equivalent:
\begin{itemize}
\item[(i)]
a subanalytic sheaf $\SM$ is a $\R$-module,
\item[(ii)]
there exists a morphism
$\mu_{\SM}\colon\R\otimes\SM\to\SM$
 of subanalytic sheaves
 such that the following diagrams are commutative:
 \[\hspace{-27pt}
\xymatrix@M=5pt@R=20pt@C=40pt{
\SM\ar@{->}[r]^-\sim\ar@{->}[d]_-{\rm id_{\SM}} & \Bbbk_{M^\sub}\otimes\SM\ar@{->}[d]^-{\varepsilon_{\SA}\otimes\SM}\\
\SM & \R\otimes\SM\ar@{->}[l]^-{\mu_{\SM}},
}\hspace{11pt}
\xymatrix@M=5pt@R=20pt@C=40pt{
\R\otimes\R\otimes\SM\ar@{->}[r]^-{\mu_\R\otimes\SM}\ar@{->}[d]_-{\R\otimes\mu_\SM} &
\R\otimes\SM\ar@{->}[d]^-{\mu_{\SM}}\\
\R\otimes\SM \ar@{->}[r]_-{\mu_{\SM}} & \SM.
}\]
\end{itemize}

\item[(3)]
Let $\SA$ be a (classical) sheaf of $\Bbbk$-algebras on $M$.
Then, a subanalytic sheaf $\rho_{M!}\SA$ is $\Bbbk_{M^\sub}$-algebra
and the following functors are well-defined:
\begin{align*}
\rho_{M!}\colon\Mod(\SA)\to \Mod(\rho_{M!}\SA),\\
\rho_{M}^{-1}\colon\Mod(\rho_{M!}\SA)\to\Mod(\SA).
\end{align*}
\end{itemize}
\end{remark}

\subsection{Ind-Sheaves}
The theory of (classical) sheaves is not well suited to the study of various objects in Analysis
which are not defined by local properties, such as holomorphic functions with tempered growth. 
The notion of ind-sheaves was introduced by M.\:Kashiwara and P.\:Schapira in \cite{KS01}
to treat functions with such as growth conditions in the formalism of sheaves.
In this subsection,
we shall briefly recall the notion of ind-sheaves.
References are made to \cite{KS01, KS06}. 

Let $\Bbbk$ be a field and $M$ a good topological space,
that is, a topological space which is locally compact, 
Hausdorff, countable at infinity and has finite soft dimension. 
We denote by $\Mod^c(\Bbbk_M)$ the category of sheaves 
of $\Bbbk$-vector spaces on $M$ with compact support.
An ind-sheaf of $\Bbbk$-vector spaces on $M$ is an ind-object of $\Mod^c(\Bbbk_M)$,
that is, inductive limit 
$$\displaystyle``\varinjlim_{i\in I}"\SF_i := \varinjlim_{i\in I}\Hom_{\Mod^c(\Bbbk_M)}(\ \cdot\ ,\ \SF_i)$$
 of a small filtrant inductive system $\{\SF_i\}_{i\in I}$ in $\Mod^c(\Bbbk_M)$.
 We call an ind-sheaf of $\Bbbk$-vector spaces an ind-sheaf, for simplicity.
 Let us denote $\I\Bbbk_M$ the category of ind-sheaves of $\Bbbk$-vector spaces on $M$.
 Note that it is abelian.
 Note also that there exists a natural exact embedding $\iota_M : \Mod(\Bbbk_M)\to\I\Bbbk_M$ 
of categories. We sometimes omit it.
It has an exact left adjoint $\alpha_M$, 
that has in turn an exact fully faithful
left adjoint functor $\beta_M$: 
 \[\xymatrix@C=60pt{\Mod(\Bbbk_M)  \ar@<1.0ex>[r]^-{\iota_{M}} 
 \ar@<-1.0ex>[r]_- {\beta_{M}} & \I\Bbbk_M 
\ar@<0.0ex>[l]|-{\alpha_{M}}}.\]
The category $\I\Bbbk_M$ does not have enough injectives. 
Nevertheless, we can construct the derived category $\BDC(\I\Bbbk_M)$ 
for ind-sheaves and the Grothendieck six operations among them. 
We denote by $\otimes$ and $\rihom$ the operations 
of tensor product and internal hom respectively. 
If $f\colon M\to N$ be a continuous map, we denote 
by $f^{-1}, \rmR f_\ast, f^!$ and $\rmR f_{!!}$ the 
operations of inverse image,
direct image, proper inverse image and proper direct image,
respectively. 
We set also $\rhom := \alpha_M\circ\rihom$. 
We thus have (derived) functors\\
\begin{minipage}{61mm}
\begin{align*}
\iota_M &: \BDC(\Bbbk_M)\longrightarrow \BDC(\I\Bbbk_M),\\
\alpha_M &: \BDC(\I\Bbbk_M)\longrightarrow  \BDC(\Bbbk_M),\\
\beta_M &: \BDC(\Bbbk_M)\longrightarrow  \BDC(\I\Bbbk_M),\\
\\
{}_Z(\cdot)&\colon\BDC(\I\Bbbk_M)\longrightarrow\BDC(\I\Bbbk_M),\\
\bfR\Gamma_Z&\colon\BDC(\I\Bbbk_M)\longrightarrow\BDC(\I\Bbbk_M),
\end{align*}
\end{minipage}
\begin{minipage}{61mm}
\begin{align*}
(\cdot)\otimes(\cdot) &: \BDC(\I\Bbbk_M)\times\BDC(\I\Bbbk_M)\longrightarrow \BDC(\I\Bbbk_M), \\
\rihom(\cdot, \cdot) &: \BDC(\I\Bbbk_M)^{\op}\times\BDC(\I\Bbbk_M)\longrightarrow \bfD^+(\I\Bbbk_M), \\
\rhom(\cdot, \cdot) &: \BDC(\I\Bbbk_M)^{\op}\times\BDC(\I\Bbbk_M)\longrightarrow \bfD^+(\Bbbk_M), \\
\rHom(\cdot, \cdot) &: \BDC(\I\Bbbk_M)^{\op}\times\BDC(\I\Bbbk_M)\longrightarrow \bfD^+(\Bbbk), \\
\bfR f_\ast &: \BDC(\I\Bbbk_M)\longrightarrow \BDC(\I\Bbbk_N),\\
f^{-1} &: \BDC(\I\Bbbk_N)\longrightarrow \BDC(\I\Bbbk_M),\\
\bfR f_{!!} &: \BDC(\I\Bbbk_M)\longrightarrow \BDC(\I\Bbbk_N),\\
f^! &: \BDC(\I\Bbbk_N)\longrightarrow \BDC(\I\Bbbk_M),
\end{align*}
\end{minipage}
%\begin{align*}
%\iota_M &: \BDC(\Bbbk_M)\longrightarrow \BDC(\I\Bbbk_M),\\
%\alpha_M &: \BDC(\I\Bbbk_M)\longrightarrow  \BDC(\Bbbk_M),\\
%\beta_M &: \BDC(\Bbbk_M)\longrightarrow  \BDC(\I\Bbbk_M),\\
%(\cdot)\otimes(\cdot) &: \BDC(\I\Bbbk_M)\times\BDC(\I\Bbbk_M)\longrightarrow \BDC(\I\Bbbk_M), \\
%\rihom(\cdot, \cdot) &: \BDC(\I\Bbbk_M)^{\op}\times\BDC(\I\Bbbk_M)\longrightarrow \bfD^+(\I\Bbbk_M), \\
%\rhom(\cdot, \cdot) &: \BDC(\I\Bbbk_M)^{\op}\times\BDC(\I\Bbbk_M)\longrightarrow \bfD^+(\Bbbk_M), \\
%\rHom(\cdot, \cdot) &: \BDC(\I\Bbbk_M)^{\op}\times\BDC(\I\Bbbk_M)\longrightarrow \bfD^+(\Bbbk), \\
%\bfR f_\ast &: \BDC(\I\Bbbk_M)\longrightarrow \BDC(\I\Bbbk_N),\\
%f^{-1} &: \BDC(\I\Bbbk_N)\longrightarrow \BDC(\I\Bbbk_M),\\
%\bfR f_{!!} &: \BDC(\I\Bbbk_M)\longrightarrow \BDC(\I\Bbbk_N),\\
%f^! &: \BDC(\I\Bbbk_N)\longrightarrow \BDC(\I\Bbbk_M),\\
%{}_Z(\cdot)&\colon\BDC(\I\Bbbk_M)\longrightarrow\BDC(\I\Bbbk_M),\\
%\bfR\Gamma_Z&\colon\BDC(\I\Bbbk_M)\longrightarrow\BDC(\I\Bbbk_M)
%\end{align*}
where $Z$ is a locally closed subset of $M$.
These functor have many properties as similar to classical sheaves,
see \cite{KS01, KS06} for details.
We just recall the commutativity of the various functors.
Let us summarize them in the table below.
Here, $``\circ"$ means that the functors commute,
and $``\times"$ they do not.
\begin{table}[h]
\begin{equation*}
   \begin{tabular}{l||c|c|c|c|c|c|c}
    {} & $\otimes$ & $f^{-1}$ & $\rmR f_\ast$ & $f^!$ & $\rmR f_{!!}$ \\ \hline \hline 
    $\overset{}{\underset{}{\iota}}$ & $\circ$ & $\circ$ & $\circ$ & $\circ$ & $\times$ \\ \hline
    $\overset{}{\underset{}{\alpha}}$ & $\circ$ & $\circ$ & $\circ$ & $\times$ & $\circ$ \\ \hline
    $\overset{}{\underset{}{\beta}}$ & $\circ$ & $\circ$ & $\times$ & $\times$ & $\times$\\ \hline 
   \end{tabular}
   \end{equation*}
\end{table}

\newpage
At the end of this subsection, 
we shall recall the notion of ind-sheaves with ring actions.
An ind-$\Bbbk_M$-algebra is the date of an ind-sheaf $R$ and morphisms of ind-sheaves
$$\mu_R\colon R\otimes R\to R,\ \varepsilon_R\colon \iota_M\Bbbk_M\to R$$
such that the following diagrams are commutative:
\[\hspace{-27pt}
\xymatrix@M=5pt@R=20pt@C=40pt{
R\ar@{->}[r]^-\sim\ar@{->}[d]_-{\rm id_{R}} & \iota_M\Bbbk_{M}\otimes R\ar@{->}[d]^-{\varepsilon_{R}\otimes R}\\
R & R\otimes R\ar@{->}[l]^-{\mu_{R}},
}\hspace{11pt}
\xymatrix@M=5pt@R=20pt@C=40pt{
R\ar@{->}[r]^-\sim\ar@{->}[d]_-{\rm id_{R}} & R\otimes\iota_M\Bbbk_{M}\ar@{->}[d]^-{R\otimes\varepsilon_{R}}\\
R &  R\otimes R\ar@{->}[l]^-{\mu_{R}},
}\hspace{11pt}
\xymatrix@M=5pt@R=20pt@C=40pt{
R\otimes R\otimes R\ar@{->}[r]^-{\mu_R\otimes R}\ar@{->}[d]_-{R\otimes\mu_R} &
R\otimes R\ar@{->}[d]^-{\mu_{R}}\\
R\otimes R \ar@{->}[r]_-{\mu_{R}} & R.
}\]
Let $R$ be an ind-$\Bbbk_M$-algebra.
An $R$-module $F$ is the date of an ind-sheaf $F$ and a morphism $\mu_F\colon R\otimes F\to F$
 such that the following diagrams are commutative:
 \[\hspace{-27pt}
\xymatrix@M=5pt@R=20pt@C=40pt{
F\ar@{->}[r]^-\sim\ar@{->}[d]_-{\rm id_{F}} & \iota_M\Bbbk_{M}\otimes F\ar@{->}[d]^-{\varepsilon_{R}\otimes F}\\
F & R\otimes F\ar@{->}[l]^-{\mu_{F}},
}\hspace{11pt}
\xymatrix@M=5pt@R=20pt@C=40pt{
R\otimes R\otimes F\ar@{->}[r]^-{\mu_R\otimes F}\ar@{->}[d]_-{\R\otimes\mu_F} &
R\otimes F\ar@{->}[d]^-{\mu_{F}}\\
R\otimes F \ar@{->}[r]_-{\mu_{F}} & F.
}\]
A morphism of $R$-modules from $F$ to $G$ is a morphism $\varphi\colon F\to G$ of ind-sheaves
such that the following diagram is commutative:
 \[\hspace{-27pt}
\xymatrix@M=5pt@R=20pt@C=40pt{
R\otimes F\ar@{->}[r]^-{R\otimes \varphi}\ar@{->}[d]_-{\mu_F}
& R\otimes G\ar@{->}[d]^-{\mu_G}\\
F\ar@{->}[r]_-{\varphi} & G.
}\]
Let us denote by $\I R$ the category of $R$-modules.
Note that this is abelian, see \cite[\S 5.4]{KS01} for the details.

For an $R$-module $F$,
we denote by $\nu_F\colon F\to\ihom(R, F)$ the corresponding morphism of $\mu_F$
by the isomorphism $\Hom_{\I\Bbbk_M}(R\otimes F, F)\simeq \Hom_{\I\Bbbk_M}(F, \ihom(R, F))$
and set
%\begin{align*}
$e_F \colon F\simeq \Bbbk_M\otimes F\xrightarrow{\ \varepsilon_R\otimes F} R\otimes F,\
e_F^\ast\colon \ihom(R, F)\xrightarrow{\ \ihom(\varepsilon_R, F)\ }\ihom(\Bbbk_M, F)\simeq F.$
%\end{align*}
Note that in the classical case of module over ring,
the analogous morphisms of $\mu_F, \nu_F, e_F$ and $e_F^\ast$ are the morphisms
$r\otimes f\mapsto rf, f\mapsto (r\mapsto rf), f\mapsto 1\otimes f$
and $\varphi\mapsto \varphi(1)$, respectively.
Moreover, for an ind-$\Bbbk_M$-algebra $R$
we defne an ind-$\Bbbk_M$-algebra $R^\op$
by the date of an ind-$\Bbbk_M$-algebra $R$, a morphism
$\mu_{R^\op}\colon R\otimes R \to R$ of ind-sheaves corresponding to $a\otimes b\mapsto b\otimes a$
and a morphism $\varepsilon_{A^\op} := \varepsilon_{A}$.

\newpage
The tensor product functor and the internal hom functor
\begin{align*}
(\cdot)\underset{R}{\otimes}(\cdot) &\colon \I R^\op\times \I R\to \I\Bbbk_M,\\
\ihom_R(\cdot, \cdot) &\colon (\I R)^\op\times \I R\to \I\Bbbk_M
\end{align*}
are defined by
\begin{align*}
F\underset{R}{\otimes}G &:= \Coker\(\mu_F'\otimes G - F\otimes\mu_G\colon F\otimes R\otimes G\to F\otimes G \),\\
\ihom_R(F, G) &:= \Ker\(\ihom(\mu_F, G) - \ihom(F, \nu_G)'\colon \ihom(F, G)\to \ihom(R\otimes F, G)\) 
\end{align*}
where the morphism $\mu_F'$ is the composition $$F\otimes R \simeq R\otimes F\xrightarrow{\ \mu_F\ }F$$
and the morphism $ \ihom(F, \nu_G)'$ is the composition
$$\ihom(F, G)\xrightarrow{\ \ihom(F, \nu_G)\ }\ihom(F,  \ihom(R, G)) \simeq \ihom(R\otimes F, G).$$
Note that there exist isomorphisms $R\underset{R}{\otimes}F\simeq F, \ihom_R(R, F)\simeq F$ in $\I\Bbbk_M$,
see \cite[Lem.\:5.4.8]{KS01} for the details.

Let us denote by $\BDC(\I R)$ the derived category of $\I R$.
For any three ind-$\Bbbk_M$-algebras $R_1, R_2, R_3$
the following functors are well-defined:
\begin{align*}
(\cdot)\overset{\LL}{\underset{R_2}{\otimes}}(\cdot) &\colon
\BDC(\I(R_1\otimes R_2^\op))\times\BDC(\I(R_2\otimes R_3^\op))\to\bfD^+(\I(R_1\otimes R_3^\op)),\\
\rihom_{R_1}(\cdot, \cdot) &\colon
\BDC(\I(R_1\otimes R_2^\op))^\op\times\BDC(\I(R_1\otimes R_3^\op))\to\bfD^+(\I(R_2\otimes R_3^\op)),
\end{align*}
see \cite[Thms.\:5.4.19 (a), 5.4.21 (b)]{KS01} for the details.
Let $f\colon M\to N$ be a morphism of good topological spaces and $S$ an ind-$\Bbbk_N$-algebra.
Then, the following functors are well-defined:
\begin{align*}
\bfR f_\ast&\colon \BDC(\I(f^{-1}S))\to \BDC(\I S),\\
f^{-1}&\colon\BDC(\I S)\to\BDC(\I(f^{-1}S)),\\
\bfR f_{!!}&\colon \BDC(\I(f^{-1}S))\to \BDC(\I S),
\end{align*}
see \cite[Thms.\:5.5.1]{KS01} for the details.
These functors have many properties as similar to classical sheaves, see \cite[\S\S 5.4, 5.5]{KS01} for the details.
We just recall that
there exist isomorphisms 
\begin{align*}
\rihom_{R_2}\({}_2F_3, \rihom_{R_1}\({}_1F_2, {}_1F_4\)\)
&\simeq
\rihom_{R_1}\({}_1F_2\underset{R_2}{\otimes}{}_2F_3, {}_1F_4\),\\
\rihom_S\(G, \bfR f_\ast F\)
&\simeq
\bfR f_\ast\rihom_{f^{-1}S}\(f^{-1}G, F\)
\end{align*}
where $R_i\ (i = 1, 2, 3, 4)$ are ind-$\Bbbk_M$-algebras,
${}_iF_j \in \BDC(\I(R_i\otimes R_j^\op))\ (i, j = 1, 2, 3, 4)$,
$S$ is an ind-$\Bbbk_N$-algebra and $F\in\BDC(\I(f^{-1}S)), G\in\BDC(\I S)$.
Moreover, for any sheaf $\SA$ of $\Bbbk$-algebras whose flat dimension is finite,
the functors $\alpha_M, \beta_M$ induce functors (see \cite[\S 5.6]{KS01} for the details)
\begin{align*}
\alpha_M\colon\I(\beta_M\SA)\to\Mod(\SA),\\
\beta_M\colon\Mod(\SA)\to \I(\beta_M\SA),\\
\alpha_M\colon\BDC(\I(\beta_M\SA))\to \BDC(\SA),\\
\beta_M\colon\BDC(\SA)\to \BDC(\I(\beta_M\SA)).
\end{align*}

%\newpage
\subsection{Relation between Ind-sheaves and Subanalytic Sheaves}\label{subsec2.5}
In this subsection, we shall recall the relation between ind-sheaves and subanalytic sheaves.
Reference are made to \cite[\S\S 6.3, 7.1]{KS01} and \cite[\S A.2]{Pre13}.

Let $M$ be a real analytic manifold and $\Bbbk$ a field.
We denote
by $\Mod_{\RR-c}^c(\Bbbk_M)$ the abelian category of $\RR$-constructible sheaves on $M$ with compact support
and
by $\I_{\RR-c}\Bbbk_M$ the category of ind-objects of $\Mod_{\RR-c}^c(\Bbbk_M)$.
Moreover let us denote by $\BDC_{\I\RR-c}(\I\Bbbk_M)$
the full triangulated subcategory of $\BDC(\I\Bbbk_{M})$
consisting of objects whose cohomologies are contained in $\I_{\RR-c}\Bbbk_{M}$.
A functor $J_M\colon \I\Bbbk_M\to\Mod(\Bbbk_M^\sub)$ is defined by
$$J_M\left(``\varinjlim_{i\in I}"\SF_i\right) := \varinjlim_{i\in I}\rho_{M\ast}\SF_i.$$
Note that for any $F\in\I\Bbbk_M$ the subanalytic sheaf $J_MF$ is given by 
$\Gamma(U; J_MF) = \Hom_{\I\Bbbk_M}(\iota_M\Bbbk_U, F)$
for each open subanalytic subset $U$.
Note also that the functor $J_M$ is left exact and admits a left adjoint
$$I_M\colon \Mod(\Bbbk_M^\sub)\to\I\Bbbk_M$$
which is fully faithful, exact and commutes with filtrant inductive limits.
%Let us denote by $\BDC_{\I\RR-c}(\I\Bbbk_M)$ the full triangulated subcategory of $\BDC(\I\Bbbk_M)$
%consisting of objects whose cohomologies belong to $\I_{\RR-c}\Bbbk_M$.
Then we have (derived) functors
\begin{align*}
I_M&\colon \BDC(\Bbbk_M^\sub)\to\BDC(\I\Bbbk_M),\\
\bfR J_M&\colon \BDC(\I\Bbbk_M)\to\BDC(\Bbbk_M^\sub)
\end{align*}
and a pair $(I_M, \bfR J_M)$ of functors is an adjoint pair
and there exists a canonical isomorphism $\id\simto\bfR J_M\circ I_M$.
Moreover, we have $$\bfR J_M\rihom(I_M(\cdot),\ \cdot) \simeq \rihom^\sub(\cdot, \bfR J_M(\cdot)).$$

\begin{theorem}[{\cite[Thm.\:6.3.5]{KS01}}, see also {\cite[A.2.1]{Pre13}}]\label{thm2.26}
There exists an equivalence of abelian categories:
\[\xymatrix@M=7pt@C=45pt{
\Mod(\Bbbk_M^\sub)\ar@<0.8ex>@{->}[r]^-{I_M}_-\sim
&
\I_{\RR-c}\Bbbk_M
\ar@<0.8ex>@{->}[l]^-{J_M}.
}\]
Furthermore, there exists an equivalence of triangulated categories:
\[\xymatrix@M=7pt@C=45pt{
\BDC(\Bbbk_M^\sub)\ar@<0.8ex>@{->}[r]^-{I_M}_-\sim
&
\BDC_{\I{\RR-c}}(\I\Bbbk_M)
\ar@<0.8ex>@{->}[l]^-{\bfR J_M}.
}\]
\end{theorem}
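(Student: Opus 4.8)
The plan is to establish the abelian equivalence first and deduce the triangulated one from it. Recall from the preceding discussion that $I_M$ is exact, fully faithful, and, being a left adjoint of $J_M$, commutes with all small inductive limits, and that the unit $\id \simto J_M \circ I_M$ is an isomorphism. Hence it suffices to identify the essential image of $I_M$ with $\I_{\RR-c}\Bbbk_M$; once this is done, the restriction of $J_M$ to $\I_{\RR-c}\Bbbk_M$ is automatically a quasi-inverse. The whole argument pivots on one computation: for a relatively compact subanalytic open $U \subseteq M$ one has $I_M(\Bbbk_U^\sub) \simto \iota_M\Bbbk_U$, where $\Bbbk_U^\sub$ denotes the subanalytic sheaf representing $\Gamma(U;\cdot)$ on $\Mod(\Bbbk_M^\sub)$. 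I would obtain this by Yoneda's lemma: using the adjunction, the stated description $\Gamma(U; J_M F) \cong \Hom_{\I\Bbbk_M}(\iota_M\Bbbk_U, F)$, and the representing property of $\Bbbk_U^\sub$, one gets natural isomorphisms $\Hom_{\I\Bbbk_M}(I_M\Bbbk_U^\sub, F) \cong \Hom_{\Mod(\Bbbk_M^\sub)}(\Bbbk_U^\sub, J_M F) \cong \Gamma(U; J_M F) \cong \Hom_{\I\Bbbk_M}(\iota_M\Bbbk_U, F)$ in $F \in \I\Bbbk_M$.

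Next I would show the essential image of $I_M$ is contained in $\I_{\RR-c}\Bbbk_M$. Any subanalytic sheaf $\SG$ fits in an exact sequence $P_1 \to P_0 \to \SG \to 0$ with $P_0, P_1$ small direct sums of sheaves $\Bbbk_U^\sub$ ($U$ relatively compact subanalytic open), since these form a system of generators of $\Mod(\Bbbk_M^\sub)$. Applying the exact, colimit-preserving functor $I_M$ together with the identification above yields an exact sequence $\bigoplus \iota_M\Bbbk_{U_1} \to \bigoplus \iota_M\Bbbk_{U_0} \to I_M\SG \to 0$ in $\I\Bbbk_M$. Each $\Bbbk_U$ is $\RR$-constructible with compact support, and $\I_{\RR-c}\Bbbk_M$ is stable under small direct sums and under cokernels inside $\I\Bbbk_M$ (the embedding $\I_{\RR-c}\Bbbk_M \hookrightarrow \I\Bbbk_M$ being $\mathrm{Ind}$ applied to the exact inclusion $\Mod_{\RR-c}^c(\Bbbk_M) \hookrightarrow \Mod^c(\Bbbk_M)$, hence itself exact and compatible with filtrant limits), so $I_M\SG \in \I_{\RR-c}\Bbbk_M$. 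For the reverse inclusion, write an arbitrary $F \in \I_{\RR-c}\Bbbk_M$ as a small filtrant inductive limit in $\I\Bbbk_M$ of objects $\iota_M\SF_i$ with $\SF_i \in \Mod_{\RR-c}^c(\Bbbk_M)$. By the structure theory of $\RR$-constructible sheaves, a dévissage along a subanalytic stratification gives each $\SF_i$ an exact sequence $B_1 \to B_0 \to \SF_i \to 0$ with $B_0, B_1$ finite direct sums of constant sheaves $\Bbbk_U$ on relatively compact subanalytic opens; since $\iota_M$ is exact, $\iota_M\Bbbk_U \cong I_M\Bbbk_U^\sub$, and the essential image of $I_M$ is closed under cokernels of its own morphisms and under small filtrant inductive limits, it follows that $\iota_M\SF_i$, and hence $F$, lies in the essential image of $I_M$. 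This gives the abelian equivalence.

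For the triangulated statement, $I_M$ being exact extends termwise to $I_M\colon \BDC(\Bbbk_M^\sub) \to \BDC(\I\Bbbk_M)$ with $H^n(I_M A) \cong I_M H^n(A) \in \I_{\RR-c}\Bbbk_M$, so it factors through $\BDC_{\I\RR-c}(\I\Bbbk_M)$. Full faithfulness follows formally from the adjunction $(I_M, \bfR J_M)$ and the stated isomorphism $\id \simto \bfR J_M \circ I_M$: for $A, B \in \BDC(\Bbbk_M^\sub)$ one has $\Hom_{\BDC(\I\Bbbk_M)}(I_M A, I_M B) \cong \Hom_{\BDC(\Bbbk_M^\sub)}(A, \bfR J_M I_M B) \cong \Hom_{\BDC(\Bbbk_M^\sub)}(A, B)$. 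For essential surjectivity onto $\BDC_{\I\RR-c}(\I\Bbbk_M)$, I would induct on the cohomological amplitude of an object $F$: taking $n$ to be the bottom degree, the truncation triangle $\tau^{\leq n}F \to F \to \tau^{>n}F$ reduces the claim to $\tau^{\leq n}F \cong H^n(F)[-n]$, with $H^n(F) \in \I_{\RR-c}\Bbbk_M$, and to $\tau^{>n}F$, which has strictly smaller amplitude; the first lies in the essential image of $I_M$ by the abelian case and the second by induction, and since $I_M$ is triangulated and fully faithful the connecting morphism of the triangle comes from $\BDC(\Bbbk_M^\sub)$, so $F$ itself lies in the essential image. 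Then $\bfR J_M$ is a quasi-inverse as the right adjoint of an equivalence.

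The main obstacle is the generator computation $I_M(\Bbbk_U^\sub) \cong \iota_M\Bbbk_U$ and the bookkeeping around it: the exactness and filtrant-colimit compatibility of the inclusion $\I_{\RR-c}\Bbbk_M \hookrightarrow \I\Bbbk_M$, and the input from the structure theory of $\RR$-constructible sheaves that, with compact support, they are obtained from constant sheaves on relatively compact subanalytic opens by finitely many cokernels. Once the abelian equivalence is in hand, the passage to derived categories is a routine dévissage.
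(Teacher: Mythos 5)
Your proposed argument is essentially correct, but note that the paper itself does not prove this statement: Theorem \ref{thm2.26} is quoted verbatim from \cite[Thm.\:6.3.5]{KS01} (see also \cite[A.2.1]{Pre13}), so there is no in-paper proof to compare against. What you have written is a faithful reconstruction of the standard Kashiwara--Schapira argument: the pivot $I_M(\Bbbk_U^\sub)\simeq\iota_M\Bbbk_U$ via the formula $\Gamma(U;J_MF)\simeq\Hom_{\I\Bbbk_M}(\iota_M\Bbbk_U,F)$, the two inclusions of essential images using that the $\Bbbk_U^\sub$ generate $\Mod(\Bbbk_M^\sub)$ and that compactly supported $\RR$-constructible sheaves admit finite presentations by constant sheaves on relatively compact subanalytic opens (the one genuinely non-formal input, coming from subanalytic triangulation), and the routine d\'evissage to the derived level. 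The only place this paper proves anything of this kind is Proposition \ref{prop3.6}, the bordered-space version, and there it argues differently --- by reduction to Theorem \ref{thm2.26} on $\che{M}$ via $\bfR j_{M_\infty!!}$ and $j_{M_\infty}^{-1}$ --- rather than redoing your generator computation.
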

Let us summarize the commutativity between the various functors and functors $I, \bfR J$.
We will denote by $$\lambda_M\colon \BDC_{\I{\RR-c}}(\I\Bbbk_M)\simto \BDC(\Bbbk_M^\sub)$$
the inverse functor of $I_M\colon \BDC(\Bbbk_M^\sub)\simto \BDC_{\I{\RR-c}}(\I\Bbbk_M)$.
Let $f\colon M\to N$ be a morphism of real analytic manifolds.
Then, we have
\begin{align*}
\alpha_M\circ I_M &\simeq \rho_{M}^{-1},\\
I_M\circ\rho_{M!}&\simeq\beta_M,\\
I_M\circ f^{-1}&\simeq f^{-1}\circ I_N,\\
\bfR f_{!!}\circ I_M&\simeq I_N\circ \bfR f_{!!},\\
I_M\circ f^{!}&\simeq f^{!}\circ I_N,\\
I_M(\cdot\otimes \cdot) &\simeq I_M(\cdot)\otimes I_M(\cdot),
\end{align*}
and 
\begin{align*}
\bfR J_M\circ \iota_M&\simeq \rho_{M\ast},\\
\rho_{M}^{-1}\circ\bfR J_M&\simeq \alpha_M,\\
\bfR J_M\circ \beta_M&\simeq \rho_{M!},\\
\bfR f_{\ast}\circ \bfR J_M &\simeq \bfR J_N\circ \bfR f_{\ast} ,\\
\bfR J_M\circ f^{!}&\simeq f^{!}\circ \bfR J_N.
\end{align*}
Moreover, we have
\begin{align*}
I_M\circ\rho_{M\ast}^{\RR-c} &\simeq \iota_M|_{\Mod_{\RR-c}(\Bbbk_M)},\\
\lambda_M\circ f^{-1}&\simeq f^{-1}\circ \lambda_N,\\
\bfR f_{!!}\circ \lambda_M&\simeq \lambda_N\circ \bfR f_{!!},\\
\lambda_M(\cdot\otimes \cdot) &\simeq \lambda_M(\cdot)\otimes \lambda_M(\cdot).
\end{align*}

At the end of this subsection, 
let us consider Theorem \ref{thm2.26} with ring actions.
Let $\SA$ is a sheaf of algebras whose flat dimension is finite.
Since the functors $I_M$ and $\lambda_M$ are exact and commute with the tensor product functor $\otimes$,
the following functors are well-defined:
\begin{align*}
I_M&\colon\Mod(\rho_{M!}\SA)\to\I(\beta_M\SA),\\
\lambda_M&\colon\I(\beta_M\SA)\cap\I_{\RR-c}\Bbbk_M\to\Mod(\rho_{M!}\SA).
\end{align*}
See also Remark \ref{rem2.26}.
Hence, by Theorem \ref{thm2.26}, there exists an equivalence of abelian categories:
\[\xymatrix@M=7pt@C=45pt{
\Mod(\rho_{M!}\SA)\ar@<0.8ex>@{->}[r]^-{I_M}_-\sim
&
\I(\beta_M\SA)\cap\I_{\RR-c}\Bbbk_M
\ar@<0.8ex>@{->}[l]^-{\lambda_M}.
}\]
Furthermore, there exists an equivalence of triangulated categories:
\[\xymatrix@M=7pt@C=45pt{
\BDC(\rho_{M!}\SA)\ar@<0.8ex>@{->}[r]^-{I_M}_-\sim
&
\BDC(\I(\beta_M\SA))\cap\BDC_{\I{\RR-c}}(\I\Bbbk_M)
\ar@<0.8ex>@{->}[l]^-{\lambda_M}.
}\]
Moreover, for any $\M\in\BDC(\rho_{M!}\SA)$ and
any $G\in\BDC(\I(\beta_M\SA))\cap\BDC_{\I{\RR-c}}(\I\Bbbk_M)$,
one has
$$\bfR J_M\rihom_{\beta_M\SA}(I_M\M, G)
\simeq
\rihom_{\rho_{M!}\SA}^\sub(\M, \lambda_MG).$$
This claim follows from the fact that 
$\bfR J_M\rihom(I_M(\cdot),\ \cdot) \simeq \rihom^\sub(\cdot, \bfR J_M(\cdot))$
and the definition of the bifunctor $\ihom_{\beta_M\SA}$.
See also Remark \ref{rem2.26}.

\subsection{Ind-Sheaves on Bordered Spaces}
In this subsection, 
we shall recall the notions of bordered spaces and ind-sheaves on bordered spaces.
Reference are made to \cite[\S 3]{DK16}
for the details of bordered spaces and ind-sheaves on bordered spaces.

Throughout this subsection, we assume that $\Bbbk$ is a field.

\subsubsection{Bordered Spaces}
A bordered space is a pair $M_{\infty} = (M, \che{M})$ of
a good topological space $\che{M}$ and an open subset $M\subset\che{M}$.
A morphism $f\colon (M, \che{M})\to (N, \che{N})$ of bordered spaces
is a continuous map $f\colon M\to N$ such that the first projection
$\che{M}\times\che{N}\to\che{M}$ is proper on
the closure $\var{\Gamma}_f$ of the graph $\Gamma_f$ of $f$ 
in $\che{M}\times\che{N}$. 
Note that a morphism $\che{f}\colon \che{M}\to\che{N}$ such that $\che{f}(M)\subset N$
induces a morphism of bordered spaces from $(M, \che{M})$ to $(N, \che{N})$.
The category of good topological spaces is embedded into that
of bordered spaces by the identification $M = (M, M)$. 

For a locally closed subset $Z$ of $M$,
we set $Z_\infty := (Z, \var{Z})$, where $\var{Z}$ is the closure of $Z$ in $\che{M}$,
and denote by $i_{Z_\infty}\colon Z_\infty\to M_\infty$
a morphism of bordered spaces induced by the natural embedding $i_Z\colon Z\hookrightarrow M$. 
Moreover, let us denote by $j_{M_\infty}\colon M_\infty\to\che{M}$
a morphism of bordered spaces induced by the natural embedding $j_M\colon M\hookrightarrow \che{M}$.

\subsubsection{Ind-Sheaves on Bordered Spaces}
A quotient category
\begin{align*}
\BDC(\I\Bbbk_{M_\infty}) &:= 
\BDC(\I\Bbbk_{\che{M}})/\BDC(\I\Bbbk_{\che{M}\backslash M})
\end{align*}
is called the category of ind-sheaves on $M_{\infty} = (M, \che{M})$,
where $\BDC(\I\Bbbk_{\che{M}\backslash M})$
is identified with its essentially image in $\BDC(\I\Bbbk_{\che{M}})$
by the fully faithful functor $\bfR i_{\che{M}\backslash M !!} \simeq \bfR i_{\che{M}\backslash M\ast}$. 
Here $i_{\che{M}\backslash M}\colon \che{M}\backslash M\to M$ is the closed embedding.
An object of $\BDC(\I\Bbbk_{M_\infty})$ is called  ind-sheaves on $M_{\infty}$.
The quotient functor
\[\q_{M_\infty} : \BDC(\I\Bbbk_{\che{M}})\to\BDC(\I\Bbbk_{M_\infty})\]
has a left adjoint $\bfl_{M_\infty}$ and a right 
adjoint $\bfr_{M_\infty}$, both fully faithful, defined by 
\begin{align*}
\bfl_{M_\infty} &\colon \BDC(\I\Bbbk_{M_\infty})\to \BDC(\I\Bbbk_{\che{M}}),\ \q F \mapsto \iota_{\che{M}}\Bbbk_M\otimes F,\\
\bfr_{M_\infty} &\colon \BDC(\I\Bbbk_{M_\infty})\to \BDC(\I\Bbbk_{\che{M}}),\ \q F \mapsto \rihom(\iota_{\che{M}}\Bbbk_M, F).
\end{align*}
Moreover, they induce equivalences of categories:
\begin{align*}
\bfl_{M_\infty} &\colon \BDC(\I\Bbbk_{M_\infty})\simto
\{F\in\BDC(\I\Bbbk_{\che{M}})\ |\ \iota_{\che{M}}\Bbbk_{M}\otimes F\simto F\},\\
\bfr_{M_\infty} &\colon \BDC(\I\Bbbk_{M_\infty})\simto
\{F\in\BDC(\I\Bbbk_{\che{M}})\ |\ F\simto\rihom(\iota_{\che{M}}\Bbbk_{M}, F)\}.
\end{align*}

\newpage
For a morphism $f\colon M_\infty\to N_\infty$ 
of bordered spaces, 
we have the Grothendieck operations 
\begin{align*}
(\cdot)\otimes(\cdot) &\colon
\BDC(\I\Bbbk_{M_\infty})\times\BDC(\I\Bbbk_{M_\infty})\to \BDC(\I\Bbbk_{M_\infty}),\\
\rihom(\cdot, \cdot) &\colon
\BDC(\I\Bbbk_{M_\infty})^\op\times\BDC(\I\Bbbk_{M_\infty})\to \BDC(\I\Bbbk_{M_\infty}),\\
\bfR f_\ast&\colon\BDC(\I\Bbbk_{M_\infty})\to \BDC(\I\Bbbk_{N_\infty}),\\
f^{-1}&\colon\BDC(\I\Bbbk_{N_\infty})\to \BDC(\I\Bbbk_{M_\infty}),\\
\bfR f_{!!}&\colon\BDC(\I\Bbbk_{M_\infty})\to \BDC(\I\Bbbk_{N_\infty}),\\
f^!&\colon\BDC(\I\Bbbk_{N_\infty})\to \BDC(\I\Bbbk_{M_\infty}),
\end{align*} 
%$ \otimes, \rihom, \rmR f_\ast, \rmR f_{!!}, f^{-1}, f^! $,
see \cite[Defs\:3.3.1, 3.3.4]{DK16} for the definition.
Moreover, these functors have many properties as similar to classical sheaves.
We shall skip the explanation of them.
See \cite[\S 3.3]{DK16} for the details.

Note that the functors
$j_{M_\infty}^{-1}\simeq j_{M_\infty}^! \colon \BDC(\I\Bbbk_{\che{M}})\to\BDC(\I\Bbbk_{M_\infty})$ 
are isomorphic to the quotient functor
and the functor $\bfR j_{M_\infty!!}\colon\BDC(\I\Bbbk_{M_\infty})\to\BDC(\I\Bbbk_{\che{M}})$
(resp.\ $\bfR j_{M_\infty\ast}\colon\BDC(\I\Bbbk_{M_\infty})\to\BDC(\I\Bbbk_{\che{M}})$)
is isomorphic to the functor $\bfl$ (resp.\ $\bfr$).

Note also that there exists an embedding functor 
$$\iota_{M_\infty}\colon\BDC(\Bbbk_{M}) \hookrightarrow \BDC(\I\Bbbk_{M_\infty}),
\hspace{7pt}
\SF\mapsto
j_{M_\infty}^{-1}\iota_{\che{M}}j_{M!}\SF\
(\,\simeq j_{M_\infty}^{-1}\iota_{\che{M}}\bfR j_{M\ast}\SF)$$
which has an exact left adjoint $$\alpha_{M_\infty}\colon \BDC(\I\Bbbk_{M_\infty})\to\BDC(\Bbbk_{M}),
\hspace{7pt}
F\mapsto
j_M^{-1}\alpha_{\che{M}}\bfR j_{M_\infty!!}F\
(\,\simeq j_M^{-1}\alpha_{\che{M}}\bfR j_{M_\infty\ast}F)
$$
that has in turn an exact fully faithful left adjoint functor
$$\beta_{M_\infty}\colon\BDC(\Bbbk_{M}) \hookrightarrow \BDC(\I\Bbbk_{M_\infty}),
\hspace{7pt}
\SF\mapsto
j_{M_\infty}^{-1}\beta_{\che{M}}j_{M!}\SF\
(\,\simeq j_{M_\infty}^{-1}\beta_{\che{M}}\bfR j_{M\ast}\SF).$$

It is clear that the quotient category 
$\BDC(\Bbbk_{M_\infty}) := 
\BDC(\Bbbk_{\che{M}})/\BDC(\Bbbk_{\che{M}\backslash M})$
is equivalent to the derived category $\BDC(\Bbbk_{M})$
of the abelian category $\Mod(\Bbbk_M)$.
We sometimes write $\BDC(\Bbbk_{M_\infty})$ for $\BDC(\Bbbk_{M})$,
when considered as a full subcategory of $\BDC(\I\Bbbk_{M_\infty})$.
%
%Let $j_{M_\infty}\colon M_\infty \to \che{M}$ be the morphism of bordered spaces
%given by the open embedding $M\hookrightarrow \che{M}$.
%Actually, 

Let $\SA$ be a sheaf of $\Bbbk$-algebras on $M$ whose flat dimension is finite.
Then we have a functor 
$$\rihom_{\pi^{-1}\beta_M\SA}(\pi^{-1}\beta_M(\cdot), \cdot)\colon
\BDC(\SA)\to \BDC(\I\Bbbk_{M\times\RR_\infty})$$
which is defined by
$$(\M, \q_{M\times\RR_\infty}F)
\mapsto
\q_{M\times\RR_\infty}\rihom_{\overline{\pi}^{-1}\beta_M\D_M}(\overline{\pi}^{-1}\beta_M\M, F),$$
where $\overline{\pi}^{-1}\colon M\times \overline{\RR} \to M$ is the projection.
%See \cite{DK16} for the datilas.
This functor is needed to define the enhanced solution functors in Definition \ref{def3.33}.

\newpage
\subsection{Enhanced Ind-Sheaves}\label{subsec2.7}
It was a long-standing problem to generalize
the Riemann--Hilbert correspondence for regular holonomic $\D$-modules
to the (not necessarily regular) holonomic $\D$-module case. 
One of the difficulties was that we could not find
an appropriate substitute of the target category of the regular case.
In \cite{DK16}, the authors solved it by using the enhanced ind-sheaves.
In this subsection,
we shall recall a more general notion of enhanced ind-sheaves on topological spaces,
that is, the notion of enhanced ind-sheaves on bordered spaces.
Reference are made to \cite{KS16-2} and \cite{DK16-2}.
We also refer to \cite{DK16} and \cite{KS16}
for the notion of enhanced ind-sheaves on good topological spaces.

Let $M_\infty = (M, \che{M})$ be a bordered space.
We set $\RR_\infty := (\RR, \var{\RR})$ for 
$\var{\RR} := \RR\sqcup\{-\infty, +\infty\}$,
and let $t\in\RR$ be the affine coordinate. 
We consider the morphisms of bordered spaces
\[M_\infty\times\RR^2_\infty\xrightarrow{p_1,\ p_2,\ \mu}M_\infty
\times\RR_\infty\overset{\pi}{\longrightarrow}M_\infty\]
given by the maps $p_1(x, t_1, t_2) := (x, t_1)$, $p_2(x, t_1, t_2) := (x, t_2)$,
$\mu(x, t_1, t_2) := (x, t_1+t_2)$ and $\pi (x,t) := x$. 
Then the convolution functors for ind-sheaves on $M_\infty \times \RR_\infty$
\begin{align*}
(\cdot)\Potimes(\cdot)&\colon
\BDC(\I\Bbbk_{M_\infty\times \RR_\infty})\times \BDC(\I\Bbbk_{M_\infty\times \RR_\infty})
\to\BDC(\I\Bbbk_{M_\infty\times \RR_\infty}),\\
\Prihom(\cdot, \cdot)&\colon
\BDC(\I\Bbbk_{M_\infty\times \RR_\infty})^\op\times \BDC(\I\Bbbk_{M_\infty\times \RR_\infty})
\to\BDC(\I\Bbbk_{M_\infty\times \RR_\infty})
\end{align*}
 are defined by
\begin{align*}
F_1\Potimes F_2 & := \rmR\mu_{!!}(p_1^{-1}F_1\otimes p_2^{-1}F_2),\\
\Prihom(F_1, F_2) & := \rmR p_{1\ast}\rihom(p_2^{-1}F_1, \mu^!F_2),
\end{align*}
where $F_1, F_2\in\BDC(\I\Bbbk_{M_\infty\times \RR_\infty})$.
Now we define the triangulated category 
of enhanced ind-sheaves on a bordered space $M_\infty$ by 
$$\BEC(\I\Bbbk_{M_\infty}) :=
\BDC(\I\Bbbk_{M_\infty \times\RR_\infty})/\pi^{-1}\BDC(\I\Bbbk_{M_\infty}).$$
The quotient functor
\[\Q_{M_\infty} \colon \BDC(\I\Bbbk_{M_\infty\times\RR_\infty})\to\BEC(\I\Bbbk_{M_\infty})\]
has fully faithful left and right adjoints 
\begin{align*}
\bfL_{M_\infty}^\rmE\colon
\BEC(\I\Bbbk_{M_\infty}) \to\BDC(\I\Bbbk_{M_\infty\times\RR_\infty}),\\
\bfR_{M_\infty}^\rmE \colon
\BEC(\I\Bbbk_{M_\infty}) \to\BDC(\I\Bbbk_{M_\infty\times\RR_\infty})
\end{align*}
 defined by 
\begin{align*}
\bfL_{M_\infty}^\rmE(\Q_{M_\infty}(F))
&:= \iota_{M_\infty\times\RR_\infty}(\Bbbk_{\{t\geq0\}}\oplus\Bbbk_{\{t\leq 0\}})\Potimes F,\\
%\hspace{10pt}
\bfR_{M_\infty}^\rmE(\Q_{M_\infty}(F))
&:=\Prihom(\iota_{M_\infty\times\RR_\infty}(\Bbbk_{\{t\geq0\}}\oplus\Bbbk_{\{t\leq 0\}}), F)
\end{align*}
for $F\in\BDC(\I\Bbbk_{M_\infty\times\RR_\infty})$,
where $\{t\geq0\}$ stands for $\{(x, t)\in M\times\RR\ |\ t\geq0\}$%\subset \che{M}\times\var{\RR}$ 
and $\{t\leq0\}$ is defined similarly.
We sometimes denote $\Q_{M_\infty}$ (resp.\ $\bfL_{M_\infty}^\rmE, \bfR_{M_\infty}^\rmE$ )
by $\Q$ (resp.\ $\bfL^\rmE, \bfR^\rmE$) for short.
Moreover they induce equivalences of categories
\begin{align*}
\bfL_{M_\infty}^\rmE &\colon \BEC(\I\Bbbk_{M_\infty})\simto
\left\{F\in\BDC(\I\Bbbk_{M_\infty\times\RR_\infty})\ |\
\iota_{M_\infty\times\RR_\infty}(\Bbbk_{\{t\geq0\}}\oplus\Bbbk_{\{t\leq0\}})\Potimes F\simto F\right\},\\
\bfR_{M_\infty}^\rmE &\colon \BEC(\I\Bbbk_{M_\infty})\simto
\left\{F\in\BDC(\I\Bbbk_{M_\infty\times\RR_\infty})\ |\ F\simto
\Prihom(\iota_{M_\infty\times\RR_\infty}(\Bbbk_{\{t\geq0\}}\oplus\Bbbk_{\{t\leq0\}}), F)\right\}.
\end{align*}
Then we have the following standard t-structure on $\BEC(\I\CC_{M_\infty})$
which is induced by the standard t-structure on $\BDC(\I\CC_{M_\infty\times\RR_\infty})$
\begin{align*}
\bfE^{\leq 0}(\I\CC_{M_\infty}) & = \{K\in \BEC(\I\CC_{M_\infty})\ | \ 
\bfL_{M_\infty}^{\rmE}K\in \bfD^{\leq 0}(\I\CC_{M_\infty\times\RR_\infty})\},\\
\bfE^{\geq 0}(\I\CC_{M_\infty}) & = \{K\in \BEC(\I\CC_{M_\infty})\ | \ 
\bfL_{M_\infty}^{\rmE}K\in \bfD^{\geq 0}(\I\CC_{M_\infty\times\RR_\infty})\}.
\end{align*}
We denote by 
\[\SH^n \colon \BEC(\I\CC_{M_\infty})\to\bfE^0(\I\CC_{M_\infty})\]
the $n$-th cohomology functor, where we set 
$$\bfE^0(\I\CC_{M_\infty}) :=
\bfE^{\leq 0}(\I\CC_{M_\infty})\cap\bfE^{\geq 0}(\I\CC_{M_\infty}).$$

%\newpage
The convolution functors for enhanced ind-sheaves on bordered spaces are well defined.
We denote them by the same symbols:
\begin{align*}
(\cdot)\Potimes(\cdot) \colon
\BEC(\I\Bbbk_{M_\infty})\times\BEC(\I\Bbbk_{M_\infty})\to \BEC(\I\Bbbk_{M_\infty}),\\
\Prihom(\cdot, \cdot)\colon
\BEC(\I\Bbbk_{M_\infty})^\op\times\BEC(\I\Bbbk_{M_\infty})\to \BEC(\I\Bbbk_{M_\infty}).
\end{align*} 
For a morphism $f \colon M_\infty \to N_\infty $ of bordered spaces,
 we can define also the operations 
\begin{align*}
\bfE f_\ast&\colon\BEC(\I\Bbbk_{M_\infty})\to \BEC(\I\Bbbk_{N_\infty}),\\
\bfE f^{-1}&\colon\BEC(\I\Bbbk_{N_\infty})\to \BEC(\I\Bbbk_{M_\infty}),\\
\bfE f_{!!}&\colon\BEC(\I\Bbbk_{M_\infty})\to \BEC(\I\Bbbk_{N_\infty}),\\
\bfE f^!&\colon\BEC(\I\Bbbk_{N_\infty})\to \BEC(\I\Bbbk_{M_\infty})
\end{align*} 
for enhanced ind-sheaves on bordered spaces.
For example, 
we define $\bfE f_\ast \big( \Q_{M_\infty}F\big) :=
\Q_{N_\infty}\big(\bfR f_{\RR_\infty\ast}F\big)$
for $F\in\BDC(\I\Bbbk_{M_\infty\times\RR_\infty})$,
where $f_{\RR_\infty}\colon M_\infty \times \RR_{\infty} \to 
N_\infty \times \RR_{\infty}$ is the natural morphism of bordered spaces associated to $f$ . 
The other operations are defined similarly.  
%We thus obtain the six operations $\Potimes$, $\Prihom$,
%$\bfE f^{-1}$, $\bfE f_\ast$, $\bfE f^!$, $\bfE f_{!!}$ 
%for enhanced ind-sheaves on bordered spaces.
Note that there exists a morphism $\bfE f_{!!}\to\bfE f_\ast$
of functors from $\BEC(\I\Bbbk_{M_\infty})$ to $\BEC(\I\Bbbk_{N_\infty})$
and it is an isomorphism if $f$ is proper.

Moreover we have external hom functors
\begin{align*}
\rihom^\rmE(\cdot, \cdot)&\colon
\BEC(\I\Bbbk_{M_\infty})^\op\times\BEC(\I\Bbbk_{M_\infty})\to \BDC(\I\Bbbk_{M_\infty}),\\
\rhom^\rmE(\cdot, \cdot)&\colon
\BEC(\I\Bbbk_{M_\infty})^\op\times\BEC(\I\Bbbk_{M_\infty})\to \BDC(\Bbbk_{M}),\\
\rHom^\rmE(\cdot, \cdot)&\colon
\BEC(\I\Bbbk_{M_\infty})^\op\times\BEC(\I\Bbbk_{M_\infty})\to \BDC(\Bbbk),
\end{align*}
which are defined by 
\begin{align*}
\rihom^\rmE(K_1, K_2)&:=\bfR\pi_\ast\rihom(\bfL_{M_\infty}^\rmE K_1, \bfL_{M_\infty}^\rmE K_2),\\
\rhom^\rmE(K_1, K_2) &:= \alpha_{M_\infty}\rihom^\rmE(K_1, K_2),\\
\rHom^\rmE(K_1, K_2) &:= \bfR\Gamma\big(M; \rhom^\rmE(K_1, K_2)\big),
\end{align*}
for $K_1, K_2\in\BEC(\I\Bbbk_{M_\infty})$.
For $F\in\BDC(\I\Bbbk_{M_\infty})$ and $K\in\BEC(\I\Bbbk_{M_\infty})$ the objects 
\begin{align*}
\pi^{-1}F\otimes K & :=\Q_{M_\infty}(\pi^{-1}F\otimes \bfL_{M_\infty}^\rmE K),\\
\rihom(\pi^{-1}F, K) & :=\Q_{M_\infty}\big(\rihom(\pi^{-1}F, \bfR_{M_\infty}^\rmE K)\big). 
\end{align*}
in $\BEC(\I\Bbbk_{M_\infty})$ are well defined. 
Hence, we have functors
\begin{align*}
\pi^{-1}(\cdot)\otimes (\cdot)&\colon
\BDC(\I\Bbbk_{M_\infty})\times\BEC(\I\Bbbk_{M_\infty})\to \BDC(\I\Bbbk_{M_\infty\times\RR_\infty}),\\
\rihom(\pi^{-1}(\cdot),\ \cdot)&\colon
\BDC(\I\Bbbk_{M_\infty})^\op\times\BEC(\I\Bbbk_{M_\infty})\to \BDC(\I\Bbbk_{M_\infty\times\RR_\infty}).
\end{align*}
We set $$\Bbbk_{M_\infty}^\rmE := \Q_{M_\infty}\q 
\(``\underset{a\to +\infty}{\varinjlim}"\
\iota_{\che{M}\times\var{\RR}}\Bbbk_{\{t\geq a\}}\)\in\BEC(\I\Bbbk_{M_\infty}).$$
Then we have a natural embedding functor
$$e_{M_\infty} \colon \BDC(\I\Bbbk_{M_\infty}) \to \BEC(\I\Bbbk_{M_\infty}),
\hspace{17pt}
F\mapsto\Bbbk_{M_\infty}^\rmE\otimes\pi^{-1}F.$$
Let us define $$\omega_{M_\infty}^\rmE := e_{M_\infty}(\iota_{M_\infty}\omega_M)\in\BEC(\I\Bbbk_{M_\infty})$$
where $\omega_M\in\BDC(\Bbbk_{M_\infty}) ( \simeq \BDC(\Bbbk_M))$ is the dualizing complex,
see \cite[Definition 3.1.16]{KS90} for the details.
Then we have the Verdier duality functor for enhanced ind-sheaves on bordered spaces
$$\rmD_{M_\infty}^\rmE \colon\BEC(\I\Bbbk_{M_\infty})^{\op}\to\BEC(\I\Bbbk_{M_\infty}),
\hspace{17pt}
K\mapsto\Prihom(K, \omega_{M_\infty}^\rmE).$$
 Note that there exists an isomorphism
 $\rmD_{M_\infty}^\rmE(e_{M_\infty}\iota_{M_\infty}\SF)
\simeq
e_{M_\infty}(\iota_{M_\infty}\rmD_M\SF)$
in $\BEC(\I\Bbbk_{M_\infty})$
for any $\SF\in\BDC(\Bbbk_M)$.

Let $i_0 \colon M_\infty\to M_\infty\times\RR_\infty$ be a morphism of bordered spaces
induced by $x\mapsto (x, 0)$.
We set
\begin{align*}
\I\sh_{M_\infty} &:= i_0^!\circ \bfR_{M_\infty}^{\rmE} \colon
\BEC(\I\Bbbk_{M_\infty}) \to \BDC(\I\Bbbk_{M_\infty}),\\
\sh_{M_\infty} &:= \alpha_{M_\infty}\circ \I\sh_{M_\infty} \colon
\BEC(\I\Bbbk_{M_\infty}) \to \BDC(\Bbbk_{M})
\end{align*}
and call them the ind-sheafification functor, the sheafification functor,
for enhanced ind-sheaves on bordered spaces, respectively.
Note that a pair $(e_{M_\infty}, \I\sh_{M_\infty})$ is an adjoint pair
and there exist isomorphisms
$F\simto \I\sh_{M_\infty}e_{M_\infty}F$
for $F\in\BDC(\I\Bbbk_M)$ and 
 $\SF\simto \sh_{M_\infty}e_{M_\infty}\iota_{M_\infty}\SF$
for $\SF\in\BDC(\Bbbk_M)$.
See \cite[\S 3]{DK21} for details.

For a continuous function $\varphi \colon U\to \RR$ defined on an open subset $U\subset M$,
we set the exponential enhanced ind-sheaf by 
%\hypertarget{E}{\[\EE_{U|M_\infty}^\varphi \colon= 
%\CC_{M_\infty}^\rmE\Potimes
%\Q_{M_\infty}\big(\CC_{\{t+\varphi\geq0\}}\big), \]}
\[\EE_{U|M_\infty}^\varphi := 
\Bbbk_{M_\infty}^\rmE\Potimes
\Q_{M_\infty}\iota_{M_\infty\times\RR_\infty}\Bbbk_{\{t+\varphi\geq0\}}
, \]
where $\{t+\varphi\geq0\}$ stands for 
$\{(x, t)\in M\times{\RR}\ |\ x\in U, t+\varphi(x)\geq0\}$. 
%$\{(x, t)\in \che{M}\times\var{\RR}\ |\ t\in\RR, x\in U, 
%t+\varphi(x)\geq0\}$. 

\newpage
\section{Main Results}\label{sec3}
The main theorem of this paper is Theorems \ref{main1}, \ref{main2}, \ref{main3} and \ref{main4}.

\subsection{Subanalytic Sheaves on Real Analytic Bordered spaces}\label{subsec3.1}
The notion of subanalytic sheaves on bordered spaces was introduced by M.\:Kashiwara.
Although it has already been explained in \cite[\S\S 3.4--3.7]{Kas16},
we will explain in detail, in this subsection again\footnote{In \cite{Kas16},
the author introduced the notion of subanalytic sheaves on subanalytic bordered spaces.
In this paper, we shall only consider them on real analytic bordered spaces.}.

A real analytic bordered space is a bordered space $M_\infty = (M, \che{M})$
such that $\che{M}$ is a real analytic manifold and $M$ is an open subanalytic subset.
A morphism $f\colon (M, \che{M})\to (N, \che{N})$ of real analytic bordered spaces is
a morphism of  bordered spaces such that
the graph $\Gamma_f$ of $f$ is a subanalytic subset of $\che{M}\times\che{N}$.
Note that a morphism $\che{f}\colon \che{M}\to\che{N}$ of real analytic manifolds
such that $\che{f}(M)\subset N$ induces a morphism of bordered spaces of real analytic bordered spaces
from $(M, \che{M})$ to $(N, \che{N})$.
The category of real analytic manifolds is embedded into that
of real analytic bordered spaces by the identification $M = (M, M)$. 

Let $M_\infty = (M, \che{M})$ be a real analytic bordered space.
We denote by $\Op_{M_\infty}^{\sub}$ (resp.\,$\Op_{M_\infty}^{\sub,\,c}$)
the category of open subsets of $M$ 
which are subanalytic (resp.\,subanalytic and relatively compact) in $\che{M}$.
It is clear that they admit finite products and fiber products.
Although the following proposition are well known by experts, we will prove in this paper.

\begin{proposition}
Let $M_\infty = (M, \che{M})$ be a real analytic bordered space.
\begin{itemize}
\item[\rm (1)]
The category $\Op_{M_\infty}^{\sub}$ can be endowed with the following Grothendieck topology:
a subset $S\subset \Ob\((\Op_{M_\infty}^{\sub})_U\)$ is a covering of $U\in\Ob\(\Op_{M_\infty}^{\sub}\)$ 
if for any compact subset $K$ of $\che{M}$ there exists a finite subset $S'\subset S$ of $S$
such that $\displaystyle K\cap U = K\cap\bigcup_{V\in S'}V$.

\item[\rm(2)]
The category $\Op_{M_\infty}^{\sub,\,c}$ can be endowed with the following Grothendieck topology:
a subset $S\subset \Ob\((\Op_{M_\infty}^{\sub,\,c})_U\)$ is a covering of $U\in\Ob\(\Op_{M_\infty}^{\sub,\,c}\)$ 
if for any compact subset $K$ of $\che{M}$ there exists a finite subset $S'\subset S$ of $S$
such that $\displaystyle K\cap U = K\cap\bigcup_{V\in S'}V$.
\end{itemize}
\end{proposition}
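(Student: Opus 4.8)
The plan is to verify the four axioms (GT1)--(GT4) of Definition \ref{def-GT} directly for the proposed families $\Cov(U)$, in both cases (1) and (2) simultaneously, since the defining condition is literally the same formula $K\cap U = K\cap\bigcup_{V\in S'}V$ for a finite $S'\subset S$ and the only difference is the ambient category $\Op_{M_\infty}^{\sub}$ versus $\Op_{M_\infty}^{\sub,\,c}$. Before starting I would record the one structural fact that makes everything work: for $U\in\Op_{M_\infty}^{\sub}$ and $V_1,V_2\in(\Op_{M_\infty}^{\sub})_U$, the fiber product $V_1\times_U V_2$ is just $V_1\cap V_2$ (intersection of subanalytic open subsets of $\che M$ contained in $U$), so that the cocycle condition in the sheaf axiom and the ``refinement'' relation $\preceq$ both reduce to honest inclusions of subanalytic open sets. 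I would also note at the outset that a finite union of subanalytic (resp.\ relatively compact subanalytic) open subsets of $\che M$ is again of the same type, so all the sets written down genuinely lie in the relevant category.

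The verification then proceeds axiom by axiom. For (GT1): $\{\id_U\}$ is a covering because for any compact $K$ one may take $S'=\{\id_U\}$ and $K\cap U = K\cap U$ trivially. For (GT2): if $S_1\in\Cov(U)$ refines $S_2\subset\Ob((\Op_{M_\infty}^{\sub})_U)$, then each $(V_1,i_{V_1})\in S_1$ admits $(V_2,i_{V_2})\in S_2$ with a morphism $V_1\to V_2$ over $U$, i.e.\ $V_1\subset V_2$; given a compact $K$, choose a finite $S_1'\subset S_1$ with $K\cap U = K\cap\bigcup_{V_1\in S_1'}V_1$, pick for each such $V_1$ a witness $V_2\in S_2$ with $V_1\subset V_2$, and let $S_2'\subset S_2$ be the (finite) set of these witnesses; then $K\cap\bigcup_{V_1\in S_1'}V_1\subset K\cap\bigcup_{V_2\in S_2'}V_2\subset K\cap U$, forcing equality, so $S_2\in\Cov(U)$. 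For (GT3): given $S\in\Cov(U)$ and $(W,i_W)\in\Ob((\Op_{M_\infty}^{\sub})_U)$, one has $W\times_U S = \{W\cap V : V\in S\}$; for a compact $K$, applying the covering property of $S$ to $K$ gives a finite $S'$ with $K\cap U = K\cap\bigcup_{V\in S'}V$, and intersecting with $W$ (and using $W\subset U$) yields $K\cap W = K\cap\bigcup_{V\in S'}(W\cap V)$, so $W\times_U S\in\Cov(W)$.

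The main obstacle is (GT4), which is the transitivity/glueing axiom: given $S_1\in\Cov(U)$ and $S_2\subset\Ob((\Op_{M_\infty}^{\sub})_U)$ with $V\times_U S_2\in\Cov(V)$ for every $(V,i_V)\in S_1$, one must produce, for an arbitrary compact $K\subset\che M$, a \emph{finite} subfamily of $S_2$ covering $K\cap U$. The natural approach: apply the hypothesis $S_1\in\Cov(U)$ to $K$ to get a finite $S_1'=\{V_1,\dots,V_n\}\subset S_1$ with $K\cap U = K\cap\bigcup_i V_i$; then for each $i$, the family $V_i\times_U S_2 = \{V_i\cap W : W\in S_2\}$ covers $V_i$, so applying its covering property to the same compact $K$ gives a finite subset $S_{2,i}'\subset S_2$ with $K\cap V_i = K\cap\bigcup_{W\in S_{2,i}'}(V_i\cap W)$; finally set $S_2' := \bigcup_{i=1}^n S_{2,i}'$, a finite subset of $S_2$. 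One then checks $K\cap U = K\cap\bigcup_i V_i = \bigcup_i (K\cap V_i) = \bigcup_i K\cap\bigcup_{W\in S_{2,i}'}(V_i\cap W) \subset K\cap\bigcup_{W\in S_2'}W \subset K\cap U$, hence equality. The only subtlety to watch is that this argument uses the \emph{same} compact $K$ at every stage and that finiteness is preserved because $S_1'$ is finite and each $S_{2,i}'$ is finite; this is exactly why the covering condition is phrased with ``for any compact $K$ there exists a finite $S'$'' rather than a global finiteness requirement, and it is the same mechanism already used for the classical subanalytic site. For part (2), one checks in addition at each step that all the open sets produced ($W\cap V$, finite unions, etc.) remain relatively compact in $\che M$, which is immediate since a subset of a relatively compact set is relatively compact and finite unions of such are such; no new argument is needed.
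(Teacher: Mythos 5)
Your proposal is correct and follows essentially the same route as the paper: a direct verification of (GT1)--(GT4), with the same choice of finite witnessing subfamilies in (GT2) and the same two-stage extraction (finite $S_1'$ from $S_1$, then a finite subset of $S_2$ for each member of $S_1'$, all against the same compact $K$) in (GT4). The paper likewise treats (2) by remarking that the argument is identical.
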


\begin{proof}
Since the proof of (2) is similar, we only prove (1).
It is clear that the condition (GT1) of Definition \ref{def-GT} is satisfied.

Let us prove that  the condition (GT2) of Definition \ref{def-GT} is satisfied.
Let $S_1\subset \Ob((\Op_{M_\infty}^\sub)_U)$ be a covering of $U\in\Ob(\Op_{M_\infty}^\sub)$
which is a refinement of $S_2\subset  \Ob((\Op_{M_\infty}^\sub)_U)$ and $K$ a compact subset of $\che{M}$.
Then there exists a finite subset $S_1'$ of $S_1$ such that $K\cap U = K\cap(\cup_{V\in S_1'}V)$.
Since $S_1$ is a refinement of $S_2$, for any $V_1\in S_1'$,
there exists $V_2\in S_2$ such that $V_1\subset V_2$.
We write $S'_2$ for a subset of $S_2$ consisting of such elements.
Since $\cup_{V_1\in S_1'}V_1\subset \cup_{V_2\in S_2'}V_2$,
we have
$$K\cap U = K\cap \bigcup_{V_1\in S_1'}V_1 \subset K\cap \bigcup_{V_2\in S_2'}V_2
\hspace{7pt}(\subset K\cap U)$$
and hence $K\cap U = K\cap (\cup_{V_2\in S_2'}V_2)$.
This implies that $S_2$ is a covering of $U$
and the condition (GT2) of Definition \ref{def-GT} is satisfied.

Let us prove that  the condition (GT3) of Definition \ref{def-GT} is satisfied.
Let $S \subset \Ob((\Op_{M_\infty}^\sub)_U)$ be a covering of $U\in\Ob(\Op_{M_\infty}^\sub)$,
$V\in\Ob((\Op_{M_\infty}^\sub)_U)$ and $K$ a compact subset of $\che{M}$.
Then there exists a finite subset $S'$ of $S$ such that $K\cap U = K\cap(\cup_{W\in S'}W)$.
Then the set $V\times_US' := \{V\cap W\ |\ W\in S'\}$ is a finite subset of $V\times_US$
and we have 
$$K\cap \bigcup_{\tl{W}\in V\times_US'}\tl{W}
=
K\cap\bigcup_{W\in S'}(V\cap W)
=
\(K\cap\bigcup_{W\in S'}W\)\cap V
=
(K\cap U)\cap V
=
K\cap V.$$
This implies that $V\times_US$ is a covering of $V$
and the condition (GT3) of Definition \ref{def-GT} is satisfied.

Let us prove that  the condition (GT4) of Definition \ref{def-GT} is satisfied.
Let $S_1$ be a covering of $U\in\Ob(\Op_{M_\infty}^\sub)$, $S_2$ a subset of $\Ob((\Op_{M_\infty}^\sub)_U)$
such that for any $V\in S_1$, $V\times_U S_2$ is a covering of $V$,
and $K$ a compact subset of $\che{M}$.
Then there exists a finite subset $S_1'$ of $S_1$ such that $K\cap U = K\cap (\cup_{V\in S_1'}V)$.
Moreover, for any $V\in S_1'\ (\subset S_1)$ there exists a finite subset $\tl{S_V}$ of $V\times_US_2$
such that $K\cap V = K\cap (\cup_{\tl{W}\in \tl{S_V}}\tl{W})$.
%For any $V\in S_1'$,
Since $\tl{S_V}$ is a finite subset of $V\times_US_2$,
there exist $m_V\in\mathbb{N}$ and $W_1^V,\ldots, W_{m_V}^V\in S_2$ such that 
$$\tl{S_V} = \{V\cap W_1^V,\ldots, V\cap W_{m_V}^V\}.$$
%for any $V\in S'_1$.
Let us set $S_V := \{W_1^V,\ldots, W_{m_V}^V\}$ %for any $V\in S'_1$
and set $$S_2' := \bigcup_{V\in S_1'}S_V.$$
Then $S_2'$ is a finite subset of $S_2$ and we have
$$K\cap U
%=
%K\cap \bigcup_{V\in S_1'}V
=
 \bigcup_{V\in S_1'}(K\cap V)
=
 \bigcup_{V\in S_1'}\(K\cap \(\bigcup_{\tl{W}\in \tl{S_V}}\tl{W}\)\)
 =
 K\cap \bigcup_{V\in S_1'}\bigcup_{\tl{W}\in \tl{S_V}}\tl{W}
  =
 K\cap \bigcup_{V\in S_1'}\bigcup_{W\in S_V}(V\cap W).
$$
Since $\bigcup_{V\in S_1'}\bigcup_{W\in S_V}(V\cap W)\subset
\bigcup_{V\in S_1'}\bigcup_{W\in S_V}W =  \bigcup_{W\in S_2'}W$,
we have $K\cap U\subset K\cap\bigcup_{W\in S_2'}W$
and hence $$K\cap U =  K\cap\bigcup_{W\in S_2'}W.$$
This implies that $S_2$ is a covering of $U$
and the condition (GT4) of Definition \ref{def-GT} is satisfied.
\end{proof}

Let us denote by $M_\infty^{\sub}$ (resp.\,$M_\infty^{\sub,c}$)
the site $\Op_{M_\infty}^{\sub}$ (resp.\,$\Op_{M_\infty}^{\sub,\,c}$) with the above Grothendieck topology. 
Note that the forgetful functor induces an equivalence of categories:
$$for\colon\Mod(\Bbbk_{M_\infty^{\sub}})\simto \Mod(\Bbbk_{M_\infty^{\sub,\,c}}).$$
Indeed, for $\SF\in\Mod(\Bbbk_{M_\infty^{\sub,\,c}})$ and for $V\in\Ob(\Op_{M_\infty}^{\sub})$ we set
$$\tl{\SF}(V) := \varprojlim_{U\in\Ob\(\Op_{M_\infty}^{\sub,\,c}\)}\SF(U\cap V).$$
Then we have $\tl{\SF}\in\Mod(\Bbbk_{M_\infty^{\sub}})$ and 
the functor $\SF\mapsto \tl{\SF}$ is the inverse functor of the forgetful functor.
\newpage

\begin{definition}
Let $M_\infty = (M, \che{M})$ be a real analytic bordered space.
An object of $\Mod(\Bbbk_{M_\infty^{\sub}})\simeq\Mod(\Bbbk_{M_\infty^{\sub,\,c}})$
is called a subanalytic sheaf on $M_\infty$.
%A subanalytic presheaf on $M_\infty$ is a presheaf on the site $M_\infty^{\sub,\,c}$,
%that is, a contravariant functor from $\Op_{M_\infty}^{\sub,\,c}$ to $\Mod(\Bbbk)$.
% A subanalytic presheaf on $M_\infty^{sub, c}$ is called subanalytic sheaf on $M_\infty$
%if it is a sheaf on the site $\Op_{M_\infty}^{\sub,\,c}$.
%We shall write $\Mod(\Bbbk_{M_\infty}^{\sub})$ (resp.\,${\rm Psh}(\Bbbk_{M_\infty}^\sub)$)
%instead of $\Mod(\Bbbk_{M_\infty^{\sub,\,c}})$ (resp.\,${\rm Psh}(\Bbbk_{M_\infty^{\sub,\,c}})$),
%for simplicity.
We shall write $\Mod(\Bbbk_{M_\infty}^{\sub})$
instead of $\Mod(\Bbbk_{M_\infty^{\sub}})\simeq\Mod(\Bbbk_{M_\infty^{\sub,\,c}})$, for simplicity.
\end{definition}
%As already seen in Subsection \ref{subsec2.2},
The category of subanalytic sheaves on $M_\infty$ is abelian,
and admits projective limits and inductive limits,
by Theorem \ref{thm2.9} (1), (2) and (3).
Moreover, subanalytic sheaves can be characterized as follows:
\begin{lemma}
Let $M_\infty = (M, \che{M})$ be a real analytic bordered space.
We assume that a presheaf $\SF$ on $M_\infty^{\sub,\,c}$ satisfies the following two conditions:
\begin{itemize}
\item[\rm(1)]
$\SF(\emptyset)=0$,

\item[\rm(2)]
for any open subsets $U, V$ of $M$ which are subanalytic and relatively compact in $\che{M}$,
the following sequence is exact:
\[\xymatrix@M=5pt@R=0pt@C=20pt{
0\ar@{->}[r] & \SF(U\cup V)\ar@{->}[r]&\SF(U)\oplus\SF(V)\ar@{->}[r]&\SF(U\cap V).\\
{}& s \ar@{|->}[r] & (s|_U, s|_V) & {}\\
{}& {}  & (t, u) \ar@{|->}[r] & t|_{U\cap V}- u|_{U\cap V}
}\]
\end{itemize}
Then the presheaf $\SF$ is a sheaf on $M_\infty^{\sub,\,c}$.
\end{lemma}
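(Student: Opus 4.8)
The plan is to adapt the proof of \cite[Prop.\ 1.1.4]{Pre08} to the bordered setting. For $U\in\Op_{M_\infty}^{\sub,\,c}$ write $\che{U}$ for the closure of $U$ in $\che{M}$; by definition of $\Op_{M_\infty}^{\sub,\,c}$ this closure is compact. The only place where the bordered structure really intervenes is the following reduction: given $S\in\Cov(U)$, applying the covering condition of $M_\infty^{\sub,\,c}$ to the compact set $K=\che{U}$ yields a finite subset $S'\subset S$ with $\che{U}\cap U=\che{U}\cap\bigcup_{V\in S'}V$; since $U\subset\che{U}$ and each $V\in S'$ satisfies $V\subset U\subset\che{U}$, this simply reads $U=\bigcup_{V\in S'}V$. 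Thus every covering in $M_\infty^{\sub,\,c}$ contains a finite subfamily of relatively compact subanalytic open subsets of $U$ whose union is $U$. The case $U=\emptyset$, equivalently the empty covering, is disposed of by condition (1); so from now on all objects under consideration are nonempty.

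First I would prove that $\SF$ is separated. The key assertion, proved by induction on $n$ simultaneously for all objects of the site, is: if $U\in\Op_{M_\infty}^{\sub,\,c}$ and $U_1,\dots,U_n\in\Op_{M_\infty}^{\sub,\,c}$ satisfy $U_i\subset U$ and $\bigcup_iU_i=U$, then any $s\in\SF(U)$ with $s|_{U_i}=0$ for all $i$ vanishes. The case $n=1$ is trivial. For $n\geq2$ set $W:=U_2\cup\dots\cup U_n$, which is open, subanalytic and relatively compact in $\che{M}$, hence again an object of the site, and $U=U_1\cup W$; the induction hypothesis applied to $W$ and $\{U_2,\dots,U_n\}$ gives $s|_W=0$, and the injectivity in condition (2) for the pair $(U_1,W)$ then yields $s=0$. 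Separatedness for a general covering $S\in\Cov(U)$ follows by picking a finite $S'\subset S$ as above and restricting to its members. (Note that $W$ need not be the total space of a real analytic bordered space, but this is irrelevant, since the induction runs over all objects of $M_\infty^{\sub,\,c}$ at once.)

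Next I would establish gluing. Let $S\in\Cov(U)$ and let $(s_V)_{V\in S}$ be a compatible family, $s_V\in\SF(V)$, with $s_{V'}|_{V'\cap V''}=s_{V''}|_{V'\cap V''}$ for all $V',V''\in S$. Uniqueness of a glued section is the separatedness just proved. For existence, choose a finite $S'=\{U_1,\dots,U_n\}\subset S$ with $\bigcup_iU_i=U$ and glue the $s_{U_i}$ over $S'$ by induction on $n$: at the step $U=U_1\cup W$ with $W=U_2\cup\dots\cup U_n$, the induction hypothesis produces $t\in\SF(W)$ with $t|_{U_i}=s_{U_i}$ for $i\geq2$; one checks $s_{U_1}|_{U_1\cap W}=t|_{U_1\cap W}$ by applying separatedness on $U_1\cap W$ to its finite cover $\{U_1\cap U_i\}_{i=2}^n$ (both sides restrict to $s_{U_i}|_{U_1\cap U_i}$ on $U_1\cap U_i$, using $t|_{U_i}=s_{U_i}$ and the compatibility of $(s_V)$); then condition (2) for $(U_1,W)$ furnishes $s\in\SF(U)$ with $s|_{U_i}=s_{U_i}$ for all $i$. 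It remains to check $s|_V=s_V$ for every $V\in S$: the finite family $\{V\cap U_i\}_{i=1}^n$ covers $V$, and the restrictions of $s|_V$ and of $s_V$ to $V\cap U_i$ both equal $s_{U_i}|_{V\cap U_i}$ — the former because $s|_{U_i}=s_{U_i}$, the latter by compatibility of $(s_V)_{V\in S}$ — whence $s|_V=s_V$ by separatedness on $V$. Therefore $\SF(U)\to\SF(S)$ is bijective for every $S\in\Cov(U)$, i.e.\ $\SF$ is a sheaf on $M_\infty^{\sub,\,c}$.

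In short, once the reduction to finite covers by relatively compact subanalytic opens is in place, everything is a formal consequence of the Mayer--Vietoris exactness in condition (2). The only genuinely bordered-specific input is that reduction — which is why the site is tested against compact subsets of $\che{M}$ rather than of $M$ — and the main care needed in the write-up is the bookkeeping in the gluing step, namely the repeated and legitimate appeals to separatedness on the intersections $U_1\cap W$ and on each $V\in S$ relative to their induced finite covers.
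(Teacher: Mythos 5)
Your proof is correct and follows essentially the same route as the paper's: reduce to a finite subcover using the relative compactness of $U$ in $\che{M}$, induct on the number of opens via the decomposition $U=U_1\cup(U_2\cup\dots\cup U_n)$ and the two-set Mayer--Vietoris hypothesis, and then extend to an arbitrary $V\in S$ by testing against the finite cover $\{V\cap U_i\}_i$. The only difference is organizational — the paper proves the full exactness of the finite \v{C}ech-type sequence in one induction, whereas you separate the separatedness and gluing steps — but the underlying argument is the same.
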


\begin{proof}
First, let us prove that, 
for any finite family $\{U_i\}_{i=1}^n$ in $\Ob\(\Op_{M_\infty}^{\sub,\,c}\)$,
a sequence 
\[\xymatrix@M=5pt@R=0pt@C=30pt{
0\ar@{->}[r] & \SF\(\overset{n}{\underset{i=1}{\bigcup}}U_i\)\ar@{->}[r]^-\alpha
&\overset{n}{\underset{i=1}{\bigoplus}}\SF(U_i)\ar@{->}[r]^-\beta
&\underset{1\leq i < j \leq n}{\bigoplus}\SF(U_{i}\cap U_j)\\
{}& s \ar@{|->}[r] & (s|_{U_i})_{i=1}^n & {}\\
{}& {}  & (t_i)_{i=1}^n \ar@{|->}[r] & \(t_i|_{U_{i}\cap U_j}-t_j|_{U_{i}\cap U_j}\)_{1\leq i < j \leq n}
}\]
is exact.
We shall prove it by induction on $n$.
The cases of $n=1, 2$ are obvious.
Let us assume that $n > 2$ and the results of the case of $k\leq n-1$ have been proved.
We shall consider the following commutative diagram:
\[\xymatrix@M=5pt@R=30pt@C=30pt{
{} & {} & 0\ar@{->}[d] & 0\ar@{->}[d]\\
0\ar@{->}[r] & \SF\(\overset{n}{\underset{i=1}{\bigcup}}U_i\)\ar@{->}[r]^-{\alpha'}\ar@{->}[rd]^-{\alpha}
&\SF\(\overset{n-1}{\underset{i=1}{\bigcup}}U_i\)\oplus\SF(U_n)\ar@{->}[r]^-{\beta'}\ar@{->}[d]^-{\alpha''}
&\SF\(\overset{n-1}{\underset{i=1}{\bigcup}}U_{i}\cap U_n\)\ar@{->}[d]^-{\gamma}\\
{} & & \overset{n}{\underset{i=1}{\bigoplus}}\SF(U_i)
\ar@{->}[rd]^-{\beta}\ar@{->}[d]^-{\beta''}\ar@{->}[r]^-{\beta'''}
 & \overset{n-1}{\underset{i=1}{\bigoplus}}\SF(U_i\cap U_n)\ar@{->}[d]\\
{} & {} & \underset{1\leq i < j \leq n-1}{\bigoplus}\SF(U_i\cap U_j)\ar@{->}[r]
& \underset{1\leq i < j \leq n}{\bigoplus}\SF(U_i\cap U_j),
}\]
where
\begin{align*}
&\alpha'(s) := \(s|_{\bigcup_{i=1}^{n-1} U_i}, s|_{U_n}\), \hspace{17pt}
\beta'(t, u) := t|_{\bigcup_{i=1}^{n-1}U_i\cap U_n} - u|_{\bigcup_{i=1}^{n-1}U_i\cap U_n},\\
&\alpha''(t, u) := \(\(t|_{U_i}\)_{i=1}^{n-1}, u\), \hspace{17pt}
\beta''\((t_i)_{i=1}^{n}\) := \(t_i|_{U_{i}\cap U_j}-t_j|_{U_{i}\cap U_j}\)_{1\leq i < j \leq n-1},\\
&\gamma(v) := (v|_{U_i\cap U_n})_{i=1}^{n-1}, \hspace{40pt}
\beta'''\((t_i)_{i=1}^{n}\) := \(t_i|_{U_{i}\cap U_n}-t_n|_{U_{i}\cap U_n}\)_{i=1}^{n-1}.
\end{align*}
Remark that $\alpha = \alpha''\circ\alpha'$ and $\beta = \beta''\oplus \beta'''$.
By the assumption (2),
a sequence 
\[\xymatrix@M=5pt@R=0pt@C=20pt{
0\ar@{->}[r] & \SF\(\overset{n-1}{\underset{i=1}{\bigcup}}U_i\cup U_n\)\ar@{->}[r]^-{\alpha'}
&\SF\(\overset{n-1}{\underset{i=1}{\bigcup}}U_i\)\oplus\SF(U_n)\ar@{->}[r]^-{\beta'}
&\SF\(\overset{n-1}{\underset{i=1}{\bigcup}}U_i\cap U_n\)}\]
is exact.
Moreover, by the induction hypothesis,
$\gamma\colon\SF\(\overset{n-1}{\underset{i=1}{\bigcup}}U_i\cap U_n\)\to
\overset{n-1}{\underset{i=1}{\bigoplus}}\SF(U_i\cap U_n)$ is injective,
and a sequence
\[\xymatrix@M=5pt@R=0pt@C=20pt{
0\ar@{->}[r] & \SF\(\overset{n-1}{\underset{i=1}{\bigcup}}U_i\)\ar@{->}[r]
&\overset{n-1}{\underset{i=1}{\bigoplus}}\SF(U_i)\ar@{->}[r]
&\underset{1\leq i < j \leq n-1}{\bigoplus}\SF(U_{i}\cap U_j)\\
{}& s' \ar@{|->}[r] & \(s'|_{U_i}\)_{i=1}^{n-1} & {}\\
{}& {}  & \(t_i\)_{i=1}^{n-1} \ar@{|->}[r] & \(t_i|_{U_i\cap U_j}-t_j|_{U_i\cap U_j}\)_{1\leq i < j \leq n-1}
}\]
is exact, and hence the following sequence is exact:
\[\xymatrix@M=5pt@R=0pt@C=20pt{
0\ar@{->}[r] & \SF\(\overset{n-1}{\underset{i=1}{\bigcup}}U_i\)\oplus\SF(U_n)\ar@{->}[r]^-{\alpha''}
&\overset{n-1}{\underset{i=1}{\bigoplus}}\SF(U_i)\oplus\SF(U_n)\ar@{->}[r]^-{\beta''}
&\underset{1\leq i < j \leq n-1}{\bigoplus}\SF(U_{i}\cap U_j).}\]
Since $\alpha', \alpha''$ are injective, $\alpha$ is also injective.
It is clear that $\beta\circ\alpha = 0$ and hence we have $\Image\alpha\subset\Ker\beta$.
Let $(t_i)_{i=1}^n\in \Ker\beta$.
Then we have $ \beta\((t_i)_{i=1}^n\) = 0$ and hence $\beta''\((t_i)_{i=1}^n\) = \beta'''\((t_i)_{i=1}^n\)  = 0$.
Since $\Image\alpha''=\Ker\beta''$, there exists $(t, u)\in \SF\(\overset{n-1}{\underset{i=1}{\bigcup}}U_i\)\oplus\SF(U_n)$
such that $\alpha''(t, u)= \((t_i)_{i=1}^{n-1}, t_n\)$.
Moreover, since $\gamma(\beta'(t, u)) = \beta'''(\alpha''(t, u)) 
= \beta'''\((t_i)_{i=1}^{n-1}, t_n\) = \beta'''\((t_i)_{i=1}^{n}\) = 0$
and $\gamma$ is injective, we have $\beta'(t, u) = 0$.
So that, $(t, u)\in \Ker\beta' = \Image\alpha'$,
and hence there exists $s\in\SF\(\overset{n}{\underset{i=1}{\bigcup}}U_i\)$ such that $\alpha'(s) = (t, u)$.
Moreover we have $\alpha(s) = \alpha''(\alpha'(s)) = \alpha''(t, u) = ((t_i)_{i=1}^{n-1}, t_n) = (t_i)_{i=1}^{n}$,
and hence $\Image\alpha = \Ker\beta$.

Let us prove that the presheaf $\SF$ is a sheaf on $M_\infty^{\sub, c}$,
that is, for any covering $S$ of $U\in\Ob\(\Op_{M_\infty}^{\sub, c}\)$
a sequence 
\[\xymatrix@M=5pt@R=0pt@C=20pt{
0\ar@{->}[r] & \SF(U)\ar@{->}[r]^-\varphi
&\underset{V\in S}{\prod}\SF(V)\ar@{->}[r]^-\psi
&\underset{V', V''\in S}{\prod}\SF(V'\cap V'')\\
{}& s \ar@{|->}[r] & \(s|_{V}\)_{V\in S} & {}\\
{}& {}  & \(t_V\)_{V\in S} \ar@{|->}[r] & \(t_{V'}|_{V'\cap V''}-t_{V''}|_{V'\cap V''}\)_{V', V''\in S}
}\]
is exact.
Since $U$ is relatively compact in $\che{M}$ and 
the definition of Grothendieck topology on $M_\infty^{\sub,\,c}$,
there exists a finite subset $\{U_i\}_{i=1}^n$ of $S$
such that $U = \bigcup_{i=1}^nU_i$,
%U = \var{U}\cap U = \var{U}\cap\cup_{i=1}^nU_i =\cup_{i=1}^nU_i
and hence a sequence
\[\xymatrix@M=5pt@R=0pt@C=30pt{
0\ar@{->}[r] & \SF(U)\ar@{->}[r]^-\alpha
&\overset{n}{\underset{i=1}{\bigoplus}}\SF(U_i)\ar@{->}[r]^-\beta
&\underset{1\leq i < j \leq n}{\bigoplus}\SF(U_{i}\cap U_j)\\
{}& s \ar@{|->}[r] & (s|_{U_i})_{i=1}^n & {}\\
{}& {}  & (t_i)_{i=1}^n \ar@{|->}[r] & \(t_i|_{U_{i}\cap U_j}-t_j|_{U_{i}\cap U_j}\)_{1\leq i < j \leq n}
}\]
is exact, as already proved.
Let $s\in \F(U)$ and assume that $\varphi(s) = 0$.
Then we have $\(s_V\)_{V\in S}=0$, and hence $\(s_{U_i}\)_{i=1}^n = 0$.
This implies that $\alpha(s)=0$.
Since $\alpha$ is injective, we have $s = 0$ and hence $\varphi$ is injective.
It is clear that $\psi\circ\varphi = 0$ and hence $\Image\varphi\subset \Ker\psi$.
Let $\(t_V\)_{V\in S}\in\Ker\psi$.
Then we have $0 = \psi\(\(t_V\)_{V\in S}\) = \(t_{V'}|_{V'\cap V''}-t_{V''}|_{V'\cap V''}\)_{V', V''\in S}$.
In particular, we have $\(t_i|_{U_{i}\cap U_j}-t_j|_{U_{i}\cap U_j}\)_{1\leq i < j \leq n} = 0$.
This implies that $\beta\(\(t_{U_i}\)_{i=1}^n\) = 0$.
Since $\(t_{U_i}\)_{i=1}^n\in\Ker\beta = \Image\alpha$,
there exists $s\in\F(U)$ such that for any $i=1, \ldots, n$ one has $s|_{U_i} = t_{U_i}$.
Let us prove that for any $V\in S\setminus\{U_i\ |\ i=1,\ldots, n\}$ one has $s|_V = t_V$.
Since $\(t_V\)_{V\in S}\in\Ker\psi$, in particular,
for any $i=1,\ldots, n$ we have 
$$(s|_V -  t_V)|_{V\cap U_i}
=
(s|_V)|_{V\cap U_i} - t_V|_{V\cap U_i}
%= s|_{V\cap U_i} - t_V|_{V\cap U_i}
= (s|_{U_i})|_{V\cap U_i} - t_V|_{V\cap U_i} = t_{U_i}|_{V\cap U_i} - t_V|_{V\cap U_i} = 0.
$$
Moreover, a map $\SF(V)\to\overset{n}{\underset{i=1}{\bigoplus}}\SF(V\cap U_i)$ is injective,
as already proved, and hence $s|_V = t_V$.
This implies that $\varphi(s) = \(t_V\)_{V\in S}$ and we have $\Image\varphi = \Ker\psi$.

Therefore, the proof is completed.
\end{proof}

From now on, let us describe various functors for subanalytic sheaves.
As already seen in Theorem \ref{thm2.8},
there exists the left exact functor
\[(\cdot)^{+}\colon {\rm Psh}(\Bbbk_{M_\infty^{\sub}})\to{\rm Psh}(\Bbbk_{M_\infty^{\sub}})\]
where for any $\SF\in {\rm Psh}(\Bbbk_{M_\infty}^\sub)$ and any $U\in\Ob\(\Op_{M_\infty}\)$
we set $$\SF^+(U) := \varinjlim_{S\in\Cov(U)}\SF(S)$$
and $\SF(S)$ is the kernel of a map
$$
\prod_{V\in S}\SF(V)\rightarrow\prod_{V', V''\in S}\SF(V'\cap V''),
\hspace{17pt}
\(t_V\)_{V\in S}\longmapsto (t_{V'}|_{V'\cap V''}-t_{V''}|_{V'\cap V''})_{V', V''\in S}.$$
Moreover, we have the exact functor
\[(\cdot)^{++}\colon {\rm Psh}(\Bbbk_{M_\infty^{\sub}})\to\Mod(\Bbbk_{M_\infty}^\sub)\]
which is called the sheafification functor
and is the left adjoint of natural embedding functor $\Mod(\Bbbk_{M_\infty}^\sub)\to {\rm Psh}(\Bbbk_{M_\infty^{\sub}})$.

Note that there exists the natural morphism $\rho_{M_\infty}\colon M\to M_\infty^\sub$ of sites.
We sometimes write $\rho$ instead of $\rho_{M_\infty}$, for simplicity.
Then we have the functors
\begin{align*}
\rho_{M_\infty\ast}&\colon\Mod(\Bbbk_{M})\to\Mod(\Bbbk_{M_\infty}^\sub),\\
\rho_{M_\infty}^{-1}&\colon\Mod(\Bbbk_{M_\infty}^\sub)\to\Mod(\Bbbk_{M})
\end{align*}
which are defined by 
$$(\rho_{M_\infty\ast}\SF)(U) := \SF(U)$$
for any $\SF\in\Mod(\Bbbk_M)$ and any $U\in \Ob\(\Op_{M_\infty}^\sub\)$
, and by
$$\rho_{M_\infty}^{-1}\SG := \(\rho_{\rm pre}^{-1}\SG\)^{++}$$
for any $\SG\in\Mod(\Bbbk_{M_\infty}^\sub)$.
Here, the presheaf $\rho_{\rm pre}^{-1}\SG$ is given by
$\displaystyle\rho_{\rm pre}^{-1}\SG(V) := \varinjlim_{V\subset U}\SG(U)$
for any open subset $V$ of $M$
and $U$ ranges through the family $\Ob\(\Op_{M_\infty}^\sub\)$.
Note that a pair $(\rho_{M_\infty}^{-1}, \rho_{M_\infty\ast})$ is an adjoint pair,
the functor $\rho_{M_\infty\ast}$ is left exact and the functor $\rho_{M_\infty}^{-1}$ is exact,
as already seen in Proposition \ref{prop2.12}.
Note also that there exists a canonical isomorphism $\rho_{M_\infty}^{-1}\circ\rho_{M_\infty\ast}\simto \id$ of functors
and an isomorphism 
$$\rho_{M_\infty\ast}\shom(\rho_{M_\infty}^{-1}\SF, \SG) \simeq \shom(\SF, \rho_{M_\infty\ast}\SG)$$
in $\Mod(\Bbbk_{M})$ for any $\SF\in\Mod(\Bbbk_{M_\infty}^\sub)$ and any $\SG\in\Mod(\Bbbk_M)$.
Hence the functor $\rho_{M_\infty\ast}$ is a fully faithful functor.

A sheaf $\SF\in\Mod(\Bbbk_M)$ is said to be $\RR$-constructible on $M_\infty$
if $j_{M!}\SF\in\Mod_{\RR-c}(\Bbbk_{\che{M}})$
where $j_M\colon M\hookrightarrow \che{M}$ is the natural embedding.
Let us denote by $\Mod_{\RR-c}(\Bbbk_{M_\infty})$
the category of $\RR$-constructible sheaves on $M_\infty$.
Note that the restriction of $\rho_{M_\infty\ast}$ to $\Mod_{\RR-c}(\Bbbk_{M_\infty})$ is exact.
We shall denote by 
$$\rho_{M_\infty\ast}^{\RR-c}\colon \Mod_{\RR-c}(\Bbbk_{M_\infty})\to \Mod(\Bbbk_{M_\infty}^\sub)$$
the exact functor.
By the same argument as \cite[Prop.\:1.1.14]{Pre08},
we have the left adjoint functor
$$\rho_{M_\infty!}\colon\Mod(\Bbbk_M)\to\Mod(\Bbbk_{M_\infty}^\sub)$$
of $\rho_{M_\infty}^{-1}$ which is given by the following:
for any $\SF\in\Mod(\Bbbk_M)$, $\rho_{M_\infty!}\SF$ is the sheaf associated to
the presheaf $U\mapsto \Gamma(\var{U}; \SF|_{\var{U}})$,
where $U\in\Ob\(\Op_{M_\infty}^\sub\)$ and $\var{U}$ is the closure of $U$ in $M$.
Note that the functor $\rho_{M!}$ is exact and fully faithful,
and there exist the canonical isomorphism $\id\simto\rho_{M_\infty}^{-1}\circ\rho_{M_\infty!}$ of functors 
and an isomorphism
$$\rho_{M_\infty}^{-1}\shom(\rho_{M_\infty!}\SF, \SG)\simeq\shom(\SF, \rho_{M_\infty}^{-1}\SG)$$
 in $\Mod(\Bbbk_M)$ for any $\SF\in\Mod(\Bbbk_M)$ and any $\SG\in\Mod(\Bbbk_{M_\infty}^\sub)$.

Let $f\colon M_\infty\to N_\infty$ be a morphism of real analytic bordered spaces.
Then we obtain a morphism of sites from $M_\infty^\sub$ to $N_\infty^\sub$ associated
with a map $\Op_{N_\infty}^\sub\to\Op_{M_\infty}^\sub, V\mapsto f^{-1}(V)$.
We shall denote by the same symbol $f\colon M_\infty^\sub\to N_\infty^\sub$.
As already seen in Definition \ref{def2.11} and Proposition \ref{prop2.12},
there exist the direct image functor and the inverse image functor
\begin{align*}
f_\ast&\colon\Mod(\Bbbk_{M_\infty}^\sub)\to \Mod(\Bbbk_{N_\infty}^\sub),\\
f^{-1}&\colon\Mod(\Bbbk_{N_\infty}^\sub)\to\Mod(\Bbbk_{M_\infty}^\sub)
\end{align*}
which are defined by 
$$(f_\ast\SF)(V):= \SF(f^{-1}(V))
\simeq\Hom_{\Mod(\Bbbk_{M_\infty}^\sub)}(\Bbbk_{f^{-1}(V)}, \SF)$$
for any $\SF\in\Mod(\Bbbk_{M_\infty}^\sub)$ and any $V\in\Ob\(\Op_{M_\infty}^\sub\)$,
and by
$$f^{-1}\SG := (f^{-1}_{\rm pre}\SG)^{++}$$
for any $\SG \in \Mod(\Bbbk_{N_\infty}^\sub)$.
Here, the presheaf $f^{-1}_{\rm pre}\SG$ is given by  
$$(f^{-1}_{\rm pre}\SG)(U) := \varinjlim_{U\subset f^{-1}(V)}\SG(V)$$
for any $U\in\Ob\(\Op_{M_\infty}^\sub\)$ and 
$V$ ranges through the family
$\{W\in \Ob\(\Op_{N_\infty}^\sub\)\ |\ U\subset f^{-1}(W)\}$.
We have already seen in Proposition \ref{prop2.12},
a pair $(f^{-1}, f_\ast)$ is an adjoint pair,
the direct image functor $f_\ast$ is left exact and the inverse image functor $f^{-1}$ is exact.

For any $U\in\Ob\(\Op_{M_\infty}^\sub\)$, we obtain a real analytic bordered space $(U, \che{M})$.
We shall denote by $U_\infty = (U, \che{M})$\footnote{It is clear that $(U, \var{U}) \simeq (U, \che{M})$ as bordered spaces.}
the real analytic bordered space
and denote by $i_{U_\infty}\colon U_\infty\to M_\infty$ a morphism of real analytic bordered spaces
induced by the natural embedding $U\hookrightarrow M$.
Hence, we obtain a morphism $i_{U_\infty}\colon U_\infty^\sub\to M_\infty^\sub$
of real analytic bordered spaces.
Moreover, we have a morphism from $M_\infty^\sub$ to $U_\infty^\sub$
induced by the map $\Op_{U_\infty}^\sub\to \Op_{M_\infty}^\sub, V\mapsto V$,
and the inverse image functor of it which is denoted by $i_{U_\infty!!}$\footnote{
We shall write $i_{U_\infty!!}$ instead of $i_{U_\infty!}$ in the notion of Subsection \ref{subsec2.2}.}
is left adjoint to the functor $i_{U_\infty}^{-1}$,
namely,
for any $\SF\in\Mod(\Bbbk_{U_\infty}^\sub)$ and any $\SG\in\Mod(\Bbbk_{M_\infty}^\sub)$,
we have
$$\Hom_{\Mod(\Bbbk_{M_\infty}^\sub)}(i_{U_\infty!!}\SF, \SG)
\simeq\Hom_{\Mod(\Bbbk_{U_\infty}^\sub)}(\SF, i_{U_\infty}^{-1}\SG).$$
We have already seen in Subsection \ref{subsec2.2},
there exist functors
\begin{align*}
(\cdot)_{U_\infty} := i_{U_\infty!!}i_{U_\infty}^{-1} &\colon\Mod(\Bbbk_{M_\infty}^\sub)\to\Mod(\Bbbk_{M_\infty}^\sub),\\
\Gamma_{U_\infty} := i_{U_\infty\ast}i_{U_\infty}^{-1}&\colon\Mod(\Bbbk_{M_\infty}^\sub)\to\Mod(\Bbbk_{M_\infty}^\sub),\\
\Gamma(U;\ \cdot\ )&\colon\Mod(\Bbbk_{M_\infty}^\sub)\to\Mod(\Bbbk).
\end{align*}

As already seen in Subsection \ref{subsec2.2},
we have the tensor product functor and the internal hom functor\footnote{
In this paper, we shall write $\ihom^\sub$ instead of the internal hom functor on $\Mod(\Bbbk_{M_\infty}^\sub)$,
that is, $\shom_{\Bbbk_{M_\infty^{\sub}}}(\cdot, \cdot)$ in the notion of Subsection \ref{subsec2.2}.}
\begin{align*}
(\cdot)\otimes(\cdot)&\colon
\Mod(\Bbbk_{M_\infty}^\sub)\times \Mod(\Bbbk_{M_\infty}^\sub)\to\Mod(\Bbbk_{M_\infty}^\sub),\\
\ihom^\sub(\cdot, \cdot)&\colon
\Mod(\Bbbk_{M_\infty}^\sub)^\op\times \Mod(\Bbbk_{M_\infty}^\sub)\to\Mod(\Bbbk_{M_\infty}^\sub)
\end{align*}
which are defined by
\begin{align*}
\SF\otimes\SG &:= \(\SF\overset{\rm pre}{\otimes}\SG\)^{++},\\
\(\ihom^\sub(\SF, \SG)\)(U) &:= \Hom_{\Mod(\Bbbk_{U_\infty}^\sub)}(i_{U_\infty}^{-1}\SF,\,i_{U_\infty}^{-1}\SG)
\end{align*}
for any $\SF, \SG\in\Mod(\Bbbk_{M})$ and any $U\in\Ob\(\Op_{M_\infty}^\sub\)$.
Here, the presheaf $\SF\overset{\rm pre}{\otimes}\SG$ is given by
$$\(\SF\overset{\rm pre}{\otimes}\SG\)(U) := \SF(U)\otimes\SG(U)$$
for any $U\in\Ob\(\Op_{M_\infty}^\sub\)$.
Moreover we have external hom functors 
\begin{align*}
\shom^\sub(\cdot, \cdot) := \rho_{M_\infty}^{-1}\ihom^\sub
&\colon
\Mod(\Bbbk_{M_\infty}^\sub)^\op\times \Mod(\Bbbk_{M_\infty}^\sub)\to\Mod(\Bbbk_M),\\
\Hom_{\Mod(\Bbbk_{M_\infty}^\sub)}(\cdot, \cdot)&\colon
\Mod(\Bbbk_{M_\infty}^\sub)^\op\times \Mod(\Bbbk_{M_\infty}^\sub)\to\Mod(\Bbbk).
\end{align*}

For a morphism $f\colon M\to N$ of real analytic manifolds,
the proper direct image functor 
$$f_{!!}\colon\Mod(\Bbbk_{M_\infty}^\sub)\to \Mod(\Bbbk_{N_\infty}^\sub)$$
is given by 
$$(f_{!!}\SF)(V) := \varinjlim_{U}\Hom_{\Mod(\Bbbk_{M_\infty}^\sub)}(\Bbbk_{f^{-1}(V)}, \SF_{U_\infty})$$
for any $V\in\Ob\(\Op_{N_\infty}^{\sub,c}\)$.
Here, $U$ ranges through the family $\Ob\(\Op_{M_\infty}^{\sub,c}\)$ such that $f^{-1}(V)\cap\var{U}\to V$ is proper,
$\var{U}$ is the closure of $U$ in $M$.
Note that the functor $f_{!!}$ is left exact and
if $f\colon M\to N$ is proper then we have $f_\ast\simeq f_{!!}$.

These functor have many properties as similar to classical sheaves.
We shall skip the explanation of it.

We shall write $\bfD^\ast(\Bbbk_{M_\infty}^\sub)$ ($\ast = {\rm ub}, +, -, {\rm b}$)
instead of $\bfD^\ast\(\Mod\(\Bbbk_{M_\infty}^{\sub}\)\)$
and denoted by $\BDC_{\RR-c}(\Bbbk_{M_\infty})$
the full triangulated subcategory of $\BDC(\Bbbk_M)$ consisting of objects
whose cohomologies are contained in $\Mod_{\RR-c}(\Bbbk_{M_\infty})$.
Recall that the abelian category $\Mod(\Bbbk_{M_\infty}^\sub)$ admits enough injectives,
see Theorem \ref{thm2.9} (6).
For a morphism $f\colon M_\infty\to N_\infty$ of real analytic bordered spaces
and $U\in\Ob\(\Op_{M_\infty}^\sub\)$,
there exist (derived) functors:
%we have already seen in Subsection \ref{subsec2.2} for several ones.
\begin{align*}
\bfR\rho_{M_\infty\ast}&\colon\BDC(\Bbbk_M)\hookrightarrow\BDC(\Bbbk_{M_\infty}^\sub),\\
\rho_{M_\infty\ast}^{\RR-c}&\colon\BDC_{\RR-c}(\Bbbk_{M_\infty})\hookrightarrow\BDC(\Bbbk_{M_\infty}^\sub),\\
\rho_{M_\infty}^{-1}&\colon\BDC(\Bbbk_{M_\infty}^\sub)\longrightarrow\BDC(\Bbbk_M),\\
\rho_{M_\infty!}&\colon\BDC(\Bbbk_M)\hookrightarrow\BDC(\Bbbk_{M_\infty}^\sub),\\
(\cdot)^{++}&\colon\BDC({\rm Psh}(\Bbbk_{M_\infty}^\sub))\longrightarrow\BDC(\Bbbk_{M_\infty}^\sub),\\
\bfR f_\ast&\colon\BDC(\Bbbk_{M_\infty}^\sub)\longrightarrow \BDC(\Bbbk_{N_\infty}^\sub),\\
f^{-1}&\colon\BDC(\Bbbk_{N_\infty}^\sub)\longrightarrow\BDC(\Bbbk_{M_\infty}^\sub),\\
\bfR f_{!!}&\colon\BDC(\Bbbk_{M_\infty}^\sub)\longrightarrow \BDC(\Bbbk_{N_\infty}^\sub),\\
%f^!&\colon \BDC(\Bbbk_{N_\infty}^\sub)\longrightarrow\BDC(\Bbbk_{M_\infty}^\sub),\\
(\cdot)\otimes(\cdot)&\colon
\BDC(\Bbbk_{M_\infty}^\sub)\times \BDC(\Bbbk_{M_\infty}^\sub)\longrightarrow\BDC(\Bbbk_{M_\infty}^\sub),\\
\rihom^\sub(\cdot, \cdot)&\colon
\BDC(\Bbbk_{M_\infty}^\sub)^\op\times \BDC(\Bbbk_{M_\infty}^\sub)
\longrightarrow\bfD^+(\Bbbk_{M_\infty}^\sub),\\
\rhom^\sub(\cdot, \cdot)&\colon
\BDC(\Bbbk_{M_\infty}^\sub)^\op\times \BDC(\Bbbk_{M_\infty}^\sub)\longrightarrow\bfD^+(\Bbbk_{M_\infty}),\\
\rHom(\cdot, \cdot)&\colon
\BDC(\Bbbk_{M_\infty}^\sub)^\op\times \BDC(\Bbbk_{M_\infty}^\sub)\longrightarrow\bfD^+(\Bbbk),\\
(\cdot)_U&\colon\BDC(\Bbbk_{M_\infty}^\sub)\longrightarrow\BDC(\Bbbk_{M_\infty}^\sub),\\
\bfR\Gamma_U&\colon\BDC(\Bbbk_{M_\infty}^\sub)\longrightarrow\BDC(\Bbbk_{M_\infty}^\sub),\\
\bfR\Gamma(U;\ \cdot\ )&\colon\BDC(\Bbbk_{M_\infty}^\sub)\longrightarrow\BDC(\Bbbk).
\end{align*}

Note that the functor $\bfR f_{!!}$ admits a right adjoint functor,
see e.g. \cite[\S 3.6]{Kas16}:
$$f^!\colon \BDC(\Bbbk_{N_\infty}^\sub)\longrightarrow\BDC(\Bbbk_{M_\infty}^\sub).$$
Note also that if $f\colon M\to N$ is topological submersive
then $f^!(\cdot)\simeq \rho_{M_\infty\ast}\omega_{M/N}\otimes f^{-1}(\cdot)$,
where $\omega_{M/N}\in\BDC(\Bbbk_M)$ is
the relative dualizing complex which is defined in \cite[Def.\:3.1.16 (i)]{KS90}. 

Moreover, these functors have many properties as similar to classical (subanalytic) sheaves.
\begin{proposition}\label{prop3.4}
Let $f\colon M_\infty\to N_\infty$ be a morphism of real analytic bordered spaces
and $\SF, \SF_1, \SF_2\in\BDC(\Bbbk_{M_\infty}^\sub),
\SG, \SG_1, \SG_2\in\BDC(\Bbbk_{N_\infty}^\sub),
\SK\in\BDC(\Bbbk_M), \SL\in\BDC(\Bbbk_N), \mathcal{J}\in\BDC_{\RR-c}(\Bbbk_{M_\infty})$.
\begin{itemize}
\item[\rm (1)]
$\bfR \rho_{M_\infty\ast}\shom(\rho_{M_\infty}^{-1}\SF, \SK)
\simeq
\rihom^\sub(\SF, \bfR \rho_{M_\infty\ast}\SK),\\[5pt]
\rhom^\sub(\rho_{M_\infty!}\SK, \SF)
\simeq
\rhom(\SK, \rho_{M_\infty}^{-1}\SF),$

\item[\rm (2)]
$\rihom^\sub(\bfR f_{!!}\SF, \SG)
\simeq
\bfR f_\ast\rihom^\sub(\SF, f^!\SG),\\[5pt]
\bfR f_\ast\rihom^\sub(f^{-1}\SG, \SF)
\simeq
\rihom^\sub(\SG, \bfR f_\ast\SF),\\[5pt]
\rihom^\sub(\SF_1\otimes\SF_2, \SF)
\simeq
\rihom^\sub\left(\SF_1, \rihom^\sub(\SF_2, \SF)\right),$

\item[\rm (3)]
$f^{-1}(\SF_1\otimes\SF_2)\simeq f^{-1}\SF_1\otimes f^{-1}\SF_2,\\[5pt]
\bfR f_{!!}\left(\SF\otimes f^{-1}\SG\right)\simeq \bfR f_{!!}\SF\otimes \SG,\\[5pt]
f^!\rihom^\sub(\SG_1, \SG_2)\simeq\rihom^\sub(f^{-1}\SG_1, f^!\SG_2),$

\item[\rm (4)]
for a cartesian diagram 
\[\xymatrix@M=5pt@R=20pt@C=40pt{
M'_\infty\ar@{->}[r]^-{f'}\ar@{->}[d]_-{g'} & N'_\infty\ar@{->}[d]^-{g}\\
M_\infty\ar@{->}[r]_-{f}  & N_\infty}\]
we have $g^{-1}\bfR f_{!!}\SF\simeq \bfR f'_{!!}g'^{-1}\SF,\hspace{3pt}
g^{!}\bfR f_{\ast}\SF\simeq \bfR f'_{\ast}g'^{!}\SF$,

\item[\rm (5)]
$f^!(\SG\otimes\rho_{N_\infty!}\SL)\simeq f^!\SG\otimes \rho_{M_\infty!}f^{-1}\SL,\\[5pt]
\rihom^\sub(\mathcal{J}, \SF)\otimes\rho_{M_\infty!}\SK\simeq
\rihom^\sub(\mathcal{J}, \SF\otimes \rho_{M_\infty!}\SK)$,

\item[\rm (6)]
there exist the commutativity of the various functors
\begin{table}[h]
\begin{equation*}
   \begin{tabular}{l||c|c|c|c|c|c|c}
    {} & $\otimes$ & $f^{-1}$ & $\bfR f_\ast$ & $f^!$ & $\bfR f_{!!}$\\ \hline \hline 
    $\overset{}{\underset{}{\bfR\rho_{\ast}}}$ & $\times$ & $\times$ & $\circ$ & $\circ$ & $\times$ \\ \hline
$\overset{}{\underset{}{\rho_{\ast}^{\RR-c}}}$ & $\circ$ & $\circ$ & $\circ$ & $\circ$ & $\times$ \\ \hline
    $\overset{}{\underset{}{\rho^{-1}}}$ & $\circ$ & $\circ$ & $\circ$ & $\times$ & $\circ$ \\ \hline
    $\overset{}{\underset{}{\rho_{!}}}$ & $\circ$ & $\circ$ & $\times$  
& $\times$ & $\times$ \\ \hline 
   \end{tabular}
   \end{equation*}
\end{table}

\noindent
where $``\circ"$ means that the functors commute, and $``\times"$ they do not.
\end{itemize}
\end{proposition}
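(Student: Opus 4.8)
The plan is to deduce every assertion from its counterpart for subanalytic sheaves on real analytic manifolds, recalled in Theorem \ref{thm2.25} (see \cite{Pre08}), by means of the bordered space formalism, proceeding as for ind-sheaves on bordered spaces in \cite[\S 3.3]{DK16}. The key device is the open embedding $j_M\colon M\hookrightarrow\che{M}$, which is a morphism $i_{M_\infty}\colon M_\infty\to\che{M}$ of real analytic bordered spaces (with $\che{M}$ viewed as the bordered space $(\che{M},\che{M})$): the associated functor $i_{M_\infty!!}\colon\Mod(\Bbbk_{M_\infty}^\sub)\to\Mod(\Bbbk_{\che{M}}^\sub)$ recalled above is fully faithful and satisfies $i_{M_\infty}^{-1}\circ i_{M_\infty!!}\simeq\id$, and in fact (from the definition of the Grothendieck topology on $\Op_{M_\infty}^\sub$, in analogy with the ind-sheaf case) $\BDC(\Bbbk_{M_\infty}^\sub)$ is equivalent to the quotient of $\BDC(\Bbbk_{\che{M}}^\sub)$ by the subcategory of objects supported on $\che{M}\setminus M$, with $i_{M_\infty}^{-1}$ the quotient functor and $\bfR i_{M_\infty!!},\bfR i_{M_\infty\ast}$ its fully faithful adjoints. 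Consequently any isomorphism of objects of $\BDC(\Bbbk_{M_\infty}^\sub)$ may be tested, after applying $\bfR i_{M_\infty!!}$ (or $\bfR i_{M_\infty\ast}$), inside $\BDC(\Bbbk_{\che{M}}^\sub)$, where Theorem \ref{thm2.25} applies.

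With this reduction in hand, the rest is routine. A general morphism $f\colon M_\infty\to N_\infty$ of real analytic bordered spaces factors through the graph morphism $M_\infty\to M_\infty\times N_\infty$ (itself a morphism of real analytic bordered spaces, handled as in \cite[\S 3.3]{DK16}) followed by the projection $M_\infty\times N_\infty\to N_\infty$, which is induced by the manifold projection $\che{M}\times\che{N}\to\che{N}$; and the open embeddings $i_{U_\infty}$ give the localization functors, for which all the identities are immediate. For a morphism induced by $\che{f}\colon\che{M}\to\che{N}$ with $\che{f}(M)\subset N$, the operations $f^{-1},\bfR f_\ast,\bfR f_{!!},f^!$ are, up to the functors $i_{(-)_\infty}^{-1}$ and their adjoints, those of Theorem \ref{thm2.25} for $\che{f}$; hence the adjunctions and the isomorphisms (1)--(5) follow from the corresponding statements on $\che{M},\che{N}$, together with the commutation of the six operations with $i_{M_\infty}^{-1},\bfR i_{M_\infty!!},\bfR i_{M_\infty\ast}$ and proper base change (Theorem \ref{thm2.25} (6)); in particular the projection formulas of Theorem \ref{thm2.25} (4),(8) give the identities in (3) and (5). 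Assertion (6) follows in the same way from the table in Theorem \ref{thm2.25}: commuting pairs remain commuting after passing to the quotient, and the non-commuting ones already fail to commute on real analytic manifolds, indeed already on a point.

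The step I expect to be the main obstacle is the compatibility assertion used throughout: that the operations $\otimes,\rihom^\sub,f^{-1},\bfR f_\ast,\bfR f_{!!},f^!$ on $\Mod(\Bbbk_{M_\infty}^\sub)$ defined in this section really descend from, and are interchanged with, the corresponding operations on the ambient real analytic manifolds by the functors $i_{M_\infty}^{-1}$ and its adjoints. This is verified as in \cite[\S 3.3]{DK16}: one checks that $\bfR i_{M_\infty!!}$ commutes with $\otimes,f^{-1},\bfR f_{!!}$ and that $\bfR i_{M_\infty\ast}$ commutes with $\rihom^\sub,f^!,\bfR f_\ast$, using the projection formula $\bfR i_{M_\infty!!}(\SF\otimes i_{M_\infty}^{-1}\SG)\simeq\bfR i_{M_\infty!!}\SF\otimes\SG$ on $\che{M}$ together with the basic adjunctions. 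Once this bookkeeping is done, each assertion of the proposition is a formal consequence of Theorem \ref{thm2.25} applied on the ambient manifolds.
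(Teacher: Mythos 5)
Your proposal is correct and is essentially the argument the paper has in mind: the paper itself omits the proof, saying only that it is ``similar to the case of classical sheaves,'' and your reduction to the ambient manifolds via $j_{M_\infty}^{-1}$, $\bfR j_{M_\infty!!}$, $\bfR j_{M_\infty\ast}$ (together with the graph factorization for a general morphism of bordered spaces) is exactly the standard way to carry out that reduction, matching the compatibilities the paper uses freely elsewhere (e.g.\ in the proofs of Lemma \ref{lem3.5} and Proposition \ref{prop3.7}). Only cosmetic points: the paper writes $j_{M_\infty}$ (not $i_{M_\infty}$) for the morphism $M_\infty\to\che{M}$, and for the non-commuting entries of the table in (6) the counterexamples come from the manifold case $M=(M,M)$ rather than from a point, where $\rho$ is trivial.
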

Since the proof of this proposition is similar to the case of  classical sheaves, we shall skip the proof.

As we have already seen in Subsection \ref{subsec2.2},
we can consider subanalytic sheaves with ring actions.
We just remark that $\Bbbk_{M_\infty^\sub}$-algebras
and modules over them can be characterized as similar to Remark \ref{rem2.26} (1), (2),
and there exist functors
\begin{align*}
\bfR f_\ast&\colon\bfD(f^{-1}\R)\longrightarrow \bfD(\R),\\
f^{-1}&\colon\bfD(\R)\longrightarrow\bfD(f^{-1}\R),\\
(\cdot)\underset{\R}{\overset{\bfL}{\otimes}}(\cdot)&\colon
\bfD(\R^\op)\times \bfD(\R)\longrightarrow\bfD(\Bbbk_M^\sub),\\
\rihom_{\R}^\sub(\cdot, \cdot)&\colon
\bfD(\R)^\op\times \bfD(\R)\longrightarrow\bfD(\Bbbk_M),\\
\rhom_{\R}^\sub(\cdot, \cdot) := \rho_M^{-1}\rihom_{\R}^\sub&\colon
\bfD(\R)^\op\times \bfD(\R)\longrightarrow\bfD(\Bbbk_M),\\
\rHom_{\R}(\cdot, \cdot)&\colon
\bfD(\R)^\op\times \bfD(\R)\longrightarrow\bfD(\Bbbk)
\end{align*}
where $\R$ is a $\Bbbk_{M_\infty^\sub}$-algebra. 
In this paper, we shall write $\ihom_{\R}^\sub$ instead of the internal hom functor on $\Mod(\R)$,
that is, $\shom_{\R}(\cdot, \cdot)$ in the notion of Subsection \ref{subsec2.2}.

At the end of this subsection,
we shall describe a relation between ind-sheaves on $M_\infty$ and subanalytic sheaves on $M_\infty$.
Recall that $j_{M_\infty}\colon M_\infty\to \che{M}$ is a morphism of real analytic bordered spaces
associated to the natural embedding $j_M\colon M\hookrightarrow\che{M}$.
Then for any $\SF\in\Mod(\Bbbk_{M_\infty}^\sub)$ and any $\SG\in\Mod(\Bbbk_{\che{M}}^\sub)$ we have
\begin{align*}
j_{M_\infty}^{-1}\bfR j_{M_\infty\ast}\SF&\overset{\sim}{\longleftarrow} \SF,\\
j_{M_\infty}^{-1}\bfR j_{M_\infty!!}\SF& \simto\SF,\\
\bfR j_{M_\infty!!}j_{M_\infty}^{-1}\SG &\simeq \rho_{\che{M}\ast}\Bbbk_M\otimes \SG,\\\
\bfR j_{M_\infty\ast}j_{M_\infty}^{-1}\SG &\simeq \rihom^\sub(\rho_{\che{M}\ast}\Bbbk_M, \SG),
\end{align*}
and hence functors $j_{M_\infty}^{-1}, \bfR j_{M_\infty!!}, \bfR j_{M_\infty\ast}$ induce equivalences of categories:
\begin{align*}
\tl{j}\colon\BDC(\Bbbk_{M_\infty}^\sub) \simto
 &\,\{\SF\in \BDC(\Bbbk_{\che{M}}^\sub)\ |\  \rho_{\che{M}\ast}\Bbbk_M\otimes \SF\simto \SF\}\\
 \simeq\
 &\,\{\SF\in \BDC(\Bbbk_{\che{M}}^\sub)\ |\  \SF\simto\rihom^\sub\(\rho_{\che{M}\ast}\Bbbk_M, \SF\) \}.
  \end{align*}
Recall that the category of ind-sheaves on $M_\infty$ is defined by 
 \begin{align*}
\BDC(\I\Bbbk_{M_\infty}) &:= 
\BDC(\I\Bbbk_{\che{M}})/\BDC(\I\Bbbk_{\che{M}\backslash M})
\end{align*} 
and there exist (derived) functors
\begin{align*}
I_{\che{M}}&\colon \BDC(\Bbbk_{\che{M}}^\sub)\to\BDC(\I\Bbbk_{\che{M}}),\\
\bfR J_{\che{M}}&\colon \BDC(\I\Bbbk_{\che{M}})\to\BDC(\Bbbk_{\che{M}}^\sub).
\end{align*}    
Let us consider the following functors
\begin{align*}
I_{M_\infty}&\colon\BDC(\Bbbk_{M_\infty}^\sub)\to\BDC(\I\Bbbk_{M_\infty}),
\hspace{7pt}
\SF\mapsto \q I_{\che{M}}\bfR j_{M_\infty!!}\SF,\\
\bfR J_{M_\infty}&\colon\BDC(\I\Bbbk_{M_\infty})\to\BDC(\Bbbk_{M_\infty}^\sub),
\hspace{7pt}
F\mapsto j_{M_\infty}^{-1}\bfR J_{\che{M}}\bfR j_{M_\infty\ast}F,
\end{align*}  
where $\q\colon \BDC(\I\Bbbk_{\che{M}})\to\BDC(\I\Bbbk_{M_\infty})$ is the quotient functor.

We denote by $\BDC_{\I{\RR-c}}(\I\Bbbk_{M_\infty})$ the full triangulated subcategory of $\BDC(\I\Bbbk_{M_\infty})$
consisting of objects such that $\bfR j_{M_\infty!!}F\in\BDC_{\I\RR-c}(\I\Bbbk_{\che{M}})$.
Then the following lemma follows from \cite[Lem.\:7.1.3]{KS01}.
\begin{lemma}\label{lem3.5}
Let $f\colon M_\infty\to N_\infty$ be a morphism of real analytic bordered spaces
associated with a morphism $\che{f}\colon\che{M}\to\che{N}$ of real analytic manifolds.
The functors below are well defined:
\begin{itemize}
\item[\rm (1)]
$\iota_{M_\infty}\colon \BDC_{\RR-c}(\Bbbk_M)\to\BDC_{\I\RR-c}(\I\Bbbk_{M_\infty})$,

\item[\rm (2)]
$\beta_{M_\infty}\colon \BDC(\Bbbk_M)\to\BDC_{\I\RR-c}(\I\Bbbk_{M_\infty})$,

\item[\rm (3)]
$(\cdot)\otimes(\cdot)\colon
\BDC_{\I\RR-c}(\I\Bbbk_{M_\infty})\times \BDC_{\I\RR-c}(\I\Bbbk_{M_\infty})
\to\BDC_{\I\RR-c}(\I\Bbbk_{M_\infty})$,

\item[\rm (4)]
$\rihom(\iota_{M_\infty}(\cdot),\ \cdot)\colon
\BDC_{\RR-c}(\Bbbk_{M_\infty})^{\op}\times \BDC_{\I\RR-c}(\I\Bbbk_{M_\infty})
\to\BDC_{\I\RR-c}(\I\Bbbk_{M_\infty})$,

\item[\rm (5)]
$f^{-1}\colon\BDC_{\I\RR-c}(\I\Bbbk_{N_\infty})\to\BDC_{\I\RR-c}(\I\Bbbk_{M_\infty})$,

\item[\rm (6)]
$\bfR f_{!!}\colon\BDC_{\I\RR-c}(\I\Bbbk_{M_\infty})\to\BDC_{\I\RR-c}(\I\Bbbk_{N_\infty})$,

\item[\rm (7)]
$f^{!}\colon\BDC_{\I\RR-c}(\I\Bbbk_{N_\infty})\to\BDC_{\I\RR-c}(\I\Bbbk_{M_\infty})$.
\end{itemize}
\end{lemma}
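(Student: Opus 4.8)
The plan is to deduce Lemma \ref{lem3.5} from the corresponding statement for the ambient real analytic manifold, namely \cite[Lem.\:7.1.3]{KS01} (or rather its global version recorded in Theorem \ref{thm2.26} and the surrounding discussion), by transporting everything along the morphism $j_{M_\infty}\colon M_\infty\to\che{M}$ and using the description of $\BDC_{\I\RR-c}(\I\Bbbk_{M_\infty})$ as the full subcategory of objects $F$ with $\bfR j_{M_\infty!!}F\in\BDC_{\I\RR-c}(\I\Bbbk_{\che M})$.

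First I would record the key compatibilities. By definition the quotient functor $\q=j_{M_\infty}^{-1}\colon\BDC(\I\Bbbk_{\che M})\to\BDC(\I\Bbbk_{M_\infty})$ has the fully faithful left adjoint $\bfl_{M_\infty}=\bfR j_{M_\infty!!}$, and $\bfr_{M_\infty}=\bfR j_{M_\infty\ast}$ is the right adjoint; moreover $j_{M_\infty}^{-1}\bfR j_{M_\infty!!}\simeq\id$. The functor $\bfR j_{M_\infty!!}$ is the one used to test $\I\RR$-constructibility, so a functor between the bordered categories preserves $\BDC_{\I\RR-c}$ as soon as, after applying $\bfR j_{M_\infty!!}$, the corresponding operation on $\che M$ preserves $\BDC_{\I\RR-c}(\I\Bbbk_{\che M})$. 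For the six operations one uses the standard base-change type identities for bordered spaces from \cite[\S 3.3]{DK16}: for $f\colon M_\infty\to N_\infty$ coming from $\che f\colon\che M\to\che N$ one has $\bfR j_{N_\infty!!}\circ f^{-1}\simeq \che f^{-1}\circ\bfR j_{N_\infty!!}$ up to the cut-off by $\iota\Bbbk_M$, and similarly $\bfR j_{N_\infty!!}\circ\bfR f_{!!}\simeq \bfR\che f_{!!}\circ\bfR j_{M_\infty!!}$, $\bfR j_{M_\infty!!}(\cdot\otimes\cdot)\simeq(\cdot)\otimes(\cdot)$ after the same cut-off, and for $f^{!}$ the analogous identity $j_{M_\infty}^{-1}\circ\che f^{!}\simeq f^{!}\circ j_{N_\infty}^{-1}$ together with the fact that $\bfR j_{M_\infty!!}f^{!}G$ is a direct summand of $\che f^{!}\bfR j_{N_\infty!!}G$ tensored with $\iota\Bbbk_M$.

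Then the proof goes item by item. For (1) and (2): $\bfR j_{M_\infty!!}\iota_{M_\infty}\SF\simeq\iota_{\che M}j_{M!}\SF$ and $\bfR j_{M_\infty!!}\beta_{M_\infty}\SF\simeq\beta_{\che M}j_{M!}\SF$ by the very definitions of $\iota_{M_\infty},\beta_{M_\infty}$ recalled in \S\ref{subsec2.7}; since $\SF\in\BDC_{\RR-c}(\Bbbk_M)$ (resp.\ $\BDC(\Bbbk_M)$) means $j_{M!}\SF\in\BDC_{\RR-c}(\Bbbk_{\che M})$, and $\iota_{\che M},\beta_{\che M}$ send $\BDC_{\RR-c}(\Bbbk_{\che M})$ into $\BDC_{\I\RR-c}(\I\Bbbk_{\che M})$ by \cite[Lem.\:7.1.3]{KS01}, we are done. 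For (3), (5), (6), (7): apply $\bfR j_{M_\infty!!}$ (resp.\ $\bfR j_{N_\infty!!}$), use the compatibility identities above to reduce to the corresponding operation on $\che M$ (resp.\ $\che N$), note that tensoring by $\iota_{\che M}\Bbbk_M$ preserves $\BDC_{\I\RR-c}(\I\Bbbk_{\che M})$ since $\Bbbk_M$ is $\RR$-constructible on $\che M$ when $M$ is subanalytic, and invoke \cite[Lem.\:7.1.3]{KS01} which gives that $\otimes$, $\che f^{-1}$, $\bfR\che f_{!!}$, $\che f^{!}$ all preserve $\I\RR$-constructibility. For (4) one combines (3) with the internal-hom case of \cite[Lem.\:7.1.3]{KS01}, again after the cut-off.

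The main obstacle I expect is the careful bookkeeping of the cut-off functor $(\cdot)\otimes\iota_{\che M}\Bbbk_M$ (equivalently $\bfR\Gamma$-type corrections): the identities relating $\bfR j_{\bullet!!}$ with the operations on $\che M$ hold only up to this cut-off, so one must check that each such correction stays within $\BDC_{\I\RR-c}$ and that the resulting object genuinely lies in the image of $\bfR j_{M_\infty!!}$ (i.e.\ is supported, in the ind-sheaf sense, on $M$). This is where one uses that $M$ is open and subanalytic in $\che M$, so that $\Bbbk_M$ and $\Bbbk_{\che M\setminus M}$ are $\RR$-constructible, and hence the distinguished triangle $\iota\Bbbk_M\otimes F\to F\to\iota\Bbbk_{\che M\setminus M}\otimes F\xrightarrow{+1}$ stays inside $\BDC_{\I\RR-c}(\I\Bbbk_{\che M})$. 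Once this point is handled uniformly, each of the seven assertions reduces to a one-line invocation of \cite[Lem.\:7.1.3]{KS01}.
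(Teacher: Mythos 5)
Your proposal is correct and follows essentially the same route as the paper: testing $\I\RR$-constructibility via $\bfR j_{M_\infty!!}$ (resp.\ $\bfR j_{N_\infty!!}$), using the compatibility isomorphisms with $\iota$, $\beta$, $\otimes$, $\che f^{-1}$, $\bfR\che f_{!!}$, $\che f^{!}$ and the cut-off $\iota_{\che M}\Bbbk_M\otimes(\cdot)$, and then invoking \cite[Lem.\:7.1.3]{KS01} together with the $\RR$-constructibility of $\Bbbk_M$ on $\che M$. (Minor slips only: in (5) the left-hand side should read $\bfR j_{M_\infty!!}\circ f^{-1}$, and for (7) the object is actually isomorphic to, not merely a direct summand of, $\iota_{\che M}\Bbbk_M\otimes\che f^{!}\bfR j_{N_\infty!!}G$.)
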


\begin{proof}
(1)
Let $\SF\in\BDC_{\RR-c}(\Bbbk_{M})$.
Then we have $j_{M!}\SF\in\BDC_{\RR-c}(\Bbbk_{\che{M}})$.
Note that there exists an isomorphism 
$\bfR j_{M_\infty!!}\iota_{M_\infty}\SF \simeq \iota_{\che{M}}j_{M!}\SF$
in $\BDC(\I\Bbbk_{\che{M}})$.
Since the functor
$\iota_{\che{M}}\colon\BDC_{\RR-c}(\Bbbk_{\che{M}})\to\BDC_{\I\RR-c}(\I\Bbbk_{\che{M}})$
is well defined,
we have $\bfR j_{M_\infty!!}\iota_{M_\infty}\SF\in\BDC_{\I\RR-c}(\I\Bbbk_{\che{M}})$.
This implies that $\iota_{M_\infty}\SF\in\BDC_{\I\RR-c}(\I\Bbbk_{M_\infty})$.
\bigskip

\noindent
(2)
Let $\SF\in\BDC(\Bbbk_{M})$.
Then we have $j_{M!}\SF\in\BDC(\Bbbk_{\che{M}})$.
Note that there exists an isomorphism 
$\bfR j_{M_\infty!!}\beta_{M_\infty}\SF \simeq \beta_{\che{M}}j_{M!}\SF$
in $\BDC(\I\Bbbk_{\che{M}})$.
By \cite[Lem.\:7.1.3 (v)]{KS01},
we have $\beta_{\che{M}}j_{M!}\SF\in \BDC_{\I\RR-c}(\Bbbk_{\che{M}})$,
and hence $\bfR j_{M_\infty!!}\beta_{M_\infty}\SF\in \BDC_{\I\RR-c}(\Bbbk_{\che{M}})$.
This implies that $\beta_{M_\infty}\SF\in\BDC_{\I\RR-c}(\I\Bbbk_{M_\infty})$.
\bigskip

\noindent
(3)
Let $F_1, F_2\in \BDC_{\I\RR-c}(\I\Bbbk_{M_\infty})$.
Then we have $\bfR j_{M_\infty!!}F_1, \bfR j_{M_\infty!!}F_2\in\BDC_{\I\RR-c}(\I\Bbbk_{\che{M}})$.
Note that there exists an isomorphism
$\bfR j_{M_\infty!!}(F_1\otimes F_2)\simeq\bfR j_{M_\infty!!}F_1\otimes\bfR j_{M_\infty!!}F_2$.
By \cite[Lem.\:7.1.3 (iii)]{KS01},
we have $\bfR j_{M_\infty!!}F_1\otimes\bfR j_{M_\infty!!}F_2\in \BDC_{\I\RR-c}(\Bbbk_{\che{M}})$,
and hence $\bfR j_{M_\infty!!}(F_1\otimes F_2)\in \BDC_{\I\RR-c}(\Bbbk_{\che{M}})$.
This implies that $F_1\otimes F_2\in\BDC_{\I\RR-c}(\I\Bbbk_{M_\infty})$.
\bigskip

\noindent
(4)
Let $\SF\in\BDC_{\RR-c}(\Bbbk_{M_\infty})$ and $G\in \BDC_{\I\RR-c}(\I\Bbbk_{M_\infty})$.
Then we have $j_{M!}\SF\in\BDC_{\RR-c}(\Bbbk_{\che{M}})$ 
and $\bfR j_{M_\infty!!}G\in\BDC_{\I\RR-c}(\I\Bbbk_{\che{M}})$.
Note that there exist isomorphisms in $\BDC(\I\Bbbk_{\che{M}})$
\begin{align*}
\bfR j_{M_\infty!!}\rihom(\iota_{M_\infty}\SF, G)
&\simeq
\bfR j_{M_\infty!!}j_{M_\infty}^{-1}\bfR j_{M_\infty\ast}
\rihom(\iota_{M_\infty}\SF, j_{M_\infty}^{-1}\bfR j_{M_\infty!!}G)\\
&\simeq
\bfR j_{M_\infty!!}j_{M_\infty}^{-1}
\rihom(\bfR j_{M_\infty!!}\iota_{M_\infty}\SF, \bfR j_{M_\infty!!}G)\\
&\simeq
\iota_{\che{M}}\Bbbk_M\otimes
\rihom(\iota_{\che{M}}j_{M!}\SF, \bfR j_{M_\infty!!}G),
\end{align*}
where the first isomorphism follows from
$j_{M_\infty}^{-1}\bfR j_{M_\infty\ast}\simeq\id$ and $\id\simeq j_{M_\infty}^{-1}\bfR j_{M_\infty!!}$,
in the second isomorphism we used the adjointness of $(\bfR j_{M_\infty!!}, j_{M_\infty}^{-1})$
and the last isomorphism follows from
$\bfR j_{M_\infty!!}j_{M_\infty}^{-1}(\cdot)\simeq\iota_{\che{M}}\Bbbk_M\otimes(\cdot)$.
By \cite[Lem.\:7.1.3 (iii), (iv)]{KS01},
we have $\iota_{\che{M}}\Bbbk_M\otimes
\rihom(\iota_{\che{M}}j_{M!}\SF, \bfR j_{M_\infty!!}G)\in \BDC_{\I\RR-c}(\Bbbk_{\che{M}})$,
and hence $\bfR j_{M_\infty!!}\rihom(\iota_{M_\infty}\SF, G)\in \BDC_{\I\RR-c}(\Bbbk_{\che{M}})$.
This implies that $\rihom(\iota_{M_\infty}\SF, G)\in\BDC_{\I\RR-c}(\I\Bbbk_{M_\infty})$.
\bigskip

\noindent
(5)
Let $G\in \BDC_{\I\RR-c}(\I\Bbbk_{N_\infty})$.
Then we have $\bfR j_{N_\infty!!}G\in\BDC_{\I\RR-c}(\I\Bbbk_{\che{N}})$.
Note that there exists an isomorphism 
$\bfR j_{M_\infty!!}f^{-1}G \simeq \che{f}^{-1}\bfR j_{N_\infty!!}G$
in $\BDC(\I\Bbbk_{\che{N}})$.
Since the functor $\che{f}\colon \BDC(\I\Bbbk_{\che{N}})\to \BDC(\I\Bbbk_{\che{M}})$
is well defined,
we have $\bfR j_{M_\infty!!}f^{-1}G\in \BDC_{\I\RR-c}(\I\Bbbk_{\che{M}})$.
This implies that $f^{-1}G\in \BDC_{\I\RR-c}(\I\Bbbk_{M_\infty})$.
\bigskip

\noindent
(6)
Let $F\in \BDC_{\I\RR-c}(\I\Bbbk_{M_\infty})$.
Then we have $\bfR j_{M_\infty!!}F\in\BDC_{\I\RR-c}(\I\Bbbk_{\che{M}})$.
Note that there exists an isomorphism 
$\bfR j_{N_\infty!!}\bfR f_{!!}F \simeq \che{f}_{!!}\bfR j_{M_\infty!!}F$
in $\BDC(\I\Bbbk_{\che{M}})$.
By \cite[Lem.\:7.1.3 (i)]{KS01},
we have $\bfR j_{N_\infty!!}\bfR f_{!!}F\in \BDC_{\I\RR-c}(\I\Bbbk_{\che{N}})$.
This implies that $\bfR f_{!!}F\in \BDC_{\I\RR-c}(\I\Bbbk_{N_\infty})$.

\bigskip

\noindent
(7)
Let $G\in \BDC_{\I\RR-c}(\I\Bbbk_{N_\infty})$.
Then we have $\bfR j_{N_\infty!!}G\in\BDC_{\I\RR-c}(\I\Bbbk_{\che{N}})$.
Note that there exist isomorphism in $\BDC(\I\Bbbk_{\che{M}})$
\begin{align*}
\bfR j_{M_\infty!!}f^{!}G \simeq 
\bfR j_{M_\infty!!}j_{M_\infty}^{-1}\che{f}^{!}\bfR j_{N_\infty!!}G
\simeq
\iota_{\che{M}}\Bbbk_M\otimes \che{f}^{!}\bfR j_{N_\infty!!}G
\end{align*}
where in the last isomorphism we used 
$\bfR j_{M_\infty!!}j_{M_\infty}^{-1}\simeq\iota_{\che{M}}\Bbbk_M\otimes(\cdot)$.
By \cite[Lem.\:7.1.3 (ii), (iii)]{KS01} and the fact that $\iota_{\che{M}}\Bbbk_M\in\BDC_{\I\RR-c}(\I\Bbbk_{\che{M}})$,
we have $\iota_{\che{M}}\Bbbk_M\otimes \che{f}^{!}\bfR j_{N_\infty!!}G\in \BDC_{\I\RR-c}(\I\Bbbk_{\che{M}})$,
and hence $\bfR j_{M_\infty!!}f^{!}G\in \BDC_{\I\RR-c}(\I\Bbbk_{\che{M}})$.
This implies that $f^{!}G\in \BDC_{\I\RR-c}(\I\Bbbk_{M_\infty})$.
\end{proof}

Let us describe a relation between ind-sheaves on $M_\infty$ and subanalytic sheaves on $M_\infty$.
\begin{proposition}\label{prop3.6}
Let $M_\infty = (M, \che{M})$ be a real analytic bordered space.
Then we have
\begin{itemize}
\item[\rm (1)]
a pair $(I_{M_\infty}, \bfR J_{M_\infty})$ is an adjoint pair
and there exists a canonical isomorphism $\id\simto\bfR J_{M_\infty}\circ I_{M_\infty}$,

\item[\rm (2)]
there exists an equivalence of triangulated categories:
\[\xymatrix@M=7pt@C=45pt{
\BDC(\Bbbk_{M_\infty}^\sub)\ar@<0.8ex>@{->}[r]^-{I_{M_\infty}}_-\sim
&
\BDC_{\I{\RR-c}}(\I\Bbbk_{M_\infty})
\ar@<0.8ex>@{->}[l]^-{\bfR J_{M_\infty}}.
}\]
\end{itemize}
\end{proposition}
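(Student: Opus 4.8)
The plan is to reduce both assertions to Theorem \ref{thm2.26} over the ambient real analytic manifold $\che{M}$, using the factorizations $I_{M_\infty}=\q\circ I_{\che{M}}\circ\bfR j_{M_\infty!!}$ and $\bfR J_{M_\infty}=j_{M_\infty}^{-1}\circ\bfR J_{\che{M}}\circ\bfR j_{M_\infty\ast}$ built into the definitions. On the subanalytic side $j_{M_\infty}$ is an open (bordered) embedding, so $j_{M_\infty}^{!}\simeq j_{M_\infty}^{-1}$, the pair $(\bfR j_{M_\infty!!},\,j_{M_\infty}^{-1})$ is an adjoint pair, and its unit $\SF\to j_{M_\infty}^{-1}\bfR j_{M_\infty!!}\SF$ is an isomorphism; on the ind-sheaf side $\q\simeq j_{M_\infty}^{-1}$ has the fully faithful right adjoint $\bfr_{M_\infty}\simeq\bfR j_{M_\infty\ast}$. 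Composing the three adjoint pairs $(\bfR j_{M_\infty!!},\,j_{M_\infty}^{-1})$, $(I_{\che{M}},\,\bfR J_{\che{M}})$ and $(\q,\,\bfr_{M_\infty})$ shows immediately that $(I_{M_\infty},\,\bfR J_{M_\infty})$ is an adjoint pair.

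Next I would transport $I_{\che{M}}$ through the two equivalences $\bfR j_{M_\infty!!}\colon\BDC(\Bbbk_{M_\infty}^\sub)\simto\{\SF\in\BDC(\Bbbk_{\che{M}}^\sub)\ |\ \rho_{\che{M}\ast}\Bbbk_M\otimes\SF\simto\SF\}$ and $\bfl_{M_\infty}\colon\BDC(\I\Bbbk_{M_\infty})\simto\{F\in\BDC(\I\Bbbk_{\che{M}})\ |\ \iota_{\che{M}}\Bbbk_M\otimes F\simto F\}$. For $\SF\in\BDC(\Bbbk_{M_\infty}^\sub)$ set $H:=I_{\che{M}}\bfR j_{M_\infty!!}\SF$; then $\iota_{\che{M}}\Bbbk_M\otimes H\simeq I_{\che{M}}(\rho_{\che{M}\ast}\Bbbk_M\otimes\bfR j_{M_\infty!!}\SF)\simeq H$, since $I_{\che{M}}$ commutes with $\otimes$ and $I_{\che{M}}\rho_{\che{M}\ast}^{\RR-c}\Bbbk_M\simeq\iota_{\che{M}}\Bbbk_M$; hence $H$ lies in the essential image of $\bfl_{M_\infty}$ and $\bfl_{M_\infty}I_{M_\infty}\SF=\bfl_{M_\infty}\q H\simeq H$. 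Since $\bfR j_{M_\infty!!}\SF\in\BDC(\Bbbk_{\che{M}}^\sub)$, Theorem \ref{thm2.26} gives $H\in\BDC_{\I\RR-c}(\I\Bbbk_{\che{M}})$; as $\bfl_{M_\infty}I_{M_\infty}\SF\simeq H$, this means $I_{M_\infty}\SF\in\BDC_{\I\RR-c}(\I\Bbbk_{M_\infty})$ by definition of the latter. Dually, $\lambda_{\che{M}}$ maps $\{\iota_{\che{M}}\Bbbk_M\otimes(\cdot)\simto(\cdot)\}\cap\BDC_{\I\RR-c}(\I\Bbbk_{\che{M}})$ into $\{\rho_{\che{M}\ast}\Bbbk_M\otimes(\cdot)\simto(\cdot)\}$ by the same computation, using that $\lambda_{\che{M}}$ commutes with $\otimes$. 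Thus $I_{\che{M}}$ restricts to an equivalence between the two bordered subcategories, and transporting it along $\bfR j_{M_\infty!!}$ and $\bfl_{M_\infty}$ yields that $I_{M_\infty}\colon\BDC(\Bbbk_{M_\infty}^\sub)\to\BDC_{\I\RR-c}(\I\Bbbk_{M_\infty})$ is an equivalence, where one uses that $\BDC_{\I\RR-c}(\I\Bbbk_{M_\infty})$ is exactly the $\q$-image of that subcategory, since every object of the form $\bfl_{M_\infty}(\cdot)$ already satisfies $\iota_{\che{M}}\Bbbk_M\otimes(\cdot)\simto(\cdot)$.

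In particular $I_{M_\infty}$ is fully faithful, so the unit $\id\simto\bfR J_{M_\infty}I_{M_\infty}$ of the adjunction above is an isomorphism; this proves (1). For (2), $I_{M_\infty}$ being an equivalence onto the full subcategory $\BDC_{\I\RR-c}(\I\Bbbk_{M_\infty})$ and $\bfR J_{M_\infty}$ being its right adjoint, the triangle identities force the counit $I_{M_\infty}\bfR J_{M_\infty}\to\id$ to be an isomorphism on $\BDC_{\I\RR-c}(\I\Bbbk_{M_\infty})$, so the restriction of $\bfR J_{M_\infty}$ to that subcategory is a quasi-inverse of $I_{M_\infty}$, which is the asserted equivalence of triangulated categories.

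The step I expect to be the main obstacle is the bookkeeping underlying the second paragraph: $\BDC(\I\Bbbk_{M_\infty})$ embeds into $\BDC(\I\Bbbk_{\che{M}})$ in two different ways, via $\bfl_{M_\infty}=\bfR j_{M_\infty!!}$ (the picture adapted to $I_{M_\infty}$) and via $\bfr_{M_\infty}=\bfR j_{M_\infty\ast}$ (adapted to $\bfR J_{M_\infty}$), with a parallel distinction between $\{\rho_{\che{M}\ast}\Bbbk_M\otimes\SF\simto\SF\}$ and $\{\SF\simto\rihom^\sub(\rho_{\che{M}\ast}\Bbbk_M,\SF)\}$ on the subanalytic side; checking that the definitions of $I_{M_\infty}$ and $\bfR J_{M_\infty}$ match the correct pictures is where the identities $\bfR j_{M_\infty\ast}j_{M_\infty}^{-1}\SG\simeq\rihom^\sub(\rho_{\che{M}\ast}\Bbbk_M,\SG)$ and $\bfR J_{\che{M}}\,\rihom(I_{\che{M}}(\cdot),\cdot)\simeq\rihom^\sub(\cdot,\bfR J_{\che{M}}(\cdot))$ are really used. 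Since the ind-sheaf localization over $\che{M}$ is a Verdier quotient, all of this is carried out with the explicit adjoints $\bfl_{M_\infty},\bfr_{M_\infty}$, paralleling \cite[\S 7.1]{KS01}.
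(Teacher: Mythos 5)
Your proposal is correct and follows essentially the same route as the paper: the adjunction is obtained by composing the three adjoint pairs $(\bfR j_{M_\infty!!}, j_{M_\infty}^{-1})$, $(I_{\che{M}}, \bfR J_{\che{M}})$, $(\q, \bfR j_{M_\infty\ast})$, and the equivalence is reduced to Theorem \ref{thm2.26} over $\che{M}$ via the identity $\bfR j_{M_\infty!!}I_{M_\infty}\SF\simeq I_{\che{M}}(\rho_{\che{M}\ast}\Bbbk_M\otimes\bfR j_{M_\infty!!}\SF)$, using that $I_{\che{M}}$ and $\lambda_{\che{M}}$ commute with $\otimes$ and that $I_{\che{M}}\rho_{\che{M}\ast}^{\RR-c}\Bbbk_M\simeq\iota_{\che{M}}\Bbbk_M$. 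The only cosmetic difference is that the paper verifies $\id\simto\bfR J_{M_\infty}\circ I_{M_\infty}$ and the essential surjectivity by direct computation, while you deduce them formally from full faithfulness and the transported equivalence of the two idempotent-cut subcategories; both rest on the same identities.
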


\begin{proof}
(1)
Let $\SF\in\BDC(\Bbbk_{M_\infty}^\sub)$ and $G\in\BDC(\I\Bbbk_{M_\infty})$.
Then we have
\begin{align*}
\Hom_{\BDC(\I\Bbbk_{M_\infty})}(I_{M_\infty}\SF, G)
&=
\Hom_{\BDC(\I\Bbbk_{M_\infty})}(\q I_{\che{M}}\bfR j_{M_\infty!!}\SF, G)\\
&\simeq
\Hom_{\BDC(\I\Bbbk_{\che{M}})}(I_{\che{M}}\bfR j_{M_\infty!!}\SF, \bfR j_{M_\infty\ast}G)\\
&\simeq
\Hom_{\BDC(\Bbbk_{\che{M}}^\sub)}(\bfR j_{M_\infty!!}\SF, \bfR J_{\che{M}}\bfR j_{M_\infty\ast}G)\\
&\simeq
\Hom_{\BDC(\Bbbk_{M_\infty}^\sub)}(\SF, \bfR j_{M_\infty}^{-1}\bfR J_{\che{M}}\bfR j_{M_\infty\ast}G)\\
&\simeq
\Hom_{\BDC(\Bbbk_{M_\infty}^\sub)}(\SF, \bfR J_{M_\infty}G),
\end{align*}
where in the first isomorphism we used the fact that $\q=j_{M_\infty}^{-1}$
and a pair $(j_{M_\infty}^{-1}, \bfR j_{M_\infty\ast})$ is an adjoint pair,
the second isomorphism follows from a pair $(I_{\che{M}}, \bfR J_{\che{M}})$ is an adjoint pair
and the third isomorphism follows from a pair $(\bfR j_{M_\infty!!}, j_{M_\infty}^{-1})$ is an adjoint pair.
This implies that a pair $(I_{M_\infty}, \bfR J_{M_\infty})$ is an adjoint pair.

Hence, there exists a natural morphism $\id\to \bfR J_{M_\infty}\circ I_{M_\infty}$ of functors.
Moreover, for any $\SF\in\BDC(\I\Bbbk_{M_\infty})$,
we have isomorphisms in $\BDC(\I\Bbbk_{M_\infty})$
\begin{align*}
(\bfR J_{M_\infty}\circ I_{M_\infty})(\SF)
&\simeq
j_{M_\infty}^{-1}\bfR J_{\che{M}}\bfR j_{M_\infty\ast}\q I_{\che{M}}\bfR j_{M_\infty!!}\SF\\
&\simeq
j_{M_\infty}^{-1}\bfR J_{\che{M}}\bfR j_{M_\infty\ast}j_{M_\infty}^{-1} I_{\che{M}}\bfR j_{M_\infty!!}\SF\\
&\simeq
j_{M_\infty}^{-1}\bfR J_{\che{M}}\rihom(\iota_{\che{M}}\Bbbk_{M}, I_{\che{M}}\bfR j_{M_\infty!!}\SF)\\
&\simeq
j_{M_\infty}^{-1}\bfR J_{\che{M}}\rihom(I_{\che{M}}\rho_{\che{M}}\Bbbk_{M}, I_{\che{M}}\bfR j_{M_\infty!!}\SF)\\
&\simeq
j_{M_\infty}^{-1}\rihom^\sub(\rho_{\che{M}}\Bbbk_{M}, \bfR J_{\che{M}}I_{\che{M}}\bfR j_{M_\infty!!}\SF)\\
&\simeq
j_{M_\infty}^{-1}\rihom^\sub(\rho_{\che{M}}\Bbbk_{M}, \bfR j_{M_\infty!!}\SF)\\
&\simeq
j_{M_\infty}^{-1}\bfR j_{M_\infty\ast}j_{M_\infty}^{-1}\bfR j_{M_\infty!!}\SF\\
&\simeq\SF
\end{align*}
where the second isomorphism follows from $\q = j_{M_\infty}^{-1}$,
in the third isomorphism we used the fact that
$\bfR j_{M_\infty\ast}j_{M_\infty}^{-1}(\cdot)\simeq\rihom(\iota_{\che{M}}\Bbbk_M, \cdot)$,
the fourth isomorphism follows from
$I_{\che{M}}\circ\rho_{\che{M}}^{\RR-c} = \iota_{\che{M}}|_{\Mod_{\RR-c}(\Bbbk_{\che{M}})}$,
the fifth isomorphism follows from the adjointness of $(I_{\che{M}}, \bfR J_{\che{M}})$
and in the sixth we used the fact $\id\simto \bfR J_{\che{M}}\circ I_{\che{M}}$.
\bigskip

\noindent
(2)
First, let us prove that
for any $\SF\in\BDC(\Bbbk_{M_\infty}^\sub)$ one has $I_{M_\infty}(\SF)\in\BDC_{\I\RR-c}(\I\Bbbk_{M_\infty})$.
There exist isomorphisms in $\BDC(\I\Bbbk_{\che{M}})$
\begin{align*}
\bfR j_{M_\infty!!}I_{M_\infty}(\SF)
&\simeq
\bfR j_{M_\infty!!}\q I_{\che{M}}\bfR j_{M_\infty!!}\SF\\
&\simeq
\iota_{\che{M}}\Bbbk_M\otimes I_{\che{M}}\bfR j_{M_\infty!!}\SF\\
&\simeq
I_{\che{M}}\rho_{\che{M}}\Bbbk_M\otimes I_{\che{M}}\bfR j_{M_\infty!!}\SF\\
&\simeq
I_{\che{M}}(\rho_{\che{M}}\Bbbk_M\otimes \bfR j_{M_\infty!!}\SF)
\end{align*}
where the third isomorphism follows from
$I_{\che{M}}\circ\rho_{\che{M}}^{\RR-c} = \iota_{\che{M}}|_{\Mod_{\RR-c}(\Bbbk_{\che{M}})}$
and in the fourth isomorphism we used the fact that
$I_{\che{M}}(\cdot\otimes\cdot)\simeq I_{\che{M}}(\cdot)\otimes I_{\che{M}}(\cdot)$.
Since $I_{\che{M}}(\rho_{\che{M}}\Bbbk_M\otimes \bfR j_{M_\infty!!}\SF)\in \BDC_{\I\RR-c}(\I\Bbbk_{\che{M}})$,
we have $\bfR j_{M_\infty!!}I_{M_\infty}(\SF)\in \BDC_{\I\RR-c}(\I\Bbbk_{\che{M}})$.
This implies $I_{M_\infty}(\SF)\in\BDC_{\I\RR-c}(\I\Bbbk_{M_\infty})$.

By (1), the functor $I_{M_\infty}$ is fully faithful.
Let us prove that the functor $I_{M_\infty}$ is essentially surjective.
Let $G\in\BDC_{\I{\RR-c}}(\I\Bbbk_{M_\infty})$.
Then we have $\bfR j_{M_\infty!!}G\in\BDC_{\I{\RR-c}}(\I\Bbbk_{\che{M}})$. 
By Theorem \ref{thm2.26}, there exists $\SF\in \BDC(\Bbbk_{\che{M}}^\sub)$
such that $\bfR j_{M_\infty!!}G\simeq I_{\che{M}}\SF$
and hence we have $G\simeq j_{M_\infty}^{-1}I_{\che{M}}\SF$.
Moreover, there exist isomorphisms in $\BDC(\I\Bbbk_{M_\infty})$
\begin{align*}
j_{M_\infty}^{-1}I_{\che{M}}\bfR j_{M_\infty!!}j_{M_\infty}^{-1}\SF
&\simeq
j_{M_\infty}^{-1}I_{\che{M}}(\rho_{\che{M}}\Bbbk_M\otimes\SF)\\
&\simeq
j_{M_\infty}^{-1}(I_{\che{M}}\rho_{\che{M}}\Bbbk_M\otimes I_{\che{M}}\SF)\\
&\simeq
j_{M_\infty}^{-1}(\iota_{\che{M}}\Bbbk_M\otimes I_{\che{M}}\SF)\\
&\simeq
j_{M_\infty}^{-1}\bfR j_{M_\infty!!}j_{M_\infty}^{-1}I_{\che{M}}\SF\\
&\simeq
j_{M_\infty}^{-1}I_{\che{M}}\SF
\end{align*}
where in the first isomorphism we used
$(\bfR j_{M_\infty!!}\circ j_{M_\infty}^{-1})(\cdot) \simeq \rho_{\che{M}\ast}\Bbbk_M\otimes(\cdot)$,
in the second isomorphism we used the fact that
$I_{\che{M}}(\cdot\otimes\cdot)\simeq I_{\che{M}}(\cdot)\otimes I_{\che{M}}(\cdot)$,
the third isomorphism follows from
$I_{\che{M}}\circ\rho_{\che{M}}^{\RR-c} = \iota_{\che{M}}|_{\Mod_{\RR-c}(\Bbbk_{\che{M}})}$
and in the fourth isomorphism we used the fact that
$(\bfR j_{M_\infty!!}\circ j_{M_\infty}^{-1})(\cdot)\simeq \iota_{\che{M}}\Bbbk_{M}\otimes(\cdot)$.
Hence we have 
$$G\simeq j_{M_\infty}^{-1}I_{\che{M}}\SF
\simeq
j_{M_\infty}^{-1}I_{\che{M}}\bfR j_{M_\infty!!}j_{M_\infty}^{-1}\SF
\simeq
I_{M_\infty}(j_{M_\infty}^{-1}\SF).$$
This implies that the functor $I_{M_\infty}$ is essentially surjective.

Therefore, the proof is completed.
\end{proof}

We will denote by $$\lambda_{M_\infty}\colon \BDC_{\I{\RR-c}}(\I\Bbbk_{M_\infty})\simto \BDC(\Bbbk_{M_\infty}^\sub)$$
the inverse functor of $I_{M_\infty}\colon \BDC(\Bbbk_{M_\infty}^\sub)
 \simto \BDC_{\I{\RR-c}}(\I\Bbbk_{M_\infty})$.
\begin{proposition}\label{prop3.7}
Let $f\colon M_\infty\to N_\infty$ be a morphism of real analytic bordered spaces
associated with a morphism $\che{f}\colon\che{M}\to\che{N}$ of real analytic manifolds.
\begin{itemize}
\item[\rm (1)]
For any $\SF\in\BDC(\Bbbk_{M_\infty}^\sub)$ and any $G\in\BDC(\I\Bbbk_{M_\infty})$,
we have
$$\bfR J_{M_\infty}\rihom(I_{M_\infty}\SF, G)
\simeq
\rihom^\sub(\SF, \bfR J_{M_\infty}G).$$

\item[\rm (2)]
For any $\SL\in\BDC(\Bbbk_M)$, any $\SF, \SF_1, \SF_2\in\BDC(\Bbbk_{M_\infty}^\sub)$ and any $\SG\in\BDC(\Bbbk_{N_\infty}^\sub)$,
we have
\begin{itemize}
\item[\rm (i)]
$\alpha_{M_\infty}I_{M_\infty}\SF\simeq \rho_{M_\infty}^{-1}\SF$,
\item[\rm (ii)]
$I_{M_\infty}\rho_{M_\infty!}\SL\simeq\beta_{M_\infty}\SL$,
\item[\rm (iii)]
$I_{M_\infty}f^{-1}\SG\simeq f^{-1}I_{N_\infty}\SG$,
\item[\rm (iv)]
$\bfR f_{!!}I_{M_\infty}\SF\simeq I_{N_\infty}\bfR f_{!!}\SF$,
\item[\rm (v)]
$I_{M_\infty}f^{!}\SG\simeq f^{!}I_{N_\infty}\SG$,
\item[\rm (vi)]
$I_{M_\infty}(\SF_1\otimes \SF_2) \simeq I_{M_\infty}(\SF_1)\otimes I_{M_\infty}(\SF_2)$.
\end{itemize}

\item[\rm (3)]
For any $\SL\in\BDC(\Bbbk_{M})$, any $F\in\BDC(\I\Bbbk_{M_\infty})$ and any $G\in\BDC(\I\Bbbk_{N_\infty})$,
we have
\begin{itemize}
\item[\rm (i)]
$\bfR J_{M_\infty}\iota_{M_\infty}\SL\simeq \rho_{M_\infty\ast}\SL$,
\item[\rm (ii)]
$\rho_{M_\infty}^{-1}\bfR J_{M_\infty}F\simeq \alpha_{M_\infty}F$,
\item[\rm (iii)]
$\bfR J_{M_\infty}\beta_{M_\infty}\SL\simeq \rho_{M_\infty!}\SL$,
\item[\rm (iv)]
$\bfR f_{\ast}\bfR J_{M_\infty}F\simeq \bfR J_{N_\infty}\bfR f_{\ast}F$,
\item[\rm (v)]
$\bfR J_{M_\infty}f^{!}G\simeq f^{!}\bfR J_{N_\infty}G$.
\end{itemize}

\item[\rm (4)]
For any $F, F_1, F_2\in\BDC_{\I\RR-c}(\I\Bbbk_{M_\infty})$, any $G\in\BDC_{\I\RR-c}(\I\Bbbk_{N_\infty})$
and any $\SL\in\BDC_{\RR-c}(\Bbbk_{M_\infty})$,
we have
\begin{itemize}
\item[\rm (i)]
$I_{M_\infty}\rho_{M_\infty\ast}^{\RR-c}\SL\simeq \iota_{M_\infty}\SL$,
\item[\rm (ii)]
$\lambda_{M_\infty}f^{-1}G\simeq f^{-1}\lambda_{N_\infty}G$,
\item[\rm (iii)]
$\bfR f_{!!}\lambda_{M_\infty}F\simeq \lambda_{N_\infty}\bfR f_{!!}F$,
\item[\rm (iv)]
$\lambda_{M_\infty}(F_1\otimes F_2) \simeq \lambda_{M_\infty}(F_1)\otimes \lambda_{M_\infty}(F_2)$.
\end{itemize}
\end{itemize}
\end{proposition}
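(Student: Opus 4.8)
The plan is to reduce every isomorphism to the corresponding statement over the ambient real analytic manifold $\che{M}$ (or $\che{N}$), which is recorded in Subsection \ref{subsec2.5} (Theorem \ref{thm2.26} and the two lists of compatibilities following it), and then to transport it across the open embedding $j_{M_\infty}\colon M_\infty\to\che{M}$ by the same bookkeeping already used in the proof of Proposition \ref{prop3.6}. The basic tools are: the defining formulas $I_{M_\infty}=\q\,I_{\che{M}}\bfR j_{M_\infty!!}$ and $\bfR J_{M_\infty}=j_{M_\infty}^{-1}\bfR J_{\che{M}}\bfR j_{M_\infty\ast}$; the adjoint pairs $(j_{M_\infty}^{-1},\bfR j_{M_\infty\ast})$ and $(\bfR j_{M_\infty!!},j_{M_\infty}^{-1})$ together with $j_{M_\infty}^{-1}\bfR j_{M_\infty\ast}\simeq\id\simeq j_{M_\infty}^{-1}\bfR j_{M_\infty!!}$; the identities $\bfR j_{M_\infty!!}j_{M_\infty}^{-1}(\cdot)\simeq\iota_{\che{M}}\Bbbk_M\otimes(\cdot)$ and $\bfR j_{M_\infty\ast}j_{M_\infty}^{-1}(\cdot)\simeq\rihom(\iota_{\che{M}}\Bbbk_M,\cdot)$ for ind-sheaves, with the subanalytic analogues $\bfR j_{M_\infty!!}j_{M_\infty}^{-1}(\cdot)\simeq\rho_{\che{M}\ast}\Bbbk_M\otimes(\cdot)$ and $\bfR j_{M_\infty\ast}j_{M_\infty}^{-1}(\cdot)\simeq\rihom^\sub(\rho_{\che{M}\ast}\Bbbk_M,\cdot)$; and, for a morphism $f$ associated to $\che{f}$, the commutations of $\bfR j_{(\cdot)!!}$ with $f^{-1}$, $\bfR f_{!!}$, $f^{!}$ and $\otimes$ that are established inside the proof of Lemma \ref{lem3.5}.

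For (1), apply $\bfR j_{M_\infty!!}$ to both sides, rewrite the internal hom over $\che{M}$ via the adjunction $(\bfR j_{M_\infty!!},j_{M_\infty}^{-1})$, invoke the manifold identity $\bfR J_{\che{M}}\rihom(I_{\che{M}}(\cdot),\cdot)\simeq\rihom^\sub(\cdot,\bfR J_{\che{M}}(\cdot))$, and pull back by $j_{M_\infty}^{-1}$; the factors $\iota_{\che{M}}\Bbbk_M\otimes(\cdot)$ that appear are absorbed exactly as in the chain proving $\bfR J_{M_\infty}\circ I_{M_\infty}\simeq\id$ in Proposition \ref{prop3.6}. Each sub-item of (2) and (3) is then a one-line transport of its manifold counterpart: (2)(i) from $\alpha_{M_\infty}=j_M^{-1}\alpha_{\che{M}}\bfR j_{M_\infty!!}$, $\bfR j_{M_\infty!!}\,\q\simeq\iota_{\che{M}}\Bbbk_M\otimes(\cdot)$, $\alpha_{\che{M}}I_{\che{M}}\simeq\rho_{\che{M}}^{-1}$ and the commutation of $\alpha_{\che{M}}$ with $\otimes$; (2)(ii) from $I_{\che{M}}\rho_{\che{M}!}\simeq\beta_{\che{M}}$; (2)(iii), (2)(iv) and (3)(iv) from the base-change identities $\bfR j_{M_\infty!!}f^{-1}\simeq\che{f}^{-1}\bfR j_{N_\infty!!}$ and $\bfR j_{N_\infty!!}\bfR f_{!!}\simeq\che{f}_{!!}\bfR j_{M_\infty!!}$ together with $I_{\che{M}}f^{-1}\simeq f^{-1}I_{\che{N}}$, $\bfR f_{!!}I_{\che{M}}\simeq I_{\che{N}}\bfR f_{!!}$ and $\bfR f_{\ast}\bfR J_{\che{M}}\simeq\bfR J_{\che{N}}\bfR f_{\ast}$; (2)(v) and (3)(v) from $\bfR j_{M_\infty!!}f^{!}G\simeq\iota_{\che{M}}\Bbbk_M\otimes\che{f}^{!}\bfR j_{N_\infty!!}G$ (the proof of Lemma \ref{lem3.5}(7)) together with $I_{\che{M}}f^{!}\simeq f^{!}I_{\che{N}}$ and $\bfR J_{\che{M}}f^{!}\simeq f^{!}\bfR J_{\che{N}}$; (2)(vi) from $I_{\che{M}}(\cdot\otimes\cdot)\simeq I_{\che{M}}(\cdot)\otimes I_{\che{M}}(\cdot)$; and (3)(i)--(iii) analogously from $\bfR J_{\che{M}}\iota_{\che{M}}\simeq\rho_{\che{M}\ast}$, $\rho_{\che{M}}^{-1}\bfR J_{\che{M}}\simeq\alpha_{\che{M}}$ and $\bfR J_{\che{M}}\beta_{\che{M}}\simeq\rho_{\che{M}!}$. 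Finally, (4) follows by restricting to $\BDC_{\I\RR-c}$ and using the equivalence $\lambda_{M_\infty}$ of Proposition \ref{prop3.6}(2): (4)(i) is the bordered-space transport of $I_{\che{M}}\rho_{\che{M}\ast}^{\RR-c}\simeq\iota_{\che{M}}|_{\Mod_{\RR-c}(\Bbbk_{\che{M}})}$, while (4)(ii), (4)(iii) and (4)(iv) are obtained from (2)(iii), (2)(iv) and (2)(vi) by applying $\lambda_{M_\infty}$ (resp.\ $\lambda_{N_\infty}$) to both sides, which is legitimate because $f^{-1}$, $\bfR f_{!!}$ and $\otimes$ preserve $\BDC_{\I\RR-c}$ by Lemma \ref{lem3.5}.

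The bookkeeping with $j_{M_\infty}$ is routine; the \emph{genuinely delicate points} are the $f^{!}$-compatibilities (2)(v), (3)(v) (and the statement implicit in (4)), since $f^{!}$ on bordered spaces, being a right adjoint of $\bfR f_{!!}$, commutes with $\bfR j_{(\cdot)!!}$ only up to the correction factor $\iota_{\che{M}}\Bbbk_M\otimes(\cdot)$. What makes these go through is that this correction factor passes through $I_{\che{M}}$, $\bfR J_{\che{M}}$, $\alpha_{\che{M}}$ and $\beta_{\che{M}}$ on the relevant subcategories — this is precisely where one uses $I_{\che{M}}\rho_{\che{M}\ast}^{\RR-c}\simeq\iota_{\che{M}}|_{\Mod_{\RR-c}(\Bbbk_{\che{M}})}$ and the commutation of these functors with $\otimes$ — so that it can be carried along and finally cancelled by $j_{M_\infty}^{-1}$.
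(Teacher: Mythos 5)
Your proposal is correct and follows essentially the same strategy as the paper: unwind the definitions $I_{M_\infty}=\q\,I_{\che{M}}\bfR j_{M_\infty!!}$ and $\bfR J_{M_\infty}=j_{M_\infty}^{-1}\bfR J_{\che{M}}\bfR j_{M_\infty\ast}$, transport each identity to $\che{M}$ using the adjunctions for $j_{M_\infty}$ and the correction factors $\iota_{\che{M}}\Bbbk_M\otimes(\cdot)$, resp.\ $\rho_{\che{M}\ast}\Bbbk_M\otimes(\cdot)$, invoke the manifold-level compatibilities of Subsection \ref{subsec2.5}, and deduce (4) from (2) via the equivalence $\lambda_{M_\infty}$ of Proposition \ref{prop3.6}. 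The only places where the paper takes a different local route are exactly the items you single out as delicate: for (2)(v) the paper does not transport directly but instead uses Lemma \ref{lem3.5} (7) to see that $f^!I_{N_\infty}\SG$ lies in $\BDC_{\I\RR-c}(\I\Bbbk_{M_\infty})$, applies $I_{M_\infty}\bfR J_{M_\infty}\simeq\id$ there, and reduces to (3)(v); and for (3)(iv) it uses a Yoneda argument from (2)(iii) and the adjunctions rather than base change. Your direct transport of (2)(v) does close (the factor $\iota_{\che{M}}\Bbbk_M\otimes(\cdot)$ is absorbed by $j_{M_\infty}^{-1}$ exactly as you say), but one small imprecision is worth fixing in (3)(v): since $\bfR J_{M_\infty}$ is built from $\bfR j_{M_\infty\ast}$, the identity you need there is the $\ast$-base change $\bfR j_{M_\infty\ast}f^!\simeq\che{f}^!\bfR j_{N_\infty\ast}$ (as in the paper), not the $\bfR j_{M_\infty!!}$-formula with the $\otimes\,\iota_{\che{M}}\Bbbk_M$ correction, which would force you to commute $\bfR J_{\che{M}}$ past a tensor product where it does not commute in general.
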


\begin{proof}
(1)
Let $\SF\in\BDC(\Bbbk_{M_\infty}^\sub)$ and $G\in\BDC(\I\Bbbk_{M_\infty})$.
Then we have
\begin{align*}
\bfR J_{M_\infty}\rihom(I_{M_\infty}\SF, G) &= 
j_{M_\infty}^{-1}\bfR J_{\che{M}}\bfR j_{M_\infty\ast}
\rihom(j_{M_\infty}^{-1}I_{\che{M}}\bfR j_{M_\infty!!}\SF, G)\\
&\simeq
j_{M_\infty}^{-1}\rihom^\sub(\bfR j_{M_\infty!!}\SF, \bfR J_{\che{M}}\bfR j_{M_\infty\ast}G)\\
&\simeq
\rihom^\sub(j_{M_\infty}^{-1}\bfR j_{M_\infty!!}\SF, j_{M_\infty}^{-1}\bfR J_{\che{M}}\bfR j_{M_\infty\ast}G)\\
&\simeq
\rihom^\sub(\SF, \bfR J_{M_\infty}G),
\end{align*}
where in the second isomorphism we used the adjointness of
$(j_{M_\infty}^{-1}, \bfR j_{M_\infty\ast})$ and $(I_{\che{M}}, \bfR j_{\che{M}})$
and the last isomorphism follows from $j_{M_\infty}^{-1}\circ\bfR j_{M_\infty!!}\simeq\id$.
\bigskip

\noindent
(2)(i)
Let $\SF\in \BDC(\Bbbk_{M_\infty}^\sub)$.
Then we have 
\begin{align*}
\alpha_{M_\infty}I_{M_\infty}\SF
&\simeq
j_M^{-1}\alpha_{\che{M}}\bfR j_{M_\infty!!}j_{M_\infty}^{-1}I_{\che{M}}\bfR j_{M_\infty!!}\SF\\
&\simeq
j_M^{-1}\alpha_{\che{M}}(\iota_{\che{M}}\Bbbk_M\otimes I_{\che{M}}\bfR j_{M_\infty!!}\SF)\\
&\simeq
j_M^{-1}\alpha_{\che{M}}\iota_{\che{M}}\Bbbk_M\otimes
j_M^{-1}\alpha_{\che{M}}I_{\che{M}}\bfR j_{M_\infty!!}\SF\\
&\simeq
j_M^{-1}\rho_{\che{M}}^{-1}\bfR j_{M_\infty!!}\SF\\
&\simeq
\rho_{M_\infty}^{-1}j_{M_\infty}^{-1}\bfR j_{M_\infty!!}\SF\\
&\simeq \rho_{M_\infty}^{-1}\SF,
\end{align*}
where in the second isomorphism we used the fact that
$\bfR j_{M_\infty!!}j_{M_\infty}^{-1}\simeq\iota_{\che{M}}\Bbbk_M\otimes(\cdot)$
and the fourth isomorphism follows from 
$\alpha_{\che{M}}\circ I_{\che{M}}\simeq\rho_{\che{M}}^{-1}$.
\medskip

\noindent
(ii)
Let $\SL\in \BDC(\Bbbk_{M})$.
Since $I_{\che{M}}\circ\rho_{\che{M}!}\simeq\beta_{\che{M}}$,
we have 
\begin{align*}
I_{M_\infty}\rho_{M_\infty!}\SL
&\simeq
j_{M_\infty}^{-1}I_{\che{M}}\bfR j_{M_\infty!!}\rho_{M_\infty!}\SL\\
&\simeq
j_{M_\infty}^{-1}I_{\che{M}}\rho_{\che{M}!}j_{M!}\SL\\
&\simeq
j_{M_\infty}^{-1}\beta_{\che{M}}j_{M!}\SL\\
&\simeq\beta_{M_\infty}\SF.
\end{align*}
\medskip

\noindent
(iii)
Let $\SG\in\BDC(\Bbbk_{N_\infty}^\sub)$.
Since $I_{\che{M}}\circ\che{f}^{-1}\simeq\che{f}^{-1}\circ I_{\che{N}}$,
we have 
\begin{align*}
I_{M_\infty}f^{-1}\SG
&\simeq
j_{M_\infty}^{-1}I_{\che{M}}\bfR j_{M_\infty!!}f^{-1}\SG\\
&\simeq
j_{M_\infty}^{-1}I_{\che{M}}\che{f}^{-1}\bfR j_{N_\infty!!}\SG\\
&\simeq
j_{M_\infty}^{-1}\che{f}^{-1}I_{\che{N}}\bfR j_{N_\infty!!}\SG\\
&\simeq
{f}^{-1}j_{N_\infty}^{-1}I_{\che{N}}\bfR j_{N_\infty!!}\SG\\
&\simeq f^{-1}I_{N_\infty}\SG.
\end{align*}
\medskip

\noindent
(iv)
Let $\SF\in\BDC(\Bbbk_{M_\infty}^\sub)$.
Since $I_{\che{N}}\circ\che{f}_{!!}\simeq\che{f}_{!!}\circ I_{\che{M}}$,
we have 
\begin{align*}
\bfR f_{!!}I_{M_\infty}\SF
&\simeq
\bfR f_{!!}j_{M_\infty}^{-1}I_{\che{M}}\bfR j_{M_\infty!!}\SF\\
&\simeq
j_{N_\infty}^{-1}\bfR \che{f}_{!!}I_{\che{M}}\bfR j_{M_\infty!!}\SF\\
&\simeq
j_{N_\infty}^{-1}I_{\che{N}}\bfR \che{f}_{!!}\bfR j_{M_\infty!!}\SF\\
&\simeq
j_{N_\infty}^{-1}I_{\che{N}}\bfR j_{N_\infty!!}\bfR{f}_{!!}\SF\\
&\simeq
I_{N_\infty}\bfR{f}_{!!}\SF.
\end{align*}
\medskip

\noindent
(v)
Let $G\in\BDC(\Bbbk_{N_\infty}^\sub)$.
By Lemma \ref{lem3.5} (7) we have $f^!I_{N_\infty}G\in\BDC_{\I\RR-c}(\I\Bbbk_{M_\infty})$
and hence by Proposition \ref{prop3.6} (2) we have  
$I_{M_\infty}\bfR J_{M_\infty} f^!I_{N_\infty}G\simeq f^!I_{N_\infty}G$.
Then we have
\begin{align*}
f^!I_{N_\infty}G
\simeq
I_{M_\infty}\bfR J_{M_\infty} f^!I_{N_\infty}G
\simeq
I_{M_\infty}f^!\bfR J_{N_\infty}I_{N_\infty}G
\simeq
I_{M_\infty}f^!G,
\end{align*}
where the second isomorphism follows from (3)(v)
and in the last isomorphism we used $\bfR J_{M_\infty}\circ I_{M_\infty}\simeq\id$.
\medskip

\noindent
(vi)
Let $\SF_1, \SF_2\in\BDC(\Bbbk_{M_\infty}^\sub)$.
Then we have
$$\bfR j_{M_\infty!!}(\SF_1\otimes \SF_2)
\simeq
\bfR j_{M_\infty!!}(j_{M_\infty}^{-1}\bfR j_{M_\infty!!}\SF_1\otimes \SF_2)
\simeq
\bfR j_{M_\infty!!}\SF_1\otimes\bfR j_{M_\infty!!}\SF_2,$$
and hence we have
$$I_{\che{M}}\bfR j_{M_\infty!!}(\SF_1\otimes \SF_2)
\simeq
I_{\che{M}}\bfR j_{M_\infty!!}\SF_1\otimes I_{\che{M}}\bfR j_{M_\infty!!}\SF_2.$$
So that, we have 
\begin{align*}
I_{M_\infty}(\SF_1\otimes \SF_2)
&\simeq
j_{M_\infty}^{-1}I_{\che{M}}\bfR j_{M_\infty!!}(\SF_1\otimes \SF_2)\\
&\simeq
j_{M_\infty}^{-1}I_{\che{M}}\bfR j_{M_\infty!!}\SF_1
\otimes j_{M_\infty}^{-1}I_{\che{M}}\bfR j_{M_\infty!!}\SF_2\\
&\simeq 
I_{M_\infty}\SF_1\otimes I_{M_\infty}\SF_2.
\end{align*}
\bigskip

\noindent
(3)(i)
Let $F\in\BDC(\I\Bbbk_{M_\infty})$.
Since $\bfR J_{\che{M}}\circ\iota_{\che{M}}\simeq \rho_{\che{M}\ast}$,
we have
\begin{align*}
\bfR J_{M_\infty}\iota_{M_\infty}F
&\simeq
j_{M_\infty}^{-1}\bfR J_{\che{M}}\bfR j_{M_\infty\ast}\iota_{M_\infty}F\\
&\simeq
j_{M_\infty}^{-1}\bfR J_{\che{M}}\iota_{\che{M}}\bfR j_{M_\infty\ast}F\\
&\simeq
j_{M_\infty}^{-1}\rho_{\che{M}\ast}\bfR j_{M_\infty\ast}F\\
&\simeq
\rho_{M_\infty\ast}j_{M_\infty}^{-1}\bfR j_{M_\infty\ast}F\\
&\simeq \rho_{M_\infty\ast}F.
\end{align*}
\medskip

\noindent
(ii)
Let $F\in\BDC(\I\Bbbk_{M_\infty})$.
Since $\rho_{\che{M}}^{-1}\circ\bfR J_{\che{M}}\simeq \alpha_{\che{M}}$,
we have
\begin{align*}
\rho_{M_\infty}^{-1}\bfR J_{M_\infty}F
&\simeq
\rho_{M_\infty}^{-1}j_{M_\infty}^{-1}\bfR J_{\che{M}}\bfR j_{M_\infty\ast}F\\
&\simeq
j_{M_\infty}^{-1}\rho_{\che{M}}^{-1}\bfR J_{\che{M}}\bfR j_{M_\infty\ast}F\\
&\simeq
j_{M_\infty}^{-1}\alpha_{\che{M}}\bfR j_{M_\infty\ast}F\\
&\simeq
\alpha_{M_\infty}j_{M_\infty}^{-1}\bfR j_{M_\infty\ast}F\\
&\simeq\alpha_{M_\infty}F.
\end{align*}
\medskip

\noindent
(iii)
Let $\SL\in\BDC(\Bbbk_{M})$.
By using (2)(ii) and the fact that $\bfR J_{M_\infty}\circ I_{M_\infty}\simeq\id$,
we have
$$\bfR J_{M_\infty}\beta_{M_\infty}\SL\simeq
\bfR J_{M_\infty}I_{M_\infty}\rho_{M_\infty!}\SL\simeq
 \rho_{M_\infty!}F.$$
 \medskip

\noindent
(iv)
Let $F\in\BDC(\I\Bbbk_{M_\infty})$ and $\G\in\BDC(\Bbbk_{N_\infty}^\sub)$.
Then we have
\begin{align*}
\Hom_{\BDC(\I\Bbbk_{N_\infty})}(\G, \bfR f_\ast\bfR J_{M_\infty}F)
&\simeq
\Hom_{\BDC(\I\Bbbk_{M_\infty})}(I_{M_\infty}f^{-1}\G, F)\\
&\simeq
\Hom_{\BDC(\I\Bbbk_{M_\infty})}(f^{-1}I_{N_\infty}\G, F)\\
&\simeq
\Hom_{\BDC(\I\Bbbk_{N_\infty})}(\G, \bfR J_{N_\infty}\bfR f_\ast F),
\end{align*}
where the first and last isomorphisms follow from
adjointness of $(f^{-1}, \bfR f_{\ast})$ and $(I, \bfR J)$
and in the second isomorphism we used (2)(iii). 
Hence there exists an isomorphism
$\bfR f_\ast\bfR J_{M_\infty}F\simeq
\bfR J_{N_\infty}\bfR f_\ast F$
 in $\BDC(\I\Bbbk_{N_\infty})$.
 \medskip

\noindent
(v)
Let $G\in\BDC(\I\Bbbk_{N_\infty})$.
Since $\bfR J_{\che{M}}\circ \che{f}^!\simeq \che{f}^!\circ\bfR J_{\che{N}}$,
we have
\begin{align*}
\bfR J_{M_\infty}f^!G
&\simeq
j_{M_\infty}^{-1}\bfR J_{\che{M}}\bfR j_{M_\infty\ast}f^!G\\
&\simeq
j_{M_\infty}^{-1}\bfR J_{\che{M}}\che{f}^!\bfR j_{N_\infty\ast}G\\
&\simeq
j_{M_\infty}^{-1}\che{f}^!\bfR J_{\che{N}}\bfR j_{N_\infty\ast}G\\
&\simeq
f^!j_{N_\infty}^{-1}\bfR J_{\che{N}}\bfR j_{N_\infty\ast}G\\
&\simeq
f^!\bfR J_{N_\infty}G.
\end{align*}
\bigskip

\noindent
(4)(i)
Let $\SL\in\BDC_{\RR-c}(\Bbbk_{M_\infty})$.
Then we have $\iota_{M_\infty}\SL\in\BDC_{\I\RR-c}(\I\Bbbk_{M_\infty})$ by Lemma \ref{lem3.5} (1).
Hence, by using (3)(i) and Proposition \ref{prop3.6} (2), we have
$$\iota_{M_\infty}\SL \simeq I_{M_\infty}\bfR J_{M_\infty}\iota_{M_\infty}\SL
\simeq I_{M_\infty}\rho_{M_\infty\ast}\SL.$$
\medskip

\noindent
(ii)
Let $G\in\BDC_{\I\RR-c}(\I\Bbbk_{N_\infty})$.
Then we have $G\simeq I_{N_\infty}\lambda_{N_\infty}G$ by Proposition \ref{prop3.6} (2).
Moreover, by using (2)(iii)
we have 
$$\lambda_{M_\infty}f^{-1}G\simeq \lambda_{M_\infty}f^{-1}I_{N_\infty}\lambda_{N_\infty}G
\simeq
\lambda_{M_\infty}I_{M_\infty}f^{-1}\lambda_{N_\infty}G
\simeq
f^{-1}\lambda_{N_\infty}G.
$$
\medskip

\noindent
(iii)
Let $F\in\BDC_{\I\RR-c}(\I\Bbbk_{M_\infty})$.
Then we have $F\simeq I_{M_\infty}\lambda_{M_\infty}F$ by Proposition \ref{prop3.6} (2).
Hence, by using (2)(iv)
we have 
$$\lambda_{N_\infty}f_{!!}F\simeq \lambda_{N_\infty}f_{!!}I_{M_\infty}\lambda_{M_\infty}F
\simeq
\lambda_{N_\infty}I_{N_\infty}f_{!!}\lambda_{M_\infty}F
\simeq
f_{!!}\lambda_{M_\infty}F.
$$
\medskip

\noindent
(iv)
Let $F_1, F_2\in\BDC_{\I\RR-c}(\I\Bbbk_{M_\infty})$.
Then we have $F_i\simeq I_{M_\infty}\lambda_{M_\infty}F_i\ (i=1,2)$ by Proposition \ref{prop3.6} (2).
Hence, by using (2)(vi)
we have 
\begin{align*}
\lambda_{M_\infty}(F_1\otimes F_2)
&\simeq 
\lambda_{M_\infty}(I_{M_\infty}\lambda_{M_\infty}F_1\,\otimes\,I_{M_\infty}\lambda_{M_\infty}F_2)\\
&\simeq
\lambda_{M_\infty}I_{M_\infty}(\lambda_{M_\infty}F_1\,\otimes\,\lambda_{M_\infty}F_2)\\
&\simeq
\lambda_{M_\infty}F_1\otimes \lambda_{M_\infty}F_2.
\end{align*}
\end{proof}

\subsection{Convolutions for Subanalytic Sheaves on Real Analytic Bordered Spaces}
In this subsection, let us define convolution functors
for subanalytic sheaved on real analytic bordered spaces.
Although it has already been explained in \cite[\S 5.1]{Kas16},
we will explain in detail, in this subsection again\footnote{In \cite{Kas16},
the author introduced convolution functors for subanalytic sheaves on subanalytic bordered spaces.
In this paper, we shall only consider them on real analytic bordered spaces.}.

Let $M_\infty = (M, \che{M})$ be a real analytic bordered space.
We set $\RR_\infty := (\RR, \var{\RR})$ for 
$\var{\RR} := \RR\sqcup\{-\infty, +\infty\}$,
and let $t\in\RR$ be the affine coordinate. 
We consider the morphisms of real analytic bordered spaces
\[M_\infty\times\RR^2_\infty\xrightarrow{p_1,\ p_2,\ \mu}M_\infty
\times\RR_\infty\overset{\pi}{\longrightarrow}M_\infty\]
given by the maps $p_1(x, t_1, t_2) := (x, t_1)$, $p_2(x, t_1, t_2) := (x, t_2)$,
$\mu(x, t_1, t_2) := (x, t_1+t_2)$ and $\pi (x,t) := x$. 

Then, the convolution functors for subanalytic sheaves on $M_\infty \times \RR_\infty$
\begin{align*}
(\cdot)\Potimes(\cdot)&\colon
\BDC(\Bbbk_{M_\infty\times \RR_\infty}^\sub)\times \BDC(\Bbbk_{M_\infty\times \RR_\infty}^\sub)
\to\BDC(\Bbbk_{M_\infty\times \RR_\infty}^\sub),\\
\Prihomsub(\cdot, \cdot)&\colon
\BDC(\Bbbk_{M_\infty\times \RR_\infty}^\sub)^\op\times \BDC(\Bbbk_{M_\infty\times \RR_\infty}^\sub)
\to\BDC(\Bbbk_{M_\infty\times \RR_\infty}^\sub)
\end{align*}
 are defined by
\begin{align*}
\SF_1\Potimes \SF_2 & := \rmR\mu_{!!}(p_1^{-1}\SF_1\otimes p_2^{-1}\SF_2),\\
\Prihomsub(\SF_1, \SF_2) & := \rmR p_{1\ast}\rihom^\sub(p_2^{-1}\SF_1, \mu^!\SF_2),
\end{align*}
for $\SF_1, \SF_2\in\BDC(\Bbbk_{M_\infty\times \RR_\infty}^\sub)$.
Note that for any $\SF, \SF_1, \SF_2, \SF_3\in\BDC(\Bbbk_{M_\infty\times\RR_\infty}^\sub)$
there exist isomorphisms
\begin{align*}
\SF_1\Potimes \SF_2
&\simeq
\SF_2\Potimes \SF_1,\\
\SF_1\Potimes\(\SF_2\Potimes \SF_3\)
&\simeq
\(\SF_1\Potimes \SF_2\)\Potimes \SF_3,\\
\Bbbk_{\{t=0\}}\Potimes \SF
&\simeq
\SF\simeq \Prihomsub(\Bbbk_{\{t=0\}}, \SF),
\end{align*}
where $\{t = 0\}$ stands for $\{(x, t)\in M\times {\RR}\ |\ t = 0\}$.
Hence, the category $\BDC(\Bbbk_{M_\infty\times\RR_\infty}^\sub)$ has
a structure of commutative tensor category with $\Potimes$ as tensor product functor
and $\Bbbk_{\{t=0\}}$ as unit object.

The convolution functors have several properties
as similar to the tensor product functor and the internal hom functor.
For a morphism of real analytic bordered spaces $f\colon M_\infty\to N_\infty$,
let us denoted by $f_{\RR_\infty}\colon M_\infty\times\RR_\infty\to M_\infty\times\RR_\infty$ 
the morphism $f\times\id_{\RR_\infty}$
of real analytic bordered spaces.

\begin{proposition}\label{prop3.8}\label{prop3.8}
Let $f\colon M_\infty\to\N_\infty$ be a morphism of real analytic bordered spaces,
$\SF, \SF_1, \SF_2, \in\BDC(\Bbbk_{M_\infty\times\RR_\infty}^\sub)$
and $\SG, \SG_1, \SG_2, \in\BDC(\Bbbk_{N_\infty\times\RR_\infty}^\sub)$.
There exist isomorphisms
\begin{align*}
\Prihomsub\(\SF_1\Potimes \SF_2, \SF\)
&\simeq
\Prihomsub\(\SF_1, \Prihomsub(\SF_2, \SF)\),\\
\Hom_{\BDC(\Bbbk_{M_\infty\times \RR_\infty}^\sub)}\(\SF_1\Potimes \SF_2, \SF\)
&\simeq
\Hom_{\BDC(\Bbbk_{M_\infty\times \RR_\infty}^\sub)}\(\SF_1, \Prihomsub(\SF_2, \SF)\),\\
\bfR f_{\RR_\infty\ast}\Prihomsub\(f_{\RR_\infty}^{-1}\SG, \SF\)
&\simeq
\Prihomsub\(\SG, \bfR f_{\RR_\infty\ast}\SF\),\\
f_{\RR_\infty}^{-1}\(\SF_1\Potimes\SF_2\)
&\simeq
f_{\RR_\infty}^{-1}\SF_1\Potimes f_{\RR_\infty}^{-1}\SF_2,\\
\Prihomsub\(\bfR f_{\RR_\infty!!}\SF, \SG\)
&\simeq
\bfR f_{\RR_\infty\ast}\Prihomsub\(\SF, f_{\RR_\infty}^!\SG\),\\
\bfR f_{\RR_\infty!!}\left(\SF\Potimes f_{\RR_\infty}^{-1}\SG\right)
&\simeq
\bfR f_{\RR_\infty!!}\SF\Potimes \SG,\\
f_{\RR_\infty}^!\Prihomsub(\SG_1, \SG_2)
&\simeq
\Prihomsub\(f_{\RR_\infty}^{-1}\SG_1, f_{\RR_\infty}^!\SG_2\).
\end{align*}
\end{proposition}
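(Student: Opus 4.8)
The plan is to derive all seven isomorphisms formally from the six--operation calculus for subanalytic sheaves on real analytic bordered spaces collected in Proposition \ref{prop3.4}, together with the adjunctions $(\rmR\mu_{!!}, \mu^!)$, $(p_1^{-1}, \rmR p_{1\ast})$ and the tensor--hom adjunction for $\rihom^\sub$, exactly as in the ind--sheaf argument of \cite{DK16} and the classical case of \cite{KS01}. The only geometric input is that, writing $f_{\RR^2_\infty} := f\times\id_{\RR^2_\infty}$, one has $p_i\circ f_{\RR^2_\infty} = f_{\RR_\infty}\circ p_i$ and $\mu\circ f_{\RR^2_\infty} = f_{\RR_\infty}\circ\mu$, and that each of the three resulting commutative squares is Cartesian; I would verify these on points once at the start.

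I would proceed in four steps. First, prove the $\Hom$--adjunction (the second displayed isomorphism) by the chain $\Hom(\rmR\mu_{!!}(p_1^{-1}\SF_1\otimes p_2^{-1}\SF_2), \SF)\simeq\Hom(p_1^{-1}\SF_1\otimes p_2^{-1}\SF_2, \mu^!\SF)\simeq\Hom(p_1^{-1}\SF_1, \rihom^\sub(p_2^{-1}\SF_2, \mu^!\SF))\simeq\Hom(\SF_1, \rmR p_{1\ast}\rihom^\sub(p_2^{-1}\SF_2, \mu^!\SF))$, whose last term is $\Hom(\SF_1, \Prihomsub(\SF_2, \SF))$ by definition. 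Second, prove the projection formula (the sixth), the compatibility of $f_{\RR_\infty}^{-1}$ with $\Potimes$ (the fourth), and the two compatibilities of $\bfR f_{\RR_\infty\ast}$ with $\Prihomsub$ (the third and fifth) by unfolding the definitions and transporting $f_{\RR_\infty}^{-1}$, $\bfR f_{\RR_\infty!!}$, $\bfR f_{\RR_\infty\ast}$ across $\rmR\mu_{!!}$, $\rmR p_{1\ast}$, $\mu^!$ by the base--change and projection formulas of Proposition \ref{prop3.4} applied to $f_{\RR^2_\infty}$, together with the identities recorded above; for example $f_{\RR_\infty}^{-1}(\SF_1\Potimes\SF_2)\simeq\rmR\mu_{!!}\,f_{\RR^2_\infty}^{-1}(p_1^{-1}\SF_1\otimes p_2^{-1}\SF_2)\simeq\rmR\mu_{!!}(p_1^{-1}f_{\RR_\infty}^{-1}\SF_1\otimes p_2^{-1}f_{\RR_\infty}^{-1}\SF_2)$ gives the fourth, and the same computation with $\bfR f_{\RR_\infty!!}$ (using $\bfR f_{\RR_\infty!!}\rmR\mu_{!!}=\rmR\mu_{!!}\bfR f_{\RR^2_\infty!!}$ and the projection formula for $f_{\RR^2_\infty}$) gives the sixth; the third and fifth follow the same pattern with $\mu^!$ and $\rihom^\sub$ in place of $\rmR\mu_{!!}$ and $\otimes$. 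Third, deduce the internal associativity isomorphism (the first) from the $\Hom$--adjunction of step one and the associativity of $\Potimes$ recalled before the statement, by Yoneda in $\BDC(\Bbbk_{M_\infty\times\RR_\infty}^\sub)$: for any $\SG$, $\Hom(\SG, \Prihomsub(\SF_1\Potimes\SF_2, \SF))\simeq\Hom((\SG\Potimes\SF_1)\Potimes\SF_2, \SF)\simeq\Hom(\SG, \Prihomsub(\SF_1, \Prihomsub(\SF_2, \SF)))$. Fourth, deduce the last isomorphism, $f_{\RR_\infty}^!\Prihomsub(\SG_1,\SG_2)\simeq\Prihomsub(f_{\RR_\infty}^{-1}\SG_1, f_{\RR_\infty}^!\SG_2)$, again by Yoneda, combining the $\Hom$--adjunction, the projection formula of step two, and the adjunction $(\bfR f_{\RR_\infty!!}, f_{\RR_\infty}^!)$: for any $\SH$, $\Hom(\SH, f_{\RR_\infty}^!\Prihomsub(\SG_1,\SG_2))\simeq\Hom((\bfR f_{\RR_\infty!!}\SH)\Potimes\SG_1, \SG_2)\simeq\Hom(\bfR f_{\RR_\infty!!}(\SH\Potimes f_{\RR_\infty}^{-1}\SG_1), \SG_2)\simeq\Hom(\SH, \Prihomsub(f_{\RR_\infty}^{-1}\SG_1, f_{\RR_\infty}^!\SG_2))$.

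The main obstacle is the last isomorphism. A naive direct computation transports $f_{\RR_\infty}^!$ past $\rmR p_{1\ast}$ by base change, but the outcome $\bfR f_{\RR^2_\infty\ast}\,p_1^{!}\rihom^\sub(\cdots)$ does not have the shape of the right--hand side $\Prihomsub(f_{\RR_\infty}^{-1}\SG_1, f_{\RR_\infty}^!\SG_2)$, so one is forced into the Yoneda detour of step four, which presupposes that the projection formula is already available; getting this dependency order right, and matching each manipulation to the correct base--change or commutation entry of Proposition \ref{prop3.4} with its proper variance, is the delicate part. Nothing genuinely new occurs; in particular none of the non--commutativities involving $\rho_{M_\infty\ast}$ or $\rho_{M_\infty!}$ in the tables intervene, since $\rho$ does not appear in any of these formulas. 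I would therefore present step one, the computations of the fourth and sixth isomorphisms, and the Yoneda arguments of steps three and four in full, and remark that the third and fifth follow the same scheme.
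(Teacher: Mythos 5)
Your proposal is correct, and for six of the seven isomorphisms it follows exactly the paper's own route: the $\Hom$--adjunction via the chain of adjunctions $(\bfR\mu_{!!},\mu^!)$, $(\otimes,\rihom^\sub)$, $(p_1^{-1},\bfR p_{1\ast})$; the associativity of $\Prihomsub$ by Yoneda from that adjunction; and the third through sixth isomorphisms by direct computation with the Cartesian squares for $\tl{f}_{\RR_\infty}:=f\times\id_{\RR_\infty^2}$ and the base--change and projection formulas of Proposition \ref{prop3.4}. The one place you diverge is the last isomorphism, where you argue by Yoneda from the projection formula and the adjunction $(\bfR f_{\RR_\infty!!}, f_{\RR_\infty}^!)$; this is valid given that you establish the projection formula first, and it has the minor virtue of needing only the $(!!,\,!)$ adjunction rather than the $!$--base--change isomorphism.

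However, the ``obstacle'' you cite to justify that detour is not real. Base change in Proposition \ref{prop3.4} (4), applied to the Cartesian square with horizontal arrows $p_1$ and vertical arrows $\tl{f}_{\RR_\infty}$, $f_{\RR_\infty}$, gives $f_{\RR_\infty}^!\bfR p_{1\ast}\simeq \bfR p_{1\ast}\tl{f}_{\RR_\infty}^!$ --- not $\bfR f_{\RR_\infty^2\ast}\,p_1^!$ as you wrote; you have swapped the roles of the two pairs of parallel arrows. With the correct form, the direct computation closes immediately: $f_{\RR_\infty}^!\bfR p_{1\ast}\rihom^\sub(p_2^{-1}\SG_1,\mu^!\SG_2)\simeq \bfR p_{1\ast}\tl{f}_{\RR_\infty}^!\rihom^\sub(p_2^{-1}\SG_1,\mu^!\SG_2)\simeq \bfR p_{1\ast}\rihom^\sub(\tl{f}_{\RR_\infty}^{-1}p_2^{-1}\SG_1,\tl{f}_{\RR_\infty}^!\mu^!\SG_2)\simeq \bfR p_{1\ast}\rihom^\sub(p_2^{-1}f_{\RR_\infty}^{-1}\SG_1,\mu^!f_{\RR_\infty}^!\SG_2)$, which is $\Prihomsub(f_{\RR_\infty}^{-1}\SG_1, f_{\RR_\infty}^!\SG_2)$ by definition; this is exactly how the paper proves it. So your step four is a correct alternative, but the dependency on the projection formula that you describe as forced is in fact avoidable.
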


\begin{proof}
First, let us prove the second isomorphism.
By using the adjointness,
we have
\begin{align*}
\Hom_{\BDC(\Bbbk_{M_\infty\times \RR_\infty}^\sub)}
\(\SF_1\Potimes \SF_2, \SF\)
&\simeq
\Hom_{\BDC(\Bbbk_{M_\infty\times \RR_\infty}^\sub)}
\(\bfR\mu_{!!}(p_1^{-1}\SF_1\otimes p_2^{-1}\SF_2), \SF\)\\
&\simeq
\Hom_{\BDC(\Bbbk_{M_\infty\times \RR_\infty}^\sub)}
\(\SF_1, \bfR p_{1\ast}\rihom^\sub(p_2^{-1}\SF_2, \mu^{!}\SF)\)\\
&\simeq
\Hom_{\BDC(\Bbbk_{M_\infty\times \RR_\infty}^\sub)}
\(\SF_1, \Prihomsub(\SF_2, \SF)\).
\end{align*}

Let us prove the first isomorphism.
By using the second isomorphism,
for any $\SF_0\in\BDC(\Bbbk_{M_\infty\times\RR_\infty}^\sub)$,
there exist isomorphisms
\begin{align*}
&\Hom_{\BDC(\Bbbk_{M_\infty\times \RR_\infty}^\sub)}
\(\SF_0,\,\Prihomsub\(\SF_1\Potimes \SF_2, \SF_3\)\)\\
\simeq\
&\Hom_{\BDC(\Bbbk_{M_\infty\times \RR_\infty}^\sub)}
\(\SF_0\Potimes\(\SF_1\Potimes \SF_2\),\,\SF_3\)\\
\simeq\
&\Hom_{\BDC(\Bbbk_{M_\infty\times \RR_\infty}^\sub)}
\(\(\SF_0\Potimes\SF_1\)\Potimes \SF_2,\,\SF_3\)\\
\simeq\
&\Hom_{\BDC(\Bbbk_{M_\infty\times \RR_\infty}^\sub)}
\(\SF_0\Potimes\SF_1,\,\Prihomsub(\SF_2, \SF_3)\)\\
\simeq\
&\Hom_{\BDC(\Bbbk_{M_\infty\times \RR_\infty}^\sub)}
\(\SF_0, \Prihomsub\(\SF_1, \Prihomsub(\SF_2, \SF_3)\)\).
\end{align*}
Hence, we have 
$\Prihomsub\(\SF_1\Potimes \SF_2, \SF_3\)\simeq
\Prihomsub\(\SF_1, \Prihomsub(\SF_2, \SF_3)\)$.
\medskip

Let us denote by $\tl{f}_{\RR_\infty}\colon M_\infty\times\RR_\infty^2\to N_\infty\times\RR_\infty^2$
the morphism $f\times\id_{\RR_\infty^2}$ of real analytic bordered spaces.
Then there exist cartesian diagrams:
\[\xymatrix@M=5pt@R=20pt@C=40pt{
M_\infty\times \RR_\infty^2\ar@{->}[r]^-{\bigstar}\ar@{->}[d]_-{\tl{f}_{\RR_\infty}}\ar@{}[rd]|-\Box&
M_\infty\times \RR_\infty\ar@{->}[r]^-{\pi}\ar@{->}[d]_-{f_{\RR_\infty}}\ar@{}[rd]|-\Box&
M_\infty\ar@{->}[d]_-{f}\\
N_\infty\times \RR_\infty^2\ar@{->}[r]_-{\bigstar}&
N_\infty\times \RR_\infty\ar@{->}[r]_-{\pi}&
N_\infty,}\]
where $\bigstar = p_1, p_2, \mu$, respectively.
Hence, we have
\begin{align*}
\bfR f_{\RR_\infty\ast}\Prihomsub(f_{\RR_\infty}^{-1}\SG, \SF)
&\simeq
\bfR f_{\RR_\infty\ast}\bfR p_{1\ast}\rihom^\sub(p_2^{-1}f_{\RR_\infty}^{-1}\SG, \mu^!\SF)\\
&\simeq
\bfR p_{1\ast}\bfR \tl{f}_{\RR_\infty\ast}\rihom^\sub(\tl{f}_{\RR_\infty}^{-1}p_2^{-1}\SG, \mu^!\SF)\\
&\simeq
\bfR p_{1\ast}\rihom^\sub(p_2^{-1}\SG, \bfR \tl{f}_{\RR_\infty\ast}\mu^!\SF)\\
&\simeq
\bfR p_{1\ast}\rihom^\sub(p_2^{-1}\SG,\mu^! \bfR {f}_{\RR_\infty\ast}\SF)\\
&\simeq
\Prihomsub(\SG, \bfR f_{\RR_\infty\ast}\SF),
\end{align*}
where in the fourth isomorphism we used Proposition \ref{prop3.4} (4).
By using Proposition \ref{prop3.4} (3), (4) we have 
\begin{align*}
f_{\RR_\infty}^{-1}\(\SF_1\Potimes\SF_2\)
&\simeq
f_{\RR_\infty}^{-1}\bfR\mu_{!!}\(p_1^{-1}\SF_1\Potimes p_2^{-1}\SF_2\)\\
&\simeq
\bfR\mu_{!!}\tl{f}_{\RR_\infty}^{-1}\(p_1^{-1}\SF_1\Potimes p_2^{-1}\SF_2\)\\
&\simeq
\bfR\mu_{!!}\(\tl{f}_{\RR_\infty}^{-1}p_1^{-1}\SF_1\Potimes \tl{f}_{\RR_\infty}^{-1}p_2^{-1}\SF_2\)\\
&\simeq
\bfR\mu_{!!}\(p_1^{-1}{f}_{\RR_\infty}^{-1}\SF_1\Potimes p_2^{-1}{f}_{\RR_\infty}^{-1}\SF_2\)\\
&\simeq
f_{\RR_\infty}^{-1}\SF_1\Potimes f_{\RR_\infty}^{-1}\SF_2,
\end{align*}
and 
\begin{align*}
\bfR f_{\RR_\infty!!}\left(\SF\Potimes f_{\RR_\infty}^{-1}\SG\right)
&\simeq
\bfR f_{\RR_\infty!!}\bfR\mu_{!!}\left(p_1^{-1}\SF\Potimes p_2^{-1}f_{\RR_\infty}^{-1}\SG\right)\\
&\simeq
\bfR\mu_{!!}\bfR\tl{f}_{\RR_\infty!!}\left(p_1^{-1}\SF\Potimes \tl{f}_{\RR_\infty}^{-1}p_2^{-1}\SG\right)\\
&\simeq
\bfR\mu_{!!}\left(\bfR\tl{f}_{\RR_\infty!!}p_1^{-1}\SF\Potimes p_2^{-1}\SG\right)\\
&\simeq
\bfR\mu_{!!}\left(p_1^{-1}\bfR{f}_{\RR_\infty!!}\SF\Potimes p_2^{-1}\SG\right)\\
&\simeq
\bfR f_{\RR_\infty!!}\SF\Potimes \SG.
\end{align*}
Moreover, by using Proposition \ref{prop3.4} (3) we have
\begin{align*}
f_{\RR_\infty}^!\Prihomsub(\SG_1, \SG_2)
&\simeq
f_{\RR_\infty}^!\bfR p_{1\ast}\rihom^\sub(p_2^{-1}\SG_1, \mu^{!}\SG_2)\\
&\simeq
\bfR p_{1\ast}\tl{f}_{\RR_\infty}^!\rihom^\sub(p_2^{-1}\SG_1, \mu^{!}\SG_2)\\
&\simeq
\bfR p_{1\ast}\rihom^\sub(\tl{f}_{\RR_\infty}^{-1}p_2^{-1}\SG_1, \tl{f}_{\RR_\infty}^!\mu^{!}\SG_2)\\
&\simeq
\bfR p_{1\ast}\rihom^\sub(p_2^{-1}{f}_{\RR_\infty}^{-1}\SG_1, \mu^{!}{f}_{\RR_\infty}^!\SG_2)\\
&\simeq
\Prihomsub\(f_{\RR_\infty}^{-1}\SG_1, f_{\RR_\infty}^!\SG_2\).
\end{align*}
By using Proposition \ref{prop3.4} (2), (4) there exits isomorphisms
\begin{align*}
\Prihomsub\(\bfR f_{\RR_\infty!!}\SF, \SG\)
&\simeq
\bfR p_{1\ast}\rihom^\sub\(p_2^{-1}\bfR f_{\RR_\infty!!}\SF, \mu^!\SG\)\\
&\simeq
\bfR p_{1\ast}\rihom^\sub\(\bfR\tl{f}_{\RR_\infty!!}p_2^{-1}\SF, \mu^!\SG\)\\
&\simeq
\bfR p_{1\ast}\bfR\tl{f}_{\RR_\infty\ast}\rihom^\sub\(p_2^{-1}\SF, \bfR\tl{f}_{\RR_\infty}^{!}\mu^!\SG\)\\
&\simeq
\bfR{f}_{\RR_\infty\ast}\bfR p_{1\ast}\rihom^\sub\(p_2^{-1}\SF, \mu^!\bfR{f}_{\RR_\infty}^{!}\SG\)\\
&\simeq
\bfR f_{\RR_\infty\ast}\Prihomsub\(\SF, f_{\RR_\infty}^!\SG\).
\end{align*}
\end{proof}

\begin{proposition}\label{prop3.9}
Let $\SF, \SG, \SH\in\BDC(\Bbbk_{M_\infty\times\RR_\infty}^\sub)$
and $\SF_0\in\BDC(\Bbbk_{M_\infty}^\sub)$.
Then there exist isomorphisms
\begin{align*}
\pi^{-1}\SF_0\otimes \(\SF\Potimes \SG\)
&\simeq
\(\pi^{-1}\SF_0\otimes\SF\)\Potimes \SG,\\
\rihom^\sub\(\pi^{-1}\SF_0, \Prihomsub(\SF, \SG)\)
&\simeq
\Prihomsub\(\pi^{-1}\SF_0\otimes\SF, \SG\)\\
&\simeq
\Prihomsub\(\SF,\rihom^\sub(\pi^{-1}\SF_0, \SG\),\\
\bfR\pi_\ast\rihom^\sub\(\SF\Potimes \SG, \SH\)
&\simeq
\bfR\pi_\ast\rihom^\sub\(\SF, \Prihomsub(\SG, \SH)\).
\end{align*}
\end{proposition}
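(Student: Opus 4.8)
The plan is to reduce each assertion to the definitions $\SF_1\Potimes\SF_2 = \bfR\mu_{!!}(p_1^{-1}\SF_1\otimes p_2^{-1}\SF_2)$ and $\Prihomsub(\SF_1, \SF_2) = \bfR p_{1\ast}\rihom^\sub(p_2^{-1}\SF_1, \mu^!\SF_2)$ and then to apply the functorial identities of Proposition \ref{prop3.4}, using only the geometric fact that $\pi\circ p_1 = \pi\circ p_2 = \pi\circ\mu$. Writing $\varpi\colon M_\infty\times\RR_\infty^2\to M_\infty$ for this common map, that fact gives $p_1^{-1}\pi^{-1}\simeq p_2^{-1}\pi^{-1}\simeq\mu^{-1}\pi^{-1}\simeq\varpi^{-1}$ and $\bfR\pi_\ast\bfR p_{1\ast}\simeq\bfR\pi_\ast\bfR\mu_\ast\simeq\bfR\varpi_\ast$; I will also use repeatedly that $p_1^{-1}$ and $p_2^{-1}$ commute with $\otimes$.

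For the first isomorphism I would begin with $\pi^{-1}\SF_0\otimes(\SF\Potimes\SG) = \pi^{-1}\SF_0\otimes\bfR\mu_{!!}(p_1^{-1}\SF\otimes p_2^{-1}\SG)$, apply the projection formula $\bfR\mu_{!!}(X\otimes\mu^{-1}Y)\simeq\bfR\mu_{!!}X\otimes Y$ of Proposition \ref{prop3.4} (3) to bring $\pi^{-1}\SF_0$ inside as $\mu^{-1}\pi^{-1}\SF_0\simeq p_1^{-1}\pi^{-1}\SF_0$, use $p_1^{-1}\pi^{-1}\SF_0\otimes p_1^{-1}\SF\simeq p_1^{-1}(\pi^{-1}\SF_0\otimes\SF)$, and reassemble to get $(\pi^{-1}\SF_0\otimes\SF)\Potimes\SG$.

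For the third isomorphism I would write $\bfR\pi_\ast\rihom^\sub(\SF\Potimes\SG, \SH) = \bfR\pi_\ast\rihom^\sub\big(\bfR\mu_{!!}(p_1^{-1}\SF\otimes p_2^{-1}\SG), \SH\big)$, apply $\rihom^\sub(\bfR\mu_{!!}X, \SH)\simeq\bfR\mu_\ast\rihom^\sub(X, \mu^!\SH)$ (Proposition \ref{prop3.4} (2)) and the tensor-hom adjunction $\rihom^\sub(X_1\otimes X_2, -)\simeq\rihom^\sub(X_1, \rihom^\sub(X_2, -))$ to split off $p_1^{-1}\SF$, then use $\bfR\pi_\ast\bfR\mu_\ast\simeq\bfR\pi_\ast\bfR p_{1\ast}$ followed by $\bfR p_{1\ast}\rihom^\sub(p_1^{-1}\SF, -)\simeq\rihom^\sub(\SF, \bfR p_{1\ast}(-))$ (Proposition \ref{prop3.4} (2)). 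This lands on $\bfR\pi_\ast\rihom^\sub\big(\SF, \bfR p_{1\ast}\rihom^\sub(p_2^{-1}\SG, \mu^!\SH)\big) = \bfR\pi_\ast\rihom^\sub(\SF, \Prihomsub(\SG, \SH))$.

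For the middle chain I would show that each of the three objects equals $\bfR p_{1\ast}\rihom^\sub\big(p_2^{-1}(\pi^{-1}\SF_0\otimes\SF), \mu^!\SG\big)$: this is the definition of $\Prihomsub(\pi^{-1}\SF_0\otimes\SF, \SG)$; the identification with $\Prihomsub(\SF, \rihom^\sub(\pi^{-1}\SF_0, \SG))$ uses $\mu^!\rihom^\sub(\pi^{-1}\SF_0, \SG)\simeq\rihom^\sub(\mu^{-1}\pi^{-1}\SF_0, \mu^!\SG)\simeq\rihom^\sub(p_2^{-1}\pi^{-1}\SF_0, \mu^!\SG)$ (Proposition \ref{prop3.4} (3)) plus the tensor-hom adjunction; and the identification with $\rihom^\sub(\pi^{-1}\SF_0, \Prihomsub(\SF, \SG))$ commutes $\rihom^\sub(\pi^{-1}\SF_0, -)$ past $\bfR p_{1\ast}$ via $\rihom^\sub(\pi^{-1}\SF_0, \bfR p_{1\ast}(-))\simeq\bfR p_{1\ast}\rihom^\sub(p_1^{-1}\pi^{-1}\SF_0, -)$ (Proposition \ref{prop3.4} (2)), identifies $p_1^{-1}\pi^{-1}\SF_0$ with $p_2^{-1}\pi^{-1}\SF_0$, and applies the tensor-hom adjunction again; alternatively the whole chain follows from a Yoneda argument exactly as in the proof of Proposition \ref{prop3.8}, combining the adjunction $\Hom(X\Potimes\SF, \SG)\simeq\Hom(X, \Prihomsub(\SF, \SG))$ with the first isomorphism proved above. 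No step is conceptually hard; the only point needing care is the bookkeeping of which pullback among $p_1^{-1}\pi^{-1}$, $p_2^{-1}\pi^{-1}$, $\mu^{-1}\pi^{-1}$, $\varpi^{-1}$ appears at each stage, and checking that the cited identities of Proposition \ref{prop3.4} genuinely hold over the bordered spaces $M_\infty\times\RR_\infty$ and $M_\infty\times\RR_\infty^2$ (they do, through the cartesian diagrams displayed in the proof of Proposition \ref{prop3.8}), with the middle chain being the most delicate to present symmetrically in $\SF_0$, $\SF$, $\SG$, so routing it through Yoneda is the cleaner option.
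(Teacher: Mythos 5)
Your proposal is correct and matches the paper's proof in all essentials: the first isomorphism and the middle chain are established exactly as in the paper, by unwinding the definitions of $\Potimes$ and $\Prihomsub$ and applying the identities of Proposition \ref{prop3.4} together with $\pi\circ p_1=\pi\circ p_2=\pi\circ\mu$. The only (harmless) deviation is in the last isomorphism, which you compute directly via $\rihom^\sub(\bfR\mu_{!!}(-),\SH)\simeq\bfR\mu_\ast\rihom^\sub(-,\mu^!\SH)$ and $\bfR\pi_\ast\bfR\mu_\ast\simeq\bfR\pi_\ast\bfR p_{1\ast}$, whereas the paper reduces it by a Yoneda argument to the first isomorphism and the adjunction of Proposition \ref{prop3.8}; both routes are valid.
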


\begin{proof}
First let us remark that $\pi\circ p_1 =  \pi\circ p_2 = \pi\circ\mu$.

By using Proposition \ref{prop3.4} (3),
we have
\begin{align*}
\pi^{-1}\SF_0\otimes \(\SF\Potimes \SG\)
&\simeq
\pi^{-1}\SF_0\otimes \bfR\mu_{!!}\(p_1^{-1}\SF\otimes p_2^{-1}\SG\)\\
&\simeq
\bfR\mu_{!!}\(\mu^{-1}\pi^{-1}\SF_0\otimes \(p_1^{-1}\SF\otimes p_2^{-1}\SG\)\)\\
&\simeq
\bfR\mu_{!!}\(p_1^{-1}\pi^{-1}\SF_0\otimes \(p_1^{-1}\SF\otimes p_2^{-1}\SG\)\)\\
&\simeq
\bfR\mu_{!!}\(p_1^{-1}(\pi^{-1}\SF_0\otimes\SF)\otimes p_2^{-1}\SG\)\\
&\simeq
\(\pi^{-1}\SF_0\otimes\SF\)\Potimes \SG.
\end{align*}

By using Proposition \ref{prop3.4} (2),
\begin{align*}
\rihom^\sub\(\pi^{-1}\SF_0, \Prihomsub(\SF_1, \SF_2)\)
&\simeq
\rihom^\sub\(\pi^{-1}\SF_0, \bfR p_{1\ast}\rihom^\sub(p_2^{-1}\SF_1, \mu^!\SF_2)\)\\
&\simeq
\bfR p_{1\ast}\rihom^\sub\(p_1^{-1}\pi^{-1}\SF_0, \rihom^\sub(p_2^{-1}\SF_1, \mu^!\SF_2)\)\\
&\simeq
\bfR p_{1\ast}\rihom^\sub\(p_1^{-1}\pi^{-1}\SF_0\otimes p_2^{-1}\SF_1, \mu^!\SF_2)\).
\end{align*}
Moreover, by using Proposition \ref{prop3.4} (3) we have
\begin{align*}
\bfR p_{1\ast}\rihom^\sub\(p_1^{-1}\pi^{-1}\SF_0\otimes p_2^{-1}\SF_1, \mu^!\SF_2)\)
&\simeq
\bfR p_{1\ast}\rihom^\sub\(p_2^{-1}\pi^{-1}\SF_0\otimes p_2^{-1}\SF_1, \mu^!\SF_2)\)\\
&\simeq
\bfR p_{1\ast}\rihom^\sub\(p_2^{-1}(\pi^{-1}\SF_0\otimes \SF_1), \mu^!\SF_2)\)\\
&\simeq
\Prihomsub\(\pi^{-1}\SF_0\otimes\SF_1, \SF_2\)
\end{align*}
and 
by using Proposition \ref{prop3.4} (2), (3) we have 
\begin{align*}
&\bfR p_{1\ast}\rihom^\sub\(p_1^{-1}\pi^{-1}\SF_0\otimes p_2^{-1}\SF_1, \mu^!\SF_2)\)\\
\simeq\
&\bfR p_{1\ast}\rihom^\sub\(\mu^{-1}\pi^{-1}\SF_0\otimes p_2^{-1}\SF_1, \mu^!\SF_2)\)\\
\simeq\
&\bfR p_{1\ast}\rihom^\sub\(p_2^{-1}\SF_1, \rihom^\sub(\mu^{-1}\pi^{-1}\SF_0, \mu^!\SF_2)\)\\
\simeq\
&\bfR p_{1\ast}\rihom^\sub\(p_2^{-1}\SF_1, \mu^!\rihom^\sub(\pi^{-1}\SF_0, \SF_2)\)\\
\simeq\
&\Prihomsub\(\SF_1, \rihom^\sub(\pi^{-1}\SF_0, \SF_2)\).
\end{align*}

Let us prove the last assertion.
For any $\SG_0\in\BDC(\Bbbk_{M_\infty}^\sub)$,
there exist isomorphisms
\begin{align*}
&\Hom_{\BDC(\Bbbk_{M_\infty}^\sub)}
\(\SG_0,\,\bfR\pi_\ast\rihom^\sub\(\SF\Potimes \SG, \SH\)\)\\
&\simeq
\Hom_{\BDC(\Bbbk_{M_\infty\times \RR_\infty}^\sub)}
\(\pi^{-1}\SG_0,\,\rihom^\sub\(\SF\Potimes \SG, \SH\)\)\\
&\simeq
\Hom_{\BDC(\Bbbk_{M_\infty\times \RR_\infty}^\sub)}
\(\pi^{-1}\SG_0\otimes\(\SF\Potimes \SG\),\,\SH\)\\
&\simeq
\Hom_{\BDC(\Bbbk_{M_\infty\times \RR_\infty}^\sub)}
\(\(\pi^{-1}\SG_0\otimes\SF\)\Potimes \SG,\,\SH\)\\
&\simeq
\Hom_{\BDC(\Bbbk_{M_\infty\times \RR_\infty}^\sub)}
\(\pi^{-1}\SG_0\otimes\SF,\,\Prihomsub(\SG, \SH)\)\\
%&\simeq
%\Hom_{\BDC(\Bbbk_{M_\infty\times \RR_\infty}^\sub)}
%\(\pi^{-1}\SG_0, \rihom^\sub\(\SF_1, \Prihomsub(\SF_2, \SF_3)\)\)\\
&\simeq
\Hom_{\BDC(\Bbbk_{M_\infty}^\sub)}
\(\SG_0, \bfR\pi_\ast\rihom^\sub\(\SF, \Prihomsub(\SG, \SH)\)\).\,
\end{align*}
where in the third (resp.\,fourth) isomorphism we used the first assertion
(resp.\,Proposition \ref{prop3.8}).
Hence, we have an isomorphism
$$\bfR\pi_\ast\rihom^\sub\(\SF\Potimes \SG, \SH\)
\simeq
\bfR\pi_\ast\rihom^\sub\(\SF, \Prihomsub(\SG, \SH)\).$$
\end{proof}

\begin{lemma}
For any $\SF\in\BDC(\Bbbk_{M_\infty\times\RR_\infty}^\sub)$ and any $\SG\in\BDC(\Bbbk_{N_\infty}^\sub)$,
We have
\begin{align*}
\pi^{-1}\SG\Potimes\SF
&\simeq
\pi^{-1}(\SG\otimes \bfR\pi_{!!}\SF),\\
\Prihomsub(\pi^{-1}\SG, \SF)
&\simeq
\pi^{!}\Prihomsub(\SG, \bfR\pi_{\ast}\SF),\\
\Prihomsub(\SF, \pi^{!}\SG)
&\simeq
\pi^{!}\Prihomsub(\bfR\pi_{!!}\SF, \SG).
\end{align*}
\end{lemma}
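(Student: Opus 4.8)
The plan is to deduce all three isomorphisms from the adjunction and base change formulae for subanalytic sheaves on real analytic bordered spaces collected in Propositions \ref{prop3.4} and \ref{prop3.8}, by exactly the manipulations used in the proofs of those propositions. The one geometric remark needed is that, although $p_1, p_2, \mu\colon M_\infty\times\RR^2_\infty\to M_\infty\times\RR_\infty$ are not literally the two projections of a fiber product, any two of them become such after composing with the automorphism of $M_\infty\times\RR^2_\infty$ induced by a linear change of the two $\RR$-coordinates (for instance $(x,t_1,t_2)\mapsto(x,t_1,t_1+t_2)$). Hence each commutative square whose vertical arrows are a pair among $\{p_1,p_2,\mu\}$ and whose horizontal arrows are $\pi$ is cartesian, so Proposition \ref{prop3.4} (4) applies and yields the base change isomorphisms $\bfR\mu_{!!}\,p_2^{-1}\simeq\pi^{-1}\bfR\pi_{!!}$, $\bfR p_{1!!}\,p_2^{-1}\simeq\pi^{-1}\bfR\pi_{!!}$ and $\bfR p_{1\ast}\,\mu^{!}\simeq\pi^{!}\bfR\pi_{\ast}$, which I will use below; I will also use $\pi\circ p_1=\pi\circ p_2=\pi\circ\mu$ freely.

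For the first isomorphism I will start from the definition, use $p_1^{-1}\pi^{-1}\SG\simeq\mu^{-1}\pi^{-1}\SG$ to rewrite $\pi^{-1}\SG\Potimes\SF=\bfR\mu_{!!}(\mu^{-1}\pi^{-1}\SG\otimes p_2^{-1}\SF)$, pull $\pi^{-1}\SG$ out of $\bfR\mu_{!!}$ by the projection formula (Proposition \ref{prop3.4} (3)), and then invoke $\bfR\mu_{!!}p_2^{-1}\SF\simeq\pi^{-1}\bfR\pi_{!!}\SF$ together with the commutation of $\pi^{-1}$ with $\otimes$; this gives $\pi^{-1}\SG\otimes\pi^{-1}\bfR\pi_{!!}\SF\simeq\pi^{-1}(\SG\otimes\bfR\pi_{!!}\SF)$. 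For the second isomorphism I will unfold $\Prihomsub(\pi^{-1}\SG,\SF)=\bfR p_{1\ast}\rihom^\sub(p_2^{-1}\pi^{-1}\SG,\mu^{!}\SF)$, rewrite $p_2^{-1}\pi^{-1}\SG\simeq\mu^{-1}\pi^{-1}\SG$ and extract $\mu^{!}$ by Proposition \ref{prop3.4} (3) (in the form $\rihom^\sub(\mu^{-1}(\cdot),\mu^{!}(\cdot))\simeq\mu^{!}\rihom^\sub(\cdot,\cdot)$) to get $\bfR p_{1\ast}\mu^{!}\rihom^\sub(\pi^{-1}\SG,\SF)$; the base change isomorphism $\bfR p_{1\ast}\mu^{!}\simeq\pi^{!}\bfR\pi_{\ast}$ and Proposition \ref{prop3.4} (2) (in the form $\bfR\pi_{\ast}\rihom^\sub(\pi^{-1}(\cdot),\cdot)\simeq\rihom^\sub(\cdot,\bfR\pi_{\ast}(\cdot))$) then give $\pi^{!}\rihom^\sub(\SG,\bfR\pi_{\ast}\SF)$.

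For the third isomorphism I will unfold $\Prihomsub(\SF,\pi^{!}\SG)=\bfR p_{1\ast}\rihom^\sub(p_2^{-1}\SF,\mu^{!}\pi^{!}\SG)$, rewrite $\mu^{!}\pi^{!}=(\pi\circ\mu)^{!}=(\pi\circ p_1)^{!}=p_1^{!}\pi^{!}$, apply Proposition \ref{prop3.4} (2) (in the form $\bfR p_{1\ast}\rihom^\sub(\cdot,p_1^{!}(\cdot))\simeq\rihom^\sub(\bfR p_{1!!}(\cdot),\cdot)$) to obtain $\rihom^\sub(\bfR p_{1!!}p_2^{-1}\SF,\pi^{!}\SG)$, use $\bfR p_{1!!}p_2^{-1}\SF\simeq\pi^{-1}\bfR\pi_{!!}\SF$, and finish with Proposition \ref{prop3.4} (3) (in the form $\rihom^\sub(\pi^{-1}(\cdot),\pi^{!}(\cdot))\simeq\pi^{!}\rihom^\sub(\cdot,\cdot)$), arriving at $\pi^{!}\rihom^\sub(\bfR\pi_{!!}\SF,\SG)$.

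The part requiring genuine care — and the main obstacle — is the bookkeeping of the cartesian squares assembled from $p_1, p_2, \mu$ and $\pi$: one must verify that these squares are cartesian in the category of real analytic bordered spaces (this is where the linear change of $\RR$-coordinates enters, and where one should keep an eye on the properness/subanalyticity condition on the closure of the graph in the definition of a morphism of bordered spaces), and then be careful about which of the two base change statements of Proposition \ref{prop3.4} (4) — the one for proper direct image along a pull-back square, or the one for ordinary direct image against $(\cdot)^{!}$ — is invoked at each step; everything else is the same bookkeeping of adjunctions as in Propositions \ref{prop3.8} and \ref{prop3.9}.
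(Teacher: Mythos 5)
Your proposal is correct and follows essentially the same route as the paper: the paper likewise records that the squares built from $p_1,p_2,\mu$ and $\pi$ are cartesian and then runs the same chain of projection-formula, base-change and adjunction identities from Propositions \ref{prop3.4} and \ref{prop3.8}, including the base changes $\bfR\mu_{!!}p_2^{-1}\simeq\pi^{-1}\bfR\pi_{!!}$ and $\bfR p_{1\ast}\mu^{!}\simeq\pi^{!}\bfR\pi_{\ast}$ for the first two isomorphisms. The only cosmetic difference is in the third isomorphism, where the paper uses $\pi\circ\mu=\pi\circ p_2$ to write $\mu^{!}\pi^{!}\simeq p_2^{!}\pi^{!}$, pulls $p_2^{!}$ out of $\rihom^\sub$, and applies the base change $\bfR p_{1\ast}p_2^{!}\simeq\pi^{!}\bfR\pi_{\ast}$ before the final adjunction on the base, whereas you use $\pi\circ\mu=\pi\circ p_1$ together with the $(\bfR p_{1!!},p_1^{!})$-adjunction and $\bfR p_{1!!}p_2^{-1}\simeq\pi^{-1}\bfR\pi_{!!}$ --- the same ingredients in a different order.
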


\begin{proof}
Note that there exist Cartesian diagrams (i=1, 2):
\[\xymatrix@M=5pt@R=20pt@C=40pt{
{M_\infty\times \RR_\infty^2}\ar@{}[rd]|-\Box\ar@{->}[r]^-{\mu}\ar@{->}[d]_-{p_i}&
{M_\infty\times \RR_\infty}\ar@{->}[d]_-{\pi}&
{M_\infty\times \RR_\infty^2}\ar@{->}[l]_-{p_1}\ar@{->}[d]_-{p_2}\ar@{}[ld]|-\Box\\
{M_\infty\times \RR_\infty}\ar@{->}[r]_-{\pi}&
{M_\infty}&
{M_\infty\times \RR_\infty}\ar@{->}[l]^-{\pi}.}\]
Then by using Proposition \ref{prop3.4} (3), (4),
we have
\begin{align*}
\pi^{-1}\SG\Potimes\SF
&\simeq
\bfR\mu_{!!}\(p_1^{-1}\pi^{-1}\SG\otimes p_2^{-1}\SF\)\\
&\simeq
\bfR\mu_{!!}\(\mu^{-1}\pi^{-1}\SG\otimes p_2^{-1}\SF\)\\
&\simeq
\pi^{-1}\SG\otimes \bfR\mu_{!!}p_2^{-1}\SF\\
&\simeq
\pi^{-1}\SG\otimes \pi^{-1}\bfR\pi_{!!}\SF\\
&\simeq
\pi^{-1}(\SF\otimes \bfR\pi_{!!}\SF).
\end{align*}
Moreover, by using  Proposition \ref{prop3.4} (2), (3), (4),
we have
\begin{align*}
\Prihomsub(\pi^{-1}\SG, \SF)
&\simeq
\bfR p_{1\ast}\rihom^\sub(p_2^{-1}\pi^{-1}\SG, \mu^!\SF)\\
&\simeq
\bfR p_{1\ast}\rihom^\sub(\mu^{-1}\pi^{-1}\SG, \mu^!\SF)\\
&\simeq
\bfR p_{1\ast}\mu^!\rihom^\sub(\pi^{-1}\SG, \SF)\\
&\simeq
\pi^!\bfR\pi_{\ast}\rihom^\sub(\pi^{-1}\SG, \SF)\\
&\simeq
\pi^{!}\Prihomsub(\SG, \bfR\pi_{\ast}\SF)
\end{align*}
and 
\begin{align*}
\Prihomsub(\SF, \pi^{!}\SG)
&\simeq
\bfR p_{1\ast}\rihom^\sub(p_2^{-1}\SF, \mu^!\pi^{!}\SG)\\
&\simeq
\bfR p_{1\ast}\rihom^\sub(p_2^{-1}\SF, p_2^!\pi^{!}\SG)\\
&\simeq
\bfR p_{1\ast}p_2^!\rihom^\sub(\SF, \pi^{!}\SG)\\
&\simeq
\pi^!\bfR \pi_{\ast}\rihom^\sub(\SF, \pi^{!}\SG)\\
&\simeq
\pi^{!}\Prihomsub(\bfR\pi_{!!}\SF, \SG).
\end{align*}
\end{proof}

%We shall write $\{t\geq0\}$ instead of 
%$\{(x, t)\in M\times\RR\ |\ t\geq0\}\subset \che{M}\times\var{\RR}$ 
%and $\{t\leq0\}$ is defined similarly.
Since $\bfR\pi_{!!}\Bbbk_{\{t\leq0\}} \simeq \bfR\pi_{!!}\Bbbk_{\{t\geq0\}}\simeq0$
and $\pi^{-1}\Bbbk_M\simeq\Bbbk_{M\times\RR}$, we have
\begin{corollary}\label{cor3.11}
For any $\SF\in\BDC(\Bbbk_{M_\infty\times\RR_\infty}^\sub)$
and any $\SG\in\BDC(\Bbbk_{M_\infty}^\sub)$,
we have
\begin{align*}
\rho_{M_\infty\times\RR_\infty\ast}\(\Bbbk_{\{t\geq0\}}\oplus\Bbbk_{\{t\leq0\}}\)\Potimes \pi^{-1}\SG
&\simeq 0\\
\Prihomsub\(\rho_{M_\infty\times\RR_\infty\ast}\(\Bbbk_{\{t\geq0\}}\oplus\Bbbk_{\{t\leq0\}}\), \pi^{-1}\SG\)
&\simeq 0,\\
\rho_{M_\infty\times\RR_\infty\ast}\Bbbk_{M\times\RR}\Potimes \SF
&\simeq
\pi^{-1}\bfR\pi_{!!}\SF\ (\,\simeq \pi^!\bfR\pi_{!!}\SF[-1]),\\
\Prihomsub\(\rho_{M_\infty\times\RR_\infty\ast}\Bbbk_{M\times\RR}, \SF\)
&\simeq
\pi^{!}\bfR\pi_{\ast}\SF\
(\,\simeq \pi^{-1}\bfR\pi_{\ast}\SF[1]).
\end{align*}
\end{corollary}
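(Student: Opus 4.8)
The four isomorphisms will all be read off from the three isomorphisms of the Lemma immediately above, specialised to the two subanalytic sheaves $\SF':=\rho_{M_\infty\times\RR_\infty\ast}\bigl(\Bbbk_{\{t\geq0\}}\oplus\Bbbk_{\{t\leq0\}}\bigr)$ and $\rho_{M_\infty\times\RR_\infty\ast}\Bbbk_{M\times\RR}$; essentially no new computation is needed, only two elementary reductions together with the two facts recalled just before the statement. First I would record that $\Bbbk_{M\times\RR}$ is $\RR$-constructible on $M_\infty\times\RR_\infty$, so that $\rho_\ast^{\RR-c}$ applies and commutes with $\pi^{-1}$ (Proposition \ref{prop3.4} (6)); hence $\rho_{M_\infty\times\RR_\infty\ast}\Bbbk_{M\times\RR}\simeq\rho_{M_\infty\times\RR_\infty\ast}^{\RR-c}\pi^{-1}\Bbbk_M\simeq\pi^{-1}\rho_{M_\infty\ast}^{\RR-c}\Bbbk_M\simeq\pi^{-1}\Bbbk_{M_\infty}^\sub$. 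Second, $\pi\colon M\times\RR\to M$ is a topological submersion, so by the submersion formula recalled in \S\ref{subsec3.1}, together with $\omega_{M\times\RR/M}\simeq\Bbbk_{M\times\RR}[1]$ and the unit law, $\pi^!(\,\cdot\,)\simeq\pi^{-1}(\,\cdot\,)[1]$; this is what produces the parenthetical reformulations in the last two lines.

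For the third isomorphism I would apply the first isomorphism of the Lemma (using commutativity of $\Potimes$) with $\SG=\Bbbk_{M_\infty}^\sub$ and the unit law $\Bbbk_{M_\infty}^\sub\otimes(\,\cdot\,)\simeq(\,\cdot\,)$:
\[
\rho_{M_\infty\times\RR_\infty\ast}\Bbbk_{M\times\RR}\Potimes\SF
\;\simeq\;\pi^{-1}\Bbbk_{M_\infty}^\sub\Potimes\SF
\;\simeq\;\pi^{-1}\bigl(\Bbbk_{M_\infty}^\sub\otimes\bfR\pi_{!!}\SF\bigr)
\;\simeq\;\pi^{-1}\bfR\pi_{!!}\SF .
\]
For the fourth I would feed $\rho_{M_\infty\times\RR_\infty\ast}\Bbbk_{M\times\RR}\simeq\pi^{-1}\Bbbk_{M_\infty}^\sub$ into the second isomorphism of the Lemma and use $\rihom^\sub(\Bbbk_{M_\infty}^\sub,\,\cdot\,)\simeq\id$, obtaining $\Prihomsub\bigl(\rho_{M_\infty\times\RR_\infty\ast}\Bbbk_{M\times\RR},\SF\bigr)\simeq\pi^!\bfR\pi_\ast\SF$.

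For the first two isomorphisms I would use that $\bfR\pi_{!!}\Bbbk_{\{t\geq0\}}\simeq\bfR\pi_{!!}\Bbbk_{\{t\leq0\}}\simeq0$ (recalled above) and additivity of $\bfR\pi_{!!}$ to get $\bfR\pi_{!!}\SF'\simeq0$. Then the first isomorphism of the Lemma and commutativity of $\Potimes$ give $\rho_{M_\infty\times\RR_\infty\ast}\bigl(\Bbbk_{\{t\geq0\}}\oplus\Bbbk_{\{t\leq0\}}\bigr)\Potimes\pi^{-1}\SG\simeq\pi^{-1}\bigl(\SG\otimes\bfR\pi_{!!}\SF'\bigr)\simeq0$; and, rewriting $\pi^{-1}\SG\simeq\pi^!\SG[-1]$ by the submersion formula above, the third isomorphism of the Lemma gives $\Prihomsub(\SF',\pi^{-1}\SG)\simeq\Prihomsub(\SF',\pi^!\SG)[-1]\simeq\pi^!\rihom^\sub(\bfR\pi_{!!}\SF',\SG)[-1]\simeq0$, since a hom-type functor out of the zero object vanishes.

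The bulk of the argument is thus already carried by the preceding Lemma. The one genuinely delicate input — which I would simply quote from the line preceding the statement rather than reprove — is the vanishing of $\bfR\pi_{!!}$ on the two half-space constant sheaves: since $\rho_\ast^{\RR-c}$ does \emph{not} commute with $\bfR f_{!!}$ in general (see the table in Proposition \ref{prop3.4} (6)), this is not a formal consequence of the classical fact $\bfR\pi_!\Bbbk_{\{t\geq0\}}\simeq0$ and has to be established separately. Apart from that, the only thing to keep straight is the single degree shift coming from $\pi^!\simeq\pi^{-1}[1]$, in the two parentheticals and in the proof of the second isomorphism.
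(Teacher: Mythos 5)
Your argument is correct and is exactly the route the paper intends: the paper offers no written proof of this corollary beyond the sentence ``Since $\bfR\pi_{!!}\Bbbk_{\{t\leq0\}}\simeq\bfR\pi_{!!}\Bbbk_{\{t\geq0\}}\simeq0$ and $\pi^{-1}\Bbbk_M\simeq\Bbbk_{M\times\RR}$, we have \dots'', and your two reductions (identifying $\rho_{M_\infty\times\RR_\infty\ast}\Bbbk_{M\times\RR}$ with the pullback of the unit, and $\pi^!\simeq\pi^{-1}[1]$) together with the preceding Lemma are precisely what that sentence suppresses. Your remark that the vanishing of $\bfR\pi_{!!}$ on the half-space sheaves is quoted rather than reproved matches the paper, which likewise asserts it without proof.
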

%証明はDK同様にできる

At the end of this subsection,
we shall prove the following proposition.
\begin{proposition}\label{prop3.12}
Let $M_\infty = (M, \che{M})$ be a real analytic bordered space.
\begin{itemize}
\item[\rm (1)]
For any $\SF_1, \SF_2\in\BDC(\Bbbk_{M_\infty\times\RR_\infty}^\sub)$,
there exits an isomorphism in $\BDC(\I\Bbbk_{M_\infty\times\RR_\infty}):$
$$I_{M_\infty\times\RR_\infty}\SF_1\Potimes I_{M_\infty\times\RR_\infty}\SF_2
\simeq
I_{M_\infty\times\RR_\infty}\(\SF_1\Potimes\SF_2\).$$

\item[\rm (2)]
For any $\SF\in\BDC(\Bbbk_{M_\infty\times\RR_\infty}^\sub)$
and any $G\in\BDC(\I\Bbbk_{M_\infty\times\RR_\infty})$,
there exist an isomorphism in $\BDC(\Bbbk_{M_\infty\times\RR_\infty}^\sub):$
$$\bfR J_{M_\infty\times\RR_\infty}\Prihom\(I_{M_\infty\times\RR_\infty}\SF, G\)
\simeq
\Prihomsub\(\SF, \bfR J_{M_\infty\times\RR_\infty}G\).$$

\item[\rm (3)]
The functor 
$$(\cdot)\Potimes(\cdot)\colon
\BEC_{\I\RR-c}(\I\Bbbk_{M_\infty\times\RR_\infty})\times\BEC_{\I\RR-c}(\I\Bbbk_{M_\infty\times\RR_\infty})
\to\BDC_{\I\RR-c}(\I\Bbbk_{M_\infty\times\RR_\infty})$$
is well defined.

\item[\rm (4)]
For any $F_1, F_2\in\BDC_{\I\RR-c}(\I\Bbbk_{M_\infty\times\RR_\infty})$,
there exits an isomorphism in $\BDC(\Bbbk_{M_\infty\times\RR_\infty}^\sub)$
$$\lambda_{M_\infty\times\RR_\infty}F_1\Potimes \lambda_{M_\infty\times\RR_\infty}F_2
\simeq
\lambda_{M_\infty\times\RR_\infty}\(F_1\Potimes F_2\).$$
\end{itemize}
\end{proposition}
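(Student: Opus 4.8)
The plan is to reduce all four assertions to the compatibility of the functors $I$, $\bfR J$, $\lambda$ with the Grothendieck operations (Propositions~\ref{prop3.6} and~\ref{prop3.7}) and to the stability of $\I\RR$-constructibility under those operations (Lemma~\ref{lem3.5}), after unravelling the definitions $\SF_1\Potimes\SF_2=\bfR\mu_{!!}(p_1^{-1}\SF_1\otimes p_2^{-1}\SF_2)$, $\Prihom(F_1,F_2)=\bfR p_{1\ast}\rihom(p_2^{-1}F_1,\mu^!F_2)$ and $\Prihomsub(\SF_1,\SF_2)=\bfR p_{1\ast}\rihom^\sub(p_2^{-1}\SF_1,\mu^!\SF_2)$. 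Throughout I would use that $p_1,p_2,\mu\colon M_\infty\times\RR_\infty^2\to M_\infty\times\RR_\infty$ are morphisms of real analytic bordered spaces, so that all the cited results legitimately apply to them and to the bordered space $M_\infty\times\RR_\infty^2$.

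For (1), starting from $I_{M_\infty\times\RR_\infty}\SF_1\Potimes I_{M_\infty\times\RR_\infty}\SF_2=\bfR\mu_{!!}(p_1^{-1}I_{M_\infty\times\RR_\infty}\SF_1\otimes p_2^{-1}I_{M_\infty\times\RR_\infty}\SF_2)$, I would push $I$ to the outside in three steps: $p_i^{-1}I_{M_\infty\times\RR_\infty}\simeq I_{M_\infty\times\RR_\infty^2}p_i^{-1}$ by Proposition~\ref{prop3.7}~(2)(iii); then $I_{M_\infty\times\RR_\infty^2}(\cdot)\otimes I_{M_\infty\times\RR_\infty^2}(\cdot)\simeq I_{M_\infty\times\RR_\infty^2}(\cdot\otimes\cdot)$ by (2)(vi); and finally $\bfR\mu_{!!}\circ I_{M_\infty\times\RR_\infty^2}\simeq I_{M_\infty\times\RR_\infty}\circ\bfR\mu_{!!}$ by (2)(iv). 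Composing these isomorphisms gives $I_{M_\infty\times\RR_\infty}(\SF_1\Potimes\SF_2)$.

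For (2), after unwinding $\bfR J_{M_\infty\times\RR_\infty}\Prihom(I_{M_\infty\times\RR_\infty}\SF,G)=\bfR J_{M_\infty\times\RR_\infty}\bfR p_{1\ast}\rihom(p_2^{-1}I_{M_\infty\times\RR_\infty}\SF,\mu^!G)$, I would carry out four commutations: pull $\bfR J$ past $\bfR p_{1\ast}$ (replacing it by $\bfR p_{1\ast}\bfR J_{M_\infty\times\RR_\infty^2}$) using Proposition~\ref{prop3.7}~(3)(iv); rewrite $p_2^{-1}I_{M_\infty\times\RR_\infty}\SF\simeq I_{M_\infty\times\RR_\infty^2}p_2^{-1}\SF$ by (2)(iii); apply the key identity Proposition~\ref{prop3.7}~(1), namely $\bfR J\rihom(I(\cdot),\cdot)\simeq\rihom^\sub(\cdot,\bfR J(\cdot))$, on the bordered space $M_\infty\times\RR_\infty^2$; and finally move $\bfR J$ past $\mu^!$ by (3)(v). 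Reassembling yields $\bfR p_{1\ast}\rihom^\sub(p_2^{-1}\SF,\mu^!\bfR J_{M_\infty\times\RR_\infty}G)=\Prihomsub(\SF,\bfR J_{M_\infty\times\RR_\infty}G)$, which is the claim.

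For (3), since $\Potimes$ is assembled from $p_i^{-1}$, $\otimes$ and $\bfR\mu_{!!}$, it suffices to observe that each of these three operations preserves $\I\RR$-constructibility, which is Lemma~\ref{lem3.5}~(5), (3) and (6) respectively; so $\Potimes$ restricts to $\I\RR$-constructible objects. For (4), I would use $F_i\simeq I_{M_\infty\times\RR_\infty}\lambda_{M_\infty\times\RR_\infty}F_i$ (Proposition~\ref{prop3.6}~(2)), invoke (1) to rewrite $F_1\Potimes F_2\simeq I_{M_\infty\times\RR_\infty}(\lambda_{M_\infty\times\RR_\infty}F_1\Potimes\lambda_{M_\infty\times\RR_\infty}F_2)$, and then apply $\lambda_{M_\infty\times\RR_\infty}\circ I_{M_\infty\times\RR_\infty}\simeq\id$; here (3) guarantees $F_1\Potimes F_2$ lies in $\BDC_{\I\RR-c}(\I\Bbbk_{M_\infty\times\RR_\infty})$ so that $\lambda_{M_\infty\times\RR_\infty}$ may be applied, and Proposition~\ref{prop3.6}~(2) guarantees the intermediate term is in $\BDC_{\I\RR-c}$. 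I expect no essential obstacle; the only delicate point is the bookkeeping — keeping straight on which bordered space ($M_\infty\times\RR_\infty$ versus $M_\infty\times\RR_\infty^2$) each functor lives, and confirming that $p_1,p_2,\mu$ genuinely qualify as morphisms of real analytic bordered spaces so that Lemma~\ref{lem3.5} and Propositions~\ref{prop3.6},~\ref{prop3.7} are applicable in each step.
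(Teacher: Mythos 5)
There is a genuine gap, and it sits exactly at the point you yourself flag as ``the only delicate point''. Propositions \ref{prop3.6}, \ref{prop3.7} and Lemma \ref{lem3.5} are stated (and proved) only for morphisms of real analytic bordered spaces that are \emph{associated with a morphism $\che{f}\colon\che{M}\to\che{N}$ of the ambient real analytic manifolds}; their proofs rest on identities such as $\bfR j_{N_\infty!!}\bfR f_{!!}\simeq\bfR\che{f}_{!!}\bfR j_{M_\infty!!}$, which presuppose that $\che{f}$ exists. The projections $p_1,p_2$ do extend to $\che{M}\times\var{\RR}^2\to\che{M}\times\var{\RR}$, but the addition map $\mu(x,t_1,t_2)=(x,t_1+t_2)$ does not: $+\colon\RR^2\to\RR$ has no continuous extension to $\var{\RR}^2$ (consider $(+\infty,-\infty)$). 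Consequently the steps $\bfR\mu_{!!}\circ I\simeq I\circ\bfR\mu_{!!}$ in your proof of (1), $\bfR J\circ\mu^!\simeq\mu^!\circ\bfR J$ in your proof of (2), and the appeal to Lemma \ref{lem3.5} (6) for $\bfR\mu_{!!}$ in your proof of (3) are not covered by the results you cite. This is precisely the obstruction the paper's proof of (1) is built to circumvent: it replaces $M_\infty\times\RR_\infty^2$ by $\che{M}\times S$, where $S$ is the closure in $\var{\RR}^3$ of the hyperplane $\{t_1+t_2+t_3=0\}$, on which the three structure maps $\tl{p}_1,\tl{p}_2,\tl{\mu}$ \emph{are} induced by morphisms of real analytic manifolds, and it uses the presentation $F_1\Potimes F_2\simeq j_{M_\infty\times\RR_\infty}^{-1}\bfR\tl{\mu}_{!!}(\tl{p}_1^{-1}\bfR j_{M_\infty\times\RR_\infty!!}F_1\otimes\tl{p}_2^{-1}\bfR j_{M_\infty\times\RR_\infty!!}F_2)$ (proved as in \cite[Lem.~4.3.9]{DK16}) before commuting anything with $I$.

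For the remaining parts the paper also takes routes that avoid the problem: (2) is deduced from (1) by a pure adjunction computation, $\Hom(\SF_0,\bfR J\Prihom(I\SF,G))\simeq\Hom(I(\SF_0\Potimes\SF),G)\simeq\Hom(\SF_0,\Prihomsub(\SF,\bfR J G))$, so $\bfR J$ is never commuted with $\mu^!$; and (3) follows directly from (1) together with Proposition \ref{prop3.6} (2) by writing $F_i\simeq I_{M_\infty\times\RR_\infty}\SF_i$, with no appeal to Lemma \ref{lem3.5}. Your argument for (4) coincides with the paper's. To repair your proof you would either have to extend Propositions \ref{prop3.6}--\ref{prop3.7} and Lemma \ref{lem3.5} to arbitrary morphisms of real analytic bordered spaces (which the paper does not do), or reroute the computation through $\che{M}\times S$ as above.
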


\begin{proof}
(1)
Let us denote by $S$ the closure of 
$\{(t_1, t_2, t_3)\in\RR^3\ |\ t_1+t_2+t_3=0\}$ in $\var{\RR}^3$,
and consider the morphisms $\tl{p}_1, \tl{p}_2, \tl{\mu}\colon S\to\var{\RR}$
given by $\tl{p}_1(t_1, t_2, t_3) = t_1,\ \tl{p}_2(t_1, t_2, t_3) = t_2,\ \tl{\mu}(t_1, t_2, t_3) = t_1+t_2 = -t_3$.
We shall denote by the same symbols the corresponding morphisms $\che{M}\times S\to\che{M}\times\var{\RR}$.
Then there exists a commutative diagram
\[\xymatrix@M=7pt@C=35pt{
M_\infty\times\RR_\infty^2\ar@{->}[r]^-k\ar@{->}[d]_-u
&
\che{M}\times S\ar@{->}[d]^-{\tl{u}}\\
M_\infty\times\RR_\infty\ar@{->}[r]_-{j_{M_\infty\times\RR_\infty}}
&\che{M}\times\var{\RR},}\]
where $u = p_1, p_2, \mu$, and
$k$ is the morphism associated
by the embedding $\RR^2\hookrightarrow S, (t_1, t_2)\mapsto (t_1, t_2, -t_1-t_2)$.
Note that for any $F_1, F_2\in\BDC(\I\Bbbk_{M_\infty\times\RR_\infty})$
there exists an isomorphism 
$$
F_1\Potimes F_2\simeq
j_{M_\infty\times\RR_\infty}^{-1}\bfR\tl{\mu}_{!!}(
\tl{p}_1^{-1}\bfR j_{M_\infty\times\RR_\infty!!}F_1\otimes
\tl{p}_2^{-1}\bfR j_{M_\infty\times\RR_\infty!!}F_2).
$$
This assertion can be proved as similarly to \cite[Lem.\:4.3.9]{DK16}.
Then, we have isomorphism
$$
I_{M_\infty\times\RR_\infty}\SF_1\Potimes I_{M_\infty\times\RR_\infty}\SF_2
\simeq
j_{M_\infty\times\RR_\infty}^{-1}\bfR\tl{\mu}_{!!}\(
\tl{p}_1^{-1}\bfR j_{M_\infty\times\RR_\infty!!}I_{M_\infty\times\RR_\infty}\SF_1
\otimes
\tl{p}_2^{-1}\bfR j_{M_\infty\times\RR_\infty!!}I_{M_\infty\times\RR_\infty}\SF_2\).
$$
Moreover, we have isomorphisms
\begin{align*}
\tl{p}_1^{-1}\bfR j_{M_\infty\times\RR_\infty!!}I_{M_\infty\times\RR_\infty}\SF_1
&\simeq
\tl{p}_1^{-1}\bfR j_{M_\infty\times\RR_\infty!!}
j_{M_\infty\times\RR_\infty}^{-1}I_{\che{M}\times\var{\RR}}\bfR j_{M_\infty\times\RR_\infty!!}\SF_1\\
&\simeq
\tl{p}_1^{-1}(\iota_{\che{M}\times\var{\RR}}\Bbbk_{M\times\RR}
\otimes I_{\che{M}\times\var{\RR}}\bfR j_{M_\infty\times\RR_\infty!!}\SF_1)\\
&\simeq
\tl{p}_1^{-1}(I_{\che{M}\times\var{\RR}}\rho_{\che{M}\times\var{\RR}\ast}\Bbbk_{M\times\RR}
\otimes I_{\che{M}\times\var{\RR}}\bfR j_{M_\infty\times\RR_\infty!!}\SF_1)\\
&\simeq
\tl{p}_1^{-1}I_{\che{M}\times\var{\RR}}\(\rho_{\che{M}\times\var{\RR}\ast}\Bbbk_{M\times\RR}
\otimes \bfR j_{M_\infty\times\RR_\infty!!}\SF_1\)\\
&\simeq
I_{\che{M}\times S}\tl{p}_1^{-1}\(\rho_{\che{M}\times\var{\RR}\ast}\Bbbk_{M\times\RR}
\otimes \bfR j_{M_\infty\times\RR_\infty!!}\SF_1\)\\
&\simeq
I_{\che{M}\times S}\tl{p}_1^{-1}\(
\bfR j_{M_\infty\times\RR_\infty!!}j_{M_\infty\times\RR_\infty}^{-1}\bfR j_{M_\infty\times\RR_\infty!!}\SF_1\)\\
&\simeq
I_{\che{M}\times S}\(\tl{p}_1^{-1}\bfR j_{M_\infty\times\RR_\infty!!}\SF_1\),
\end{align*}
where in the second (resp.\:sixth) isomorphism we used the fact that 
$\bfR j_{M_\infty\times\RR_\infty!!}j_{M_\infty\times\RR_\infty}^{-1}
\simeq\iota_{\che{M}\times\var{\RR}}\Bbbk_{M\times\RR}\otimes(\cdot)$
(resp.\:$\bfR j_{M_\infty\times\RR_\infty!!}j_{M_\infty\times\RR_\infty}^{-1}
\simeq\rho_{\che{M}\times\var{\RR}\ast}\Bbbk_{M\times\RR}\otimes(\cdot)$).
See also the end of Subsection \ref{subsec2.5}.
In a similar way,
we have an isomorphism
$$\tl{p}_2^{-1}\bfR j_{M_\infty\times\RR_\infty!!}I_{M_\infty\times\RR_\infty}\SF_2
\simeq
I_{\che{M}\times S}\(\tl{p}_2^{-1}\bfR j_{M_\infty\times\RR_\infty!!}\SF_2\).$$
Hence, there exist isomorphisms 
\begin{align*}
I_{M_\infty\times\RR_\infty}\SF_1\Potimes I_{M_\infty\times\RR_\infty}\SF_2
&\simeq
j_{M_\infty\times\RR_\infty}^{-1}\bfR\tl{\mu}_{!!}I_{\che{M}\times S}
\(\tl{p}_1^{-1}\bfR j_{M_\infty\times\RR_\infty!!}\SF_1
\otimes
\tl{p}_2^{-1}\bfR j_{M_\infty\times\RR_\infty!!}\SF_2\)\\
&\simeq
j_{M_\infty\times\RR_\infty}^{-1}I_{\che{M}\times \var{\RR}}\bfR\tl{\mu}_{!!}
\(\tl{p}_1^{-1}\bfR j_{M_\infty\times\RR_\infty!!}\SF_1
\otimes
\tl{p}_2^{-1}\bfR j_{M_\infty\times\RR_\infty!!}\SF_2\)\\
&\simeq
I_{M_\infty\times\RR_\infty}j_{M_\infty\times\RR_\infty}^{-1}\bfR\tl{\mu}_{!!}
\(\tl{p}_1^{-1}\bfR j_{M_\infty\times\RR_\infty!!}\SF_1
\otimes
\tl{p}_2^{-1}\bfR j_{M_\infty\times\RR_\infty!!}\SF_2\).
\end{align*}
Moreover, we have
\begin{align*}
&j_{M_\infty\times\RR_\infty}^{-1}\bfR\tl{\mu}_{!!}
\(\tl{p}_1^{-1}\bfR j_{M_\infty\times\RR_\infty!!}\SF_1
\otimes
\tl{p}_2^{-1}\bfR j_{M_\infty\times\RR_\infty!!}\SF_2\)\\
\simeq\
&\bfR{\mu}_{!!}k^{-1}
\(\bfR k_{!!}{p}_1^{-1}\SF_1\otimes \bfR k_{!!}{p}_2^{-1}\SF_2\)\\
\simeq\
&\bfR{\mu}_{!!}
\({p}_1^{-1}\SF_1\otimes {p}_2^{-1}\SF_2\)
%&\simeq
%\SF_1\Potimes\SF_2
\end{align*}
and hence we have $I_{M_\infty\times\RR_\infty}\SF_1\Potimes I_{M_\infty\times\RR_\infty}\SF_2
\simeq
I_{M_\infty\times\RR_\infty}\(\SF_1\Potimes\SF_2\)$.
\medskip

\noindent
(2)
For any $\SF_0\in\BDC(\Bbbk_{M_\infty\times\RR_\infty}^\sub)$,
there exist isomorphisms
\begin{align*}
&\Hom_{\BDC(\Bbbk_{M_\infty\times\RR_\infty}^\sub)}\(
\SF_0, \bfR J_{M_\infty\times\RR_\infty}\Prihom\(I_{M_\infty\times\RR_\infty}\SF, G\)\)\\
\simeq\
&\Hom_{\BDC(\Bbbk_{M_\infty\times\RR_\infty}^\sub)}\(
I_{M_\infty\times\RR_\infty}\SF_0\Potimes I_{M_\infty\times\RR_\infty}\SF, G\)\\
\simeq\
&\Hom_{\BDC(\Bbbk_{M_\infty\times\RR_\infty}^\sub)}\(
I_{M_\infty\times\RR_\infty}\(\SF_0\Potimes \SF\), G\)\\
\simeq\
&\Hom_{\BDC(\Bbbk_{M_\infty\times\RR_\infty}^\sub)}\(\SF_0,
\Prihomsub\(\SF, \bfR J_{M_\infty\times\RR_\infty}G\)\),
\end{align*} 
where in the first and last isomorphisms we used Proposition \ref{prop3.6} (1) and Proposition \ref{prop3.8},
and the second isomorphism follows from the assertion (1).
Hence we have an isomorphism 
$\bfR J_{M_\infty\times\RR_\infty}\Prihom\(I_{M_\infty\times\RR_\infty}\SF, G\)
\simeq
\Prihomsub\(\SF, \bfR J_{M_\infty\times\RR_\infty}G\).$

\medskip

\noindent
(3)
Let $F_1, F_2\in\BDC_{\I\RR-c}(\I\Bbbk_{M_\infty\times\RR_\infty})$.
By Proposition \ref{prop3.6} (2), there exist $\SF_1, \SF_2\in\BDC(\Bbbk_{M_\infty}^\sub)$ such that
$F_1\simeq I_{M_\infty\times\RR_\infty}\SF_1,
F_2\simeq I_{M_\infty\times\RR_\infty}\SF_2$.
Moreover, by using the first assertion,
we have $$F_1\Potimes F_2 \simeq
I_{M_\infty\times\RR_\infty}\SF_1\Potimes I_{M_\infty\times\RR_\infty}\SF_1
\simeq
I_{M_\infty\times\RR_\infty}\(\SF_1\otimes\SF_2\).$$
This implies that $F_1\Potimes F_2\in \BDC_{\I\RR-c}(\I\Bbbk_{M_\infty\times\RR_\infty})$.
The proof is completed.
\medskip

\noindent
(4)
This assertion follows from Propositions \ref{prop3.6} (2) and the assertion (1).
\end{proof}

\subsection{Enhanced Subanalytic Sheaves}\label{subsec3.3}
In this subsection, let us define enhanced subanalytic sheaves in a similar way to the definition of enhanced ind-sheaves.

Let $M_\infty = (M, \che{M})$ be a real analytic bordered space and 
set $\RR_\infty := (\RR, \var{\RR})$ for $\var{\RR} := \RR\sqcup\{-\infty, +\infty\}$.
Let us set
$$\BEC(\Bbbk_{M_\infty}^\sub) :=
\BDC(\Bbbk_{M_\infty \times\RR_\infty}^\sub)/\pi^{-1}\BDC(\Bbbk_{M_\infty}^\sub)$$
and we shall call an object of $\BEC(\Bbbk_{M_\infty}^\sub)$ an enhanced subanalytic sheaf\footnote{It seems that a object of $\BDC(\Bbbk_{M_\infty\times\RR_\infty}^\sub)$ is called 
an enhanced subanalytic sheaf, in \cite{Kas16}.} on $M_\infty$.
The category $\pi^{-1}\BDC(\Bbbk_{M_\infty}^\sub)$ can be characterized as follows.
\begin{lemma}
For $\SF\in\BDC(\Bbbk_{M_\infty\times\RR_\infty}^\sub)$, the following five conditions are equivalent
\begin{itemize}
\item[\rm (i)]
$\SF\in\pi^{-1}\BDC(\Bbbk_{M_\infty}^\sub)$,

\item[\rm (ii)]
$\SF\simto\Bbbk_{M\times\RR}[1]\Potimes\SF$,

\item[\rm (iii)]
$\Prihomsub(\Bbbk_{M\times\RR}[1], \SF)\simto \SF$,

\item[\rm (iv)]
$\(\Bbbk_{\{t\geq0\}}\oplus\Bbbk_{\{t\leq0\}}\)\Potimes \SF \simeq 0$,

\item[\rm (v)]
$\Prihomsub\(\Bbbk_{\{t\geq0\}}\oplus\Bbbk_{\{t\leq0\}}, \SF\)\simeq 0$.

\end{itemize}
\end{lemma}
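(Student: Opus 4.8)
The plan is to mimic the proof of the analogous characterization for enhanced ind-sheaves (cf. \cite[Prop.\:4.5.2]{DK16} or its bordered-space version), transporting it to the subanalytic setting via the formulas established in Corollary \ref{cor3.11} and Proposition \ref{prop3.8}. First I would observe that all five conditions are preserved under isomorphism, so we only need a cycle of implications. I would arrange them as (i) $\Rightarrow$ (ii) $\Rightarrow$ (iv) $\Rightarrow$ (v) $\Rightarrow$ (iii) $\Rightarrow$ (i), or whichever orientation makes each single arrow shortest; the natural first reduction is to unwind what $\Bbbk_{M\times\RR}[1]\Potimes(\cdot)$ and $\Prihomsub(\Bbbk_{M\times\RR}[1],\cdot)$ actually do, which is precisely the content of the third and fourth isomorphisms in Corollary \ref{cor3.11}: up to the shift, $\rho_{M_\infty\times\RR_\infty\ast}\Bbbk_{M\times\RR}\Potimes\SF\simeq\pi^{-1}\bfR\pi_{!!}\SF$ and $\Prihomsub(\rho_{M_\infty\times\RR_\infty\ast}\Bbbk_{M\times\RR},\SF)\simeq\pi^{-1}\bfR\pi_\ast\SF$. (One should be slightly careful: Corollary \ref{cor3.11} is stated with $\rho_\ast\Bbbk_{M\times\RR}$, while the lemma uses $\Bbbk_{M\times\RR}$; since $\pi^{-1}\Bbbk_M\simeq\Bbbk_{M\times\RR}$ as recorded just before Corollary \ref{cor3.11}, and $\Bbbk$-valued constant subanalytic sheaves on $M\times\RR$ are the relevant objects here, I would note at the outset that these agree under the identifications in use, or simply work throughout with the $\rho_\ast$ version.)

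For (i) $\Rightarrow$ (ii): if $\SF\simeq\pi^{-1}\SG$ for some $\SG\in\BDC(\Bbbk_{M_\infty}^\sub)$, then by the third line of Corollary \ref{cor3.11} together with $\bfR\pi_{!!}\pi^{-1}\SG\simeq\SG\otimes\bfR\pi_{!!}\Bbbk_{M_\infty\times\RR_\infty}$ and the projection formula (Proposition \ref{prop3.4} (3)), one computes $\Bbbk_{M\times\RR}[1]\Potimes\pi^{-1}\SG\simeq\pi^{-1}\bfR\pi_{!!}\pi^{-1}\SG[1]$; using that $\bfR\pi_{!!}\Bbbk_{\{t\in\RR\}}\simeq\Bbbk_{M_\infty}[-1]$ (the fiber being an open interval, $\bfR\Gamma_{!!}(\RR;\Bbbk)\simeq\Bbbk[-1]$) this collapses back to $\pi^{-1}\SG\simeq\SF$, giving (ii). The reverse-type implications (ii) $\Rightarrow$ (i) and (iii) $\Rightarrow$ (i) are the ones where one produces the witness $\SG$: set $\SG:=\bfR\pi_{!!}\SF[1]$ (resp.\ $\bfR\pi_\ast\SF[-1]$) and use Corollary \ref{cor3.11} to identify $\pi^{-1}\SG$ with $\Bbbk_{M\times\RR}[1]\Potimes\SF\simeq\SF$ (resp.\ with $\Prihomsub(\Bbbk_{M\times\RR}[1],\SF)\simeq\SF$). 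The equivalences (ii) $\Leftrightarrow$ (iv) and (iii) $\Leftrightarrow$ (v) come from the distinguished triangle relating $\Bbbk_{\{t\geq0\}}\oplus\Bbbk_{\{t\leq0\}}$, $\Bbbk_{\{t=0\}}$, and $\Bbbk_{M\times\RR}[1]$ — concretely, the Mayer--Vietoris triangle $\Bbbk_{\{t=0\}}\to\Bbbk_{\{t\geq0\}}\oplus\Bbbk_{\{t\leq0\}}\to\Bbbk_{M\times\RR}\xrightarrow{+1}$ — convolved with $\SF$ and using $\Bbbk_{\{t=0\}}\Potimes\SF\simeq\SF$ (and likewise for $\Prihomsub$), recorded in the preceding subsection. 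Convolving that triangle with $\SF$ shows $(\Bbbk_{\{t\geq0\}}\oplus\Bbbk_{\{t\leq0\}})\Potimes\SF\simeq0$ iff the natural map $\SF\simeq\Bbbk_{\{t=0\}}\Potimes\SF\to\Bbbk_{M\times\RR}[1]\Potimes\SF$ is an isomorphism, and dually for the $\Prihomsub$ side.

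The main obstacle, I expect, is bookkeeping rather than conceptual: making sure the shift conventions in the triangle $\Bbbk_{\{t=0\}}\to\Bbbk_{\{t\geq0\}}\oplus\Bbbk_{\{t\leq0\}}\to\Bbbk_{M\times\RR}\xrightarrow{+1}$ line up with the "$[1]$" appearing in conditions (ii), (iii), and verifying that the convolution functor $\Potimes$ (resp.\ $\Prihomsub$) is triangulated in the relevant variable so that it sends this triangle to a triangle — this is part of the structure recalled in Subsection \ref{subsec3.3} but should be cited explicitly. A secondary point needing care is the compatibility of $\bfR\pi_{!!}$ and $\bfR\pi_\ast$ with $\pi^{-1}$ on constant sheaves (the computation $\bfR\pi_{!!}\Bbbk\simeq\Bbbk[-1]$, $\bfR\pi_\ast\Bbbk\simeq\Bbbk[1]$ along the fiber $\RR_\infty$), which is exactly where the bordered structure matters and where one invokes the base-change and projection formulas of Proposition \ref{prop3.4} (3), (4); all of this is entirely parallel to the ind-sheaf case in \cite{DK16}, so I would keep the argument terse and point to Corollary \ref{cor3.11} for the key identities.
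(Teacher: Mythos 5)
Your overall strategy matches the paper's: reduce (ii)$\Leftrightarrow$(iv) and (iii)$\Leftrightarrow$(v) to the Mayer--Vietoris triangle together with $\Bbbk_{\{t=0\}}\Potimes\SF\simeq\SF\simeq\Prihomsub(\Bbbk_{\{t=0\}},\SF)$, and get (ii)$\Rightarrow$(i), (iii)$\Rightarrow$(i) from Corollary \ref{cor3.11} by exhibiting $\bfR\pi_{!!}\SF[1]$ (resp.\ $\bfR\pi_\ast\SF$) as the witness. Where you differ is the direction starting from (i): the paper transports $\SF$ to an ind-sheaf via $I_{M_\infty\times\RR_\infty}$, invokes \cite[Lem.\:4.4.3]{DK16} there, and pulls the conclusion back through Propositions \ref{prop3.6} and \ref{prop3.7}; you instead compute directly in the subanalytic setting using the projection formula and $\bfR\pi_{!!}\Bbbk_{M\times\RR}\simeq\Bbbk_M[-1]$. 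Your route is more self-contained, at the cost of having to check that the abstract isomorphism you produce agrees with the canonical morphism in (ii) --- a point the detour through \cite[Lem.\:4.4.3]{DK16} handles for free, and which you could also sidestep entirely by proving (i)$\Rightarrow$(iv) instead (this is literally the first line of Corollary \ref{cor3.11}) and then passing to (ii) via the triangle.

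There is, however, a genuine logical gap: the implications you actually sketch are (i)$\Rightarrow$(ii), (ii)$\Rightarrow$(i), (iii)$\Rightarrow$(i), (ii)$\Leftrightarrow$(iv) and (iii)$\Leftrightarrow$(v). From these one gets (i)$\Leftrightarrow$(ii)$\Leftrightarrow$(iv) and that (iii), (v) imply the rest, but no implication \emph{into} (iii) or (v) is ever established, so the five conditions are not shown to be equivalent. Your proposed cycle (i)$\Rightarrow$(ii)$\Rightarrow$(iv)$\Rightarrow$(v)$\Rightarrow$(iii)$\Rightarrow$(i) would require (iv)$\Rightarrow$(v), which the triangle argument does not give (it only relates (iv) to (ii) and (v) to (iii) separately). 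The fix is easy and parallel to what you already did: prove (i)$\Rightarrow$(v) from the second line of Corollary \ref{cor3.11}, or (i)$\Rightarrow$(iii) from $\bfR\pi_\ast\pi^{-1}\SG\simeq\SG$ --- this is exactly the second half of the paper's argument, which treats the $\Potimes$ and $\Prihomsub$ sides symmetrically. A second, smaller issue: your Mayer--Vietoris triangle is written in the wrong orientation. The short exact sequence for the closed cover $\{t\geq0\}\cup\{t\leq0\}=M\times\RR$ is $0\to\Bbbk_{M\times\RR}\to\Bbbk_{\{t\geq0\}}\oplus\Bbbk_{\{t\leq0\}}\to\Bbbk_{\{t=0\}}\to0$, giving the triangle $\Bbbk_{\{t\geq0\}}\oplus\Bbbk_{\{t\leq0\}}\to\Bbbk_{\{t=0\}}\to\Bbbk_{M\times\RR}[1]\xrightarrow{+1}$ as in the paper; taken literally, your version would place the shift incorrectly and change (ii) by a shift of $2$, although the consequence you then state is the correct one.
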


\begin{proof}
By using a distinguished triangle 
$$\Bbbk_{\{t\geq 0\}}\oplus\Bbbk_{\{t\leq 0\}}\longrightarrow
\Bbbk_{\{t=0\}}\longrightarrow\Bbbk_{M\times\RR}[1]\xrightarrow{+1}$$
and the fact that $\Bbbk_{\{t=0\}}\Potimes \SF\simeq \SF$
(resp.\,$\SF\simeq\Prihomsub(\Bbbk_{\{t=0\}}, \SF)$),
we have the condition (ii) (resp.\,(iii)) is equivalent to the condition (iv) (resp.\,(v)).

Let us prove three conditions (i), (ii), (iii) are equivalent.
Let us assume the condition (ii) (resp.\,(iii)) is satisfied.
Then we have $\SF\simto\Bbbk_{M\times\RR}\Potimes\SF[1]$
(resp.\,$\Prihomsub(\Bbbk_{M\times\RR}[1], \SF)\simto \SF$).
By Corollary \ref{cor3.11},
$\SF\simto\pi^{-1}\bfR\pi_{!!}[1]\SF$
(resp.\,$\pi^{-1}\bfR\pi_\ast\SF\overset{\sim}{\longleftarrow}\SF$).
Hence, the condition (i) is satisfied. 

Let us assume the condition (i) is satisfied.
By using Proposition \ref{prop3.7} (2)(iii),
we have $I_{M_\infty\times\RR_\infty}\SF\in\pi^{-1}\BDC(\I\Bbbk_{M_\infty})$,
and hence by \cite[Lem.\:4.4.3]{DK16} we have
\begin{align*}
\pi^{-1}\bfR\pi_\ast I_{M_\infty\times\RR_\infty}\SF
&\simto
I_{M_\infty\times\RR_\infty}\SF,\\
I_{M_\infty\times\RR_\infty}\SF
&\simto
\pi^{!}\bfR\pi_{!!} I_{M_\infty\times\RR_\infty}\SF.
\end{align*}
By using Proposition \ref{prop3.7} (3)(iv), (v) (resp.\,(2)(iv), (v)) and Proposition \ref{prop3.6} (2),
we have 
$$
\pi^{-1}\bfR\pi_\ast\SF
\simto
\SF\hspace{7pt}
(\text{resp.}\, \SF\simto\pi^{!}\bfR\pi_{!!} \SF).
$$
By Corollary \ref{cor3.11}, 
this implies that the condition (iii) (resp.\,(ii)) is satisfied.

Therefore, the proof is completed.
\end{proof}

Let us prove the quotient functor
\[\Q_{M_\infty}^\sub \colon \BDC(\Bbbk_{M_\infty\times\RR_\infty}^\sub)\to\BEC(\Bbbk_{M_\infty}^\sub)\]
has fully faithful left and right adjoints.
By Corollary \ref{cor3.11},
two functors
\begin{align*}
\bfL_{M_\infty}^{\rmE, \sub}&\colon
\BEC(\Bbbk_{M_\infty}^\sub) \to\BDC(\Bbbk_{M_\infty\times\RR_\infty}^\sub),
\hspace{7pt}
\\
\bfR_{M_\infty}^{\rmE, \sub} &\colon
\BEC(\Bbbk_{M_\infty}^\sub) \to\BDC(\Bbbk_{M_\infty\times\RR_\infty}^\sub)
\end{align*}
 which are defined by 
\begin{align*}
\bfL_{M_\infty}^{\rmE, \sub}(\Q_{M_\infty}^\sub(\cdot))
&:= \rho_{M_\infty\times\RR_\infty\ast}(\Bbbk_{\{t\geq0\}}\oplus\Bbbk_{\{t\leq 0\}})
\Potimes (\cdot) ,\\
\bfR_{M_\infty}^{\rmE, \sub}(\Q_{M_\infty}^\sub(\cdot)) 
&:=\Prihomsub(\rho_{M_\infty\times\RR_\infty\ast}(\Bbbk_{\{t\geq0\}}\oplus\Bbbk_{\{t\leq 0\}}), \cdot)
\end{align*}
are well defined.

\begin{lemma}\label{lem3.13}
The functors $\bfL_{M_\infty}^{\rmE, \sub}, \bfR_{M_\infty}^{\rmE, \sub}$ induce equivalences of categories
\begin{align*}
\bfL_{M_\infty}^{\rmE, \sub} &\colon \BEC(\Bbbk_{M_\infty}^\sub)\simto
\{\SF\in\BDC(\Bbbk_{M_\infty\times\RR_\infty}^\sub)\ |\
\rho_{M_\infty\times\RR_\infty\ast}(\Bbbk_{\{t\geq0\}}\oplus\Bbbk_{\{t\leq0\}})\Potimes \SF\simto \SF\},\\
\bfR_{M_\infty}^{\rmE, \sub} &\colon \BEC(\Bbbk_{M_\infty}^\sub)\simto
\{\SF\in\BDC(\Bbbk_{M_\infty\times\RR_\infty}^\sub)\ |\ \SF\simto
\Prihomsub(\rho_{M_\infty\times\RR_\infty\ast}(\Bbbk_{\{t\geq0\}}\oplus\Bbbk_{\{t\leq0\}}), \SF)\},
\end{align*}
respectively.
%where $\{t\geq0\}$ stands for $\{(x, t)\in M\times\RR\ |\ t\geq0\}\subset \che{M}\times\var{\RR}$ 
%and $\{t\leq0\}$ is defined similarly.

Moreover, the quotient functor admits
a left (resp.\,right) adjoint $\bfL_{M_\infty}^{\rmE, \sub}$ (resp.\,$\bfR_{M_\infty}^{\rmE, \sub}$).

\end{lemma}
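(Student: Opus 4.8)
The plan is to run the standard localization argument for a Verdier quotient by a triangulated subcategory, in exact parallel with the treatment of enhanced ind-sheaves in \cite[\S\S 4.3, 4.4]{DK16}. Set $\mathcal{T} := \BDC(\Bbbk_{M_\infty\times\RR_\infty}^\sub)$, $\mathcal{N} := \pi^{-1}\BDC(\Bbbk_{M_\infty}^\sub)$ and $L := \rho_{M_\infty\times\RR_\infty\ast}(\Bbbk_{\{t\geq0\}}\oplus\Bbbk_{\{t\leq0\}})$, so that by definition $\bfL_{M_\infty}^{\rmE,\sub}(\Q_{M_\infty}^\sub\SF) = L\Potimes\SF$ and $\bfR_{M_\infty}^{\rmE,\sub}(\Q_{M_\infty}^\sub\SF) = \Prihomsub(L,\SF)$. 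First I would record the two \emph{projector triangles}: applying $(\cdot)\Potimes\SF$ and $\Prihomsub(\cdot,\SF)$ to the image under the exact functor $\rho_{M_\infty\times\RR_\infty\ast}^{\RR-c}$ of the distinguished triangle $\Bbbk_{\{t\geq0\}}\oplus\Bbbk_{\{t\leq0\}}\to\Bbbk_{\{t=0\}}\to\Bbbk_{M\times\RR}[1]\xrightarrow{+1}$, and using that $\rho_{M_\infty\times\RR_\infty\ast}\Bbbk_{\{t=0\}}$ is the unit for $\Potimes$ together with Corollary \ref{cor3.11}, one obtains for every $\SF\in\mathcal{T}$ distinguished triangles
\[L\Potimes\SF\longrightarrow\SF\longrightarrow\pi^{-1}\bfR\pi_{!!}\SF[1]\xrightarrow{+1},\qquad
\pi^{-1}\bfR\pi_{\ast}\SF\longrightarrow\SF\longrightarrow\Prihomsub(L,\SF)\xrightarrow{+1}.\]
Since the third terms lie in $\mathcal{N}$, the canonical maps become isomorphisms after applying $\Q_{M_\infty}^\sub$; hence $\Q_{M_\infty}^\sub\circ\bfL_{M_\infty}^{\rmE,\sub}\simeq\id$ and $\Q_{M_\infty}^\sub\circ\bfR_{M_\infty}^{\rmE,\sub}\simeq\id$.

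Next I would identify the essential images. Applying $L\Potimes(\cdot)$ (resp. $\Prihomsub(L,\cdot)$) to the first (resp. second) triangle above and using the vanishing $L\Potimes\pi^{-1}(\cdot)\simeq 0$ (resp. $\Prihomsub(L,\pi^{-1}(\cdot))\simeq 0$) from Corollary \ref{cor3.11}, one gets $L\Potimes(L\Potimes\SF)\simto L\Potimes\SF$ and $\Prihomsub(L,\SF)\simto\Prihomsub(L,\Prihomsub(L,\SF))$, so that $\bfL_{M_\infty}^{\rmE,\sub}$ and $\bfR_{M_\infty}^{\rmE,\sub}$ take values in
\[\mathcal{L} := \{\SF\in\mathcal{T}\ |\ L\Potimes\SF\simto\SF\}\qquad\text{and}\qquad
\mathcal{R} := \{\SF\in\mathcal{T}\ |\ \SF\simto\Prihomsub(L,\SF)\},\]
respectively; conversely, if $\SF\in\mathcal{L}$ then $\bfL_{M_\infty}^{\rmE,\sub}(\Q_{M_\infty}^\sub\SF) = L\Potimes\SF\simeq\SF$, and similarly for $\mathcal{R}$.

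The localization step comes next. Using the convolution adjunction of Proposition \ref{prop3.8} together with Corollary \ref{cor3.11} one checks $\mathcal{L}\subseteq{}^{\perp}\mathcal{N}$ and $\mathcal{R}\subseteq\mathcal{N}^{\perp}$: for $\SF\in\mathcal{L}$ and $\mathcal{G}\in\mathcal{N}$ one has $\Hom_{\mathcal{T}}(\SF,\mathcal{G})\simeq\Hom_{\mathcal{T}}(L\Potimes\SF,\mathcal{G})\simeq\Hom_{\mathcal{T}}(\SF,\Prihomsub(L,\mathcal{G})) = 0$, and dually for $\mathcal{R}$ with $L\Potimes\mathcal{G}\simeq 0$. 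Now the elementary fact about Verdier quotients applies: if $\SF\in{}^{\perp}\mathcal{N}$ then every morphism $\SF'\to\SF$ in $\mathcal{T}$ with cone in $\mathcal{N}$ admits a section (apply $\Hom_{\mathcal{T}}(\SF,-)$ to the defining triangle), whence the natural map $\Hom_{\mathcal{T}}(\SF,\mathcal{H})\to\Hom_{\BEC(\Bbbk_{M_\infty}^\sub)}(\Q_{M_\infty}^\sub\SF,\Q_{M_\infty}^\sub\mathcal{H})$ is bijective for all $\mathcal{H}\in\mathcal{T}$; combined with the essential surjectivity $\Q_{M_\infty}^\sub\mathcal{H}\simeq\Q_{M_\infty}^\sub(L\Potimes\mathcal{H})$ with $L\Potimes\mathcal{H}\in\mathcal{L}$, this shows that $\Q_{M_\infty}^\sub$ restricts to an equivalence $\mathcal{L}\simto\BEC(\Bbbk_{M_\infty}^\sub)$ inverse to $\bfL_{M_\infty}^{\rmE,\sub}$, which is the first displayed equivalence of the lemma; the dual argument with $\mathcal{N}^{\perp}$ gives the second. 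Finally the adjunctions are formal: for $K\in\BEC(\Bbbk_{M_\infty}^\sub)$ and $\SF\in\mathcal{T}$,
\[\Hom_{\mathcal{T}}(\bfL_{M_\infty}^{\rmE,\sub}K,\SF)\simeq
\Hom_{\BEC(\Bbbk_{M_\infty}^\sub)}(\Q_{M_\infty}^\sub\bfL_{M_\infty}^{\rmE,\sub}K,\Q_{M_\infty}^\sub\SF)\simeq
\Hom_{\BEC(\Bbbk_{M_\infty}^\sub)}(K,\Q_{M_\infty}^\sub\SF)\]
using $\bfL_{M_\infty}^{\rmE,\sub}K\in\mathcal{L}\subseteq{}^{\perp}\mathcal{N}$ and $\Q_{M_\infty}^\sub\bfL_{M_\infty}^{\rmE,\sub}\simeq\id$, so $\bfL_{M_\infty}^{\rmE,\sub}$ is left adjoint to $\Q_{M_\infty}^\sub$; symmetrically, using $\mathcal{R}\subseteq\mathcal{N}^{\perp}$, the functor $\bfR_{M_\infty}^{\rmE,\sub}$ is right adjoint.

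I do not expect a serious obstacle, since every ingredient — the two projector triangles, the vanishing statements $L\Potimes\pi^{-1}(\cdot)\simeq 0$ and $\Prihomsub(L,\pi^{-1}(\cdot))\simeq 0$, and the convolution adjunction — is already available in Corollary \ref{cor3.11} and Proposition \ref{prop3.8}, and the overall scheme copies verbatim the one used for enhanced ind-sheaves in \cite[\S 4.4]{DK16}. The two points that deserve a little care are: (i) the idempotency $L\Potimes\SF\in\mathcal{L}$ and $\Prihomsub(L,\SF)\in\mathcal{R}$ for all $\SF\in\mathcal{T}$, which is precisely where the vanishing lines of Corollary \ref{cor3.11} enter; and (ii) the bookkeeping of the elementary quotient lemma (existence and uniqueness of the splitting, hence bijectivity of the comparison of $\Hom$'s in $\mathcal{T}$ versus $\mathcal{T}/\mathcal{N}$), which is routine. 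Naturality of all the isomorphisms above is to be checked along the way.
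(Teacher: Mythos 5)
Your proof is correct, and the first half (the two equivalences) follows essentially the same path as the paper: both arguments rest on the projector triangles obtained from $\Bbbk_{\{t\geq0\}}\oplus\Bbbk_{\{t\leq0\}}\to\Bbbk_{\{t=0\}}\to\Bbbk_{M\times\RR}[1]\xrightarrow{+1}$ together with the vanishing and identification statements of Corollary \ref{cor3.11}, giving $\Q_{M_\infty}^\sub\circ\bfL_{M_\infty}^{\rmE,\sub}\simeq\id$, $\Q_{M_\infty}^\sub\circ\bfR_{M_\infty}^{\rmE,\sub}\simeq\id$ and the identification of the essential images. Where you genuinely diverge is the adjunction. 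The paper deduces it from full faithfulness of $\bfL_{M_\infty}^{\rmE,\sub}$ and $\bfR_{M_\infty}^{\rmE,\sub}$ combined with the two identities $\Prihomsub\bigl(\rho_{M_\infty\times\RR_\infty\ast}(\Bbbk_{\{t\geq0\}}\oplus\Bbbk_{\{t\leq0\}}),\bfL_{M_\infty}^{\rmE,\sub}K\bigr)\simeq\bfR_{M_\infty}^{\rmE,\sub}K$ and $\rho_{M_\infty\times\RR_\infty\ast}(\Bbbk_{\{t\geq0\}}\oplus\Bbbk_{\{t\leq0\}})\Potimes\bfR_{M_\infty}^{\rmE,\sub}K\simeq\bfL_{M_\infty}^{\rmE,\sub}K$, which it proves by transporting everything through the equivalence $I_{M_\infty\times\RR_\infty}/\lambda_{M_\infty\times\RR_\infty}$ of Proposition \ref{prop3.6} and Proposition \ref{prop3.12} and invoking the analogue of \cite[Cor.\:4.3.11]{DK16} on the ind-sheaf side. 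You instead stay entirely inside $\BDC(\Bbbk_{M_\infty\times\RR_\infty}^\sub)$ and run the standard Bousfield-localization argument: the convolution adjunction of Proposition \ref{prop3.8} plus Corollary \ref{cor3.11} give the orthogonality of the two essential images to $\pi^{-1}\BDC(\Bbbk_{M_\infty}^\sub)$, and the roof calculus then identifies $\Hom$ in the quotient with $\Hom$ in the ambient category. Your route is more self-contained (it avoids the detour through enhanced ind-sheaves and the unproved transported corollary) and yields the useful orthogonality statements as a by-product; the paper's route produces, along the way, the explicit comparison isomorphisms between $\bfL_{M_\infty}^{\rmE,\sub}$ and $\bfR_{M_\infty}^{\rmE,\sub}$, which are of independent interest later. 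The only points to make explicit in your write-up are that $\pi^{-1}\BDC(\Bbbk_{M_\infty}^\sub)$ is stable under shifts and cones (so the orthogonals behave as claimed) and the naturality of the splitting in the roof argument; both are routine.
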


\begin{proof}
By Corollary \ref{cor3.11} and the fact that 
for any $K\in\BEC(\Bbbk_{M_\infty}^\sub)$
there exist isomorphisms
\begin{align*}
&\rho_{M_\infty\times\RR_\infty\ast}(\Bbbk_{\{t\geq0\}}\oplus\Bbbk_{\{t\leq0\}})\Potimes\bfL^{\rmE, \sub}_{M_\infty}K
\simto\bfL^{\rmE, \sub}_{M_\infty}K,\\
&\bfR^{\rmE, \sub}_{M_\infty}K
\simto
\Prihomsub\(\rho_{M_\infty\times\RR_\infty\ast}(\Bbbk_{\{t\geq0\}}\oplus\Bbbk_{\{t\leq0\}}), 
\bfR^{\rmE, \sub}_{M_\infty}K\),
\end{align*}
functors 
\begin{align*}
\bfL_{M_\infty}^{\rmE, \sub} &\colon \BEC(\Bbbk_{M_\infty}^\sub)\to
\{\SF\in\BDC(\Bbbk_{M_\infty\times\RR_\infty}^\sub)\ |\
\rho_{M_\infty\times\RR_\infty\ast}(\Bbbk_{\{t\geq0\}}\oplus\Bbbk_{\{t\leq0\}})\Potimes \SF\simto \SF\},\\
\bfR_{M_\infty}^{\rmE, \sub} &\colon \BEC(\Bbbk_{M_\infty}^\sub)\to
\{\SF\in\BDC(\Bbbk_{M_\infty\times\RR_\infty}^\sub)\ |\ \SF\simto
\Prihomsub(\rho_{M_\infty\times\RR_\infty\ast}(\Bbbk_{\{t\geq0\}}\oplus\Bbbk_{\{t\leq0\}}), \SF)\}
\end{align*}
are well defined.
Let $K\in\BEC(\Bbbk_{M_\infty}^\sub)$.
Then there exist $\SF\in\BDC(\Bbbk_{M_\infty\times\RR_\infty}^\sub)$
such that $K\simeq \Q_{M_\infty}^\sub\SF$.
Let us prove that 
\begin{align*}
&\Q_{M_\infty}^\sub\(\rho_{M_\infty\times\RR_\infty\ast}(\Bbbk_{\{t\geq0\}}\oplus\Bbbk_{\{t\leq0\}})
\Potimes \SF\)
\simeq \Q_{M_\infty}^\sub(\SF),\\
&\SF\simto
\Q_{M_\infty}^\sub\(\Prihomsub(\rho_{M_\infty\times\RR_\infty\ast}(\Bbbk_{\{t\geq0\}}\oplus\Bbbk_{\{t\leq0\}}), \SF)\).
\end{align*}
Since there exists a distinguished triangle 
$$\Bbbk_{\{t\geq 0\}}\oplus\Bbbk_{\{t\leq 0\}}\longrightarrow
\Bbbk_{\{t=0\}}\longrightarrow\Bbbk_{M\times\RR}[1]\xrightarrow{+1},$$
it is enough to show that 
$$\Q_{M_\infty}^\sub\(\rho_{M_\infty\times\RR_\infty\ast}\Bbbk_{M\times\RR}[1]\Potimes \SF\)
\simeq
\Q_{M_\infty}^\sub\(\Prihomsub(\rho_{M_\infty\times\RR_\infty\ast}\Bbbk_{M\times\RR}[1], \SF)\)
\simeq0.$$
These assertions follows from Corollary \ref{cor3.11}.
So that, we have
\begin{align*}
\Q_{M_\infty}^\sub\bfL_{M_\infty}^{\rmE, \sub}K
\simeq
\Q_{M_\infty}^\sub\(\rho_{M_\infty\times\RR_\infty\ast}(\Bbbk_{\{t\geq0\}}\oplus\Bbbk_{\{t\leq0\}})
\Potimes \SF\)
\simeq
\Q_{M_\infty}^\sub(\SF)
\simeq K,\\
\Q_{M_\infty}^\sub\bfR_{M_\infty}^{\rmE, \sub}K
\simeq
\Q_{M_\infty}^\sub\(\Prihomsub(\rho_{M_\infty\times\RR_\infty\ast}(\Bbbk_{\{t\geq0\}}\oplus\Bbbk_{\{t\leq0\}}), \SF)\)
\simeq
\Q_{M_\infty}^\sub(\SF)
\simeq K.
\end{align*}
Hence we have $\Q_{M_\infty}^\sub\circ\bfL_{M_\infty}^{\rmE, \sub}\simeq\id,\ 
\Q_{M_\infty}^\sub\circ\bfR_{M_\infty}^{\rmE, \sub}\simeq\id$.
Moreover, it is clear that for any
$\SG_1\in\{\SF\in\BDC(\Bbbk_{M_\infty\times\RR_\infty}^\sub)\ |\
\rho_{M_\infty\times\RR_\infty\ast}(\Bbbk_{\{t\geq0\}}\oplus\Bbbk_{\{t\leq0\}})\Potimes \SF\simto \SF\}$
and any 
$\SG_2\in\{\SF\in\BDC(\Bbbk_{M_\infty\times\RR_\infty}^\sub)\ |\
\SF\simto\Prihomsub(\rho_{M_\infty\times\RR_\infty\ast}(\Bbbk_{\{t\geq0\}}\oplus\Bbbk_{\{t\leq0\}}), \SF)\}$,
we have 
$\bfL_{M_\infty}^{\rmE, \sub}\Q_{M_\infty}^\sub(\SG_1)\simeq\SG_1$
and 
$\bfR_{M_\infty}^{\rmE, \sub}\circ\Q_{M_\infty}^\sub(\SG_2)\simeq\SG_2$.
Therefore, there exist equivalences of categories
\begin{align*}
\bfL_{M_\infty}^{\rmE, \sub} &\colon \BEC(\Bbbk_{M_\infty}^\sub)\simto
\{\SF\in\BDC(\Bbbk_{M_\infty\times\RR_\infty}^\sub)\ |\
\rho_{M_\infty\times\RR_\infty\ast}(\Bbbk_{\{t\geq0\}}\oplus\Bbbk_{\{t\leq0\}})\Potimes \SF\simto \SF\},\\
\bfR_{M_\infty}^{\rmE, \sub} &\colon \BEC(\Bbbk_{M_\infty}^\sub)\simto
\{\SF\in\BDC(\Bbbk_{M_\infty\times\RR_\infty}^\sub)\ |\ \SF\simto
\Prihomsub(\rho_{M_\infty\times\RR_\infty\ast}(\Bbbk_{\{t\geq0\}}\oplus\Bbbk_{\{t\leq0\}}), \SF)\}.
\end{align*}

Let us prove that the quotient functor admits
a left (resp.\,right) adjoint $\bfL_{M_\infty}^{\rmE, \sub}$ (resp.\,$\bfR_{M_\infty}^{\rmE, \sub}$).
Let $\SF\in\BDC(\Bbbk_{M_\infty\times\RR_\infty}^\sub)$ and $K\in\BEC(\Bbbk_{M_\infty}^\sub)$.
Since functors $\bfL_{M_\infty}^{\rmE, \sub}, \bfR_{M_\infty}^{\rmE, \sub}$ induce fully faithful functors
\begin{align*}
\bfL_{M_\infty}^{\rmE, \sub} &\colon \BEC(\Bbbk_{M_\infty}^\sub)
\hookrightarrow\BDC(\Bbbk_{M_\infty\times\RR_\infty}^\sub),\\
\bfR_{M_\infty}^{\rmE, \sub} &\colon \BEC(\Bbbk_{M_\infty}^\sub)
\hookrightarrow\BDC(\Bbbk_{M_\infty\times\RR_\infty}^\sub),
\end{align*}
there exist isomorphisms
\begin{align*}
\Hom_{\BEC(\Bbbk_{M_\infty}^\sub)}
\(\Q_{M_\infty}^\sub\SF, K\)
&\simeq
\Hom_{\BDC(\Bbbk_{M_\infty\times\RR_\infty}^\sub)}
\(\bfL_{M_\infty}^{\rmE, \sub}\Q_{M_\infty}^\sub\SF, \bfL_{M_\infty}^{\rmE, \sub}K\),\\
\Hom_{\BEC(\Bbbk_{M_\infty}^\sub)}
\(K, \Q_{M_\infty}^\sub\SF\)
&\simeq
\Hom_{\BDC(\Bbbk_{M_\infty\times\RR_\infty}^\sub)}
\(\bfR_{M_\infty}^{\rmE, \sub}K, \bfR_{M_\infty}^{\rmE, \sub}\Q_{M_\infty}^\sub\SF\).
\end{align*}
Let us prove that there exist isomorphisms in $\BDC(\Bbbk_{M_\infty\times\RR_\infty}^\sub):$
\begin{align*}
&\Prihomsub\(\rho_{M_\infty\times\RR_\infty\ast}(\Bbbk_{\{t\geq0\}}\oplus\Bbbk_{\{t\leq0\}}),
 \bfL_{M_\infty}^{\rmE, \sub}K\)
\simeq
\bfR_{M_\infty}^{\rmE, \sub}K,\\
&\rho_{M_\infty\times\RR_\infty\ast}\(\Bbbk_{\{t\geq0\}}\oplus\Bbbk_{\{t\leq0\}}\)\Potimes\bfR_{M_\infty}^{\rmE, \sub}K
\simeq
\bfL_{M_\infty}^{\rmE, \sub}K.
\end{align*}
Since $K\in \BEC(\Bbbk_{M_\infty}^\sub)$,
there exists $\SF_0\in \BDC(\Bbbk_{M_\infty\times\RR_\infty}^\sub)$ such that $K\simeq \Q_{M_\infty}^\sub\SF_0$.
Moreover there exists $F_0\in\BDC_{\I\RR-c}(\I\Bbbk_{M_\infty\times\RR_\infty})$ such that $\SF_0\simeq\lambda_{M_\infty\times\RR_\infty}F_0$
by Proposition \ref{prop3.6} (2).
Then we have isomorphisms
\begin{align*}
 &\Prihomsub\(\rho_{M_\infty\times\RR_\infty\ast}(\Bbbk_{\{t\geq0\}}\oplus\Bbbk_{\{t\leq0\}}),
 \rho_{M_\infty\times\RR_\infty\ast}(\Bbbk_{\{t\geq0\}}\oplus\Bbbk_{\{t\leq0\}})\Potimes \SF_0\)\\
 \simeq\
 &\Prihomsub\(\rho_{M_\infty\times\RR_\infty\ast}(\Bbbk_{\{t\geq0\}}\oplus\Bbbk_{\{t\leq0\}}),
 \lambda_{M_\infty\times\RR_\infty}\iota_{M_\infty\times\RR_\infty}(\Bbbk_{\{t\geq0\}}\oplus\Bbbk_{\{t\leq0\}})\Potimes \lambda_{M_\infty\times\RR_\infty}F_0\)\\
 \simeq\
 &\Prihomsub\(\rho_{M_\infty\times\RR_\infty\ast}(\Bbbk_{\{t\geq0\}}\oplus\Bbbk_{\{t\leq0\}}),
 \lambda_{M_\infty\times\RR_\infty}\(\iota_{M_\infty\times\RR_\infty}(\Bbbk_{\{t\geq0\}}\oplus\Bbbk_{\{t\leq0\}})\Potimes F_0\)\)\\
 \simeq\
 &\bfR J_{M_\infty\times\RR_\infty
}\Prihom\(\iota_{M_\infty\times\RR_\infty}(\Bbbk_{\{t\geq0\}}\oplus\Bbbk_{\{t\leq0\}}),
 \iota_{M_\infty\times\RR_\infty}(\Bbbk_{\{t\geq0\}}\oplus\Bbbk_{\{t\leq0\}})\Potimes F_0\)\\
 \simeq\
 &\bfR J_{M_\infty\times\RR_\infty
}\Prihom\(\iota_{M_\infty\times\RR_\infty}(\Bbbk_{\{t\geq0\}}\oplus\Bbbk_{\{t\leq0\}}), F_0\)\\
 \simeq\
 &\Prihomsub\(\rho_{M_\infty\times\RR_\infty\ast}(
 \Bbbk_{\{t\geq0\}}\oplus\Bbbk_{\{t\leq0\}}), \lambda_{M_\infty\times\RR_\infty}F_0\)
\end{align*}
where in the first isomorphism we used Proposition \ref{prop3.7} (3)(i),
in the second isomorphism we used Proposition \ref{prop3.12} (4),
in the third and fifth isomorphisms we used Propositions \ref{prop3.7} (4)(i) and \ref{prop3.12} (2).
In the fourth isomorphism we used the fact that
for any $G\in\BDC(\I\Bbbk_{M_\infty\times\RR_\infty})$
there exists an isomorphism in $\BDC(\I\Bbbk_{M_\infty\times\RR_\infty})$:
\begin{align*}
&\Prihom\(\iota_{M_\infty\times\RR_\infty}(\Bbbk_{\{t\geq0\}}\oplus\Bbbk_{\{t\leq0\}}),
 \iota_{M_\infty\times\RR_\infty}(\Bbbk_{\{t\geq0\}}\oplus\Bbbk_{\{t\leq0\}})\Potimes G\)\\
 \simeq\
 &\Prihom\(\iota_{M_\infty\times\RR_\infty}(\Bbbk_{\{t\geq0\}}\oplus\Bbbk_{\{t\leq0\}}), G\).
 \end{align*}
This assertion can be proved as similar way to \cite[Cor.\:4.3.11]{DK16}.
Hence there exists an isomorphism in $\BDC(\Bbbk_{M_\infty\times\RR_\infty}^\sub):$
$$\Prihomsub\(\rho_{M_\infty\times\RR_\infty\ast}(\Bbbk_{\{t\geq0\}}\oplus\Bbbk_{\{t\leq0\}}),
 \bfL_{M_\infty}^{\rmE, \sub}K\)
\simeq \bfR_{M_\infty}^{\rmE, \sub}K.$$
Moreover, we have isomorphisms
\begin{align*}
 & \rho_{M_\infty\times\RR_\infty\ast}(\Bbbk_{\{t\geq0\}}\oplus\Bbbk_{\{t\leq0\}})\Potimes
 \Prihomsub\(\rho_{M_\infty\times\RR_\infty\ast}(\Bbbk_{\{t\geq0\}}\oplus\Bbbk_{\{t\leq0\}}), \SF_0\)\\
 \simeq\
 & \lambda_{M_\infty\times\RR_\infty}\iota_{M_\infty\times\RR_\infty}(\Bbbk_{\{t\geq0\}}\oplus\Bbbk_{\{t\leq0\}})\Potimes
 \Prihomsub\(\rho_{M_\infty\times\RR_\infty\ast}(\Bbbk_{\{t\geq0\}}\oplus\Bbbk_{\{t\leq0\}}), \lambda_{M_\infty\times\RR_\infty}F_0\)\\
 \simeq\
 & \lambda_{M_\infty\times\RR_\infty}\iota_{M_\infty\times\RR_\infty}(\Bbbk_{\{t\geq0\}}\oplus\Bbbk_{\{t\leq0\}})\Potimes
 \lambda_{M_\infty\times\RR_\infty}\Prihom\(\iota_{M_\infty\times\RR_\infty}(\Bbbk_{\{t\geq0\}}\oplus\Bbbk_{\{t\leq0\}}), F_0\)\\
 \simeq\
 & \lambda_{M_\infty\times\RR_\infty}\(\iota_{M_\infty\times\RR_\infty}(\Bbbk_{\{t\geq0\}}\oplus\Bbbk_{\{t\leq0\}})\Potimes
\Prihom\(\iota_{M_\infty\times\RR_\infty}(\Bbbk_{\{t\geq0\}}\oplus\Bbbk_{\{t\leq0\}}), F_0\)\)\\
 \simeq\
 & \lambda_{M_\infty\times\RR_\infty}\(\iota_{M_\infty\times\RR_\infty}(\Bbbk_{\{t\geq0\}}\oplus\Bbbk_{\{t\leq0\}})\Potimes F_0\)\\
 \simeq\
 & \lambda_{M_\infty\times\RR_\infty}\iota_{M_\infty\times\RR_\infty}(\Bbbk_{\{t\geq0\}}\oplus\Bbbk_{\{t\leq0\}})\Potimes  \lambda_{M_\infty\times\RR_\infty}F_0
\end{align*}
where in the first isomorphism we used Proposition \ref{prop3.7} (3)(i),
in the second isomorphisms we used Proposition \ref{prop3.12} (2),
in the third and fifth isomorphisms we used Proposition \ref{prop3.12} (4).
In the fourth isomorphism we used the fact that
for any $G\in\BDC(\I\Bbbk_{M_\infty\times\RR_\infty})$
there exists an isomorphism in $\BDC(\I\Bbbk_{M_\infty\times\RR_\infty})$:
\begin{align*}
\iota_{M_\infty\times\RR_\infty}(\Bbbk_{\{t\geq0\}}\oplus\Bbbk_{\{t\leq0\}})\Potimes
\Prihom\(\iota_{M_\infty\times\RR_\infty}(\Bbbk_{\{t\geq0\}}\oplus\Bbbk_{\{t\leq0\}}), G\)
 \simeq 
 \iota_{M_\infty\times\RR_\infty}(\Bbbk_{\{t\geq0\}}\oplus\Bbbk_{\{t\leq0\}})\Potimes G.
 \end{align*}
This assertion can be proved as similar way to \cite[Cor.\:4.3.11]{DK16}.
Hence there exists isomorphism in $\BDC(\Bbbk_{M_\infty\times\RR_\infty}^\sub):$
$$\rho_{M_\infty\times\RR_\infty\ast}\(\Bbbk_{\{t\geq0\}}\oplus\Bbbk_{\{t\leq0\}}\)\Potimes\bfR_{M_\infty}^{\rmE, \sub}K
\simeq
\bfL_{M_\infty}^{\rmE, \sub}K.$$
Therefore, we have 
\begin{align*}
\Hom_{\BDC(\Bbbk_{M_\infty\times\RR_\infty}^\sub)}
(\bfL_{M_\infty}^{\rmE, \sub}\Q_{M_\infty}^\sub\SF, \bfL_{M_\infty}^{\rmE, \sub}K)
&\simeq
\Hom_{\BDC(\Bbbk_{M_\infty\times\RR_\infty}^\sub)}
(\SF, \bfR_{M_\infty}^{\rmE, \sub}K),\\
\Hom_{\BDC(\Bbbk_{M_\infty\times\RR_\infty}^\sub)}
(\bfR_{M_\infty}^{\rmE, \sub}K, \bfR_{M_\infty}^{\rmE, \sub}\Q_{M_\infty}^\sub\SF)
&\simeq
\Hom_{\BDC(\Bbbk_{M_\infty\times\RR_\infty}^\sub)}
(\bfL_{M_\infty}^{\rmE, \sub}K, \SF),
\end{align*}
and hence there exist isomorphisms
\begin{align*}
\Hom_{\BEC(\Bbbk_{M_\infty}^\sub)}
(\Q_{M_\infty}^\sub\SF, K)
&\simeq
\Hom_{\BDC(\Bbbk_{M_\infty\times\RR_\infty}^\sub)}
(\SF, \bfR_{M_\infty}^{\rmE, \sub}K),\\
\Hom_{\BEC(\Bbbk_{M_\infty}^\sub)}
(K, \Q_{M_\infty}^\sub\SF)
&\simeq
\Hom_{\BDC(\Bbbk_{M_\infty\times\RR_\infty}^\sub)}
(\bfL_{M_\infty}^{\rmE, \sub}K, \SF).
\end{align*}
Therefore, the quotient functor admits a left (resp.\,right) adjoint $\bfL_{M_\infty}^{\rmE, \sub}$ (resp.\,$\bfR_{M_\infty}^{\rmE, \sub}$).
\end{proof}

We sometimes denote $\Q_{M_\infty}^\sub$
(resp.\ $\bfL_{M_\infty}^{\rmE, \sub}, \bfR_{M_\infty}^{\rmE, \sub}$ )
by $\Q^\sub$ (resp.\ $\bfL^{\rmE, \sub}, \bfR^{\rmE, \sub}$) for short.
Let us set
\begin{align*}
\bfE^{\leq 0}(\Bbbk_{M_\infty}^\sub) & = \{K\in \BEC(\Bbbk_{M_\infty}^\sub)\ | \ 
\bfL_{M_\infty}^{\rmE, \sub}K\in \bfD^{\leq 0}(\Bbbk_{M_\infty\times\RR_\infty}^\sub)\},\\
\bfE^{\geq 0}(\Bbbk_{M_\infty}^\sub) & = \{K\in \BEC(\Bbbk_{M_\infty}^\sub)\ | \ 
\bfL_{M_\infty}^{\rmE, \sub}K\in \bfD^{\geq 0}(\Bbbk_{M_\infty\times\RR_\infty}^\sub)\},
\end{align*}
where $\(\bfD^{\leq 0}(\Bbbk_{M_\infty\times\RR_\infty}^\sub),
\bfD^{\geq 0}(\Bbbk_{M_\infty\times\RR_\infty}^\sub)\)$
is the standard t-structure on $\BDC(\Bbbk_{M_\infty\times\RR_\infty}^\sub)$.

\begin{proposition}
A pair $\(\bfE^{\leq 0}(\Bbbk_{M_\infty}^\sub),
\bfE^{\geq 0}(\Bbbk_{M_\infty}^\sub)\)$
is a t-structure on $\BEC(\Bbbk_{M_\infty}^\sub)$.
\end{proposition}

\begin{proof}
It is enough to show that for any $K\in \BEC(\Bbbk_{M_\infty}^\sub)$
there exists a distinguished triangle 
$$K_1 \to K \to K_2 \xrightarrow{+1}$$
with $K_1\in \bfE^{\leq 0}(\Bbbk_{M_\infty}^\sub), K_2\in \bfE^{\geq 1}(\Bbbk_{M_\infty}^\sub)$.

Let $k\in \BEC(\Bbbk_{M_\infty})$.
Then there exists a distinguished triangle 
$$\SF_1 \to \bfL_{M_\infty}^{\rmE, \sub}K \to \SF_2 \xrightarrow{+1}$$
in $\BDC(\Bbbk_{M_\infty\times\RR_{\infty}}^\sub)$
with $\SF_1\in \bfD^{\leq 0}(\Bbbk_{M_\infty\times\RR_\infty}^\sub), 
\SF_2\in \bfD^{\geq 1}(\Bbbk_{M_\infty\times\RR_\infty}^\sub)$.
By Corollary \ref{cor3.11} and Lemma \ref{lem3.13},
we have an isomorphism 
$\pi^{-1}\bfR\pi_{!!}\bfL_{M_\infty}^{\rmE, \sub}K\simeq 0$
in $\BDC(\Bbbk_{M_\infty\times\RR_\infty}^\sub)$
and hence
we have an isomorphism 
$$\pi^{-1}\bfR\pi_{!!}\SF_1[1] \simeq \pi^{-1}\bfR\pi_{!!}\SF_2$$
in $\BDC(\Bbbk_{M_\infty\times\RR_\infty}^\sub)$.
Since functors $\bfR\pi_{!!}$ and $\pi^{-1}$ are left t-exact
with respect to standard t-structures,
we have $\pi^{-1}\bfR\pi_{!!}\SF_2 \in \bfD^{\geq 1}(\Bbbk_{M_\infty\times\RR_\infty}^\sub)$.
Since functors $\bfR\pi_{!!}[1]$ and $\pi^{-1}$ are right t-exact
with respect to standard t-structures,
we have $\pi^{-1}\bfR\pi_{!!}\SF_1[1] \in \bfD^{\leq 0}(\Bbbk_{M_\infty\times\RR_\infty}^\sub)$.
Therefore, we have 
$$\pi^{-1}\bfR\pi_{!!}\SF_1[1] \simeq \pi^{-1}\bfR\pi_{!!}\SF_2 \simeq 0$$
and hence
there exist isomorphisms
$$\bfL_{M_\infty}^{\rmE, \sub}\Q^\sub(\SF_1) \simeq \SF_1,\ 
\bfL_{M_\infty}^{\rmE, \sub}\Q^\sub(\SF_2) \simeq \SF_2$$
in $\BDC(\Bbbk_{M_\infty\times\RR_\infty}^\sub)$
and a distinguished triangle
$$\Q^\sub \SF_1 \to K \to \Q^\sub \SF_2 \xrightarrow{+1}$$
in $\BEC(\Bbbk_{M_\infty}^\sub)$.

The proof is completed.
\end{proof}

We denote by 
\[\SH^n \colon \BEC(\Bbbk_{M_\infty}^\sub)\to\bfE^0(\Bbbk_{M_\infty}^\sub)\]
the $n$-th cohomology functor, where we set 
$\bfE^0(\Bbbk_{M_\infty}^\sub) :=
\bfE^{\leq 0}(\Bbbk_{M_\infty}^\sub)\cap\bfE^{\geq 0}(\Bbbk_{M_\infty}^\sub)$.

By Proposition \ref{prop3.9},
the convolution functors can be lifted to the triangulated category $\BEC(\Bbbk_{M_\infty}^\sub)$.
We denote them by the same symbols $\Potimes$, $\Prihomsub$.
Namely, we obtain functors
\begin{align*}
(\cdot)\Potimes(\cdot)&\colon
\BEC(\Bbbk_{M_\infty}^\sub)\times \BEC(\Bbbk_{M_\infty}^\sub)\to\BEC(\Bbbk_{M_\infty}^\sub),\\
\Prihomsub(\cdot, \cdot)&\colon
\BEC(\Bbbk_{M_\infty}^\sub)^\op\times \BEC(\Bbbk_{M_\infty}^\sub)\to\BEC(\Bbbk_{M_\infty}^\sub)
\end{align*}
 which are defined by
\begin{align*}
\Q_{M_\infty}^\sub(\SF)\Potimes \Q_{M_\infty}^\sub(\SG)
& := 
\Q_{M_\infty}^\sub\(\SF\Potimes\SG\),\\
\Prihomsub(\Q_{M_\infty}^\sub(\SF), \Q_{M_\infty}^\sub(\SG))
& := \Q_{M_\infty}^\sub\(\Prihomsub(\SF, \SG)\),
\end{align*}
for $\SF, \SG\in\BDC(\Bbbk_{M_\infty\times \RR_\infty}^\sub)$.
Moreover, by Proposition \ref{prop3.4} (4),
for a morphism $f\colon M_\infty\to N_\infty$ of real analytic bordered spaces,
the following functors are well defined
%\begin{proposition}
\begin{align*}
\bfE f_\ast&\colon
\BEC(\Bbbk_{M_\infty}^\sub)\to\BEC(\Bbbk_{N_\infty}^\sub),
\hspace{7pt}
\Q_{M_\infty}^\sub\SF
\longmapsto
\Q_{N_\infty}^\sub\(\bfR f_{\RR_\infty\ast}\SF\)
\\
\bfE f^{-1}&\colon
\BEC(\Bbbk_{N_\infty}^\sub)\to\BEC(\Bbbk_{M_\infty}^\sub),
\hspace{7pt}
\Q_{N_\infty}^\sub\SG
\longmapsto
\Q_{M_\infty}^\sub\(f_{\RR_\infty}^{-1}\SG\),\\
\bfE f_{!!}&\colon
\BEC(\Bbbk_{M_\infty}^\sub)\to\BEC(\Bbbk_{N_\infty}^\sub),
\hspace{7pt}
\Q_{M_\infty}^\sub\SF
\longmapsto
\Q_{N_\infty}^\sub\(\bfR f_{\RR_\infty!!}\SF\),\\
\bfE f^{!}&\colon
\BEC(\Bbbk_{N_\infty}^\sub)\to\BEC(\Bbbk_{M_\infty}^\sub),
\hspace{7pt}
\Q_{N_\infty}^\sub\SG
\longmapsto
\Q_{M_\infty}^\sub\(f_{\RR_\infty}^{!}\SG\).
\end{align*}
%\end{proposition}

%\begin{proof}
%It is enough to show that for any $\SF\in\BDC(\Bbbk_{M_\infty}^\sub)$
%and any $\SG\in\BDC(\Bbbk_{M_\infty}^\sub)$,
%$$
%\bfR f_{\RR_\infty\ast}(\pi^{-1}\SF)\simeq0,
%\hspace{7pt}
%\bfR f_{\RR_\infty}^{-1}(\pi^{-1}\SG)\simeq0,
%\hspace{7pt}
%\bfR f_{\RR_\infty!!}(\pi^{-1}\SF)\simeq0,
%\hspace{7pt}
%\bfR f_{\RR_\infty}^{!}(\pi^{-1}\SG)\simeq0.
%$$
%\end{proof}

Let us define external hom functors
\begin{align*}
\rihom^{\rmE,\sub}(\cdot, \cdot)&\colon
\BEC(\Bbbk_{M_\infty}^\sub)^\op\times\BEC(\Bbbk_{M_\infty}^\sub)\to \BDC(\Bbbk_{M_\infty}^\sub),\\
\rhom^{\rmE,\sub}(\cdot, \cdot)&\colon
\BEC(\Bbbk_{M_\infty}^\sub)^\op\times\BEC(\Bbbk_{M_\infty}^\sub)\to \BDC(\Bbbk_{M}),\\
\rHom^{\rmE, \sub}(\cdot, \cdot)&\colon
\BEC(\Bbbk_{M_\infty}^\sub)^\op\times\BEC(\Bbbk_{M_\infty}^\sub)\to \BDC(\Bbbk),
\end{align*}
by 
\begin{align*}
\rihom^{\rmE, \sub}\(\Q_{M_\infty}^\sub \SF_1, \Q_{M_\infty}^\sub \SF_2\)
&:=\bfR\pi_\ast\rihom^\sub(\SF_1, \SF_2),\\
\rhom^{\rmE, \sub}\(\Q_{M_\infty}^\sub \SF_1, \Q_{M_\infty}^\sub \SF_2\)
&:= \rho_{M_\infty\ast}\rihom^\rmE\(\Q_{M_\infty}^\sub\SF_1, \Q_{M_\infty}^\sub\SF_2\),\\
\rHom^{\rmE, \sub}\(\Q_{M_\infty}^\sub\SF_1, \Q_{M_\infty}^\sub\SF_2\)
&:= \bfR\Gamma\(M; \rhom^\rmE(\Q_{M_\infty}^\sub\SF_1, \Q_{M_\infty}^\sub\SF_2)\),
\end{align*}
for $\SF_1, \SF_2\in\BEC(\Bbbk_{M_\infty\times\RR_\infty}^\sub)$.
Note that for any $K_1, K_2\in\BEC(\Bbbk_{M_\infty}^\sub)$,
we have
$$\Hom_{\BEC(\Bbbk_{M_\infty}^\sub)}(K_1, K_2)\simeq\SH^0\rHom^{\rmE, \sub}\(K_1, K_2\).$$
Moreover, 
for $\SF_0\in\BDC(\Bbbk_{M_\infty}^\sub)$ and $\SF\in\BDC(\Bbbk_{M_\infty\times\RR_\infty}^\sub)$,
the objects 
\begin{align*}
\pi^{-1}\SF_0\otimes \Q_{M_\infty}^\sub\SF
& :=\Q_{M_\infty}^\sub\(\pi^{-1}\SF_0\otimes \SF\),\\
\rihom^\sub\(\pi^{-1}\SF_0, \Q_{M_\infty}^\sub\SF\)
& :=\Q_{M_\infty}^\sub\(\rihom^\sub(\pi^{-1}\SF_0, \Q_{M_\infty}^\sub\SF)\) 
\end{align*}
are well defined and hence the following functors are well defined
\begin{align*}
\pi^{-1}(\cdot)\otimes (\cdot)
&\colon
\BDC(\Bbbk_{M_\infty}^\sub)\times\BEC(\Bbbk_{M_\infty}^\sub)\to \BEC(\Bbbk_{M_\infty}^\sub),\\
\rihom^\sub\(\pi^{-1}(\cdot), \cdot\)&\colon
\BDC(\Bbbk_{M_\infty}^\sub)^\op\times\BEC(\Bbbk_{M_\infty}^\sub)\to \BEC(\Bbbk_{M_\infty}^\sub).
\end{align*}

%関手の性質

At the end of this subsection,
let us prove that these functorss have several properties as similar to the classical sheaves.
\begin{proposition}\label{prop3.14}
Let $f\colon M_\infty\to N_\infty$ be a morphism of real analytic bordered spaces.

\begin{itemize}
\item[\rm (1)]
\begin{itemize}
\item[\rm (i)]
For any $K_1, K_2, K_3\in\BEC(\Bbbk_{M_\infty}^\sub)$,
one has
\begin{align*}
\Prihomsub\(K_1\Potimes K_2, K_3\)
&\simeq
\Prihomsub\(K_1, \Prihomsub(K_2, K_3)\),\\
\rihom^{\rmE, \sub}\(K_1\Potimes K_2, K_3\)
&\simeq
\rihom^{\rmE, \sub}\(K_1, \Prihomsub(K_2, K_3)\),\\
\rhom^{\rmE, \sub}\(K_1\Potimes K_2, K_3\)
&\simeq
\rhom^{\rmE, \sub}\(K_1, \Prihomsub(K_2, K_3)\),\\
\rHom^{\rmE, \sub}\(K_1\Potimes K_2, K_3\)
&\simeq
\rHom^{\rmE, \sub}\(K_1\Prihomsub(K_2, K_3)\),\\
\Hom_{\BEC(\Bbbk_{M_\infty}^\sub)}\(K_1 \Potimes K_2, K_3\)
&\simeq
\Hom_{\BEC(\Bbbk_{M_\infty}^\sub)}\(K_1, \Prihomsub(K_2, K_3)\).
\end{align*}

\item[\rm (ii)]
For any $K\in\BEC(\Bbbk_{M_\infty}^\sub)$ and any $L\in\BEC(\Bbbk_{N_\infty}^\sub)$,
one has
\begin{align*}
\bfE f_\ast\Prihomsub\(\bfE f^{-1}L, K\)
&\simeq
\Prihomsub\(L, \bfE f_{\ast}K\),\\
\bfR f_\ast\rihom^{\rmE, \sub}\(\bfE f^{-1}L, K\)
&\simeq
\rihom^{\rmE, \sub}\(L, \bfE f_{\ast}K\),\\
\bfR f_\ast\rhom^{\rmE, \sub}\(\bfE f^{-1}L, K\)
&\simeq
\rhom^{\rmE, \sub}\(L, \bfE f_{\ast}K\),\\
\rHom^{\rmE, \sub}\(\bfE f^{-1}L, K\)
&\simeq
\rHom^{\rmE, \sub}\(L, \bfE f_{\ast}K\),\\
\Hom_{\BEC(\Bbbk_{M_\infty}^\sub)}\(\bfE f^{-1}L, K\)
&\simeq
\Hom_{\BEC(\Bbbk_{N_\infty}^\sub)}\(L, \bfE f_{\ast}K\).
\end{align*}

\item[\rm (iii)]
For any $K\in\BEC(\Bbbk_{M_\infty}^\sub)$ and any $L\in\BEC(\Bbbk_{N_\infty}^\sub)$,
one has
\begin{align*}
\Prihomsub\(\bfE f_{!!}K, L\)
&\simeq
\bfE f_\ast\Prihomsub\(K, \bfE f^{!}L\),\\
\rihom^{\rmE, \sub}\(\bfE f_{!!}K, L\)
&\simeq
\bfR f_\ast\rihom^{\rmE, \sub}\(K, \bfE f^{!}L\),\\
\rhom^{\rmE, \sub}\(\bfE f_{!!}K, L\)
&\simeq
\bfR f_\ast\rhom^{\rmE, \sub}\(K, \bfE f^{!}L\),\\
\bfR\Hom^{\rmE, \sub}\(\bfE f_{!!}K, L\)
&\simeq
\bfR\Hom^{\rmE, \sub}\(K, \bfE f^{!}L\),\\
\Hom_{\BEC(\Bbbk_{N_\infty}^\sub)}\(\bfE f_{!!}K, L\)
&\simeq
\Hom_{\BEC(\Bbbk_{M_\infty}^\sub)}\(K, \bfE f^{!}L, \).
\end{align*}
\end{itemize}

\item[\rm (2) ]
For any $K, K_1, K_2\in\BEC(\Bbbk_{M_\infty}^\sub)$ and any $L, L_1, L_2\in\BEC(\Bbbk_{N_\infty}^\sub)$,
one has,
\begin{align*}
\bfE f^{-1}\(K_1\Potimes K_2\)
&\simeq
\bfE f^{-1}K_1\Potimes \bfE f^{-1}K_2,\\
\bfE f_{!!}\(K\Potimes\bfE f^{-1}L\)
&\simeq
\bfE f_{!!}K\Potimes L,\\
\bfE f^!\Prihomsub\(L_1, L_2\)
&\simeq
\Prihomsub\(\bfE f^{-1}L_1, \bfE f^!L_2\)
\end{align*}

\item[\rm (3)]
For a cartesian diagram 
\[\xymatrix@M=5pt@R=20pt@C=40pt{
M'_\infty\ar@{->}[r]^-{f'}\ar@{->}[d]_-{g'} & N'_\infty\ar@{->}[d]^-{g}\\
M_\infty\ar@{->}[r]_-{f}  & N_\infty}\]
and any $K\in \BEC(\Bbbk_{M_\infty}^\sub)$,
one has
\begin{align*}
\bfE g^{-1}\bfE f_{!!}K&\simeq \bfE f'_{!!}\bfE g'^{-1}K,\\
\bfE g^{!}\bfE f_{\ast}K&\simeq \bfE f'_{\ast}\bfE g'^{!}K.
\end{align*}

\item[\rm (4)]
\begin{itemize}
\item[\rm (i)]
For any $\SF\in\BDC(\Bbbk_{M_\infty}^\sub)$ and any $K_1, K_2\in \BEC(\Bbbk_{M_\infty}^\sub)$,
one has
\begin{align*}
\Prihomsub\(\pi^{-1}\SF\otimes K_1, K_2\)
&\simeq
\Prihomsub\(K_1, \rihom^\sub(\pi^{-1}\SF, K_2\)\\
&\simeq
\rihom^\sub\(\pi^{-1}\SF, \Prihomsub(K_1, K_2)\),\\
\rihom^{\rmE, \sub}\(\pi^{-1}\SF\otimes K_1, K_2\)
&\simeq
\rihom^{\rmE,\sub}\(K_1, \rihom^\sub(\pi^{-1}\SF, K_2\)\\
&\simeq
\rihom^{\sub}\(\SF, \rihom^{\rmE, \sub}(K_1, K_2)\),\\
\rhom^{\rmE, \sub}\(\pi^{-1}\SF\otimes K_1, K_2\)
&\simeq
\rhom^{\rmE,\sub}\(K_1, \rihom^\sub(\pi^{-1}\SF, K_2)\)\\
&\simeq
\rhom^{\sub}\(\SF, \rihom^{\rmE, \sub}(K_1, K_2)\),\\
\rHom^{\rmE, \sub}\(\pi^{-1}\SF\otimes K_1, K_2\)
&\simeq
\rHom^{\rmE,\sub}\(K_1, \rihom^\sub(\pi^{-1}\SF, K_2)\)\\
&\simeq
\rHom\(\SF, \rihom^{\rmE, \sub}(K_1, K_2)\),\\
\Hom_{\BEC(\Bbbk_{M_\infty}^\sub)}\(\pi^{-1}\SF\otimes K_1, K_2\)
&\simeq
\Hom_{\BEC(\Bbbk_{M_\infty}^\sub)}\(K_1, \rihom^\sub(\pi^{-1}\SF, K_2)\)\\
&\simeq
\Hom_{\BDC(\Bbbk_{M_\infty}^\sub)}\(\SF, \rihom^{\rmE, \sub}(K_1, K_2)\),\\
\end{align*}

\item[\rm (ii)]
For any $\SF\in\BDC(\Bbbk_{M_\infty}^\sub)$,
any $\SG\in\BDC(\Bbbk_{N_\infty}^\sub)$,
any $K\in \BEC(\Bbbk_{M_\infty}^\sub)$
and any $L\in \BEC(\Bbbk_{N_\infty}^\sub)$,
one has
\begin{align*}
\bfE f^{-1}\(\pi^{-1}\SF\otimes L\)
&\simeq
\pi^{-1}f^{-1}\SF\otimes \bfE f^{-1}L,\\
\bfE f_{!!}\(\pi^{-1}\SF\otimes \bfE f^{-1}L\)
&\simeq
\pi^{-1}\bfR f_{!!}\SF\otimes L,\\
\bfE f_{!!}\(\pi^{-1}f^{-1}\SF\otimes K\)
&\simeq
\pi^{-1}\SF\otimes \bfE f_{!!}K,\\
\bfE f^!\Prihomsub\(\pi^{-1}\SG, L\)
&\simeq
\Prihomsub\(\pi^{-1}f^{-1}\SG, \bfE f^!L\).
\end{align*}

\item[\rm (iii)]
For any $\SF\in\BDC(\Bbbk_{M_\infty}^\sub)$
and any $K, L\in\BEC(\Bbbk_{M_\infty}^\sub)$,
one has
\begin{align*}
\pi^{-1}\SF\otimes \(K\Potimes L\)
&\simeq
\(\pi^{-1}\SF\otimes K\)\Potimes L,\\
\rihom^\sub\(\pi^{-1}\SF, \Prihomsub(K, L)\)
&\simeq
\Prihomsub\(\pi^{-1}\SF\otimes K, L\)\\
&\simeq
\Prihomsub\(K,\rihom^\sub(\pi^{-1}\SF, L\),\\
\rihom^{\bfE, \sub}\(K\Potimes L, \SF\)
&\simeq
\rihom^{\bfE, \sub}\(K, \Prihomsub(L, \SF)\).
\end{align*}
\end{itemize}
\end{itemize}
\end{proposition}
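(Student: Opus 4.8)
The strategy is to reduce every assertion to the corresponding statement for subanalytic sheaves on the real analytic bordered space $M_\infty\times\RR_\infty$ (Propositions \ref{prop3.4}, \ref{prop3.8}, \ref{prop3.9} and Corollary \ref{cor3.11}), transported across the quotient functor $\Q_{M_\infty}^\sub$ and its fully faithful adjoints $\bfL_{M_\infty}^{\rmE,\sub}$, $\bfR_{M_\infty}^{\rmE,\sub}$ of Lemma \ref{lem3.13}. First I would establish the last isomorphism in each block of (1), e.g. $\Hom_{\BEC(\Bbbk_{M_\infty}^\sub)}(K_1\Potimes K_2, K_3)\simeq\Hom_{\BEC(\Bbbk_{M_\infty}^\sub)}(K_1,\Prihomsub(K_2,K_3))$: writing $K_i=\Q_{M_\infty}^\sub\SF_i$ and using that $\Q_{M_\infty}^\sub$ is left adjoint to $\bfR_{M_\infty}^{\rmE,\sub}$ and right adjoint to $\bfL_{M_\infty}^{\rmE,\sub}$, together with the stabilization isomorphisms of Lemma \ref{lem3.13} relating $\bfL^{\rmE,\sub}$ and $\bfR^{\rmE,\sub}$ via convolution with $\rho_{M_\infty\times\RR_\infty\ast}(\Bbbk_{\{t\geq0\}}\oplus\Bbbk_{\{t\leq0\}})$, one lands on the adjunction $\Hom(\SF_1\Potimes\SF_2,\SF_3)\simeq\Hom(\SF_1,\Prihomsub(\SF_2,\SF_3))$ of Proposition \ref{prop3.8}. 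The $\bfE f_\ast$/$\bfE f^{-1}$ adjunction in (1)(ii) and the $\bfE f_{!!}$/$\bfE f^!$ adjunction in (1)(iii) follow the same pattern from the adjointness of $(f_{\RR_\infty}^{-1},\bfR f_{\RR_\infty\ast})$ and $(\bfR f_{\RR_\infty!!},f_{\RR_\infty}^!)$; here one only needs that these functors preserve the subcategories $\pi^{-1}\BDC(\Bbbk_{\bullet_\infty}^\sub)$, which is immediate from $\pi\circ f_{\RR_\infty}=f\circ\pi$.

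Next, for the isomorphisms valued in $\BEC(\Bbbk_{M_\infty}^\sub)$ -- the associativity $\Prihomsub(K_1\Potimes K_2,K_3)\simeq\Prihomsub(K_1,\Prihomsub(K_2,K_3))$, the adjunction $\bfE f^!\Prihomsub(L_1,L_2)\simeq\Prihomsub(\bfE f^{-1}L_1,\bfE f^!L_2)$, and the $\pi^{-1}\SF\otimes(\cdot)$ identities in (4)(i) and (4)(iii) -- I would apply $\Hom_{\BEC(\Bbbk_{M_\infty}^\sub)}(L,\cdot)$ for an arbitrary test object $L$ and invoke the Yoneda lemma, each time reducing to an instance of the previous paragraph together with Proposition \ref{prop3.8} or \ref{prop3.9}; equivalently, these descend directly from the $\BDC(\Bbbk_{M_\infty\times\RR_\infty}^\sub)$-isomorphisms of Propositions \ref{prop3.8}, \ref{prop3.9} through $\Q_{M_\infty}^\sub$ once well-definedness is checked. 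The $\rihom^{\rmE,\sub}$, $\rhom^{\rmE,\sub}$ and $\rHom^{\rmE,\sub}$ variants are then deduced from the $\Prihomsub$-variant by applying successively $\bfR\pi_\ast\rihom^\sub(\bfL^{\rmE,\sub}(\cdot),\bfL^{\rmE,\sub}(\cdot))$, $\rho_{M_\infty\ast}(\cdot)$ and $\bfR\Gamma(M;\cdot)$, using the last isomorphism of Proposition \ref{prop3.9} to carry $\Potimes$ past $\bfR\pi_\ast\rihom^\sub$ and Proposition \ref{prop3.4} for the interaction with $\bfR f_\ast$ and $f^{-1}$.

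The commutation statements (2), (3) and (4)(ii) are not $\Hom$-statements, so I would lift them directly: realizing the objects as $\Q^\sub$ of subanalytic sheaves on the relevant product bordered spaces, the identities $\bfE f^{-1}(K_1\Potimes K_2)\simeq\bfE f^{-1}K_1\Potimes\bfE f^{-1}K_2$, the projection formula $\bfE f_{!!}(K\Potimes\bfE f^{-1}L)\simeq\bfE f_{!!}K\Potimes L$, and the base change $\bfE g^{-1}\bfE f_{!!}\simeq\bfE f'_{!!}\bfE g'^{-1}$, $\bfE g^!\bfE f_\ast\simeq\bfE f'_\ast\bfE g'^!$ are the $\Q$-images of Proposition \ref{prop3.8} (fourth and sixth isomorphisms) and Proposition \ref{prop3.4} (4) applied to $f_{\RR_\infty}$ and the cartesian square it induces; likewise (4)(ii) comes from Propositions \ref{prop3.9} and \ref{prop3.4} (3) together with $\pi\circ f_{\RR_\infty}=f\circ\pi$. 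In every case the only verification is that the functors send $\pi^{-1}\BDC(\Bbbk_{\bullet_\infty}^\sub)$ into $\pi^{-1}\BDC(\Bbbk_{\bullet_\infty}^\sub)$, which follows from $\pi\circ f_{\RR_\infty}=f\circ\pi$, $\pi\circ\tl{f}_{\RR_\infty}=\tl{f}\circ\pi$ and Corollary \ref{cor3.11}.

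The main obstacle I anticipate is bookkeeping the asymmetry between $\bfL_{M_\infty}^{\rmE,\sub}$ and $\bfR_{M_\infty}^{\rmE,\sub}$: to convert a $\Hom$ in $\BEC(\Bbbk_{M_\infty}^\sub)$ into a $\Hom$ in $\BDC(\Bbbk_{M_\infty\times\RR_\infty}^\sub)$ one must apply $\bfL^{\rmE,\sub}$ to the source but $\bfR^{\rmE,\sub}$ to the target, and then re-identify the outcome through the identities $\Prihomsub(\rho_{M_\infty\times\RR_\infty\ast}(\Bbbk_{\{t\geq0\}}\oplus\Bbbk_{\{t\leq0\}}),\bfL^{\rmE,\sub}K)\simeq\bfR^{\rmE,\sub}K$ and $\rho_{M_\infty\times\RR_\infty\ast}(\Bbbk_{\{t\geq0\}}\oplus\Bbbk_{\{t\leq0\}})\Potimes\bfR^{\rmE,\sub}K\simeq\bfL^{\rmE,\sub}K$ already proved inside Lemma \ref{lem3.13} (themselves resting on \cite[Cor.\:4.3.11]{DK16} transported via $I_{M_\infty\times\RR_\infty}$ and $\lambda_{M_\infty\times\RR_\infty}$, cf. Propositions \ref{prop3.6}, \ref{prop3.12}). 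Once these compatibilities are in hand, the reductions are routine and parallel to the enhanced ind-sheaf case treated in \cite[\S 4.3, \S 4.4]{DK16}.
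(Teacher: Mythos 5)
Your proposal is correct and follows essentially the same route as the paper: every assertion is descended from the corresponding isomorphism at the level of $\BDC(\Bbbk_{M_\infty\times\RR_\infty}^\sub)$ (Propositions \ref{prop3.4}, \ref{prop3.8}, \ref{prop3.9}) through the quotient functor and the adjoints $\bfL_{M_\infty}^{\rmE,\sub}$, $\bfR_{M_\infty}^{\rmE,\sub}$ of Lemma \ref{lem3.13}, with the external hom variants obtained by applying $\rho_{M_\infty\ast}$ and $\bfR\Gamma(M;\cdot)$ in turn. Your extra care about the $\bfL^{\rmE,\sub}$/$\bfR^{\rmE,\sub}$ asymmetry and the Yoneda reductions only makes explicit what the paper's very terse proof leaves implicit.
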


\begin{proof}
(1)(i)
The first (resp.\,second) assertion follows from Proposition \ref{prop3.8} (resp.\,\ref{prop3.4} (2)).
The third (resp.\,fourth, fifth) assertion follows from the second (resp.\, third, fourth) one.
\medskip

\noindent
(ii)
The first (resp.\,second) assertion follows from Proposition \ref{prop3.8} (resp.\,\ref{prop3.4} (2)).
The third (resp.\,fourth, fifth) assertion follows from the second (resp.\, third, fourth) one.
\medskip

\noindent
(iii)
The first (resp.\,second) assertion follows from Proposition \ref{prop3.8} (resp.\,\ref{prop3.4} (2)).
The third (resp.\,fourth, fifth) assertion follows from the second (resp.\, third, fourth) one.
\medskip

\noindent
(2)
The three assertions follow from Proposition \ref{prop3.8}.
\medskip

\noindent
(3)
The two assertions follow from Proposition \ref{prop3.4} (4).
\medskip

\noindent
(4)(i)
By the definition 
we have
\begin{align*}
\Prihomsub\(\pi^{-1}\SF\otimes K_1, K_2\)
&\simeq
\Q_{M_\infty}^\sub\Prihomsub
\(\pi^{-1}\SF\otimes \bfL_{M_\infty}^{\rmE, \sub}K_1, \bfR_{M_\infty}^{\rmE, \sub}K_2\)
\end{align*}
and hence by Proposition \ref{prop3.9}
there exist isomorphisms
\begin{align*}
&\Q_{M_\infty}^\sub\Prihomsub
\(\pi^{-1}\SF\otimes \bfL_{M_\infty}^{\rmE, \sub}K_1, \bfR_{M_\infty}^{\rmE, \sub}K_2\)\\
\simeq
&\,\Q_{M_\infty}^\sub\Prihomsub
\(\bfL_{M_\infty}^{\rmE, \sub}K_1, \rihom^\sub(\pi^{-1}\SF, \bfR_{M_\infty}^{\rmE, \sub}K_2\)\\
\simeq
&\,\Prihomsub\(K_1, \rihom^\sub(\pi^{-1}\SF, K_2\)
\end{align*}
and
\begin{align*}
&\Q_{M_\infty}^\sub\Prihomsub
\(\pi^{-1}\SF\otimes \bfL_{M_\infty}^{\rmE, \sub}K_1, \bfR_{M_\infty}^{\rmE, \sub}K_2\)\\
\simeq
&\,\Q_{M_\infty}^\sub\rihom^\sub
\(\pi^{-1}\SF, \Prihomsub(\bfL_{M_\infty}^{\rmE, \sub}K_1, \bfR_{M_\infty}^{\rmE, \sub}K_2)\)\\
\simeq
&\,\rihom^\sub\(\pi^{-1}\SF, \Prihomsub(K_1, K_2\).
\end{align*}
\medskip

\noindent
By the definition we have
$$\rihom^{\rmE, \sub}\(\pi^{-1}\SF\otimes K_1, K_2\)
\simeq
\bfR\pi_{\ast}\rihom^\sub
\(\pi^{-1}\SF\otimes\bfL_{M_\infty}^{\rmE, \sub}K_1, \bfL_{M_\infty}^{\rmE, \sub}K_2\)$$
and hence by Proposition \ref{prop3.4} (2)
there exist isomorphisms
\begin{align*}
&\bfR\pi_{\ast}\rihom^\sub
\(\pi^{-1}\SF\otimes\bfL_{M_\infty}^{\rmE, \sub}K_1, \bfL_{M_\infty}^{\rmE, \sub}K_2\)\\
\simeq
&\bfR\pi_{\ast}\rihom^\sub
\(\bfL_{M_\infty}^{\rmE, \sub}K_1, \rihom^\sub\(\pi^{-1}\SF, \bfL_{M_\infty}^{\rmE, \sub}K_2\)\)\\
\simeq
&\rihom^{\rmE,\sub}\(K_1, \rihom^\sub(\pi^{-1}\SF, K_2\)
\end{align*}
and
\begin{align*}
&\bfR\pi_{\ast}\rihom^\sub
\(\pi^{-1}\SF\otimes\bfL_{M_\infty}^{\rmE, \sub}K_1, \bfL_{M_\infty}^{\rmE, \sub}K_2\)\\
\simeq
&\bfR\pi_{\ast}\rihom^\sub
\(\pi^{-1}\SF, \rihom^\sub\(\bfL_{M_\infty}^{\rmE, \sub}K_1, \bfL_{M_\infty}^{\rmE, \sub}K_2\)\)\\
\simeq
&\rihom^\sub\(\SF,
\bfR\pi_{\ast}\rihom^\sub\(\bfL_{M_\infty}^{\rmE, \sub}K_1, \bfL_{M_\infty}^{\rmE, \sub}K_2\)\)\\
\simeq
&\rihom^{\sub}\(\SF, \rihom^{\rmE, \sub}(K_1, K_2\).
\end{align*}
\medskip

\noindent
The third (resp.\,fourth, fifth) assertion follows from the second (resp.\,third, fourth) one.
\medskip

\noindent
The four assertions of (ii) follow from Proposition \ref{prop3.4} (3), (4).
\medskip

\noindent
The three assertions of (iii) follow from Proposition \ref{prop3.9}.
\end{proof}

%\begin{remark}
%One can consider enhanced subanalytic sheaves with ring actions
%and Proposition \ref{prop3.14} can be extended to the ones with ring actions.
%We shall skip the explanation of them.
%\end{remark}

\subsection{Relation between Enhanced Ind-Sheaves and Enhanced Subanalytic Sheaves }\label{subsec3.4}
In this subsection, we shall explain a relation between enhanced subanalytic sheaves and enhanced ind-sheaves.
Theorems \ref{main1} and \ref{main2} are main results of this paper.

Let $M_\infty = (M, \che{M})$ be a real analytic bordered space.
Let us consider a quotient category 
$$
\BEC_{\I\RR-c}(\I\Bbbk_{M_\infty}) :=
\BDC_{\I\RR-c}(\I\Bbbk_{M_\infty \times\RR_\infty})/\pi^{-1}\BDC_{\I\RR-c}(\I\Bbbk_{M_\infty}). 
$$
Note that this is a full triangulated subcategory of $\BEC(\I\Bbbk_{M_\infty})$
by using
$\pi^{-1}\BDC_{\I\RR-c}(\I\Bbbk_{M_\infty})
= \pi^{-1}\BDC(\I\Bbbk_{M_\infty})\cap\BDC_{\I\RR-c}(\I\Bbbk_{M_\infty\times\RR_\infty})$
and \cite[Prop.\:1.6.10]{KS90}.
Note also that $\Bbbk_{M_\infty}^\rmE\in\BEC_{\I\RR-c}(\I\Bbbk_{M_\infty})$.
Moreover, Propositions \ref{prop3.15} below follows from Lemma \ref{lem3.5} and Proposition \ref{prop3.12} (3).
\begin{proposition}\label{prop3.15}
Let $f\colon M_\infty\to N_\infty$ be a morphism of real analytic bordered spaces
associated with a morphism $\che{f}\colon\che{M}\to\che{N}$ of real analytic manifolds.
The functors below are well defined:
\begin{itemize}
\item[\rm (1)]
$e_{M_\infty}\colon \BDC_{\I\RR-c}(\I\Bbbk_{M_\infty})\to\BEC_{\I\RR-c}(\I\Bbbk_{M_\infty})$,

\item[\rm (2)]
$(\cdot)\Potimes(\cdot)\colon
\BEC_{\I\RR-c}(\I\Bbbk_{M_\infty})\times \BEC_{\I\RR-c}(\I\Bbbk_{M_\infty})
\to\BEC_{\I\RR-c}(\I\Bbbk_{M_\infty})$,

\item[\rm (3)]
$\bfE f^{-1}\colon\BEC_{\I\RR-c}(\I\Bbbk_{N_\infty})\to\BEC_{\I\RR-c}(\I\Bbbk_{M_\infty})$,

\item[\rm (4)]
$\bfE f_{!!}\colon\BEC_{\I\RR-c}(\I\Bbbk_{M_\infty})\to\BEC_{\I\RR-c}(\I\Bbbk_{N_\infty})$,

\item[\rm (5)]
$\bfE f^{!}\colon\BEC_{\I\RR-c}(\I\Bbbk_{N_\infty})\to\BEC_{\I\RR-c}(\I\Bbbk_{M_\infty})$.
\end{itemize}
\end{proposition}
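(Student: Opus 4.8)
The plan is to reduce each of the five assertions to the analogous statement for ordinary ind-sheaves on the bordered space $M_\infty\times\RR_\infty$ (or $N_\infty\times\RR_\infty$), which is recorded in Lemma \ref{lem3.5} and Proposition \ref{prop3.12} (3), and then to transport the conclusion across the quotient functor $\Q$. Recall first that, by definition, $\BEC_{\I\RR-c}(\I\Bbbk_{M_\infty})=\BDC_{\I\RR-c}(\I\Bbbk_{M_\infty\times\RR_\infty})/\pi^{-1}\BDC_{\I\RR-c}(\I\Bbbk_{M_\infty})$, so every object of $\BEC_{\I\RR-c}(\I\Bbbk_{M_\infty})$ is represented by an object of $\BDC_{\I\RR-c}(\I\Bbbk_{M_\infty\times\RR_\infty})$, and $\Q_{M_\infty}$ carries $\BDC_{\I\RR-c}(\I\Bbbk_{M_\infty\times\RR_\infty})$ into $\BEC_{\I\RR-c}(\I\Bbbk_{M_\infty})$; the same holds with $M$ replaced by $N$. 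Since $\BEC_{\I\RR-c}(\I\Bbbk_{M_\infty})$ is a full triangulated subcategory of $\BEC(\I\Bbbk_{M_\infty})$ and the enhanced operations are already available on the ambient categories, it is enough to check in each case that the image of an object lies again in the appropriate $\BEC_{\I\RR-c}$.

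For (3), (4) and (5) I would argue uniformly. Given $f\colon M_\infty\to N_\infty$ associated with $\che f\colon\che M\to\che N$, the morphism $f_{\RR_\infty}=f\times\id_{\RR_\infty}\colon M_\infty\times\RR_\infty\to N_\infty\times\RR_\infty$ is again a morphism of real analytic bordered spaces associated with the morphism $\che f\times\id_{\var\RR}$ of real analytic manifolds. For $L\in\BEC_{\I\RR-c}(\I\Bbbk_{N_\infty})$ choose a representative $G\in\BDC_{\I\RR-c}(\I\Bbbk_{N_\infty\times\RR_\infty})$; then $\bfE f^{-1}L=\Q_{M_\infty}(f_{\RR_\infty}^{-1}G)$ and $\bfE f^{!}L=\Q_{M_\infty}(f_{\RR_\infty}^{!}G)$, and for $K\in\BEC_{\I\RR-c}(\I\Bbbk_{M_\infty})$ with representative $G'\in\BDC_{\I\RR-c}(\I\Bbbk_{M_\infty\times\RR_\infty})$ one has $\bfE f_{!!}K=\Q_{N_\infty}(\bfR f_{\RR_\infty!!}G')$. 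By Lemma \ref{lem3.5} (5), (7), (6) (applied to $f_{\RR_\infty}$), the objects $f_{\RR_\infty}^{-1}G$ and $f_{\RR_\infty}^{!}G$ belong to $\BDC_{\I\RR-c}(\I\Bbbk_{M_\infty\times\RR_\infty})$ and $\bfR f_{\RR_\infty!!}G'$ belongs to $\BDC_{\I\RR-c}(\I\Bbbk_{N_\infty\times\RR_\infty})$; applying $\Q$ yields (3), (4), (5). Assertion (2) is identical: for $K_1,K_2\in\BEC_{\I\RR-c}(\I\Bbbk_{M_\infty})$ with $\I\RR$-constructible representatives $G_1,G_2$ we have $K_1\Potimes K_2=\Q_{M_\infty}(G_1\Potimes G_2)$, and $G_1\Potimes G_2\in\BDC_{\I\RR-c}(\I\Bbbk_{M_\infty\times\RR_\infty})$ by Proposition \ref{prop3.12} (3).

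For (1) I would unwind the definition of $e_{M_\infty}$. For $F\in\BDC_{\I\RR-c}(\I\Bbbk_{M_\infty})$ one has $e_{M_\infty}(F)=\Bbbk_{M_\infty}^\rmE\otimes\pi^{-1}F=\Q_{M_\infty}\bigl(\pi^{-1}F\otimes\bfL_{M_\infty}^\rmE\Bbbk_{M_\infty}^\rmE\bigr)$, so it suffices to show the argument of $\Q_{M_\infty}$ lies in $\BDC_{\I\RR-c}(\I\Bbbk_{M_\infty\times\RR_\infty})$. Here $\pi^{-1}F\in\BDC_{\I\RR-c}(\I\Bbbk_{M_\infty\times\RR_\infty})$ by Lemma \ref{lem3.5} (5) applied to the projection $\pi$. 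Since $\Bbbk_{M_\infty}^\rmE\in\BEC_{\I\RR-c}(\I\Bbbk_{M_\infty})$ it has a representative $G_0\in\BDC_{\I\RR-c}(\I\Bbbk_{M_\infty\times\RR_\infty})$, and $\bfL_{M_\infty}^\rmE\Bbbk_{M_\infty}^\rmE\simeq\iota_{M_\infty\times\RR_\infty}(\Bbbk_{\{t\geq0\}}\oplus\Bbbk_{\{t\leq0\}})\Potimes G_0$ is then in $\BDC_{\I\RR-c}(\I\Bbbk_{M_\infty\times\RR_\infty})$ by Lemma \ref{lem3.5} (1) (the sheaf $\Bbbk_{\{t\geq0\}}\oplus\Bbbk_{\{t\leq0\}}$ being $\RR$-constructible on $M_\infty\times\RR_\infty$) together with Proposition \ref{prop3.12} (3). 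Finally $\pi^{-1}F\otimes\bfL_{M_\infty}^\rmE\Bbbk_{M_\infty}^\rmE\in\BDC_{\I\RR-c}(\I\Bbbk_{M_\infty\times\RR_\infty})$ by Lemma \ref{lem3.5} (3).

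I do not expect any serious obstacle: the content is entirely carried by Lemma \ref{lem3.5} and Proposition \ref{prop3.12} (3), and the only point demanding a little bookkeeping is assertion (1), where one has to verify that every auxiliary object produced when $e_{M_\infty}$ is written out in terms of $\pi^{-1}$, $\bfL_{M_\infty}^\rmE\Bbbk_{M_\infty}^\rmE$, the constant sheaves $\Bbbk_{\{t\geq0\}},\Bbbk_{\{t\leq0\}}$ on $M_\infty\times\RR_\infty$ and the tensor product is $\I\RR$-constructible, so that those two results indeed apply.
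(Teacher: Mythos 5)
Your proposal is correct and follows exactly the route the paper intends: the paper gives no written proof, merely asserting that Proposition \ref{prop3.15} ``follows from Lemma \ref{lem3.5} and Proposition \ref{prop3.12} (3)'', and your argument is a faithful spelling-out of that reduction (choosing $\I\RR$-constructible representatives, applying Lemma \ref{lem3.5} to $f_{\RR_\infty}$ and $\pi$, and Proposition \ref{prop3.12} (3) for the convolution). The only part requiring any care, the bookkeeping for $e_{M_\infty}$ in assertion (1), is handled correctly.
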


By Proposition \ref{prop3.7} (2)(iii), (3)(v),
the following functors are well defined
\begin{align*}
I_{M_\infty}^\rmE&\colon\BEC(\Bbbk_{M_\infty}^\sub)\to\BEC(\I\Bbbk_{M_\infty}),
\hspace{7pt}
\Q_{M_\infty}^\sub\SF\mapsto \Q_{M_\infty}I_{M_\infty\times\RR_\infty}\SF,\\
J_{M_\infty}^\rmE&\colon\BEC(\I\Bbbk_{M_\infty})\to\BEC(\Bbbk_{M_\infty}^\sub),
\hspace{7pt}
\Q_{M_\infty}F\mapsto \Q_{M_\infty}^\sub\bfR J_{M_\infty\times\RR_\infty}F.
\end{align*}  

\begin{theorem}\label{main1}
Let $M_\infty = (M, \che{M})$ be a real analytic bordered space.
Then we have
\begin{itemize}
\item[\rm (1)]
a pair $(I_{M_\infty}^\rmE, J_{M_\infty}^\rmE)$ is an adjoint pair
and there exists a canonical isomorphism $\id\simto J_{M_\infty}^\rmE\circ I_{M_\infty}^\rmE$,

\item[\rm (2)]
there exists an equivalence of triangulated categories:
\[\xymatrix@M=7pt@C=45pt{
\BEC(\Bbbk_{M_\infty}^\sub)\ar@<0.8ex>@{->}[r]^-{I_{M_\infty}^\rmE}_-\sim
&
\BEC_{\I{\RR-c}}(\I\Bbbk_{M_\infty})
\ar@<0.8ex>@{->}[l]^-{J_{M_\infty}^\rmE}.
}\]
\end{itemize}
\end{theorem}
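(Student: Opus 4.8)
The plan is to lift the bordered-space version of the ind-sheaf/subanalytic sheaf dictionary (Proposition \ref{prop3.6}) from $M_\infty\times\RR_\infty$ to the enhanced setting, exactly imitating the way $\BEC(\Bbbk_{M_\infty}^\sub)$ and $\BEC(\I\Bbbk_{M_\infty})$ are defined as quotient categories. First I would verify that the two functors $I_{M_\infty}^\rmE$ and $J_{M_\infty}^\rmE$ are well defined: this is already asserted in the excerpt, but the point worth recording is that $I_{M_\infty\times\RR_\infty}$ sends $\pi^{-1}\BDC(\Bbbk_{M_\infty}^\sub)$ into $\pi^{-1}\BDC(\I\Bbbk_{M_\infty})$ by Proposition \ref{prop3.7} (2)(iii), and $\bfR J_{M_\infty\times\RR_\infty}$ sends $\pi^{-1}\BDC(\I\Bbbk_{M_\infty})$ into $\pi^{-1}\BDC(\Bbbk_{M_\infty}^\sub)$ by Proposition \ref{prop3.7} (3)(iv), (v) together with the characterization of $\pi^{-1}\BDC$ via $\bfR\pi_\ast$ and $\bfR\pi_{!!}$; hence both descend to the quotients.

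For part (1), the adjunction, I would argue on representatives. Given $\SF\in\BDC(\Bbbk_{M_\infty\times\RR_\infty}^\sub)$ and $F\in\BDC(\I\Bbbk_{M_\infty\times\RR_\infty})$, I compute
\[
\Hom_{\BEC(\I\Bbbk_{M_\infty})}\bigl(\Q_{M_\infty}I_{M_\infty\times\RR_\infty}\SF,\ \Q_{M_\infty}F\bigr)
\]
by first passing to $\BDC(\I\Bbbk_{M_\infty\times\RR_\infty})$ via the fully faithful right adjoint $\bfR_{M_\infty}^\rmE$, then using the ordinary adjunction $(I_{M_\infty\times\RR_\infty},\bfR J_{M_\infty\times\RR_\infty})$ from Proposition \ref{prop3.6} (1), then returning to $\BEC(\Bbbk_{M_\infty}^\sub)$ via Lemma \ref{lem3.13}; one has to check the intermediate objects $\bfR_{M_\infty}^\rmE(\Q_{M_\infty}F)$ and $\bfR_{M_\infty}^{\rmE,\sub}(\Q_{M_\infty}^\sub\SF)$ correspond under $I$ and $\bfR J$ up to the $\pi^{-1}$-kernel, which is where Proposition \ref{prop3.12} (2) (the compatibility of $I$ and $\bfR J$ with $\Prihom$ / $\Prihomsub$) enters, applied to the object $\rho_{M_\infty\times\RR_\infty\ast}(\Bbbk_{\{t\geq0\}}\oplus\Bbbk_{\{t\leq0\}})$. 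For the canonical isomorphism $\id\simto J_{M_\infty}^\rmE\circ I_{M_\infty}^\rmE$ I would simply note that $J_{M_\infty}^\rmE I_{M_\infty}^\rmE\Q_{M_\infty}^\sub\SF \simeq \Q_{M_\infty}^\sub\bfR J_{M_\infty\times\RR_\infty}I_{M_\infty\times\RR_\infty}\SF \simeq \Q_{M_\infty}^\sub\SF$, the middle isomorphism being $\id\simto\bfR J_{M_\infty\times\RR_\infty}\circ I_{M_\infty\times\RR_\infty}$ from Proposition \ref{prop3.6} (1); one checks this is functorial and independent of the representative.

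For part (2), full faithfulness of $I_{M_\infty}^\rmE$ is formal from part (1) once one knows the unit is an isomorphism. It remains to identify the essential image with $\BEC_{\I\RR-c}(\I\Bbbk_{M_\infty})$: by definition $\BEC_{\I\RR-c}(\I\Bbbk_{M_\infty})$ is the quotient of $\BDC_{\I\RR-c}(\I\Bbbk_{M_\infty\times\RR_\infty})$ by $\pi^{-1}\BDC_{\I\RR-c}(\I\Bbbk_{M_\infty})$, and Proposition \ref{prop3.6} (2) already gives an equivalence $I_{M_\infty\times\RR_\infty}\colon\BDC(\Bbbk_{M_\infty\times\RR_\infty}^\sub)\simto\BDC_{\I\RR-c}(\I\Bbbk_{M_\infty\times\RR_\infty})$ restricting to an equivalence on the $\pi^{-1}$-subcategories; one then has to pass to the Verdier quotients. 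So I would: (i) given $K\in\BEC_{\I\RR-c}(\I\Bbbk_{M_\infty})$, lift it to $F\in\BDC_{\I\RR-c}(\I\Bbbk_{M_\infty\times\RR_\infty})$, write $F\simeq I_{M_\infty\times\RR_\infty}\SF$ by Proposition \ref{prop3.6} (2), and conclude $K\simeq \Q_{M_\infty}I_{M_\infty\times\RR_\infty}\SF = I_{M_\infty}^\rmE\Q_{M_\infty}^\sub\SF$; (ii) check conversely that $I_{M_\infty}^\rmE$ lands in $\BEC_{\I\RR-c}(\I\Bbbk_{M_\infty})$, i.e. $\bfL^\rmE_{M_\infty}I^\rmE_{M_\infty}\Q^\sub_{M_\infty}\SF$ has $\I\RR$-constructible cohomology — this follows from $\bfL^\rmE_{M_\infty}I^\rmE_{M_\infty}\Q^\sub_{M_\infty}\SF\simeq I_{M_\infty\times\RR_\infty}\bfL^{\rmE,\sub}_{M_\infty}\Q^\sub_{M_\infty}\SF$, using $I$'s compatibility with $\Potimes$ (Proposition \ref{prop3.12} (1)) applied to the definition of $\bfL^{\rmE,\sub}$, plus Proposition \ref{prop3.6} (2). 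The main obstacle I anticipate is purely bookkeeping: one must be careful that the Verdier quotient functor $\Q$ and the kernel subcategory interact correctly with the equivalence $I_{M_\infty\times\RR_\infty}$ — concretely, that $\Q_{M_\infty}I_{M_\infty\times\RR_\infty}\SF \simeq 0$ in $\BEC_{\I\RR-c}(\I\Bbbk_{M_\infty})$ forces $\SF\in\pi^{-1}\BDC(\Bbbk_{M_\infty}^\sub)$, which is exactly the statement that $I_{M_\infty\times\RR_\infty}$ matches the two kernels, already available from Proposition \ref{prop3.6} (2) restricted to $\pi^{-1}$-subcategories. Once these compatibilities are laid out, the universal property of Verdier quotients assembles everything into the desired equivalence.
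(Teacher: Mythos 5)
Your proposal is correct and follows essentially the same route as the paper: both prove the adjunction by passing through the fully faithful (co)adjoints of the quotient functors (Lemma \ref{lem3.13}) and the adjunction $(I_{M_\infty\times\RR_\infty},\bfR J_{M_\infty\times\RR_\infty})$ of Proposition \ref{prop3.6}, with Proposition \ref{prop3.12} supplying the compatibility of $I$, $\bfR J$ with the convolution functors, and both obtain the equivalence in (2) by lifting to representatives in $\BDC_{\I\RR-c}(\I\Bbbk_{M_\infty\times\RR_\infty})$ and applying Proposition \ref{prop3.6} (2). The only differences are presentational (e.g.\ you characterize membership in $\BEC_{\I\RR-c}(\I\Bbbk_{M_\infty})$ via $\bfL_{M_\infty}^{\rmE}$, whereas the paper argues directly on the representative $I_{M_\infty\times\RR_\infty}\bfL_{M_\infty}^{\rmE,\sub}K$), and they do not affect the substance of the argument.
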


\begin{proof}
(1)
For any $K\in \BEC(\Bbbk_{M_\infty}^\sub)$ and any $L\in\BEC(\I\Bbbk_{M_\infty})$,
there exist isomorphisms
\begin{align*}
\Hom_{\BEC(\I\Bbbk_{M_\infty})}\(I_{M_\infty}^\rmE K, L\)
&\simeq
\Hom_{\BEC(\I\Bbbk_{M_\infty})}\(
\Q_{M_\infty}I_{M_\infty\times\RR_\infty}\bfL_{M_\infty}^{\rmE, \sub}K, L\)\\
&\simeq
\Hom_{\BEC(\Bbbk_{M_\infty}^\sub)}\(K, 
\Q_{M_\infty}^\sub\bfR J_{M_\infty\times\RR_\infty}\bfR_{M_\infty}^{\rmE}L\)\\
&\simeq
\Hom_{\BEC(\Bbbk_{M_\infty}^\sub)}\(K, J_{M_\infty}^\sub L\),
\end{align*}
where in the second isomorphism we used Proposition \ref{prop3.6} (1) and Lemma \ref{lem3.13}.
This implies that a pair $(I_{M_\infty}^\rmE, J_{M_\infty}^\rmE)$ is an adjoint pair.
Moreover, by Proposition \ref{prop3.6} (1) it is clear that $\id\simto J_{M_\infty}^\rmE\circ I_{M_\infty}^\rmE$.
\medskip

\noindent
(2)
Since the functor $I_{M_\infty\times\RR_\infty}\colon \BDC(\Bbbk_{M_\infty\times\RR_\infty}^\sub)\to 
\BDC_{\I\RR-c}(\I\Bbbk_{M_\infty\times\RR_\infty})$ is well defined,
for any $K\in\BEC(\Bbbk_{M_\infty}^\sub)$
we have $$I_{M_\infty}^\rmE K
=\Q_{M_\infty}I_{M_\infty\times\RR_\infty}\bfL_{M_\infty}^{\rmE, \sub}K
\in\BEC_{\I\RR-c}(\I\Bbbk_{M_\infty}).$$
Let $L\in \BEC_{\I\RR-c}(\I\Bbbk_{M_\infty})$.
Then there exists $G\in\BDC_{\I\RR-c}(\I\Bbbk_{M_\infty\times\RR_\infty})$
such that $L \simeq \Q_{M_\infty}G$ and hence
$$I_{M_\infty}^\rmE\bfR J_{M_\infty}^\rmE L \simeq
\Q_{M_\infty}I_{M_\infty\times\RR_\infty}\bfR J_{M_\infty\times\RR_\infty}G
\simeq \Q_{M_\infty}G \simeq L,$$
where in the second isomorphism we used Proposition \ref{prop3.6} (2).
Therefore, the proof is completed.
\end{proof}

We will denote by $\lambda_{M_\infty}^\rmE
\colon \BEC_{\I{\RR-c}}(\I\Bbbk_{M_\infty})\simto \BEC(\Bbbk_{M_\infty}^\sub)$
 the inverse functor of
$I_{M_\infty}^\rmE\colon \BEC(\Bbbk_{M_\infty}^\sub)\simto \BEC_{\I{\RR-c}}(\I\Bbbk_{M_\infty})$.
There exists a commutativity between the various functors and functors $I^\rmE, J^\rmE, \lambda^\rmE$ as below.

\begin{proposition}\label{prop3.18}
Let $f\colon M_\infty\to N_\infty$ be a morphism of real analytic bordered spaces
associated with a morphism $\che{f}\colon\che{M}\to\che{N}$ of real analytic manifolds.
\begin{itemize}
\item[\rm (1)]
For any $K, K_1, K_2\in\BEC(\Bbbk_{M_\infty}^\sub)$ and any $L\in\BEC(\I\Bbbk_{M_\infty})$,
we have
\begin{itemize}
\item[\rm (i)]
$I_{M_\infty}^\rmE\(K_1\Potimes K_2\)
\simeq
I_{M_\infty}^\rmE K_1\Potimes I_{M_\infty}^\rmE K_2,$

\item[\rm (ii)]
$J_{M_\infty}^\rmE\Prihom(I_{M_\infty}^\rmE K, L)
\simeq
\Prihomsub(K, J_{M_\infty}^\rmE L)$

\item[\rm (iii)]
$\bfR J_{M_\infty}\rihom^{\rmE}(I_{M_\infty}^\rmE K, L)
\simeq
\rihom^{\rmE, \sub}(K, J_{M_\infty}^\rmE L).$

\end{itemize}

\item[\rm (2)]
For any $K\in\BEC(\Bbbk_{M_\infty}^\sub)$ and any $L\in\BEC(\Bbbk_{N_\infty}^\sub)$,
we have
\begin{itemize}
\item[\rm (i)]
$I_{M_\infty}^\rmE\bfE f^{-1}L\simeq \bfE f^{-1}I_{N_\infty}^\rmE L$,

\item[\rm (ii)]
$\bfE f_{!!}I_{M_\infty}^\rmE K\simeq I_{N_\infty}^\rmE\bfE f_{!!}K$,

\item[\rm (iii)]
$I_{M_\infty}^\rmE\bfE f^{!}L\simeq \bfE f^{!}I_{N_\infty}^\rmE L$.
\end{itemize}

\item[\rm (3)]
For any $K\in\BEC(\I\Bbbk_{M_\infty})$ and any $L\in\BEC(\I\Bbbk_{N_\infty})$,
we have
\begin{itemize}
\item[\rm (i)]
$\bfE f_{\ast}J_{M_\infty}^\rmE K\simeq J_{N_\infty}^\rmE\bfE f_{\ast}K$,

\item[\rm (ii)]
$J_{M_\infty}^\rmE\bfE f^{!}L\simeq \bfE f^{!}J_{N_\infty}^\rmE L$.
\end{itemize}

\item[\rm (4)]
For any $K\in\BEC_{\I\RR-c}(\I\Bbbk_{M_\infty})$ and any $L\in\BEC_{\I\RR-c}(\I\Bbbk_{N_\infty})$,
we have
\begin{itemize}
\item[\rm (i)]
$\lambda_{M_\infty}^\rmE\bfE f^{-1}L\simeq \bfE f^{-1}\lambda_{N_\infty}^\rmE L$,

\item[\rm (ii)]
$\bfE f_{!!}\lambda_{M_\infty}^\rmE K\simeq \lambda_{N_\infty}^\rmE\bfE f_{!!}K$,

\item[\rm (iii)]
$\lambda_{M_\infty}^\rmE\(K_1\Potimes K_2\)
\simeq \lambda_{M_\infty}^\rmE K_1\Potimes \lambda_{M_\infty}^\rmE K_2$.
\end{itemize}

\end{itemize}
\end{proposition}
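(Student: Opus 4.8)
The plan is to reduce every identity to the level of the unquotiented derived categories $\BDC(\Bbbk_{M_\infty\times\RR_\infty}^\sub)$ and $\BDC(\I\Bbbk_{M_\infty\times\RR_\infty})$, where the analogous statements are already available from Propositions \ref{prop3.6}, \ref{prop3.7} and \ref{prop3.12}. The point is that $f_{\RR_\infty}=f\times\id_{\RR_\infty}$ is again a morphism of real analytic bordered spaces (associated with $\che{f}\times\id_{\var{\RR}}$), that all the enhanced operations $I_{M_\infty}^\rmE$, $J_{M_\infty}^\rmE$, $\bfE f^{-1}$, $\bfE f_\ast$, $\bfE f_{!!}$, $\bfE f^{!}$, $\Potimes$, $\Prihomsub$ are obtained by applying $\Q_{M_\infty}$ (resp.\ $\Q_{M_\infty}^\sub$) to the corresponding non-enhanced functor evaluated on a distinguished representative supplied by $\bfL^\rmE$, $\bfR^\rmE$, $\bfL^{\rmE,\sub}$ or $\bfR^{\rmE,\sub}$, and that $I_{M_\infty\times\RR_\infty}$, $\bfR J_{M_\infty\times\RR_\infty}$, $\lambda_{M_\infty\times\RR_\infty}$ carry such representatives to representatives of the same kind (this uses Proposition \ref{prop3.12} (1) and the $\RR$-constructibility of $\Bbbk_{\{t\geq0\}}\oplus\Bbbk_{\{t\leq0\}}$, so that $I_{M_\infty\times\RR_\infty}$ intertwines the idempotents cutting out $\BEC(\Bbbk_{M_\infty}^\sub)$ and $\BEC(\I\Bbbk_{M_\infty})$ in the ambient categories). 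Granting this, each clause follows by choosing the appropriate section ($\bfL$ for $\Potimes$, $\bfE f^{-1}$, $\bfE f_{!!}$; $\bfR$ for $\Prihomsub$, $\bfE f_\ast$, $\bfE f^{!}$), commuting the relevant non-enhanced functor past $I$, $\bfR J$ or $\lambda$ via the matching clause of Proposition \ref{prop3.7} or \ref{prop3.12}, and reapplying $\Q$.

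Concretely, (1)(i) is immediate from the lift of $\Potimes$ to the quotients together with $I_{M_\infty\times\RR_\infty}\SF_1\Potimes I_{M_\infty\times\RR_\infty}\SF_2\simeq I_{M_\infty\times\RR_\infty}(\SF_1\Potimes\SF_2)$ of Proposition \ref{prop3.12} (1), applied to $\SF_i=\bfL_{M_\infty}^{\rmE,\sub}K_i$. For (1)(ii) the cleanest route is Yoneda: for $K_0\in\BEC(\Bbbk_{M_\infty}^\sub)$ one gets
\[
\Hom(K_0,J_{M_\infty}^\rmE\Prihom(I_{M_\infty}^\rmE K,L))
\simeq\Hom(I_{M_\infty}^\rmE K_0\Potimes I_{M_\infty}^\rmE K,L)
\simeq\Hom(I_{M_\infty}^\rmE(K_0\Potimes K),L)
\simeq\Hom(K_0,\Prihomsub(K,J_{M_\infty}^\rmE L)),
\]
using the adjunctions $(I_{M_\infty}^\rmE,J_{M_\infty}^\rmE)$ of Theorem \ref{main1} (1) and $(\Potimes,\Prihom)$ for enhanced ind-sheaves, then (1)(i), then the adjunctions $(I_{M_\infty}^\rmE,J_{M_\infty}^\rmE)$ and $(\Potimes,\Prihomsub)$ of Proposition \ref{prop3.14} (1)(i). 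For (1)(iii) one works with representatives, using $\bfL_{M_\infty}^\rmE I_{M_\infty}^\rmE K\simeq I_{M_\infty\times\RR_\infty}\bfL_{M_\infty}^{\rmE,\sub}K$, then Proposition \ref{prop3.7} (3)(iv) with $f=\pi$ to pass $\bfR J_{M_\infty}$ through $\bfR\pi_\ast$, then Proposition \ref{prop3.7} (1). For (2), with $\bfL^{\rmE,\sub}$-representatives, $I_{M_\infty}^\rmE\bfE f^{-1}L=\Q_{M_\infty}I_{M_\infty\times\RR_\infty}f_{\RR_\infty}^{-1}\bfL^{\rmE,\sub}L\simeq\Q_{M_\infty}f_{\RR_\infty}^{-1}I_{N_\infty\times\RR_\infty}\bfL^{\rmE,\sub}L=\bfE f^{-1}I_{N_\infty}^\rmE L$ by Proposition \ref{prop3.7} (2)(iii), and (2)(ii), (2)(iii) follow likewise from Proposition \ref{prop3.7} (2)(iv), (2)(v) applied to $f_{\RR_\infty}$; part (3) is the same with the $\bfR^\rmE$-representative and Proposition \ref{prop3.7} (3)(iv), (3)(v).

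Finally, (4) is formal. By Proposition \ref{prop3.15} the operations $\bfE f^{-1}$, $\bfE f_{!!}$ and $\Potimes$ preserve $\I\RR$-constructibility, and by Theorem \ref{main1} (2) $\lambda_{M_\infty}^\rmE$ is quasi-inverse to $I_{M_\infty}^\rmE$ on $\BEC_{\I\RR-c}(\I\Bbbk_{M_\infty})$; so writing e.g.\ $L\simeq I_{N_\infty}^\rmE\lambda_{N_\infty}^\rmE L$ and applying (2)(i) with $\lambda_{M_\infty}^\rmE\circ I_{M_\infty}^\rmE\simeq\id$ gives $\lambda_{M_\infty}^\rmE\bfE f^{-1}L\simeq\bfE f^{-1}\lambda_{N_\infty}^\rmE L$, and similarly for $\bfE f_{!!}$ and $\Potimes$, exactly as in the proof of Proposition \ref{prop3.7} (4). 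The main obstacle is the bookkeeping of representatives in (1)(ii)--(1)(iii): one must make precise that $I_{M_\infty\times\RR_\infty}$ and $\bfR J_{M_\infty\times\RR_\infty}$ are compatible with the idempotents defining $\bfL^{\rmE,\sub}/\bfR^{\rmE,\sub}$ and $\bfL^\rmE/\bfR^\rmE$, equivalently that the squares relating $\Q^\sub$, $\Q$ to $I$, $\bfR J$ commute; once this is set up via Lemma \ref{lem3.13}, Proposition \ref{prop3.7} (4)(i) and Proposition \ref{prop3.12} (1)--(2), the remaining steps are routine applications of the cited isomorphisms.
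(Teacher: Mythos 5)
Your proposal is correct and follows essentially the same route as the paper: reduce each identity to representatives in $\BDC(\Bbbk_{M_\infty\times\RR_\infty}^\sub)$ and $\BDC(\I\Bbbk_{M_\infty\times\RR_\infty})$ and invoke the matching clauses of Propositions \ref{prop3.7} and \ref{prop3.12}, then handle (4) by conjugating (2) with the equivalence of Theorem \ref{main1}. The only (immaterial) deviation is in (1)(ii), where you argue by Yoneda at the enhanced level using the adjunctions $(I^\rmE,J^\rmE)$ and $(\Potimes,\Prihom)$, whereas the paper descends Proposition \ref{prop3.12} (2) to the quotient directly; both yield the same isomorphism.
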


\begin{proof}
Let us denote by ${f}_{\RR_\infty}\colon M_\infty\times\RR_\infty\to N_\infty\times\RR_\infty$
the morphism $f\times\id_{\RR_\infty}$ of real analytic bordered spaces.
\medskip

\noindent
(1)
Let $K, K_1, K_2\in\BEC(\Bbbk_{M_\infty}^\sub)$, $L\in\BEC(\I\Bbbk_{M_\infty})$,
then there exist $\SF, \SF_1, \SF_2\in\BDC(\Bbbk_{M_\infty\times\RR_\infty}^\sub)$,
$G\in\BDC(\I\Bbbk_{M_\infty\times\RR_\infty})$
such that $K\simeq\Q_{M_\infty}^\sub\SF, K_1\simeq\Q_{M_\infty}^\sub\SF_1,
K_2\simeq\Q_{M_\infty}^\sub\SF_2, L\simeq\Q_{M_\infty}G$.
\smallskip

\noindent
(i)
By Proposition \ref{prop3.12} (1), we have
\begin{align*}
I_{M_\infty}^\rmE\(K_1\Potimes K_2\)
&\simeq
\Q_{M_\infty}I_{M_\infty\times\RR_\infty}\(\SF_1\Potimes \SF_2\)\\
&\simeq
\Q_{M_\infty}\(I_{M_\infty\times\RR_\infty}\SF_1\Potimes I_{M_\infty\times\RR_\infty}\SF_2\)\\
&\simeq
I_{M_\infty}^\rmE K_1\Potimes I_{M_\infty}^\rmE K_2.
\end{align*}

\noindent
(ii)
By Proposition \ref{prop3.12} (2), we have
\begin{align*}
J_{M_\infty}^\rmE\Prihom(I_{M_\infty}^\rmE K, L)
&\simeq
\Q_{M_\infty}^\sub \bfR J_{M_\infty\times\RR_\infty}\Prihom(I_{M_\infty\times\RR_\infty}\SF, G)\\
&\simeq
\Q_{M_\infty}^\sub\Prihomsub(\SF, \bfR J_{M_\infty\times\RR_\infty}G)\\
&\simeq
\Prihomsub(K, J_{M_\infty}^\rmE L).
\end{align*}

\noindent
(iii)
By Proposition \ref{prop3.7} (1), (3)(iv), we have
\begin{align*}
\bfR J_{M_\infty}\rihom^{\rmE}(I_{M_\infty}^\rmE K, L)
&\simeq
\bfR J_{M_\infty}\pi_\ast\rihom(I_{M_\infty\times\RR_\infty}\SF, \SG)\\
&\simeq
\pi_\ast\bfR J_{M_\infty\times\RR_\infty}\rihom(I_{M_\infty\times\RR_\infty}\SF, \SG)\\
&\simeq
\pi_\ast\rihom^\sub(\SF, \bfR J_{M_\infty\times\RR_\infty}\SG)\\
&\simeq
\rihom^{\rmE, \sub}(K, J_{M_\infty}^\rmE L).
\end{align*}
\medskip

\noindent
(2)
Let $K\in\BEC(\Bbbk_{M_\infty}^\sub)$, $L\in\BEC(\Bbbk_{N_\infty}^\sub)$.
Then there exist $\SF\in\BDC(\Bbbk_{M_\infty\times\RR_\infty}^\sub)$,
$\SG\in\BDC(\Bbbk_{N_\infty\times\RR_\infty}^\sub)$
such that $K\simeq\Q_{M_\infty}^\sub\SF,\ L\simeq\Q_{N_\infty}^\sub\SG$.
\smallskip

\noindent
(i)
By Proposition \ref{prop3.7} (2)(iii), we have
\begin{align*}
I_{M_\infty}^\rmE\bfE f^{-1}L
\simeq 
\Q_{M_\infty}I_{M_\infty\times\RR_\infty}f_{\RR_\infty}^{-1}\SG
\simeq 
\Q_{M_\infty}f_{\RR_\infty}^{-1}I_{N_\infty\times\RR_\infty}\SG\simeq 
\bfE f^{-1}I_{N_\infty}^\rmE L.
\end{align*}

\noindent
(ii)
By Proposition \ref{prop3.7} (2)(iv), we have
\begin{align*}
\bfE f_{!!}I_{M_\infty}^\rmE K
\simeq 
\Q_{N_\infty}f_{\RR_\infty!!}I_{M_\infty\times\RR_\infty}\SF
\simeq 
\Q_{N_\infty}I_{N_\infty\times\RR_\infty}f_{\RR_\infty!!}\SF
\simeq 
\bfE f_{!!}I_{N_\infty}^\rmE K.
\end{align*}

\noindent
(iii)
By Proposition \ref{prop3.7} (2)(v), we have
\begin{align*}
I_{M_\infty}^\rmE\bfE f^{!}L
\simeq 
\Q_{M_\infty}I_{M_\infty\times\RR_\infty}f_{\RR_\infty}^{!}\SG
\simeq 
\Q_{M_\infty}f_{\RR_\infty}^{!}I_{N_\infty\times\RR_\infty}\SG
\simeq 
\bfE f^{!}I_{N_\infty}^\rmE L.
\end{align*}
\medskip

\noindent
(3)
Let $K\in\BEC(\I\Bbbk_{M_\infty})$, $L\in\BEC(\I\Bbbk_{M_\infty})$.
Then there exist $F\in\BDC(\I\Bbbk_{M_\infty\times\RR_\infty})$,
$G\in\BDC(\I\Bbbk_{M_\infty\times\RR_\infty})$
such that $K\simeq\Q_{M_\infty} F,\ L\simeq\Q_{M_\infty} G$.
\smallskip

\noindent
(i)
By Proposition \ref{prop3.7} (3)(iv), we have
\begin{align*}
\bfE f_{\ast}J_{M_\infty}^\rmE K
\simeq 
\Q_{N_\infty}^\sub \bfR f_{\RR_\infty\ast}\bfR J_{M_\infty\times\RR_\infty}F
\simeq 
\Q_{N_\infty}^\sub \bfR J_{N_\infty\times\RR_\infty}\bfR f_{\RR_\infty\ast}F
\simeq 
J_{N_\infty}^\rmE\bfE f_{\ast} K.
\end{align*}

\noindent
(ii)
By Proposition \ref{prop3.7} (3)(v), we have
\begin{align*}
J_{M_\infty}^\rmE\bfE f^{!}L
\simeq 
\Q_{M_\infty}^\sub \bfR J_{M_\infty\times\RR_\infty}f_{\RR_\infty}^{!}G
\simeq 
\Q_{M_\infty}^\sub f_{\RR_\infty}^{!}\bfR J_{N_\infty\times\RR_\infty}G
\simeq 
\bfE f^{!}J_{N_\infty}^\rmE L.
\end{align*}
\medskip

\noindent
(4)
Let $K\in\BEC_{\I\RR-c}(\I\Bbbk_{M_\infty})$, $L\in\BEC_{\I\RR-c}(\I\Bbbk_{N_\infty})$.
\smallskip

\noindent
(i)
By using (2)(i) and Theorem \ref{main1} (2), we have
\begin{align*}
\lambda_{M_\infty}^\rmE\bfE f^{-1}L
\simeq
\lambda_{M_\infty}^\rmE\bfE f^{-1}I_{N_\infty}^\rmE \lambda_{N_\infty}^\rmE L
\simeq
\lambda_{M_\infty}^\rmE I_{M_\infty}^\rmE \bfE f^{-1}\lambda_{N_\infty}^\rmE L
\simeq
 \bfE f^{-1}\lambda_{N_\infty}^\rmE L.
\end{align*}

\noindent
(ii)
By using (2)(ii) and Theorem \ref{main1} (2), we have
\begin{align*}
\bfE f_{!!}\lambda_{M_\infty}^\rmE K
\simeq
\lambda_{N_\infty}^\rmE I_{N_\infty}^\rmE \bfE f_{!!}\lambda_{M_\infty}^\rmE K
\simeq
\lambda_{N_\infty}^\rmE\bfE f_{!!}I_{M_\infty}^\rmE\lambda_{M_\infty}^\rmE K
\simeq 
\lambda_{N_\infty}^\rmE\bfE f_{!!}K.
\end{align*}

\noindent
(iii)
By using (1)(i) and Theorem \ref{main1} (2), we have
\begin{align*}
\lambda_{M_\infty}^\rmE\(K_1\Potimes K_2\)
&\simeq
\lambda_{M_\infty}^\rmE\(
I_{M_\infty}^\rmE\lambda_{M_\infty}^\rmE K_1
\Potimes I_{M_\infty}^\rmE\lambda_{M_\infty}^\rmE K_2\)\\
&\simeq
\lambda_{M_\infty}^\rmE I_{M_\infty}^\rmE\(
\lambda_{M_\infty}^\rmE K_1\Potimes \lambda_{M_\infty}^\rmE K_2\)\\
&\simeq
\lambda_{M_\infty}^\rmE K_1\Potimes \lambda_{M_\infty}^\rmE K_2.
\end{align*}
\end{proof}

Let us prove that 
the functors $I_{M_\infty}^\rmE, J_{M_\infty}^\rmE$ are preserve the $\RR$-constructability.
Let us recall that an enhanced ind-sheaf $K$ on $M_\infty$ is $\RR$-constructible
if for any open subset $U$ of $M$ which is subanalytic and relatively compact in $\che{M}$
there exists $\SF\in\BDC_{\RR-c}(\Bbbk_{U_\infty\times\RR_\infty})$
such that $\bfE i_{U_\infty}^{-1}K\simeq
\Bbbk_{U_\infty}^{\rmE}\Potimes \Q_{U_\infty}\iota_{U_\infty\times\RR_\infty}\SF$.
We denote by $\BEC_{{\RR-c}}(\I\Bbbk_{M_\infty})$
the category of $\RR$-constructible enhanced ind-sheaves.
See \cite[\S3.3]{DK16-2} for the details.

We shall set $$\Bbbk_{M_\infty}^{\rmE, \sub} := \Q_{M_\infty}^\sub 
\(\underset{a\to +\infty}{\varinjlim}\ \rho_{M_\infty\times\RR_\infty\ast}\Bbbk_{\{t\geq a\}}
\)\in\BEC(\Bbbk_{M_\infty}^\sub).$$
\begin{lemma}\label{lem3.19}
There exist an isomorphism
$I_{M_\infty}^\rmE\Bbbk_{M_\infty}^{\rmE, \sub}\simeq \Bbbk_{M_\infty}^\rmE$
in $\BEC(\I\Bbbk_{M_\infty})$
and 
an isomorphism
$J_{M_\infty}^\rmE\Bbbk_{M_\infty}^{\rmE}\simeq \Bbbk_{M_\infty}^{\rmE, \sub}$
in $\BEC(\Bbbk_{M_\infty}^\sub)$.
\end{lemma}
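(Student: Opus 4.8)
The plan is to reduce both isomorphisms to the description of $I_{M_\infty}^\rmE$ and $J_{M_\infty}^\rmE$ on representatives, together with the already-established compatibilities of $I_{\che M\times\var\RR}$, $\bfR J_{\che M\times\var\RR}$ with the sheafification/ind-sheafification functors and with filtrant inductive limits (Subsection \ref{subsec2.5} and Proposition \ref{prop3.7}). First I would unwind the definitions: by definition $\Bbbk_{M_\infty}^{\rmE,\sub}=\Q_{M_\infty}^\sub\bigl(\varinjlim_{a\to+\infty}\rho_{M_\infty\times\RR_\infty\ast}\Bbbk_{\{t\geq a\}}\bigr)$, and applying $I_{M_\infty}^\rmE$ amounts to applying $\bfL_{M_\infty}^{\rmE,\sub}$, then $I_{M_\infty\times\RR_\infty}$, then $\Q_{M_\infty}$. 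The key point is that $I_{M_\infty\times\RR_\infty}$ commutes with filtrant inductive limits (it is stated in Subsection \ref{subsec2.5} that $I_M$ commutes with filtrant inductive limits, and the bordered version $I_{M_\infty\times\RR_\infty}=\q\circ I_{\che M\times\var\RR}\circ\bfR j_{M_\infty\times\RR_\infty!!}$ inherits this since $\bfR j_{!!}$, $\q$ and $I_{\che M\times\var\RR}$ all do), so it suffices to compute $I_{M_\infty\times\RR_\infty}\rho_{M_\infty\times\RR_\infty\ast}\Bbbk_{\{t\geq a\}}$ term by term.

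Next I would use Proposition \ref{prop3.7} (4)(i), which gives $I_{M_\infty\times\RR_\infty}\rho_{M_\infty\times\RR_\infty\ast}^{\RR-c}\SL\simeq\iota_{M_\infty\times\RR_\infty}\SL$ for $\RR$-constructible $\SL$; since $\Bbbk_{\{t\geq a\}}$ is $\RR$-constructible on $M_\infty\times\RR_\infty$, we get $I_{M_\infty\times\RR_\infty}\rho_{M_\infty\times\RR_\infty\ast}\Bbbk_{\{t\geq a\}}\simeq\iota_{M_\infty\times\RR_\infty}\Bbbk_{\{t\geq a\}}$. Passing to the limit and then to the quotient $\Q_{M_\infty}$, I obtain $\Q_{M_\infty}\bigl(\varinjlim_{a\to+\infty}\iota_{M_\infty\times\RR_\infty}\Bbbk_{\{t\geq a\}}\bigr)$, which by the definition in Subsection \ref{subsec2.7} is exactly $\Bbbk_{M_\infty}^\rmE$. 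One small bookkeeping item: I must check that the representative $\varinjlim_a\rho_{M_\infty\times\RR_\infty\ast}\Bbbk_{\{t\geq a\}}$ (an object of $\BDC(\Bbbk_{M_\infty\times\RR_\infty}^\sub)$, not yet replaced by $\bfL_{M_\infty}^{\rmE,\sub}$ of its quotient) maps under $\Q_{M_\infty}\circ I_{M_\infty\times\RR_\infty}$ to the same thing as its $\bfL$-replacement; this holds because $I_{M_\infty}^\rmE$ is well defined on $\BEC(\Bbbk_{M_\infty}^\sub)$ independently of the chosen lift, by Proposition \ref{prop3.7} (2)(iii) and (3)(v).

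For the second isomorphism I would apply $J_{M_\infty}^\rmE$ to $\Bbbk_{M_\infty}^\rmE\in\BEC_{\I\RR-c}(\I\Bbbk_{M_\infty})$ — noting $\Bbbk_{M_\infty}^\rmE\in\BEC_{\I\RR-c}(\I\Bbbk_{M_\infty})$ as remarked before Proposition \ref{prop3.15} — and then invoke the equivalence of Theorem \ref{main1} (2): since $I_{M_\infty}^\rmE$ is fully faithful with quasi-inverse $\lambda_{M_\infty}^\rmE$ on $\BEC_{\I\RR-c}$, and since the first part shows $I_{M_\infty}^\rmE\Bbbk_{M_\infty}^{\rmE,\sub}\simeq\Bbbk_{M_\infty}^\rmE$, applying $J_{M_\infty}^\rmE$ and using the canonical isomorphism $\id\simto J_{M_\infty}^\rmE\circ I_{M_\infty}^\rmE$ of Theorem \ref{main1} (1) yields $J_{M_\infty}^\rmE\Bbbk_{M_\infty}^\rmE\simeq J_{M_\infty}^\rmE I_{M_\infty}^\rmE\Bbbk_{M_\infty}^{\rmE,\sub}\simeq\Bbbk_{M_\infty}^{\rmE,\sub}$. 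Thus the second statement is a formal consequence of the first plus Theorem \ref{main1}. The main obstacle — really the only nontrivial point — is justifying the interchange of $I_{M_\infty\times\RR_\infty}$ with the filtrant inductive limit $\varinjlim_{a\to+\infty}$ at the level of derived categories; I would handle this by working with the natural representative (an honest filtrant ind-object of $\RR$-constructible sheaves with compact support, after applying $\bfR j_{!!}$), on which $I_{\che M\times\var\RR}$ is exact and commutes with such limits by construction, so no derived subtlety actually arises.
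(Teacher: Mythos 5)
Your proposal is correct and follows essentially the same route as the paper's proof: both reduce to the identity $I\circ\rho_{\ast}\simeq\iota$ on $\RR$-constructible sheaves applied termwise to $\Bbbk_{\{t\geq a\}}$, the commutation of $I$ with filtrant inductive limits, and then deduce the second isomorphism formally from the first via Theorem \ref{main1} (1). The only cosmetic difference is that the paper runs the computation starting from $\Bbbk_{M_\infty}^{\rmE}$ on the compactification $\che{M}\times\var{\RR}$ and pulls back, while you apply Proposition \ref{prop3.7} (4)(i) directly on the bordered space; these are the same argument read in opposite directions.
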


\begin{proof}
By the definition of $\Bbbk_{M_\infty}^\rmE$,
we have $\Bbbk_{M_\infty}^\rmE
\simeq
\Q_{M_\infty}j_{M_\infty\times\RR_\infty}^{-1}\(``\underset{a\to +\infty}{\varinjlim}"\ \iota_{\che{M}\times\var{\RR}}\Bbbk_{\{t\geq a\}}\)$. 
Since $I_{\che{M}\times\var{\RR}}\circ\rho_{\che{M}\times\var{\RR}\ast}\simeq \iota_{\che{M}\times\var{\RR}}$
and the fact that a functor $I$ commutes with the filtrant inductive limits,
there exist isomorphisms in $\BDC(\I\Bbbk_{\che{M}\times\var{\RR}})$:
$$``\underset{a\to +\infty}{\varinjlim}"\ \iota_{\che{M}\times\var{\RR}}\Bbbk_{\{t\geq a\}}
\simeq
I_{\che{M}\times\var{\RR}}\(
\underset{a\to +\infty}{\varinjlim}\ \rho_{\che{M}\times\var{\RR}\ast}\Bbbk_{\{t\geq a\}}\).$$
Hence, we have
\begin{align*}
\Bbbk_{M_\infty}^\rmE
&\simeq
\Q_{M_\infty}j_{M_\infty\times\RR_\infty}^{-1}I_{\che{M}\times\var{\RR}}\(
\underset{a\to +\infty}{\varinjlim}\ \rho_{\che{M}\times\var{\RR}\ast}\Bbbk_{\{t\geq a\}}\)\\
&\simeq
I_{M_\infty}^\rmE\Q_{M_\infty}^\sub \(\underset{a\to +\infty}{\varinjlim}\ 
j_{M_\infty\times\RR_\infty}^{-1}\rho_{\che{M}\times\var{\RR}\ast}\Bbbk_{\{t\geq a\}}\)\\
&\simeq
I_{M_\infty}^\rmE\Q_{M_\infty}^\sub
\(\underset{a\to +\infty}{\varinjlim}\ \rho_{M_\infty\times\RR_\infty\ast}\Bbbk_{\{t\geq a\}}\)\\
&\simeq
I_{M_\infty}^\rmE\Bbbk_{M_\infty}^{\rmE, \sub}.
\end{align*}
The second assertion follows from the first assertion and Theorem \ref{main1} (1).
\end{proof}

\begin{proposition}\label{prop3.20}
The triangulated category $\BEC_{{\RR-c}}(\I\Bbbk_{M_\infty})$
is a full triangulated subcategory of $\BEC_{{\I\RR-c}}(\I\Bbbk_{M_\infty})$.
\end{proposition}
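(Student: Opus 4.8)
The plan is to reduce the claim to the already-established equivalence $\lambda_{M_\infty}^\rmE$ of Theorem \ref{main1} (2) together with the local description of $\RR$-constructible enhanced ind-sheaves. First I would recall that, by definition, an enhanced ind-sheaf $K \in \BEC(\I\Bbbk_{M_\infty})$ is $\RR$-constructible if for every $U \in \Ob(\Op_{M_\infty}^{\sub,c})$ there exists $\SF \in \BDC_{\RR-c}(\Bbbk_{U_\infty\times\RR_\infty})$ with $\bfE i_{U_\infty}^{-1}K \simeq \Bbbk_{U_\infty}^\rmE \Potimes \Q_{U_\infty}\iota_{U_\infty\times\RR_\infty}\SF$. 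So the task is: given such a $K$, show $K \in \BEC_{\I\RR-c}(\I\Bbbk_{M_\infty})$, i.e. that $\bfR j_{M_\infty\times\RR_\infty!!}(\bfL_{M_\infty}^\rmE K)$ has $\I\RR$-constructible cohomologies on $\che{M}\times\var{\RR}$.

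The key steps, in order, would be as follows. (i) Observe that $\Bbbk_{U_\infty}^\rmE = e_{U_\infty}(\iota_{U_\infty}\Bbbk_U)$, and that $\iota_{U_\infty\times\RR_\infty}\SF$ for $\SF$ $\RR$-constructible lies in $\BDC_{\I\RR-c}(\I\Bbbk_{U_\infty\times\RR_\infty})$ by Lemma \ref{lem3.5} (1); hence by Proposition \ref{prop3.15} (and the fact $\Bbbk_{U_\infty}^\rmE \in \BEC_{\I\RR-c}(\I\Bbbk_{U_\infty})$, noted just before Proposition \ref{prop3.15}) the convolution $\Bbbk_{U_\infty}^\rmE \Potimes \Q_{U_\infty}\iota_{U_\infty\times\RR_\infty}\SF$ lies in $\BEC_{\I\RR-c}(\I\Bbbk_{U_\infty})$. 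Thus $\bfE i_{U_\infty}^{-1}K \in \BEC_{\I\RR-c}(\I\Bbbk_{U_\infty})$ for every such $U$. (ii) Now I must pass from this local $\I\RR$-constructibility (on a subanalytic relatively compact cover of $M$) to global $\I\RR$-constructibility of $K$ on $M_\infty$. Since $\I\RR$-constructibility of $\bfR j_{M_\infty\times\RR_\infty!!}(\bfL_{M_\infty}^\rmE K)$ on $\che{M}\times\var{\RR}$ is a local condition on $\che{M}\times\var{\RR}$, and the sets $U_\infty\times\RR_\infty$ with $U$ ranging over $\Op_{M_\infty}^{\sub,c}$ give (after closure) a cover of $\che{M}$ adapted to the bordered structure, one checks that $\bfE i_{U_\infty}^{-1}$ corresponds, under $\bfL^\rmE$ and $\bfR j_{!!}$, to restriction to the relevant locally closed piece; so the local conclusions glue. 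Here I would cite \cite[Prop.\:1.6.10]{KS90} (already invoked in this subsection) for the fact that $\I\RR$-constructibility is checked on the members of such a cover. (iii) Finally, $\BEC_{\RR-c}(\I\Bbbk_{M_\infty})$ being \emph{full} in $\BEC_{\I\RR-c}(\I\Bbbk_{M_\infty})$ is automatic once it is a subcategory, since both are full subcategories of $\BEC(\I\Bbbk_{M_\infty})$; and it is triangulated because $\RR$-constructibility is stable under the shift and mapping cone, which one again verifies locally using the local description and the triangulated structure of $\BEC_{\RR-c}$ established in \cite{DK16-2}.

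I expect the main obstacle to be step (ii): carefully matching the local datum $\bfE i_{U_\infty}^{-1}K$ on the bordered subspace $U_\infty = (U,\che{M})$ with a genuine restriction of $\bfR j_{M_\infty\times\RR_\infty!!}(\bfL_{M_\infty}^\rmE K)$ to an open (or locally closed) subset of $\che{M}\times\var{\RR}$, so that the local-to-global principle for $\I\RR$-constructible sheaves on the honest manifold $\che{M}\times\var{\RR}$ applies. This requires keeping track of the interplay between $\bfE i_{U_\infty}^{-1}$, the functor $\bfL^{\rmE}$, and $\bfR j_{!!}$, for which the commutation identities in Proposition \ref{prop3.7} (2) and the definitions of the enhanced operations on bordered spaces are the relevant tools; the remaining parts are routine given the results already assembled in this section.
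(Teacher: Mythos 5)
Your step (i) is fine and reaches the same local conclusion as the paper, namely that $\bfE i_{U_\infty}^{-1}K\in\BEC_{\I\RR-c}(\I\Bbbk_{U_\infty})$ for every $U\in\Ob(\Op_{M_\infty}^{\sub,c})$. The genuine gap is your step (ii). The property of lying in $\BDC_{\I\RR-c}$ is the requirement that the cohomologies be ind-objects of compactly supported $\RR$-constructible sheaves, and this is \emph{not} an obviously local condition: ind-objects do not glue, so "check $\I\RR$-constructibility on the members of a cover" is precisely the statement that needs proof, not something you may assume. The reference you give for it, \cite[Prop.\:1.6.10]{KS90}, is about quotients of triangulated categories by null systems (it is used earlier in this subsection only to see that $\BEC_{\I\RR-c}(\I\Bbbk_{M_\infty})$ is a full subcategory of $\BEC(\I\Bbbk_{M_\infty})$); it says nothing about a local criterion for $\I\RR$-constructibility, so the citation does not carry the step.

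The way the paper closes exactly this gap is worth internalizing: instead of trying to glue the constructibility condition, one glues an \emph{isomorphism}. Consider the counit $I_{M_\infty}^\rmE J_{M_\infty}^\rmE K\to K$ of the adjunction from Theorem \ref{main1}. Both $I^\rmE$ and $J^\rmE$ commute with restriction to $U_\infty$ (Proposition \ref{prop3.18} (2)(i) for $I^\rmE$ with $\bfE i_{U_\infty}^{-1}$, and (3)(ii) for $J^\rmE$ with $\bfE i_{U_\infty}^{!}\simeq\bfE i_{U_\infty}^{-1}$), so after restriction the counit becomes $I_{U_\infty}^\rmE J_{U_\infty}^\rmE(K|_{U_\infty})\to K|_{U_\infty}$, which is an isomorphism by your step (i) together with Theorem \ref{main1} (2) applied on $U_\infty$. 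Whether a morphism in $\BEC(\I\Bbbk_{M_\infty})$ is an isomorphism \emph{can} be checked on such a cover, so the counit is an isomorphism globally, and then $K\simeq I_{M_\infty}^\rmE(J_{M_\infty}^\rmE K)$ lies in the essential image of $I_{M_\infty}^\rmE$, which is $\BEC_{\I\RR-c}(\I\Bbbk_{M_\infty})$. Without this (or an honest proof of a local criterion for $\I\RR$-constructibility, which would in effect reprove the same adjunction statement), your argument does not go through. Your step (iii) is unobjectionable once the containment is established.
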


\begin{proof}
Let $K\in\BEC_{{\RR-c}}(\I\Bbbk_{M_\infty})$.
Since a pair $(I_{M_\infty}^\rmE, J_{M_\infty}^\rmE)$ is an adjoint pair,
we have a morphism $I_{M_\infty}^\rmE J_{M_\infty}^\rmE K\to K$ in $\BEC(\I\Bbbk_{M_\infty})$.
Since $K$ is $\RR$-constructible,
for any open subset $U$ of $M$ which is subanalytic and relatively compact in $\che{M}$
there exists $\SF^U\in\BDC_{\RR-c}(\Bbbk_{U_\infty\times\RR_\infty})$
such that $\bfE i_{U_\infty}^{-1}K\simeq
\Bbbk_{U_\infty}^{\rmE}\Potimes \Q_{U_\infty}\iota_{U_\infty\times\RR_\infty}\SF^U$.
By Proposition \ref{prop3.7} (4)(i), Theorem \ref{main1} (2), Proposition \ref{prop3.18} (1)(i) and Lemma \ref{lem3.19},
we have $$\Bbbk_{U_\infty}^{\rmE}\Potimes \Q_{U_\infty}\iota_{U_\infty\times\RR_\infty}\SF^U
\simeq I_{U_\infty}^\rmE\(\Bbbk_{U_\infty}^{\rmE, \sub}\Potimes
\Q_{U_\infty}^\sub\rho_{U_\infty\times\RR_\infty\ast}\SF^U\)\in\BEC_{\I\RR-c}(\I\Bbbk_{U_\infty}).$$
This implies that for any open subset $U$ of $M$ which is subanalytic and relatively compact in $\che{M}$,
$\bfE i_{U_\infty}^{-1}K\in\BEC_{\I\RR-c}(\I\Bbbk_{U_\infty})$.
Hence,  by Proposition \ref{prop3.18} (2)(i), (3)(ii)
there exist isomorphisms
\begin{align*}
\(I_{M_\infty}^\rmE J_{M_\infty}^\rmE K\)|_{U_\infty}
\simeq
I_{U_\infty}^\rmE J_{U_\infty}^\rmE (K|_{U_\infty})
\simeq
K|_{U_\infty}
\end{align*}
for any open subset $U$ of $M$ which is subanalytic and relatively compact in $\che{M}$.
This implies that $I_{M_\infty}^\rmE J_{M_\infty}^\rmE K\simto K$
and hence $K\in\BEC_{\I\RR-c}(\I\Bbbk_{M_\infty})$.
\end{proof}

\begin{definition}\label{def3.21}
Let $M_\infty = (M, \che{M})$ be a real analytic bordered space.
We say that an enhanced subanalytic sheaf $K$ is $\RR$-constructible 
if for any open subset $U$ of $M$ which is subanalytic and relatively compact in $\che{M}$
there exists $\SF\in\BDC_{\RR-c}(\Bbbk_{U_\infty\times\RR_\infty})$
such that $$\bfE i_{U_\infty}^{-1}K\simeq
\Bbbk_{U_\infty}^{\rmE, \sub}\Potimes \Q_{U_\infty}^\sub\rho_{U_\infty\times\RR_\infty\ast}\SF.$$

Let us denote by $\BEC_{{\RR-c}}(\Bbbk_{M_\infty}^\sub)$
the category of $\RR$-constructible enhanced subanalytic sheaves.
\end{definition}

\begin{theorem}\label{main2}
Let $M_\infty$ be a real analytic bordered space.
Then the functors $I_{M_\infty}^\rmE, J_{M_\infty}^\rmE$ induce an equivalence of categories
\[\xymatrix@M=7pt@C=45pt{
\BEC_{\RR-c}(\Bbbk_{M_\infty}^\sub)\ar@<0.8ex>@{->}[r]^-{I_{M_\infty}^\rmE}_-\sim
&
\BEC_{\RR-c}(\I\Bbbk_{M_\infty})
\ar@<0.8ex>@{->}[l]^-{J_{M_\infty}^\rmE}.
}\]
\end{theorem}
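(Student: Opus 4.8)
The plan is to deduce Theorem \ref{main2} from the already-established equivalence of Theorem \ref{main1}, by showing that under $I_{M_\infty}^\rmE$ and $J_{M_\infty}^\rmE$ the subcategories of $\RR$-constructible objects correspond to each other. Since $\BEC_{\RR-c}(\I\Bbbk_{M_\infty})\subset\BEC_{\I\RR-c}(\I\Bbbk_{M_\infty})$ by Proposition \ref{prop3.20}, and $I_{M_\infty}^\rmE\colon\BEC(\Bbbk_{M_\infty}^\sub)\simto\BEC_{\I\RR-c}(\I\Bbbk_{M_\infty})$ is an equivalence with quasi-inverse $\lambda_{M_\infty}^\rmE$, it suffices to prove the two inclusions: (a) $I_{M_\infty}^\rmE$ sends $\BEC_{\RR-c}(\Bbbk_{M_\infty}^\sub)$ into $\BEC_{\RR-c}(\I\Bbbk_{M_\infty})$, and (b) $\lambda_{M_\infty}^\rmE$ (equivalently $J_{M_\infty}^\rmE$ restricted to $\BEC_{\RR-c}(\I\Bbbk_{M_\infty})$) sends $\BEC_{\RR-c}(\I\Bbbk_{M_\infty})$ into $\BEC_{\RR-c}(\Bbbk_{M_\infty}^\sub)$. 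Together with Theorem \ref{main1} these give the desired equivalence, the adjunction isomorphisms $\id\simto J_{M_\infty}^\rmE I_{M_\infty}^\rmE$ and $I_{M_\infty}^\rmE J_{M_\infty}^\rmE\simto\id$ being inherited.

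For (a), I would take $K\in\BEC_{\RR-c}(\Bbbk_{M_\infty}^\sub)$ and an open $U\subset M$ subanalytic and relatively compact in $\che M$, so that by Definition \ref{def3.21} there is $\SF\in\BDC_{\RR-c}(\Bbbk_{U_\infty\times\RR_\infty})$ with $\bfE i_{U_\infty}^{-1}K\simeq\Bbbk_{U_\infty}^{\rmE,\sub}\Potimes\Q_{U_\infty}^\sub\rho_{U_\infty\times\RR_\infty\ast}\SF$. Applying $I_{U_\infty}^\rmE$ and using Proposition \ref{prop3.18} (2)(i) (to commute $\bfE i_{U_\infty}^{-1}$ past $I^\rmE$), Proposition \ref{prop3.18} (1)(i) (to commute $\Potimes$ past $I^\rmE$), Lemma \ref{lem3.19} (for $I_{U_\infty}^\rmE\Bbbk_{U_\infty}^{\rmE,\sub}\simeq\Bbbk_{U_\infty}^\rmE$), and the identity $I_{U_\infty\times\RR_\infty}\circ\rho_{U_\infty\times\RR_\infty\ast}^{\RR-c}\simeq\iota_{U_\infty\times\RR_\infty}$ from Proposition \ref{prop3.7} (4)(i), I obtain $\bfE i_{U_\infty}^{-1}I_{M_\infty}^\rmE K\simeq\Bbbk_{U_\infty}^\rmE\Potimes\Q_{U_\infty}\iota_{U_\infty\times\RR_\infty}\SF$, which is exactly the defining condition for $I_{M_\infty}^\rmE K$ to lie in $\BEC_{\RR-c}(\I\Bbbk_{M_\infty})$. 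This is essentially the computation already performed inside the proof of Proposition \ref{prop3.20}, run in the reverse direction.

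For (b), I would argue symmetrically. Let $K\in\BEC_{\RR-c}(\I\Bbbk_{M_\infty})\subset\BEC_{\I\RR-c}(\I\Bbbk_{M_\infty})$, set $L:=\lambda_{M_\infty}^\rmE K\in\BEC(\Bbbk_{M_\infty}^\sub)$, so $I_{M_\infty}^\rmE L\simeq K$ by Theorem \ref{main1} (2). For $U$ subanalytic relatively compact in $\che M$, $\RR$-constructibility of $K$ gives $\SF^U\in\BDC_{\RR-c}(\Bbbk_{U_\infty\times\RR_\infty})$ with $\bfE i_{U_\infty}^{-1}K\simeq\Bbbk_{U_\infty}^\rmE\Potimes\Q_{U_\infty}\iota_{U_\infty\times\RR_\infty}\SF^U$. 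Rewriting the right-hand side via Lemma \ref{lem3.19} and Proposition \ref{prop3.7} (4)(i) as $I_{U_\infty}^\rmE\bigl(\Bbbk_{U_\infty}^{\rmE,\sub}\Potimes\Q_{U_\infty}^\sub\rho_{U_\infty\times\RR_\infty\ast}\SF^U\bigr)$, and using Proposition \ref{prop3.18} (4)(i) (compatibility of $\lambda^\rmE$ with $\bfE f^{-1}$) together with Proposition \ref{prop3.18} (2)(i), I get $\bfE i_{U_\infty}^{-1}L\simeq\lambda_{U_\infty}^\rmE\bfE i_{U_\infty}^{-1}K\simeq\Bbbk_{U_\infty}^{\rmE,\sub}\Potimes\Q_{U_\infty}^\sub\rho_{U_\infty\times\RR_\infty\ast}\SF^U$, so $L\in\BEC_{\RR-c}(\Bbbk_{M_\infty}^\sub)$. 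Since $J_{M_\infty}^\rmE$ and $\lambda_{M_\infty}^\rmE$ agree on $\BEC_{\I\RR-c}(\I\Bbbk_{M_\infty})$, this shows $J_{M_\infty}^\rmE$ restricts as claimed, and combined with (a) and the natural isomorphisms of Theorem \ref{main1}, the functors $I_{M_\infty}^\rmE$ and $J_{M_\infty}^\rmE$ restrict to mutually quasi-inverse equivalences between $\BEC_{\RR-c}(\Bbbk_{M_\infty}^\sub)$ and $\BEC_{\RR-c}(\I\Bbbk_{M_\infty})$.

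The main obstacle I anticipate is purely bookkeeping rather than conceptual: one must check that the chain of isomorphisms $\bfE i_{U_\infty}^{-1}I_{M_\infty}^\rmE\simeq I_{U_\infty}^\rmE\bfE i_{U_\infty}^{-1}$ and $\bfE i_{U_\infty}^{-1}\lambda_{M_\infty}^\rmE\simeq\lambda_{U_\infty}^\rmE\bfE i_{U_\infty}^{-1}$ hold with $i_{U_\infty}\colon U_\infty\to M_\infty$ the bordered-space morphism attached to a subanalytic relatively compact open $U$ — this is exactly an instance of Proposition \ref{prop3.18} (2)(i) and (4)(i) applied to $f=i_{U_\infty}$ — and that all the $\Potimes$ and $\rho_\ast$ manipulations are legitimate at the level of the quotient categories $\BEC$. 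Care is also needed that $\SF^U\in\BDC_{\RR-c}(\Bbbk_{U_\infty\times\RR_\infty})$ makes $\rho_{U_\infty\times\RR_\infty\ast}\SF^U$ well-behaved, i.e. that one is using $\rho_{\ast}^{\RR-c}$ and hence Proposition \ref{prop3.7} (4)(i), not the non-exact $\bfR\rho_\ast$; this is the only place where $\RR$-constructibility of $\SF^U$ (as opposed to mere subanalyticity) is genuinely used.
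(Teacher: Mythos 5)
Your proposal is correct and follows essentially the same route as the paper: reduce to showing that $I_{M_\infty}^\rmE$ and $J_{M_\infty}^\rmE$ preserve $\RR$-constructibility by restricting to a relatively compact subanalytic open $U$, commuting $\bfE i_{U_\infty}^{-1}$ past $I^\rmE$ (resp.\ $\lambda^\rmE$) via Proposition \ref{prop3.18}, and converting $\Bbbk_{U_\infty}^{\rmE,\sub}\Potimes\Q^\sub\rho_\ast\SF^U$ into $\Bbbk_{U_\infty}^{\rmE}\Potimes\Q\,\iota\,\SF^U$ using Lemma \ref{lem3.19} and Proposition \ref{prop3.7} (4)(i), with Theorem \ref{main1} and Proposition \ref{prop3.20} supplying the ambient equivalence. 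The specific sub-items you cite differ trivially from the paper's, but the argument is the same.
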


\begin{proof}
It is enough to show that the following functors are well defined:
\begin{align*}
I_{M_\infty}^\rmE&\colon\BEC_{\RR-c}(\Bbbk_{M_\infty}^\sub)\to\BEC_{\RR-c}(\I\Bbbk_{M_\infty}),\\
J_{M_\infty}^\rmE&\colon\BEC_{\RR-c}(\I\Bbbk_{M_\infty})\to\BEC_{\RR-c}(\Bbbk_{M_\infty}^\sub).
\end{align*}  

Let $K\in \BEC_{\RR-c}(\Bbbk_{M_\infty}^\sub)$
and 
 $U$ an open subset of $M$ which is subanalytic and relatively compact in $\che{M}$.
Then there exists $\SF^U\in\BDC_{\RR-c}(\Bbbk_{U_\infty\times\RR_\infty})$
such that $\bfE i_{U_\infty}^{-1}K\simeq
\Bbbk_{U_\infty}^{\rmE, \sub}\Potimes \Q_{M_\infty}^\sub\rho_{M_\infty\times\RR_\infty\ast}\SF^U.$
By Propositions \ref{prop3.7} (4)(i), \ref{prop3.18} (1)(i), (2)(i) and Lemma \ref{lem3.19},
there exist isomorphisms 
\begin{align*}
\bfE i_{U_\infty}^{-1}I_{M_\infty}^\rmE K
&\simeq
I_{U_\infty}^\rmE \bfE i_{U_\infty}^{-1}K\\
&\simeq
I_{U_\infty}^\rmE\(\Bbbk_{U_\infty}^{\rmE, \sub}\Potimes \Q_{M_\infty}^\sub\rho_{M_\infty\times\RR_\infty\ast}\SF^U\)\\
&\simeq
I_{U_\infty}^\rmE\Bbbk_{U_\infty}^{\rmE, \sub}\Potimes I_{U_\infty}^\rmE\Q_{M_\infty}^\sub\rho_{M_\infty\times\RR_\infty\ast}\SF^U
\\
&\simeq
\Bbbk_{U_\infty}^{\rmE}\Potimes \Q_{M_\infty}\iota_{M_\infty\times\RR_\infty}\SF^U.
\end{align*}
Therefore, we have $I_{M_\infty}^\rmE K\in \BEC_{\RR-c}(\I\Bbbk_{M_\infty})$.

 Let $K\in \BEC_{\RR-c}(\I\Bbbk_{M_\infty})$ and 
 $U$ an open subset of $M$ which is subanalytic and relatively compact in $\che{M}$.
Then there exists $\SF^U\in\BDC_{\RR-c}(\Bbbk_{U_\infty\times\RR_\infty})$
such that $\bfE i_{U_\infty}^{-1}K\simeq
\Bbbk_{U_\infty}^{\rmE}\Potimes \Q_{M_\infty}\iota_{M_\infty\times\RR_\infty}\SF^U.$
By Propositions \ref{prop3.7} (3)(i), \ref{prop3.18} (4)(i), (iii), Lemma \ref{lem3.19} and Proposition \ref{prop3.20},
there exist isomorphisms
\begin{align*}
\bfE i_{U_\infty}^{-1}J_{M_\infty}^\rmE K
&\simeq
J_{U_\infty}^\rmE\bfE i_{U_\infty}^{-1} K\\
&\simeq
J_{U_\infty}^\rmE \(
\Bbbk_{U_\infty}^{\rmE}\Potimes \Q_{U_\infty}\iota_{U_\infty\times\RR_\infty}\SF^U\)\\
&\simeq
J_{U_\infty}^\rmE\Bbbk_{U_\infty}^{\rmE}\Potimes 
J_{U_\infty}^\rmE\Q_{U_\infty}\iota_{U_\infty\times\RR_\infty}\SF^U\\
&\simeq
\Bbbk_{U_\infty}^{\rmE, \sub}\Potimes 
\Q_{U_\infty}^\sub\rho_{U_\infty\times\RR_\infty\ast}\SF^U.
\end{align*}
Therefore, we have $\lambda_{M_\infty}^\rmE K\in\BEC_{\RR-c}(\Bbbk_{M_\infty}^\sub)$.

The proof is completed.
\end{proof}

Let us summarize results of Proposition \ref{prop3.20} and Theorems \ref{main1}, \ref{main2} in the following commutative diagram:
\[\xymatrix@M=10pt@R=35pt@C=85pt{
\BEC(\Bbbk_{M_\infty}^\sub)\ar@<0.7ex>@{^{(}->}[r]^-{I_{M_\infty}^\rmE}
\ar@<-0.7ex>@{->}[rd]^-{I_{M_\infty}^\rmE}_-\sim
 & \BEC(\I\Bbbk_{M_\infty})\ar@<0.7ex>@{->}[l]^-{J_{M_\infty}^\rmE}\\
\BEC_{\RR-c}(\Bbbk_{M_\infty}^\sub)\ar@{}[u]|-{\bigcup}
\ar@<0.7ex>@{->}[rd]^-{I_{M_\infty}^\rmE}_-\sim
& \BEC_{\I\RR-c}(\I\Bbbk_{M_\infty}).\ar@{}[u]|-{\bigcup}
\ar@<2.5ex>@{->}[lu]^-{\lambda_{M_\infty}^\rmE}\\
{} & \BEC_{\RR-c}(\I\Bbbk_{M_\infty})\ar@{}[u]|-{\bigcup}
\ar@<1.0ex>@{->}[lu]^-{\lambda_{M_\infty}^\rmE}.
}\]

At the end of this subsection,
let us consider an embedding functor from
$\BDC(\Bbbk_{M_\infty}^\sub)$ to $\BEC(\Bbbk_{M_\infty}^\sub)$
 and a Verdier duality functor for enhanced subanalytic sheaves.
 
Let us define a functor
$$e_{M_\infty}^\sub \colon \BDC(\Bbbk_{M_\infty}^\sub) \to \BEC(\Bbbk_{M_\infty}^\sub),
\hspace{7pt}
\SF\mapsto \pi^{-1}\SF\otimes\Bbbk_{M_\infty}^{\rmE, \sub}.$$
By Proposition \ref{prop3.23} below, we have commutative diagrams:
\[\xymatrix@M=7pt@C=45pt{
\BDC(\Bbbk_{M_\infty}^\sub)\ar@{->}[r]^-{e_{M_\infty}^\sub}\ar@{->}[d]_-{I_{M_\infty}}
&
\BEC(\Bbbk_{M_\infty}^\sub)\ar@{->}[d]^-{I_{M_\infty}^\rmE}\\
\BDC(\I\Bbbk_{M_\infty})\ar@{->}[r]_-{e_{M_\infty}}
&
\BEC(\I\Bbbk_{M_\infty}),}
\hspace{17pt}
\xymatrix@M=7pt@C=45pt{
\BDC_{\I\RR-c}(\I\Bbbk_{M_\infty})\ar@{->}[r]^-{e_{M_\infty}}\ar@{->}[d]_-{\lambda_{M_\infty}}
&
\BEC_{\I\RR-c}(\I\Bbbk_{M_\infty})\ar@{->}[d]^-{\lambda_{M_\infty}^\rmE}\\
\BDC(\Bbbk_{M_\infty}^\sub)\ar@{->}[r]_-{e_{M_\infty}^\sub}
&
\BEC(\Bbbk_{M_\infty}^\sub).}
\]

\begin{proposition}\label{prop3.23}
For any $\SF\in\BDC(\Bbbk_{M_\infty}^\sub)$ and any $F\in\BDC_{\I\RR-c}(\I\Bbbk_{M_\infty})$,
we have
\begin{align*}
I_{M_\infty}^\rmE e_{M_\infty}^\sub\SF
&\simeq
e_{M_\infty}I_{M_\infty}\SF,\\
e_{M_\infty}^\sub\lambda_{M_\infty}F
&\simeq
\lambda_{M_\infty}^\rmE e_{M_\infty}F.
\end{align*}
Moreover, the functor
$e_{M_\infty}^\sub\colon \BDC(\Bbbk_{M_\infty}^\sub) \to \BEC(\Bbbk_{M_\infty}^\sub)$
is fully faithful.
\end{proposition}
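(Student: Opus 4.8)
The plan is to reduce every assertion to the equivalences $I_{M_\infty}\colon \BDC(\Bbbk_{M_\infty}^\sub)\simto\BDC_{\I\RR-c}(\I\Bbbk_{M_\infty})$ and $I_{M_\infty}^\rmE\colon\BEC(\Bbbk_{M_\infty}^\sub)\simto\BEC_{\I\RR-c}(\I\Bbbk_{M_\infty})$ of Proposition \ref{prop3.6} and Theorem \ref{main1}, together with the identification $I_{M_\infty}^\rmE\Bbbk_{M_\infty}^{\rmE, \sub}\simeq\Bbbk_{M_\infty}^\rmE$ of Lemma \ref{lem3.19}. As a preliminary step I would record the compatibility of $I_{M_\infty}^\rmE$ with the operation $\pi^{-1}(\cdot)\otimes(\cdot)$: for $\SF\in\BDC(\Bbbk_{M_\infty}^\sub)$ and $K\in\BEC(\Bbbk_{M_\infty}^\sub)$ there is a natural isomorphism $I_{M_\infty}^\rmE(\pi^{-1}\SF\otimes K)\simeq\pi^{-1}I_{M_\infty}\SF\otimes I_{M_\infty}^\rmE K$. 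Writing $K=\Q_{M_\infty}^\sub\SG$ and unwinding the definitions of $\pi^{-1}(\cdot)\otimes(\cdot)$ from Subsections \ref{subsec2.7} and \ref{subsec3.3}, this comes down to the isomorphisms $I_{M_\infty\times\RR_\infty}(\pi^{-1}\SF\otimes\SG)\simeq I_{M_\infty\times\RR_\infty}\pi^{-1}\SF\otimes I_{M_\infty\times\RR_\infty}\SG$ and $I_{M_\infty\times\RR_\infty}\pi^{-1}\SF\simeq\pi^{-1}I_{M_\infty}\SF$, i.e.\ to Proposition \ref{prop3.7} (2)(iii), (vi) applied to $\pi\colon M_\infty\times\RR_\infty\to M_\infty$; one only has to observe that $\pi^{-1}(\cdot)\otimes(\cdot)$ preserves $\pi^{-1}\BDC$ on both sides, so that an arbitrary representative of $K$ may be used.

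Granting this, the first isomorphism is immediate: $I_{M_\infty}^\rmE e_{M_\infty}^\sub\SF = I_{M_\infty}^\rmE(\pi^{-1}\SF\otimes\Bbbk_{M_\infty}^{\rmE, \sub})\simeq\pi^{-1}I_{M_\infty}\SF\otimes I_{M_\infty}^\rmE\Bbbk_{M_\infty}^{\rmE, \sub}\simeq\pi^{-1}I_{M_\infty}\SF\otimes\Bbbk_{M_\infty}^\rmE = e_{M_\infty}I_{M_\infty}\SF$, the last step being Lemma \ref{lem3.19}. For the second isomorphism, let $F\in\BDC_{\I\RR-c}(\I\Bbbk_{M_\infty})$; by Proposition \ref{prop3.6} (2) one has $F\simeq I_{M_\infty}\lambda_{M_\infty}F$, hence by the first isomorphism $e_{M_\infty}F\simeq e_{M_\infty}I_{M_\infty}\lambda_{M_\infty}F\simeq I_{M_\infty}^\rmE e_{M_\infty}^\sub\lambda_{M_\infty}F$. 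In particular $e_{M_\infty}F\in\BEC_{\I\RR-c}(\I\Bbbk_{M_\infty})$ by Proposition \ref{prop3.15} (1), so $\lambda_{M_\infty}^\rmE e_{M_\infty}F$ is defined, and applying $\lambda_{M_\infty}^\rmE$ and using $\lambda_{M_\infty}^\rmE\circ I_{M_\infty}^\rmE\simeq\id$ yields $\lambda_{M_\infty}^\rmE e_{M_\infty}F\simeq e_{M_\infty}^\sub\lambda_{M_\infty}F$.

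For the full faithfulness of $e_{M_\infty}^\sub$, the point is that the natural isomorphism just proved exhibits $e_{M_\infty}^\sub$ as isomorphic to the composite $\lambda_{M_\infty}^\rmE\circ e_{M_\infty}\circ I_{M_\infty}$, where $e_{M_\infty}\colon\BDC(\I\Bbbk_{M_\infty})\to\BEC(\I\Bbbk_{M_\infty})$ restricts to a functor $\BDC_{\I\RR-c}\to\BEC_{\I\RR-c}$ by Proposition \ref{prop3.15} (1): indeed, applying the fully faithful functor $I_{M_\infty}^\rmE$ to $\lambda_{M_\infty}^\rmE\circ e_{M_\infty}\circ I_{M_\infty}$ and to $e_{M_\infty}^\sub$ gives $e_{M_\infty}\circ I_{M_\infty}$ in both cases (using $I_{M_\infty}^\rmE\circ\lambda_{M_\infty}^\rmE\simeq\id$ and the first isomorphism). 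Now $I_{M_\infty}$ is fully faithful (Proposition \ref{prop3.6}), $\lambda_{M_\infty}^\rmE$ is an equivalence (Theorem \ref{main1}), and $e_{M_\infty}$ is fully faithful because $(e_{M_\infty},\I\sh_{M_\infty})$ is an adjoint pair with invertible unit $\id\simto\I\sh_{M_\infty}\circ e_{M_\infty}$; hence the composite, and therefore $e_{M_\infty}^\sub$, is fully faithful.

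The only step that is not completely formal is the preliminary compatibility of $I_{M_\infty}^\rmE$ with $\pi^{-1}(\cdot)\otimes(\cdot)$: one must check that the relevant isomorphisms of Proposition \ref{prop3.7} descend through the quotient functors $\Q_{M_\infty}^\sub$ and $\Q_{M_\infty}$ defining the enhanced categories, that is, that they are independent of the chosen representatives. Once this bookkeeping is carried out, the remainder of the argument is a diagram chase through the already-established equivalences and Lemma \ref{lem3.19}.
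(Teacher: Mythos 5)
Your proposal is correct and follows essentially the same route as the paper: both prove the first isomorphism by combining the compatibility of $I$ with $\pi^{-1}$ and $\otimes$ (Proposition \ref{prop3.7} (2)(iii), (vi)) with Lemma \ref{lem3.19}, and both obtain full faithfulness by exhibiting $e_{M_\infty}^\sub$ as (isomorphic to) the composite of the fully faithful functors $I_{M_\infty}$, $e_{M_\infty}$ and the equivalence $\lambda_{M_\infty}^\rmE$. The only cosmetic difference is that you deduce the second isomorphism from the first by applying $\lambda_{M_\infty}^\rmE$, whereas the paper computes it directly from Proposition \ref{prop3.7} (4)(ii), (iv); both are valid, and your extra remark about checking descent through the quotient functors is a reasonable point of care that the paper leaves implicit.
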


\begin{proof}
Let $\SF\in\BDC(\Bbbk_{M_\infty}^\sub)$.
By Proposition \ref{prop3.7} (2)(iii), (vi) and Lemma \ref{lem3.19},
there exist isomorphisms in $\BEC(\I\Bbbk_{M_\infty})$:
$$I_{M_\infty}^\rmE e_{M_\infty}^\sub\SF
\simeq
I_{M_\infty}^\rmE\Bbbk_{M_\infty}^{\rmE, \sub}\otimes \pi^{-1}I_{M_\infty}\SF
\simeq
\Bbbk_{M_\infty}^{\rmE}\otimes \pi^{-1}I_{M_\infty}\SF
\simeq
e_{M_\infty}I_{M_\infty}\SF.$$
\smallskip

Let $F\in\BDC_{\I\RR-c}(\I\Bbbk_{M_\infty})$.
By Proposition \ref{prop3.7} (4)(ii), (iv) and Lemma \ref{lem3.19},
there exist isomorphisms in $\BEC(\Bbbk_{M_\infty}^\sub)$:
$$e_{M_\infty}^\sub \lambda_{M_\infty}F
\simeq
\Bbbk_{M_\infty}^{\rmE, \sub}\otimes \pi^{-1}\lambda_{M_\infty}F
\simeq
\lambda_{M_\infty}^\rmE\Bbbk_{M_\infty}^{\rmE}\otimes \pi^{-1}\lambda_{M_\infty}F
\simeq
\lambda_{M_\infty}^\rmE e_{M_\infty}F.$$

\medskip

\noindent
Let $\SF_1, \SF_2\in\BDC(\Bbbk_{M_\infty}^\sub)$.
By Proposition \ref{prop3.6}, the functor
$I_{M_\infty}\colon\BDC(\Bbbk_{M_\infty}^\sub)\to\BDC(\I\Bbbk_{M_\infty})$ is fully faithful
and hence there exists an isomorphism 
$$\Hom_{\BDC(\Bbbk_{M_\infty}^\sub)}(\SF_1, \SF_2)
\simeq
\Hom_{\BDC(\I\Bbbk_{M_\infty})}(I_{M_\infty}\SF_1, I_{M_\infty}\SF_2).$$
Since the functor
$e_{M_\infty}\colon \BDC(\I\Bbbk_{M_\infty}) \to \BEC(\I\Bbbk_{M_\infty})$ is fully faithful,
we have an isomorphism
$$\Hom_{\BDC(\I\Bbbk_{M_\infty})}(I_{M_\infty}\SF_1, I_{M_\infty}\SF_2)
\simeq
\Hom_{\BEC(\I\Bbbk_{M_\infty})}(e_{M_\infty}I_{M_\infty}\SF_1, e_{M_\infty}I_{M_\infty}\SF_2).$$
Moreover, by the first assertion, we have
$$\Hom_{\BEC(\I\Bbbk_{M_\infty})}(e_{M_\infty}I_{M_\infty}\SF_1, e_{M_\infty}I_{M_\infty}\SF_2)
\simeq
\Hom_{\BEC(\I\Bbbk_{M_\infty})}(
I_{M_\infty}^\rmE e_{M_\infty}^\sub\SF_1, I_{M_\infty}^\rmE e_{M_\infty}^\sub\SF_2).$$
By Theorem \ref{main1}, the functor
$I_{M_\infty}^\rmE\colon\BEC(\Bbbk_{M_\infty}^\sub)\to\BEC(\I\Bbbk_{M_\infty})$ is fully faithful,
and hence
$$\Hom_{\BEC(\I\Bbbk_{M_\infty})}(
I_{M_\infty}^\rmE e_{M_\infty}^\sub\SF_1, I_{M_\infty}^\rmE e_{M_\infty}^\sub\SF_2)
\simeq
\Hom_{\BEC(\Bbbk_{M_\infty}^\sub)}(
e_{M_\infty}^\sub\SF_1, e_{M_\infty}^\sub\SF_2).$$
Therefore, we have 
$$\Hom_{\BDC(\Bbbk_{M_\infty}^\sub)}(\SF_1, \SF_2)\simeq
\Hom_{\BEC(\Bbbk_{M_\infty}^\sub)}(
e_{M_\infty}^\sub\SF_1, e_{M_\infty}^\sub\SF_2).$$
This implies that the functor
$e_{M_\infty}^\sub\colon \BDC(\Bbbk_{M_\infty}^\sub) \to \BEC(\Bbbk_{M_\infty}^\sub)$
is fully faithful.
\end{proof}

The functor $e^\sub$ commutes with several functors as below.
\begin{proposition}\label{prop3.24}
Let $f\colon M_\infty\to N_\infty$ be a morphism of real analytic bordered spaces.
For any $\SF, \SF_1, \SF_2\in\BDC(\Bbbk_{M_\infty}^\sub)$ and any $\SG\in\BDC(\Bbbk_{N_\infty}^\sub)$,
we have
\begin{align*}
e_{M_\infty}^\sub(\SF_1\otimes\SF_2)
&\simeq
e_{M_\infty}^\sub\SF_1\Potimes e_{M_\infty}^\sub\SF_2,\\
\bfE f_{!!}e_{M_\infty}^\sub\SF
&\simeq
e_{N_\infty}^\sub\bfR f_{!!}\SF,\\
\bfE f^{-1}e_{N_\infty}^\sub\SG
&\simeq
e_{M_\infty}^\sub f^{-1}\SG,\\
\bfE f^{!}e_{N_\infty}^\sub\SG
&\simeq
e_{M_\infty}^\sub f^{!}\SG.
\end{align*}
\end{proposition}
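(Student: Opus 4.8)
The plan is to reduce each of the four isomorphisms to the corresponding statement for ordinary (non-enhanced) subanalytic sheaves, namely Proposition \ref{prop3.4}, by unwinding the definitions of $e_{M_\infty}^\sub$, $\bfE f_{!!}$, $\bfE f^{-1}$, $\bfE f^{!}$ and $\Potimes$ through the quotient functor $\Q_{M_\infty}^\sub$. Recall that $e_{M_\infty}^\sub\SF = \pi^{-1}\SF\otimes\Bbbk_{M_\infty}^{\rmE, \sub}$, where $\Bbbk_{M_\infty}^{\rmE, \sub} = \Q_{M_\infty}^\sub\(\varinjlim_{a\to+\infty}\rho_{M_\infty\times\RR_\infty\ast}\Bbbk_{\{t\geq a\}}\)$, and that the operations $\bfE f^{-1}$, $\bfE f_{!!}$, $\bfE f^{!}$ on enhanced subanalytic sheaves are induced by $f_{\RR_\infty}^{-1}$, $\bfR f_{\RR_\infty!!}$, $f_{\RR_\infty}^{!}$ on $\BDC(\Bbbk_{\bullet\times\RR_\infty}^\sub)$ via $\Q^\sub$.

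First I would prove the first isomorphism $e_{M_\infty}^\sub(\SF_1\otimes\SF_2)\simeq e_{M_\infty}^\sub\SF_1\Potimes e_{M_\infty}^\sub\SF_2$. Using Proposition \ref{prop3.14} (4)(iii) ($\pi^{-1}\SF\otimes(K\Potimes L)\simeq(\pi^{-1}\SF\otimes K)\Potimes L$), the projection formula $\pi^{-1}(\SF_1\otimes\SF_2)\simeq\pi^{-1}\SF_1\otimes\pi^{-1}\SF_2$ from Proposition \ref{prop3.4} (3), and the idempotency $\Bbbk_{M_\infty}^{\rmE, \sub}\Potimes\Bbbk_{M_\infty}^{\rmE, \sub}\simeq\Bbbk_{M_\infty}^{\rmE, \sub}$, one reduces to a formal manipulation; the idempotency itself can be proved directly from the convolution formulas, or transported from the ind-sheaf side via $I_{M_\infty}^\rmE\Bbbk_{M_\infty}^{\rmE, \sub}\simeq\Bbbk_{M_\infty}^\rmE$ (Lemma \ref{lem3.19}), the fully faithfulness of $I_{M_\infty}^\rmE$ (Theorem \ref{main1}), Proposition \ref{prop3.18} (1)(i), and the known idempotency of $\Bbbk_{M_\infty}^\rmE$ in $\BEC(\I\Bbbk_{M_\infty})$.

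For the remaining three isomorphisms the cleanest route is to exploit the compatibility of $e^\sub$ with $I^\rmE$ (Proposition \ref{prop3.23}), the fact that $e_{M_\infty}^\sub$ lands in $\BEC_{\RR-c}(\Bbbk_{M_\infty}^\sub)\subset\BEC_{\I\RR-c}(\Bbbk_{M_\infty}^\sub)$ where $I_{M_\infty}^\rmE$ is an equivalence with inverse $\lambda_{M_\infty}^\rmE$ (Theorems \ref{main1}, \ref{main2}), and the already-established commutation of $\bfE f_{!!}, \bfE f^{-1}, \bfE f^{!}$ with $I^\rmE$ in Proposition \ref{prop3.18} (2). Concretely, to check $\bfE f_{!!}e_{M_\infty}^\sub\SF\simeq e_{N_\infty}^\sub\bfR f_{!!}\SF$ I would apply $I_{N_\infty}^\rmE$ to both sides: the left becomes $I_{N_\infty}^\rmE\bfE f_{!!}e_{M_\infty}^\sub\SF\simeq\bfE f_{!!}I_{M_\infty}^\rmE e_{M_\infty}^\sub\SF\simeq\bfE f_{!!}e_{M_\infty}I_{M_\infty}\SF$, the right becomes $I_{N_\infty}^\rmE e_{N_\infty}^\sub\bfR f_{!!}\SF\simeq e_{N_\infty}I_{N_\infty}\bfR f_{!!}\SF\simeq e_{N_\infty}\bfR f_{!!}I_{M_\infty}\SF$ (using $I_{M_\infty}$ commutes with $\bfR f_{!!}$, from the list after Theorem \ref{thm2.26}, extended to bordered spaces in Proposition \ref{prop3.7} (2)(iv)), and these agree by the known identity $\bfE f_{!!}e_{M_\infty}\simeq e_{N_\infty}\bfR f_{!!}$ for enhanced ind-sheaves; since $I_{N_\infty}^\rmE$ is fully faithful the original isomorphism follows. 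The cases of $\bfE f^{-1}$ and $\bfE f^{!}$ are handled identically, invoking respectively Proposition \ref{prop3.18} (2)(i), (2)(iii), Proposition \ref{prop3.7} (2)(iii), (2)(v), and the corresponding enhanced-ind-sheaf identities $\bfE f^{-1}e_{N_\infty}\simeq e_{M_\infty}f^{-1}$ and $\bfE f^{!}e_{N_\infty}\simeq e_{M_\infty}f^{!}$.

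The main obstacle I anticipate is not any single isomorphism but making precise the statement ``$I_{N_\infty}^\rmE$ is fully faithful hence detects the isomorphism'': one must be sure that both sides actually lie in the essential image of $I_{N_\infty}^\rmE$ (equivalently in $\BEC_{\I\RR-c}$), which is where Proposition \ref{prop3.15} and Lemma \ref{lem3.5} are needed to see that $\bfE f^{-1}, \bfE f_{!!}, \bfE f^{!}$ preserve $\I\RR$-constructibility, together with the observation that $e_{M_\infty}^\sub\SF$ has $\I\RR$-constructible image (via $I_{M_\infty}^\rmE e_{M_\infty}^\sub\SF\simeq e_{M_\infty}I_{M_\infty}\SF$ and Proposition \ref{prop3.15} (1)). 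A fully self-contained alternative, which avoids passing to ind-sheaves, is to unwind everything through $\Q^\sub$ and argue on representatives in $\BDC(\Bbbk_{\bullet\times\RR_\infty}^\sub)$ using only Proposition \ref{prop3.4}; this is more elementary but requires the technical Cartesian-diagram bookkeeping already carried out in the proof of Proposition \ref{prop3.8}, so I would relegate it to a remark and present the ind-sheaf-transport argument as the main proof.
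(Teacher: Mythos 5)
Your proposal is correct, and for the first isomorphism it coincides with the paper's argument (Proposition \ref{prop3.14} (4)(iii) plus the idempotency $\Bbbk_{M_\infty}^{\rmE,\sub}\Potimes\Bbbk_{M_\infty}^{\rmE,\sub}\simeq\Bbbk_{M_\infty}^{\rmE,\sub}$). Where you diverge is in the treatment of $\bfE f_{!!}$ and $\bfE f^{-1}$: the paper handles these two directly inside the subanalytic world, reading them off from the projection formulas of Proposition \ref{prop3.14} (4)(ii) together with the single identity $\bfE f^{-1}\Bbbk_{N_\infty}^{\rmE,\sub}\simeq\Bbbk_{M_\infty}^{\rmE,\sub}$ (e.g.\ $\bfE f_{!!}(\pi^{-1}\SF\otimes\bfE f^{-1}\Bbbk_{N_\infty}^{\rmE,\sub})\simeq\pi^{-1}\bfR f_{!!}\SF\otimes\Bbbk_{N_\infty}^{\rmE,\sub}$), reserving the ind-sheaf transport only for $\bfE f^{!}$, where it cites \cite[Prop.\:2.18]{KS16-2} for $\bfE f^{!}\circ e_{N_\infty}\simeq e_{M_\infty}\circ f^{!}$. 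You instead transport all three through $I^\rmE$, which is perfectly valid but makes your proof depend on the enhanced ind-sheaf identities $\bfE f_{!!}\circ e_{M_\infty}\simeq e_{N_\infty}\circ\bfR f_{!!}$ and $\bfE f^{-1}\circ e_{N_\infty}\simeq e_{M_\infty}\circ f^{-1}$ as external inputs; these are standard but should be cited explicitly, since the paper never needs them. The paper's route buys self-containedness for the two cases where the projection formulas are available; yours buys uniformity, treating all three operations by the same two-line argument. One small simplification to your write-up: the worry about both sides lying in the essential image of $I_{N_\infty}^\rmE$ is unnecessary, because Theorem \ref{main1} (1) already makes $I^\rmE$ fully faithful on all of $\BEC(\Bbbk_{M_\infty}^\sub)$ (the unit $\id\simto J^\rmE\circ I^\rmE$ is an isomorphism), and a fully faithful functor reflects isomorphisms without any constructibility hypothesis; so Proposition \ref{prop3.15} and Lemma \ref{lem3.5} are not needed here.
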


\begin{proof}
By Proposition \ref{prop3.14} (4)(iii) and the fact that
$\Bbbk_{M_\infty}^{\rmE, \sub}\Potimes \Bbbk_{M_\infty}^{\rmE, \sub}
\simeq\Bbbk_{M_\infty}^{\rmE, \sub}$,
we have
\begin{align*}
e_{M_\infty}^\sub(\SF_1\otimes\SF_2)
&\simeq
\Bbbk_{M_\infty}^{\rmE, \sub}\otimes\pi^{-1}(\SF_1\otimes\SF_2)\\
&\simeq
\(\Bbbk_{M_\infty}^{\rmE, \sub}\Potimes\Bbbk_{M_\infty}^{\rmE, \sub}\)
\otimes(\pi^{-1}\SF_1\otimes\pi^{-1}\SF_2)\\
&\simeq
\(\Bbbk_{M_\infty}^{\rmE, \sub}\otimes\pi^{-1}\SF_1\)
\Potimes\(\Bbbk_{M_\infty}^{\rmE, \sub}\otimes\pi^{-1}\SF_2\)\\
&\simeq
e_{M_\infty}^\sub\SF_1\Potimes e_{M_\infty}^\sub\SF_2.
\end{align*}

\noindent
The second and third assertions follow from Proposition \ref{prop3.14} (4)(ii)
and the fact that $\bfE f^{-1}\Bbbk_{N_\infty}^{\rmE, \sub}\simeq \Bbbk_{M_\infty}^{\rmE, \sub}$.

\noindent
Let  us prove the last assertion.
By Propositions \ref{prop3.6}, \ref{prop3.18} (3)(ii) and  \ref{prop3.23},
we have isomorphisms in $\BEC(\Bbbk_{M_\infty}^\sub)$:
$$\bfE f^!e_{N_\infty}^\sub\SG
\simeq
\bfE f^!e_{N_\infty}^\sub\lambda_{N_\infty}I_{N_\infty}\SG
\simeq
\lambda_{M_\infty}^\rmE \bfE f^!e_{N_\infty}I_{N_\infty}\SG.$$
By \cite[Prop.\:2.18]{KS16-2}, we have $\bfE f^!\circ e_{N_\infty} \simeq e_{M_\infty}\circ f^!$
and hence there exist isomorphisms
\begin{align*}
\bfE f^!e_{N_\infty}^\sub\SG
\simeq
\lambda_{M_\infty}^\rmE \bfE f^!e_{N_\infty}I_{N_\infty}\SG
\simeq
\lambda_{M_\infty}^\rmE e_{M_\infty} f^!I_{N_\infty}\SG
\simeq
e_{M_\infty}^\sub\lambda_{M_\infty}^\rmE I_{M_\infty}f^!\SG
\simeq
e_{M_\infty}^\sub f^!\SG
\end{align*}
where in the third isomorphism we used Propositions \ref{prop3.7} (2)(v), \ref{prop3.23}
and in the last isomorphism we used Proposition \ref{prop3.6}.
\end{proof}

Let $i_0 \colon M_\infty\to M_\infty\times\RR_\infty$ be the morphism of real analytic bordered spaces
induced by the map $M\to M\times\RR, x\mapsto (x, 0)$.
We set
\[\sh_{M_\infty}^\sub := i_0^!\circ \bfR_{M_\infty}^{\rmE, \sub}
\colon\BEC(\Bbbk_{M_\infty}^\sub) \to \BDC(\Bbbk_{M_\infty}^\sub) \]
and call it the subanalytic sheafification functor for enhanced subanalytic sheaves on real analytic bordered spaces.
It has the following properties.
\begin{proposition}
\end{proposition}
\begin{itemize}
\item[\rm (1)]
A pair $(e_{M_\infty}^\sub, \sh_{M_\infty}^\sub)$ is an adjoint pair.

\item[\rm (2)]
For any $\SF\in\BDC(\Bbbk_{M_\infty}^\sub)$,
one has $\SF\simto \sh_{M_\infty}^\sub e_{M_\infty}^\sub \SF$.

\item[\rm (3)]
For any $K\in\BEC(\I\Bbbk_{M_\infty})$,
one has $\bfR J_{M_\infty}\I\sh_{M_\infty} K\simeq \sh_{M_\infty}^\sub J_{M_\infty}^\rmE K$.

\item[\rm (4)]
Let $f\colon M_\infty\to N_\infty$ be a morphism of real analytic bordered spaces.
For any $K\in\BEC(\Bbbk_{M_\infty}^\sub)$ and any $L\in\BEC(\Bbbk_{N_\infty}^\sub)$,
one has
\begin{align*}
\bfR f_\ast\sh_{M_\infty}^\sub K
&\simeq
\sh_{N_\infty}^\sub \bfE f_\ast K,\\
f^!\sh_{M_\infty}^\sub L
&\simeq
\sh_{M_\infty}^\sub\bfE f^!L.
\end{align*}
\end{itemize}

\begin{proof}
First, let us prove the assertion (3).
Let $K\in\BEC(\I\Bbbk_{M_\infty})$.
Then there exists $F\in\BDC(\I\Bbbk_{M_\infty\times\RR_\infty})$
such that $K\simeq\Q_{M_\infty}(F)$.
Then we have
\begin{align*}
\bfR J_{M_\infty}\I\sh_{M_\infty} K
&\simeq
\bfR J_{M_\infty}i_0^!\bfR_{M_\infty}^{\rmE}K\\
&\simeq
i_0^!\bfR J_{M_\infty\times\RR_\infty}\bfR_{M_\infty}^{\rmE}K\\
&\simeq
i_0^!\bfR J_{M_\infty\times\RR_\infty}
\Prihom(\iota_{M_\infty\times\RR_\infty}(\Bbbk_{\{t\geq0\}}\oplus\Bbbk_{\{t\leq 0\}}), F)\\
&\simeq
i_0^!\bfR J_{M_\infty\times\RR_\infty}\Prihom(I_{M_\infty\times\RR_\infty}\rho_{M_\infty\times\RR_\infty\ast}
(\Bbbk_{\{t\geq0\}}\oplus\Bbbk_{\{t\leq 0\}}), F)\\
&\simeq
i_0^!\Prihomsub(\rho_{M_\infty\times\RR_\infty\ast}
(\Bbbk_{\{t\geq0\}}\oplus\Bbbk_{\{t\leq 0\}}), \bfR J_{M_\infty\times\RR_\infty}F)\\
&\simeq
i_0^! \bfR_{M_\infty}^{\rmE, \sub} J_{M_\infty}^\rmE K\\
&\simeq \sh_{M_\infty}^\sub J_{M_\infty}^\rmE K,
\end{align*}
where in the second (resp.\,fourth, fifth) isomorphism
we used Proposition \ref{prop3.7} (3)(v) (resp.\,(4)(i), Proposition \ref{prop3.12} (2)).
\medskip

\noindent
(1)
Let $\SF\in\BDC(\Bbbk_{M_\infty}^\sub)$ and $K\in\BEC(\Bbbk_{M_\infty}^\sub)$.
Then we have 
\begin{align*}
\Hom_{\BEC(\Bbbk_{M_\infty}^\sub)}\(e_{M_\infty}^\sub\SF, K\)
&\simeq
\Hom_{\BEC(\I\Bbbk_{M_\infty})}\(I_{M_\infty}^\rmE e_{M_\infty}^\sub\SF, I_{M_\infty}^\rmE K\)\\
&\simeq
\Hom_{\BEC(\I\Bbbk_{M_\infty})}\(e_{M_\infty}I_{M_\infty}\SF, I_{M_\infty}^\rmE K\)\\
&\simeq
\Hom_{\BDC(\Bbbk_{M_\infty}^\sub)}\(\SF,  \bfR J_{M_\infty}\I\sh_{M_\infty}I_{M_\infty}^\rmE K\)\\
&\simeq
\Hom_{\BDC(\Bbbk_{M_\infty}^\sub)}\(\SF,  \sh_{M_\infty}^\sub J_{M_\infty}^\rmE I_{M_\infty}^\rmE K\)\\
&\simeq
\Hom_{\BDC(\Bbbk_{M_\infty}^\sub)}\(\SF,  \sh_{M_\infty}^\sub K\)
\end{align*}
where in the first and last isomorphisms (resp.\:second isomorphism) we used Theorem \ref{main1} (resp.\:Proposition \ref{prop3.23}),
in the third isomorphism we used the fact that $(e_{M_\infty}, \I\sh_{M_\infty})$ is an adjoint pair
and Proposition \ref{prop3.6} (1),
and the fourth isomorphism follows from the assertion (3). 
This implies that a pair $(e_{M_\infty}^\sub, \sh_{M_\infty}^\sub)$ is an adjoint pair.
\medskip

\noindent
(2)
Let  $\SF\in\BDC(\Bbbk_{M_\infty}^\sub)$.
By the assertion (1), there exists a canonical morphism $\SF\to \sh_{M_\infty}^\sub e_{M_\infty}^\sub \SF$.
Moreover we have 
\begin{align*}
\sh_{M_\infty}^\sub e_{M_\infty}^\sub \SF
\simeq
\sh_{M_\infty}^\sub J_{M_\infty}^\rmE I_{M_\infty}^\rmE e_{M_\infty}^\sub \SF
\simeq
\bfR J_{M_\infty}\I\sh_{M_\infty}e_{M_\infty}I_{M_\infty}\SF
%\simeq
%\bfR J_{M_\infty}I_{M_\infty}\SF
\simeq\SF
\end{align*}
where in the first isomorphism we used Theorem \ref{main1} (1),
in the second isomorphism we used the assertion (3) and Proposition \ref{prop3.23},
and in the last isomorphism we used the fact that $\I\sh_{M_\infty}\circ e_{M_\infty}\simeq\id$
and Proposition \ref{prop3.6} (1). 
\medskip

\noindent
(4)
Let $K\in\BEC(\Bbbk_{M_\infty}^\sub)$.
For any $\SF\in\BDC(\Bbbk_{N_\infty}^\sub)$,
we have
\begin{align*}
\Hom_{\BDC(\Bbbk_{N_\infty}^\sub)}\(\SF,
\bfR f_\ast\sh_{M_\infty}^\sub K\)
&\simeq
\Hom_{\BDC(\Bbbk_{M_\infty}^\sub)}\(
e_{M_\infty}^\sub f^{-1}\SF, K\)\\
&\simeq
\Hom_{\BDC(\Bbbk_{M_\infty}^\sub)}\(
\bfE f^{-1}e_{N_\infty}^\sub\SF, K\)\\
&\simeq
\Hom_{\BDC(\Bbbk_{M_\infty}^\sub)}\(\SF,
\sh_{N_\infty}^\sub \bfE f_\ast K\),
\end{align*}
where in the first and last isomorphisms
we used the fact that a pair $(f^{-1}, \bfR f_\ast)$ is an adjoint pair,
the assertion (1) and Proposition \ref{prop3.14} (1)(ii),
and in the second isomorphism we used Proposition \ref{prop3.24}.
This implies that there exists an isomorphism
$\bfR f_\ast\sh_{M_\infty}^\sub K\simeq\sh_{N_\infty}^\sub \bfE f_\ast K$.

Let $L\in\BEC(\Bbbk_{N_\infty}^\sub)$.
Then there exists $\SG\in\BDC(\Bbbk_{N_\infty\times\RR_\infty}^\sub)$
such that $L\simeq\Q_{N_\infty}\SG$.
We shall denote by ${f}_{\RR_\infty}\colon M_\infty\times\RR_\infty\to N_\infty\times\RR_\infty$
the morphism $f\times\id_{\RR_\infty}$ of real analytic bordered spaces.
Then we have isomorphisms in $\BDC(\Bbbk_{M_\infty}^\sub)$:
$$f^!\sh_{N_\infty}^\sub L
\simeq
f^!i_0^!\bfR_{N_\infty}^{\rmE,\sub}L
\simeq
i_0^!f_{\RR_\infty}^!\bfR_{N_\infty}^{\rmE,\sub}L
\simeq
i_0^!f_{\RR_\infty}^!\Prihomsub(
\rho_{M_\infty\times\RR_\infty\ast}(\Bbbk_{\{t\geq0\}}\oplus\Bbbk_{\{t\leq 0\}}), \SG).$$
Moreover, by Proposition \ref{prop3.8}, we have
\begin{align*}
f^!\sh_{N_\infty}^\sub L
&\simeq
i_0^!f_{\RR_\infty}^!\Prihomsub(
\rho_{M_\infty\times\RR_\infty\ast}(\Bbbk_{\{t\geq0\}}\oplus\Bbbk_{\{t\leq 0\}}), \SG)\\
&\simeq
i_0^!\Prihomsub(
f_{\RR_\infty}^{-1}\rho_{N_\infty\times\RR_\infty\ast}(\Bbbk_{\{t\geq0\}}\oplus\Bbbk_{\{t\leq 0\}}),
f_{\RR_\infty}^!\SG)\\
&\simeq
i_0^!\Prihomsub(
\rho_{M_\infty\times\RR_\infty\ast}(\Bbbk_{\{t\geq0\}}\oplus\Bbbk_{\{t\leq 0\}}), f_{\RR_\infty}^!\SG)\\
&\simeq
i_0^!\bfR_{M_\infty}^{\rmE, \sub}\Q_{M_\infty}^\sub f_{\RR_\infty}^!\SG\\
&\simeq
\sh_{M_\infty}^\sub\bfE f^!L.
\end{align*}
\end{proof}

Let us set $$\omega_{M_\infty}^{\rmE, \sub}
:= e_{M_\infty}^\sub(\rho_{M_\infty\ast}\omega_M)\in\BEC(\Bbbk_{M_\infty}^\sub)$$
where $\omega_M\in\BDC(\Bbbk_{M_\infty})\ (\,\simeq \BDC(\CC_M))$ is the dualizing complex,
see \cite[Def.\:3.1.16 (i)]{KS90} for the details.
Note that since $\omega_M\simeq j_M^{-1}\omega_{\che{M}}$,
we have $\omega_M\in\BDC_{\RR-c}(\Bbbk_{M_\infty})$.
We shall define a functor
$$\bfD_{M_\infty}^{\rmE, \sub} \colon\BEC(\Bbbk_{M_\infty}^\sub)^{\op}\to\BEC(\Bbbk_{M_\infty}^\sub),
\hspace{7pt}
K\mapsto
\Prihomsub(K, \omega_{M_\infty}^{\rmE, \sub}).
$$

\begin{lemma}\label{lem3.26}
There exist an isomorphism 
$I_{M_\infty}^\rmE\omega_{M_\infty}^{\rmE, \sub}\simeq \omega_{M_\infty}^\rmE$
in $\BEC(\I\Bbbk_{M_\infty})$
and 
an isomorphism
$J_{M_\infty}^\rmE\omega_{M_\infty}^{\rmE}\simeq \omega_{M_\infty}^{\rmE, \sub}$
in $\BEC(\Bbbk_{M_\infty}^\sub)$.
\end{lemma}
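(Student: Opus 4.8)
The plan is to prove the two isomorphisms exactly as Lemma \ref{lem3.19} was proved, since $\omega_{M_\infty}^{\rmE, \sub}$ and $\omega_{M_\infty}^\rmE$ are defined by applying the embedding functors $e_{M_\infty}^\sub$ and $e_{M_\infty}$ to compatible dualizing objects. First I would recall that $\omega_{M_\infty}^{\rmE, \sub} = e_{M_\infty}^\sub(\rho_{M_\infty\ast}\omega_M)$ and $\omega_{M_\infty}^\rmE = e_{M_\infty}(\iota_{M_\infty}\omega_M)$, where $\omega_M \in \BDC_{\RR-c}(\Bbbk_{M_\infty})$ as noted just before the statement. Then I would apply Proposition \ref{prop3.23}, which gives $I_{M_\infty}^\rmE e_{M_\infty}^\sub(\cdot) \simeq e_{M_\infty} I_{M_\infty}(\cdot)$, to obtain
\[
I_{M_\infty}^\rmE\omega_{M_\infty}^{\rmE, \sub}
\simeq
I_{M_\infty}^\rmE e_{M_\infty}^\sub(\rho_{M_\infty\ast}\omega_M)
\simeq
e_{M_\infty} I_{M_\infty}(\rho_{M_\infty\ast}\omega_M).
\]
At this point the key remaining identification is $I_{M_\infty}\rho_{M_\infty\ast}\omega_M \simeq \iota_{M_\infty}\omega_M$, which follows from Proposition \ref{prop3.7} (4)(i), namely $I_{M_\infty}\rho_{M_\infty\ast}^{\RR-c}\SL\simeq \iota_{M_\infty}\SL$ for $\SL\in\BDC_{\RR-c}(\Bbbk_{M_\infty})$, applied to $\SL = \omega_M$ (using that $\omega_M$ is $\RR$-constructible, so $\rho_{M_\infty\ast}\omega_M \simeq \rho_{M_\infty\ast}^{\RR-c}\omega_M$). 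Substituting this in yields $I_{M_\infty}^\rmE\omega_{M_\infty}^{\rmE, \sub} \simeq e_{M_\infty}(\iota_{M_\infty}\omega_M) = \omega_{M_\infty}^\rmE$, which is the first isomorphism.

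For the second isomorphism, I would deduce it formally from the first together with Theorem \ref{main1} (1): since $(I_{M_\infty}^\rmE, J_{M_\infty}^\rmE)$ is an adjoint pair with $\id \simto J_{M_\infty}^\rmE\circ I_{M_\infty}^\rmE$, applying $J_{M_\infty}^\rmE$ to the first isomorphism gives
\[
J_{M_\infty}^\rmE\omega_{M_\infty}^{\rmE}
\simeq
J_{M_\infty}^\rmE I_{M_\infty}^\rmE\omega_{M_\infty}^{\rmE, \sub}
\simeq
\omega_{M_\infty}^{\rmE, \sub}.
\]
This is exactly the pattern used at the end of the proof of Lemma \ref{lem3.19} ("The second assertion follows from the first assertion and Theorem \ref{main1} (1)"), so the same one-line argument applies here.

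I do not expect any serious obstacle: the statement is a direct analogue of Lemma \ref{lem3.19} with $\Bbbk_{M_\infty}^{\rmE}$ replaced by $\omega_{M_\infty}^{\rmE}$, and all the needed compatibilities ($e^\sub$ vs $e$ under $I^\rmE$, and $\rho_\ast^{\RR-c}$ vs $\iota$ under $I$) are already established in Propositions \ref{prop3.23} and \ref{prop3.7}. The only point requiring a moment of care is checking that $\omega_M$ lies in $\BDC_{\RR-c}(\Bbbk_{M_\infty})$ so that Proposition \ref{prop3.7} (4)(i) is applicable — but this is precisely the remark $\omega_M \simeq j_M^{-1}\omega_{\che{M}}$ stated immediately before the lemma, hence $j_{M!}\omega_M$ is $\RR$-constructible on $\che{M}$. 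So the proof is essentially a two-step formal manipulation invoking earlier results, with no new computation.
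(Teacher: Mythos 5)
Your proposal is correct and follows essentially the same route as the paper's proof: both reduce the first isomorphism to the identification $\iota_{M_\infty}\omega_M\simeq I_{M_\infty}\rho_{M_\infty\ast}\omega_M$ (valid since $\omega_M$ is $\RR$-constructible, i.e.\ Proposition \ref{prop3.7} (4)(i)) combined with the compatibility $I_{M_\infty}^\rmE e_{M_\infty}^\sub\simeq e_{M_\infty}I_{M_\infty}$ from Proposition \ref{prop3.23}, and both deduce the second isomorphism from the first via Theorem \ref{main1} (1). The only difference is the direction in which the chain of isomorphisms is written, which is immaterial.
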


\begin{proof}
Since $\omega_M$ is $\RR$-constructible,
there exists an isomorphism
$\iota_{M_\infty}\omega_M\simeq I_{M_\infty}\rho_{M_\infty\ast}\omega_M$
in $\BDC(\I\Bbbk_{M_\infty})$.
Hence, we have
\begin{align*}
\omega_{M_\infty}^\rmE
:=
e_{M_\infty}(\iota_{M_\infty}\omega_M)
\simeq
e_{M_\infty}(I_{M_\infty}\rho_{M_\infty\ast}\omega_M)
\simeq
 I_{M_\infty}^\rmE e_{M_\infty}^\sub(\rho_{M_\infty\ast}\omega_M)
\simeq
 I_{M_\infty}^\rmE\omega_{M_\infty}^{\rmE, \sub},
\end{align*}
where in the third isomorphism we used Proposition \ref{prop3.23}.

The second assertion follows from the first assertion and Theorem \ref{main1} (1).
\end{proof}

\begin{proposition}\label{prop3.27}
Let $f\colon M_\infty\to N_\infty$ be a morphism of real analytic bordered spaces.
\begin{itemize}
\item[\rm (1)]
For any $K\in\BEC(\Bbbk_{M_\infty}^\sub)$ and any $L\in\BEC(\Bbbk_{N_\infty}^\sub)$,
one has
\begin{align*}
\bfE f^!\bfD_{N_\infty}^{\rmE, \sub}L
&\simeq
\bfD_{M_\infty}^{\rmE, \sub}\bfE f^{-1}L,\\
\bfE f_{\ast}\bfD_{M_\infty}^{\rmE, \sub}K
&\simeq
\bfD_{N_\infty}^{\rmE, \sub}\bfE f_{!!}K,\\
J_{M_\infty}^\rmE\bfD_{M_\infty}^\rmE I_{M_\infty}^\rmE K
&\simeq
\bfD_{M_\infty}^{\rmE, \sub}K.
\end{align*}

\item[\rm (2)]
For any $K\in\BEC_{\RR-c}(\Bbbk_{M_\infty}^\sub)$,
we have $\bfD_{M_\infty}^{\rmE, \sub}K\in\BEC_{\RR-c}(\Bbbk_{M_\infty}^\sub)$ and 
$\bfD_{M_\infty}^{\rmE, \sub}\bfD_{M_\infty}^{\rmE, \sub}K
\simeq K$.
In particular, there exists an equivalence of categories:
\[\bfD_{M_\infty}^{\rmE, \sub}\colon
\BEC_{\RR-c}(\Bbbk_{M_\infty}^\sub)^\op\simto\BEC_{\RR-c}(\Bbbk_{M_\infty}^\sub).\]
\end{itemize}
\end{proposition}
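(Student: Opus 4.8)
The plan is to transport both parts to the already–established facts for enhanced ind-sheaves through the equivalences $I_{M_\infty}^\rmE$ and $J_{M_\infty}^\rmE$ of Theorems \ref{main1} and \ref{main2}, the crucial preliminary step being the compatibility $\bfE f^!\omega_{N_\infty}^{\rmE,\sub}\simeq\omega_{M_\infty}^{\rmE,\sub}$. First I would check this compatibility: by Proposition \ref{prop3.24}, $\bfE f^!\omega_{N_\infty}^{\rmE,\sub}=\bfE f^!e_{N_\infty}^\sub(\rho_{N_\infty\ast}\omega_N)\simeq e_{M_\infty}^\sub\big(f^!\rho_{N_\infty\ast}\omega_N\big)$, and since $\rho_\ast$ commutes with $f^!$ (Proposition \ref{prop3.4} (6)) and $f^!\omega_N\simeq\omega_M$ for the dualizing complexes, one gets $f^!\rho_{N_\infty\ast}\omega_N\simeq\rho_{M_\infty\ast}\omega_M$, hence $\bfE f^!\omega_{N_\infty}^{\rmE,\sub}\simeq\omega_{M_\infty}^{\rmE,\sub}$.

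Granting this, part (1) is formal. For the first isomorphism I would use the third identity in Proposition \ref{prop3.14} (2): $\bfE f^!\bfD_{N_\infty}^{\rmE,\sub}L=\bfE f^!\Prihomsub(L,\omega_{N_\infty}^{\rmE,\sub})\simeq\Prihomsub(\bfE f^{-1}L,\bfE f^!\omega_{N_\infty}^{\rmE,\sub})\simeq\Prihomsub(\bfE f^{-1}L,\omega_{M_\infty}^{\rmE,\sub})=\bfD_{M_\infty}^{\rmE,\sub}\bfE f^{-1}L$. For the second, I would use Proposition \ref{prop3.14} (1)(iii): $\bfD_{N_\infty}^{\rmE,\sub}\bfE f_{!!}K=\Prihomsub(\bfE f_{!!}K,\omega_{N_\infty}^{\rmE,\sub})\simeq\bfE f_\ast\Prihomsub(K,\bfE f^!\omega_{N_\infty}^{\rmE,\sub})\simeq\bfE f_\ast\bfD_{M_\infty}^{\rmE,\sub}K$. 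For the third, Lemma \ref{lem3.26} gives $\omega_{M_\infty}^\rmE\simeq I_{M_\infty}^\rmE\omega_{M_\infty}^{\rmE,\sub}$, so $\bfD_{M_\infty}^\rmE I_{M_\infty}^\rmE K\simeq\Prihom(I_{M_\infty}^\rmE K,I_{M_\infty}^\rmE\omega_{M_\infty}^{\rmE,\sub})$; applying $J_{M_\infty}^\rmE$, then Proposition \ref{prop3.18} (1)(ii), and finally $J_{M_\infty}^\rmE\circ I_{M_\infty}^\rmE\simeq\id$ (Theorem \ref{main1} (1)), I obtain $J_{M_\infty}^\rmE\bfD_{M_\infty}^\rmE I_{M_\infty}^\rmE K\simeq\Prihomsub(K,J_{M_\infty}^\rmE I_{M_\infty}^\rmE\omega_{M_\infty}^{\rmE,\sub})\simeq\Prihomsub(K,\omega_{M_\infty}^{\rmE,\sub})=\bfD_{M_\infty}^{\rmE,\sub}K$.

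For part (2) I would invoke the duality theorem for $\RR$-constructible enhanced ind-sheaves of \cite{DK16-2} (see also \cite{DK16}): the functor $\bfD_{M_\infty}^\rmE$ preserves $\BEC_{\RR-c}(\I\Bbbk_{M_\infty})$ and restricts there to an involution. Given $K\in\BEC_{\RR-c}(\Bbbk_{M_\infty}^\sub)$, Theorem \ref{main2} gives $I_{M_\infty}^\rmE K\in\BEC_{\RR-c}(\I\Bbbk_{M_\infty})$, hence $\bfD_{M_\infty}^\rmE I_{M_\infty}^\rmE K\in\BEC_{\RR-c}(\I\Bbbk_{M_\infty})$; using (1)(iii) and Theorem \ref{main2} once more, $\bfD_{M_\infty}^{\rmE,\sub}K\simeq J_{M_\infty}^\rmE\bfD_{M_\infty}^\rmE I_{M_\infty}^\rmE K\in\BEC_{\RR-c}(\Bbbk_{M_\infty}^\sub)$. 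Then applying (1)(iii) a second time, and using $I_{M_\infty}^\rmE\circ J_{M_\infty}^\rmE\simeq\id$ on $\BEC_{\RR-c}(\I\Bbbk_{M_\infty})$ together with $\bfD_{M_\infty}^\rmE\circ\bfD_{M_\infty}^\rmE\simeq\id$ there, I get $\bfD_{M_\infty}^{\rmE,\sub}\bfD_{M_\infty}^{\rmE,\sub}K\simeq J_{M_\infty}^\rmE\bfD_{M_\infty}^\rmE I_{M_\infty}^\rmE J_{M_\infty}^\rmE\bfD_{M_\infty}^\rmE I_{M_\infty}^\rmE K\simeq J_{M_\infty}^\rmE\bfD_{M_\infty}^\rmE\bfD_{M_\infty}^\rmE I_{M_\infty}^\rmE K\simeq J_{M_\infty}^\rmE I_{M_\infty}^\rmE K\simeq K$. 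The displayed equivalence of categories then follows at once, $\bfD_{M_\infty}^{\rmE,\sub}$ being its own quasi-inverse up to passage to opposite categories.

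The only inputs that are not pure bookkeeping are the identification $f^!\omega_{N_\infty}\simeq\omega_{M_\infty}$ in the bordered setting (which I expect to be routine, reducing to the corresponding fact on $\che M,\che N$ via $\omega_{M_\infty}\simeq j_M^{-1}\omega_{\che M}$) and, above all, the duality statement for $\RR$-constructible enhanced ind-sheaves quoted from \cite{DK16-2}; this last point is the step I would regard as the main obstacle, since all of part (2) rests on it, whereas once it is granted the rest is a direct application of Propositions \ref{prop3.14}, \ref{prop3.18}, \ref{prop3.24} and Lemma \ref{lem3.26}.
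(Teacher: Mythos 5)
Your proposal is correct and follows essentially the same route as the paper: establish $\bfE f^!\omega_{N_\infty}^{\rmE,\sub}\simeq\omega_{M_\infty}^{\rmE,\sub}$ via Proposition \ref{prop3.24}, deduce (1) from Propositions \ref{prop3.14} and \ref{prop3.18} together with Lemma \ref{lem3.26}, and reduce (2) to the duality for $\RR$-constructible enhanced ind-sheaves of \cite{DK16-2} through the equivalences of Theorems \ref{main1} and \ref{main2}. The only cosmetic difference is that you spell out the intermediate step $f^!\rho_{N_\infty\ast}\omega_N\simeq\rho_{M_\infty\ast}\omega_M$, which the paper passes over silently.
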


\begin{proof}
(1)
First, let us prove that $\bfE f^!\omega_{N_\infty}^{\rmE, \sub}\simeq
\omega_{M_\infty}^{\rmE, \sub}$.
By Proposition \ref{prop3.24} and the fact that $f^!\omega_N\simeq\omega_M$,
we have 
$$
\bfE f^!\omega_{N_\infty}^{\rmE, \sub}
\simeq
\bfE f^!e_{N_\infty}^{\rmE, \sub}(\rho_{N_\infty\ast}\omega_N)
\simeq
e_{M_\infty}^{\sub}(\rho_{M_\infty\ast}f^!\omega_N)
\simeq
e_{M_\infty}^{\sub}(\rho_{M_\infty\ast}\omega_M)
\simeq
\omega_{M_\infty}^{\rmE, \sub}.$$

Let $K\in\BEC(\Bbbk_{M_\infty}^\sub)$ and $L\in\BEC(\Bbbk_{N_\infty}^\sub)$.
By Proposition \ref{prop3.14} (2), we have
\begin{align*}
\bfE f^!\bfD_{N_\infty}^{\rmE, \sub}L
&\simeq
\bfE f^!\Prihomsub(L, \omega_{N_\infty}^{\rmE, \sub})\\
&\simeq
\Prihomsub(\bfE f^{-1}L, \bfE f^!\omega_{N_\infty}^{\rmE, \sub})\\
&\simeq
\Prihomsub(\bfE f^{-1}L, \omega_{M_\infty}^{\rmE, \sub})\\
&\simeq
\bfD_{M_\infty}^{\rmE, \sub}\bfE f^{-1}L.
\end{align*}
By Proposition \ref{prop3.14} (1)(iii),
we have
\begin{align*}
\bfE f_\ast\bfD_{M_\infty}^{\rmE, \sub}K
&\simeq
\bfE f_\ast\Prihomsub(K, \omega_{M_\infty}^{\rmE, \sub})\\
&\simeq
\bfE f_\ast\Prihomsub(K, \bfE f^!\omega_{N_\infty}^{\rmE, \sub})\\
&\simeq
\Prihomsub(\bfE f_{!!}K, \omega_{N_\infty}^{\rmE, \sub})\\
&\simeq
\bfD_{N_\infty}^{\rmE, \sub}\bfE f_{!!}K.
\end{align*}
By Proposition \ref{prop3.18} (1)(ii) and Lemma \ref{lem3.26},
we have
\begin{align*}
J_{M_\infty}^\rmE\bfD_{M_\infty}^\rmE I_{M_\infty}^\rmE K
&\simeq
J_{M_\infty}^\rmE\Prihom(I_{M_\infty}^\rmE K, \omega_{M_\infty}^{\rmE})\\
&\simeq
\Prihomsub(K, J_{M_\infty}^\rmE \omega_{M_\infty}^{\rmE})\\
&\simeq
\Prihomsub(K,\omega_{M_\infty}^{\rmE, \sub})\\
&\simeq
\bfD_{M_\infty}^{\rmE, \sub}K.
\end{align*}

\noindent
(2)
Let $K\in\BEC_{\RR-c}(\Bbbk_{M_\infty}^\sub)$.
By Theorem \ref{main2} we have $I_{M_\infty}^\rmE K\in\BEC_{\RR-c}(\I\Bbbk_{M_\infty})$.
Since $\bfD_{M_\infty}^\rmE\colon \BEC_{\RR-c}(\I\Bbbk_{M_\infty})^\op\to\BEC_{\RR-c}(\I\Bbbk_{M_\infty})$
(see, \cite[Prop.\:3.3.3 (ii)]{DK16-2}),
we have $\bfD_{M_\infty}^\rmE I_{M_\infty}^\rmE K\in\BEC_{\RR-c}(\I\Bbbk_{M_\infty})$.
Hence, by Theorem \ref{main2}, we have 
$J_{M_\infty}^\rmE\bfD_{M_\infty}^\rmE I_{M_\infty}^\rmE K\in\BEC_{\RR-c}(\Bbbk_{M_\infty}^\sub)$.
By the assertion (1), we have 
$J_{M_\infty}^\rmE\bfD_{M_\infty}^\rmE I_{M_\infty}^\rmE K
\simeq
\bfD_{M_\infty}^{\rmE, \sub}K$,
and hence $\bfD_{M_\infty}^{\rmE, \sub}K\in\BEC_{\RR-c}(\Bbbk_{M_\infty}^\sub)$.

Moreover, since $I_{M_\infty}^\rmE J_{M_\infty}^\rmE\bfD_{M_\infty}^\rmE I_{M_\infty}^\rmE K
\simeq
\bfD_{M_\infty}^\rmE I_{M_\infty}^\rmE K$
we have
\begin{align*}
\bfD_{M_\infty}^{\rmE, \sub}\bfD_{M_\infty}^{\rmE, \sub}K
&\simeq
J_{M_\infty}^\rmE\bfD_{M_\infty}^\rmE I_{M_\infty}^\rmE
J_{M_\infty}^\rmE\bfD_{M_\infty}^\rmE I_{M_\infty}^\rmE K\\
&\simeq
J_{M_\infty}^\rmE\bfD_{M_\infty}^\rmE \bfD_{M_\infty}^\rmE I_{M_\infty}^\rmE K\\
&\simeq
J_{M_\infty}^\rmE I_{M_\infty}^\rmE K\\
&\simeq
K
\end{align*}
where the third (resp.\,last) isomorphism follows from \cite[Prop.\:3.3.3 (ii)]{DK16-2})
(resp.\:Theorem \ref{main1} (1)).
\end{proof}

Several operations preserve the $\RR$-constructability as below.
Let us recall that 
a morphism $f\colon (M, \che{M})\to (N, \che{N})$ of real analytic bordered spaces
is called semi-proper if the second projection 
$\che{M}\times\che{N}\to\che{N}$ is proper on
the closure $\var{\Gamma}_f$ of the graph $\Gamma_f$ of $f$ 
in $\che{M}\times\che{N}$. 
\begin{proposition}
Let $f\colon M_\infty\to N_\infty$ be a morphism of real analytic bordered spaces
associated with a morphism $\che{f}\colon\che{M}\to\che{N}$ of real analytic manifolds.
The functors below are well defined:
\begin{itemize}
\item[\rm (1)]
$e_{M_\infty}^\sub\rho_{M_\infty\ast}\colon
\BDC_{\RR-c}(\Bbbk_{M_\infty})\to\BEC_{\RR-c}(\I\Bbbk_{M_\infty})$,

\item[\rm (2)]
$\bfE f_{\ast}\colon\BEC_{\RR-c}(\Bbbk_{M_\infty}^\sub)\to\BEC_{\RR-c}(\Bbbk_{N_\infty}^\sub)$,
if $f$ is semi-proper,

\item[\rm (3)]
$\bfE f^{-1}\colon\BEC_{\RR-c}(\Bbbk_{N_\infty}^\sub)\to\BEC_{\RR-c}(\Bbbk_{M_\infty}^\sub)$,

\item[\rm (4)]
$\bfE f_{!!}\colon\BEC_{\RR-c}(\Bbbk_{M_\infty}^\sub)\to\BEC_{\RR-c}(\Bbbk_{N_\infty}^\sub)$,
if $f$ is semi-proper,

\item[\rm (5)]
$\bfE f^{!}\colon\BEC_{\RR-c}(\Bbbk_{N_\infty}^\sub)\to\BEC_{\RR-c}(\Bbbk_{M_\infty}^\sub)$.
\end{itemize}

\end{proposition}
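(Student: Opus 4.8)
The plan is to reduce every assertion to the corresponding $\RR$-constructibility statement for enhanced ind-sheaves, namely \cite[Prop.\:3.3.3]{DK16-2}, by transporting along the equivalence $I_{M_\infty}^\rmE\colon\BEC_{\RR-c}(\Bbbk_{M_\infty}^\sub)\simto\BEC_{\RR-c}(\I\Bbbk_{M_\infty})$ of Theorem \ref{main2} and using the compatibility of $I_{M_\infty}^\rmE$, $J_{M_\infty}^\rmE$ and $e_{M_\infty}^\sub$ with the operations established in Propositions \ref{prop3.18}, \ref{prop3.23} and \ref{prop3.24}. First I would isolate a reflection principle: if $K\in\BEC(\Bbbk_{M_\infty}^\sub)$ satisfies $I_{M_\infty}^\rmE K\in\BEC_{\RR-c}(\I\Bbbk_{M_\infty})$, then $K\in\BEC_{\RR-c}(\Bbbk_{M_\infty}^\sub)$. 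This is immediate from $K\simeq J_{M_\infty}^\rmE I_{M_\infty}^\rmE K$ (Theorem \ref{main1} (1)) together with the fact that $J_{M_\infty}^\rmE$ restricted to $\BEC_{\RR-c}(\I\Bbbk_{M_\infty})$ coincides with $\lambda_{M_\infty}^\rmE$, which lands in $\BEC_{\RR-c}(\Bbbk_{M_\infty}^\sub)$ by Theorem \ref{main2}.

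With this in hand, items (3), (4), (5) are handled uniformly. For $L\in\BEC_{\RR-c}(\Bbbk_{N_\infty}^\sub)$ (resp.\ $K\in\BEC_{\RR-c}(\Bbbk_{M_\infty}^\sub)$) we have $I_{N_\infty}^\rmE L$ (resp.\ $I_{M_\infty}^\rmE K$) in $\BEC_{\RR-c}(\I\Bbbk_{N_\infty})$ (resp.\ $\BEC_{\RR-c}(\I\Bbbk_{M_\infty})$) by Theorem \ref{main2}; applying $\bfE f^{-1}$, $\bfE f^!$ or $\bfE f_{!!}$ and invoking \cite[Prop.\:3.3.3]{DK16-2} (for $\bfE f_{!!}$ under the semi-properness assumption), the result is again $\RR$-constructible as an enhanced ind-sheaf; by Proposition \ref{prop3.18} (2)(i), (2)(iii), (2)(ii) respectively this object is isomorphic to $I^\rmE$ applied to $\bfE f^{-1}L$, $\bfE f^!L$ or $\bfE f_{!!}K$, and the reflection principle concludes.

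Item (2) is the one place where Proposition \ref{prop3.18} phrases the relevant commutation for $J_{M_\infty}^\rmE$ rather than $I_{M_\infty}^\rmE$, so for semi-proper $f$ and $K\in\BEC_{\RR-c}(\Bbbk_{M_\infty}^\sub)$ I would set $L:=\bfE f_\ast I_{M_\infty}^\rmE K$, which lies in $\BEC_{\RR-c}(\I\Bbbk_{N_\infty})$ by \cite[Prop.\:3.3.3]{DK16-2}; then Proposition \ref{prop3.18} (3)(i) and Theorem \ref{main1} (1) give $\lambda_{N_\infty}^\rmE L\simeq J_{N_\infty}^\rmE\bfE f_\ast I_{M_\infty}^\rmE K\simeq\bfE f_\ast K$, so Theorem \ref{main2} yields $\bfE f_\ast K\in\BEC_{\RR-c}(\Bbbk_{N_\infty}^\sub)$. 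For item (1), given $\SL\in\BDC_{\RR-c}(\Bbbk_{M_\infty})$, Propositions \ref{prop3.23} and \ref{prop3.7} (4)(i) give $I_{M_\infty}^\rmE\bigl(e_{M_\infty}^\sub\rho_{M_\infty\ast}\SL\bigr)\simeq e_{M_\infty}I_{M_\infty}\rho_{M_\infty\ast}\SL\simeq e_{M_\infty}\iota_{M_\infty}\SL$, which is $\RR$-constructible by \cite[\S3.3]{DK16-2}; the reflection principle then shows $e_{M_\infty}^\sub\rho_{M_\infty\ast}\SL\in\BEC_{\RR-c}(\Bbbk_{M_\infty}^\sub)$ (equivalently, its image under $I_{M_\infty}^\rmE$ lies in $\BEC_{\RR-c}(\I\Bbbk_{M_\infty})$), giving the stated mapping property.

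I do not expect a serious obstacle: the whole argument is a formal consequence of Theorems \ref{main1} and \ref{main2}, the compatibilities in Proposition \ref{prop3.18}, and the quoted enhanced ind-sheaf statement. The only steps that call for attention are bookkeeping the semi-properness hypothesis so that it is invoked exactly where \cite[Prop.\:3.3.3]{DK16-2} requires it for $\bfE f_\ast$ and $\bfE f_{!!}$, and — if one wants a self-contained write-up — checking that the parts of \cite[Prop.\:3.3.3]{DK16-2} we use are formulated for enhanced ind-sheaves on (real analytic) bordered spaces rather than only on manifolds, which is indeed the case.
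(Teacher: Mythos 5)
Your argument is correct, and for items (2) and (3) it is essentially the paper's proof: transport along $I^{\rmE}$, apply \cite[Prop.\:3.3.3 (iv), (iii)]{DK16-2}, and return via $\lambda^{\rmE}$ (your ``reflection principle'' is just $J^{\rmE}_{M_\infty}\circ I^{\rmE}_{M_\infty}\simeq\id$ from Theorem \ref{main1} (1) combined with Theorem \ref{main2}, which is how the paper argues as well). The routes differ on the remaining items. For (1) the paper never passes to ind-sheaves: it verifies Definition \ref{def3.21} directly, showing for each relatively compact subanalytic open $U$ that $\bfE i_{U_\infty}^{-1}\(e_{M_\infty}^\sub\rho_{M_\infty\ast}\SF\)$ has the required form with $\SF^U:=\Bbbk_{\{t=0\}}\otimes\pi^{-1}\SF|_U$, using Propositions \ref{prop3.4} (6), \ref{prop3.14} (4)(iii) and \ref{prop3.24}; your version instead quotes the closure property of $e_{M_\infty}\iota_{M_\infty}$ from \cite[\S 3.3]{DK16-2} and reflects back --- shorter, but it leans on the external reference where the paper is self-contained. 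For (4) and (5) the paper reduces to (2) and (3) by Verdier duality, writing $\bfE f_{!!}K\simeq\bfD_{N_\infty}^{\rmE,\sub}\bfE f_{\ast}\bfD_{M_\infty}^{\rmE,\sub}K$ and $\bfE f^{!}L\simeq\bfD_{M_\infty}^{\rmE,\sub}\bfE f^{-1}\bfD_{N_\infty}^{\rmE,\sub}L$ via Proposition \ref{prop3.27}, whereas you use the commutation isomorphisms of Proposition \ref{prop3.18} (2)(ii), (iii) together with the $\bfE f_{!!}$ and $\bfE f^{!}$ clauses of \cite[Prop.\:3.3.3]{DK16-2}. Both are valid; your treatment is more uniform across the five items, while the paper's duality detour confines the appeal to \cite{DK16-2} to the clauses it has already cited and exercises Proposition \ref{prop3.27} (2). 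Your closing caveats --- tracking exactly where semi-properness is needed, and checking that \cite[Prop.\:3.3.3]{DK16-2} is formulated on bordered spaces --- are the right ones, and both check out.
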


\begin{proof}
(1)
Let $\SF\in\BDC_{\RR-c}(\Bbbk_{M_\infty})$ and 
$U$ be an open subset of $M$ which is subanalytic and relatively compact in $\che{M}$.
We set $\SF^U := \Bbbk_{\{t = 0\}}\otimes \pi^{-1}\SF|_U$.
By Propositions \ref{prop3.4} (6) and \ref{prop3.24},
we have 
$$\bfE i_{U_\infty}^{-1}\(e_{M_\infty}^\sub\rho_{M_\infty\ast}\SF\)
\simeq
e_{U_\infty}^\sub\rho_{U_\infty\ast}(\SF|_U)
\simeq\Bbbk_{U_\infty}^{\rmE, \sub}\otimes
\pi^{-1}\rho_{U_\infty\ast}(\SF|_U).$$
Since 
$\Bbbk_{U_\infty}^{\rmE, \sub}\simeq
\Bbbk_{U_\infty}^{\rmE, \sub}\Potimes\rho_{U_\infty\times\RR_\infty\ast}\Bbbk_{\{t=0\}}$,
there exist isomorphisms
\begin{align*}
\Bbbk_{U_\infty}^{\rmE, \sub}\otimes
\pi^{-1}\rho_{U_\infty\ast}(\SF|_U)
&\simeq
\(\Bbbk_{U_\infty}^{\rmE, \sub}\Potimes\rho_{U_\infty\times\RR_\infty\ast}\Bbbk_{\{t=0\}}\)
\otimes \rho_{U_\infty\times\RR_\infty\ast}\pi^{-1}\SF|_U\\
&\simeq
\Bbbk_{U_\infty}^{\rmE, \sub}\Potimes
\rho_{U_\infty\times\RR_\infty\ast}\(\Bbbk_{\{t = 0\}}\otimes \pi^{-1}\SF|_U\)\\
&\simeq
\Bbbk_{U_\infty}^{\rmE, \sub}\Potimes
\rho_{U_\infty\times\RR_\infty}(\SF^U),
\end{align*}
where in the first isomorphism we used Proposition \ref{prop3.14} (4)(iii).
Since $\SF$ is $\R$-constructible, 
we have $\SF^U\in\BDC_{\RR-c}(\Bbbk_{U_\infty})$.
This implies that $e_{M_\infty}^{\sub}\rho_{M_\infty\ast}\SF$ is $\RR$-constructible.
\medskip

\noindent
(2)
Let $K\in\BEC_{\RR-c}(\Bbbk_{M_\infty}^\sub)$.
By Theorem \ref{main1} and Proposition \ref{prop3.18} (3)(i), we have isomorphisms 
\begin{align*}
\bfE f_{\ast}K
\simeq
\bfE f_{\ast}\lambda_{M_\infty}^\rmE I_{M_\infty}^\rmE K
\simeq
\lambda_{N_\infty}^\rmE\bfE f_{\ast}I_{M_\infty}^\rmE K.
\end{align*}
Since $K$ is $\RR$-constructible,
we have $I_{M_\infty}^\rmE K\in\BEC_{\RR-c}(\I\Bbbk_{M_\infty})$
by Theorem \ref{main2},
and hence, by \cite[Prop.\:3.3.3 (iv)]{DK16-2} we have 
$\bfE f_{\ast}I_{M_\infty}^\rmE K\in\BEC_{\RR-c}(\I\Bbbk_{N_\infty})$.
This implies that $\bfE f_{\ast}K\simeq
\lambda_{N_\infty}^\rmE\bfE f_{\ast}I_{M_\infty}^\rmE K$ is $\RR$-constructible by Theorem \ref{main2}.
\medskip

\noindent
(3)
Let $L\in\BEC_{\RR-c}(\Bbbk_{N_\infty}^\sub)$.
By Theorem \ref{main1} and Proposition \ref{prop3.18} (4)(i), we have isomorphisms 
\begin{align*}
\bfE f^{-1}K
\simeq
\bfE f^{-1}\lambda_{N_\infty}^\rmE I_{N_\infty}^\rmE L
\simeq
\lambda_{M_\infty}^\rmE\bfE f^{-1}I_{N_\infty}^\rmE L.
\end{align*}
Since $L$ is $\RR$-constructible,
we have $I_{N_\infty}^\rmE L\in\BEC_{\RR-c}(\I\Bbbk_{N_\infty})$
by Theorem \ref{main2},
and hence, by \cite[Prop.\:3.3.3 (iii)]{DK16-2} we have 
$\bfE f^{-1}I_{N_\infty}^\rmE L\in\BEC_{\RR-c}(\I\Bbbk_{M_\infty})$.
This implies that $\bfE f^{-1}L\simeq
\lambda_{M_\infty}^\rmE\bfE f^{-1}I_{N_\infty}^\rmE L$ is $\RR$-constructible by Theorem \ref{main2}.
\medskip

\noindent
(4)
Let $K\in\BEC_{\RR-c}(\Bbbk_{M_\infty}^\sub)$.
Then we have $$\bfE f_{!!}K\simeq \bfE f_{!!} \bfD_{M_\infty}^{\rmE, \sub}\bfD_{M_\infty}^{\rmE, \sub}K
\simeq
 \bfD_{N_\infty}^{\rmE, \sub}\bfE f_{\ast}\bfD_{M_\infty}^{\rmE, \sub}K$$
by Proposition \ref{prop3.27}.
This implies that $\bfE f_{!!}K$ is $\RR$-constructible by the assertion (2) and Proposition \ref{prop3.27} (2).
\medskip

\noindent
(5)
Let $L\in\BEC_{\RR-c}(\Bbbk_{N_\infty}^\sub)$.
Then we have $$\bfE f^{!}K\simeq \bfE f^{!} \bfD_{N_\infty}^{\rmE, \sub}\bfD_{N_\infty}^{\rmE, \sub}L
\simeq
 \bfD_{M_\infty}^{\rmE, \sub}\bfE f^{-1}\bfD_{N_\infty}^{\rmE, \sub}L$$
by Proposition \ref{prop3.27}.
This implies that $\bfE f^{!}K$ is $\RR$-constructible by the assertion (3) and Proposition \ref{prop3.27} (2).
\end{proof}

Moreover, convolution functors preserve the $\RR$-constructability as below.
\begin{proposition}
\begin{itemize}
\item[\rm (1)]
The functors 
\begin{align*}
(\cdot)\Potimes(\cdot)&\colon
\BEC_{\RR-c}(\Bbbk_{M_\infty}^\sub)\times \BEC_{\RR-c}(\Bbbk_{M_\infty}^\sub)
\to\BEC_{\RR-c}(\Bbbk_{M_\infty}^\sub),\\
\Prihomsub(\cdot, \cdot)&\colon
\BEC_{\RR-c}(\Bbbk_{M_\infty}^\sub)^\op\times \BEC_{\RR-c}(\Bbbk_{M_\infty}^\sub)
\to\BEC_{\RR-c}(\Bbbk_{M_\infty}^\sub)
\end{align*}
are well defined.

\item[\rm (2)]
For any $K, L\in\BEC_{\RR-c}(\Bbbk_{M_\infty}^\sub)$,
one has
\begin{itemize}
\item[\rm (i)]
$\bfD_{M_\infty}^{\rmE, \sub}\(K\Potimes L\)
\simeq
\Prihomsub(K, \bfD_{M_\infty}^{\rmE,\sub} L),$

\item[\rm (ii)]
$\bfD_{M_\infty}^{\rmE, \sub}\Prihomsub(K, L)
\simeq
K\Potimes \bfD_{M_\infty}^{\rmE, \sub} L,$

\item[\rm (iii)]
$\Prihomsub(K, L)
\simeq
\Prihomsub\(\bfD_{M_\infty}^{\rmE,\sub}L, \bfD_{M_\infty}^{\rmE,\sub} K\),$

\item[\rm (iv)]
$\rihom^{\rmE, \sub}(K, L)
\simeq
\rihom^{\rmE, \sub}\(\bfD_{M_\infty}^{\rmE,\sub}L, \bfD_{M_\infty}^{\rmE,\sub}K\),$

\item[\rm (v)]
$\rhom^{\rmE, \sub}(K, L)
\simeq
\rhom^{\rmE, \sub}\(\bfD_{M_\infty}^{\rmE,\sub}L, \bfD_{M_\infty}^{\rmE,\sub}K\).$
\end{itemize}
\end{itemize}
\end{proposition}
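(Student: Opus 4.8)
The plan is to deduce everything from three ingredients already available in the paper: the tensor--hom adjunction for enhanced subanalytic sheaves (Proposition \ref{prop3.14} (1)(i)), the Verdier duality formalism on $\BEC_{\RR-c}(\Bbbk_{M_\infty}^\sub)$ (Proposition \ref{prop3.27}, in particular the involutivity $\bfD_{M_\infty}^{\rmE,\sub}\circ\bfD_{M_\infty}^{\rmE,\sub}\simeq\id$ there), and the commutativity of $\Potimes$; the only input not internal to this paper will be the stability of $\RR$-constructibility under $\Potimes$ for enhanced \emph{ind}-sheaves, which I transport through the equivalences of Theorems \ref{main1} and \ref{main2}. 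I would first record the identity (2)(i), which is purely formal and holds for arbitrary $K,L\in\BEC(\Bbbk_{M_\infty}^\sub)$: by definition $\bfD_{M_\infty}^{\rmE,\sub}(K\Potimes L)=\Prihomsub(K\Potimes L,\omega_{M_\infty}^{\rmE,\sub})$, and Proposition \ref{prop3.14} (1)(i) rewrites this as $\Prihomsub(K,\Prihomsub(L,\omega_{M_\infty}^{\rmE,\sub}))=\Prihomsub(K,\bfD_{M_\infty}^{\rmE,\sub}L)$.

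Next I would treat the $\Potimes$-half of (1). Given $K,L\in\BEC_{\RR-c}(\Bbbk_{M_\infty}^\sub)$, Theorem \ref{main2} gives $I_{M_\infty}^\rmE K,I_{M_\infty}^\rmE L\in\BEC_{\RR-c}(\I\Bbbk_{M_\infty})$; the stability of $\RR$-constructible enhanced ind-sheaves under convolution (\cite[Prop.\:3.3.3]{DK16-2}) then gives $I_{M_\infty}^\rmE K\Potimes I_{M_\infty}^\rmE L\in\BEC_{\RR-c}(\I\Bbbk_{M_\infty})$, and by Proposition \ref{prop3.18} (1)(i) this object is $I_{M_\infty}^\rmE(K\Potimes L)$. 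Since $I_{M_\infty}^\rmE$ is an equivalence $\BEC(\Bbbk_{M_\infty}^\sub)\simto\BEC_{\I\RR-c}(\I\Bbbk_{M_\infty})$ (Theorem \ref{main1}) restricting to an equivalence between the $\RR$-constructible subcategories (Theorem \ref{main2}), it follows that $K\Potimes L\simeq\lambda_{M_\infty}^\rmE I_{M_\infty}^\rmE(K\Potimes L)$ lies in $\BEC_{\RR-c}(\Bbbk_{M_\infty}^\sub)$. (One can also argue directly, computing $\bfE i_{U_\infty}^{-1}(K\Potimes L)$ locally via Proposition \ref{prop3.14} (2), Definition \ref{def3.21}, and the stability of $\RR$-constructible classical sheaves under convolution, but the route through Theorems \ref{main1} and \ref{main2} is shorter.)

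With (2)(i) and $\Potimes$-stability in hand, the rest is bookkeeping. For $K,L\in\BEC_{\RR-c}(\Bbbk_{M_\infty}^\sub)$, applying (2)(i) with $L$ replaced by $\bfD_{M_\infty}^{\rmE,\sub}L$ and using $\bfD_{M_\infty}^{\rmE,\sub}\bfD_{M_\infty}^{\rmE,\sub}L\simeq L$ (Proposition \ref{prop3.27} (2)) yields $\Prihomsub(K,L)\simeq\bfD_{M_\infty}^{\rmE,\sub}(K\Potimes\bfD_{M_\infty}^{\rmE,\sub}L)$; since $\bfD_{M_\infty}^{\rmE,\sub}L$, and hence $K\Potimes\bfD_{M_\infty}^{\rmE,\sub}L$, are $\RR$-constructible, so is the right-hand side, which proves the $\Prihomsub$-half of (1). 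Applying $\bfD_{M_\infty}^{\rmE,\sub}$ to this isomorphism and cancelling the double dual on the (now $\RR$-constructible) left-hand side gives (2)(ii). For (2)(iii), both sides rewrite by (2)(i) as $\bfD_{M_\infty}^{\rmE,\sub}$ applied to $K\Potimes\bfD_{M_\infty}^{\rmE,\sub}L$ and to $\bfD_{M_\infty}^{\rmE,\sub}L\Potimes K$, which agree by commutativity of $\Potimes$. For (2)(iv) one computes, using $\bfD_{M_\infty}^{\rmE,\sub}\bfD_{M_\infty}^{\rmE,\sub}\simeq\id$, the definition of $\bfD_{M_\infty}^{\rmE,\sub}$, the $\rihom^{\rmE,\sub}$-version of Proposition \ref{prop3.14} (1)(i), and commutativity of $\Potimes$, the chain $\rihom^{\rmE,\sub}(K,L)\simeq\rihom^{\rmE,\sub}(K\Potimes\bfD_{M_\infty}^{\rmE,\sub}L,\omega_{M_\infty}^{\rmE,\sub})\simeq\rihom^{\rmE,\sub}(\bfD_{M_\infty}^{\rmE,\sub}L,\bfD_{M_\infty}^{\rmE,\sub}K)$; finally (2)(v) follows by the same computation with $\rhom^{\rmE,\sub}$ in place of $\rihom^{\rmE,\sub}$ (equivalently by applying $\rho_{M_\infty}^{-1}$ to (2)(iv)).

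The argument has no isolated hard step; the only real obstacle is organisational. The biduality rewritings in Part (2) rely on $\bfD_{M_\infty}^{\rmE,\sub}\circ\bfD_{M_\infty}^{\rmE,\sub}\simeq\id$, which is available only on $\BEC_{\RR-c}(\Bbbk_{M_\infty}^\sub)$, so the stability statements of Part (1) must be established first — and within Part (1) the $\Prihomsub$-stability must be extracted from the $\Potimes$-stability, rather than the reverse, to avoid circularity. Care is also needed that (2)(i) is invoked only in its general form before constructibility is available, and that the transport along $\lambda_{M_\infty}^\rmE$ indeed lands in $\BEC_{\RR-c}(\Bbbk_{M_\infty}^\sub)$, which is exactly the content of Theorem \ref{main2}.
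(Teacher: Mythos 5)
Your proof is correct and follows the same logical skeleton as the paper's --- (2)(i) proved formally from Proposition \ref{prop3.14} (1)(i), then $\Potimes$-stability, then the $\Prihomsub$-half of (1) extracted from (2)(i), (2)(iii) and biduality, and only then (2)(ii), (iv), (v) --- and you have correctly flagged the ordering needed to avoid circularity. The one genuinely different step is the $\Potimes$-half of (1): the paper verifies Definition \ref{def3.21} directly, restricting to each relatively compact subanalytic $U$, writing $\bfE i_{U_\infty}^{-1}K$ and $\bfE i_{U_\infty}^{-1}L$ in the normal form $\Bbbk_{U_\infty}^{\rmE,\sub}\Potimes\Q_{U_\infty}^\sub\rho_{U_\infty\times\RR_\infty\ast}(\cdot)$, absorbing one factor of $\Bbbk_{U_\infty}^{\rmE,\sub}$ by idempotence, and using that $\rho_{U_\infty\times\RR_\infty\ast}$ commutes with convolution of $\RR$-constructible sheaves; you instead transport the statement through the equivalences of Theorems \ref{main1} and \ref{main2} via Proposition \ref{prop3.18} (1)(i), importing convolution-stability of $\BEC_{\RR-c}(\I\Bbbk_{M_\infty})$ from \cite{DK16-2}. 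Your route is shorter and mirrors what the paper does for $\bfE f_{\ast}$ and $\bfE f^{-1}$ in the neighbouring proposition, but it rests on the external reference actually containing the convolution statement (the paper only ever cites \cite{DK16-2}, Prop.~3.3.3 (ii)--(iv), for duality and the image functors), so you should either pin that citation down or keep the direct local computation you mention as the fallback --- it is exactly the paper's argument. A minor further divergence: in (2)(iv) you rewrite $L$ as $\Prihomsub(\bfD_{M_\infty}^{\rmE,\sub}L,\omega_{M_\infty}^{\rmE,\sub})$ and apply the tensor--hom adjunction for $\rihom^{\rmE,\sub}$ twice, whereas the paper first establishes $\Bbbk_{M_\infty}^{\rmE,\sub}\Potimes K\simeq K$ for $\RR$-constructible $K$ and inserts this unit; your version bypasses that auxiliary isomorphism.
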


\begin{proof}
(1)
Let $K, L\in\BEC_{\RR-c}(\Bbbk_{M_\infty}^\sub)$ and
$U$ be an open subset of $M$ which is subanalytic and relatively compact in $\che{M}$.
Then there exist $\SF, \SG\in\BDC_{\RR-c}(\Bbbk_{U_\infty\times\RR_\infty})$ such that 
\begin{align*}
\bfE i_{U_\infty}^{-1}K\simeq
\Bbbk_{U_\infty}^{\rmE, \sub}\Potimes \Q_{U_\infty}^\sub\rho_{U_\infty\times\RR_\infty\ast}\SF,\hspace{9pt}
\bfE i_{U_\infty}^{-1}L\simeq
\Bbbk_{U_\infty}^{\rmE, \sub}\Potimes \Q_{U_\infty}^\sub\rho_{U_\infty\times\RR_\infty\ast}\SG.
\end{align*}
Hence ,we have
\begin{align*}
\bfE i_{U_\infty}^{-1}\(K\Potimes L\)
&\simeq
\bfE i_{U_\infty}^{-1}K\Potimes\bfE i_{U_\infty}^{-1}L\\
&\simeq
\(\Bbbk_{U_\infty}^{\rmE, \sub}\Potimes \Q_{U_\infty}^\sub\rho_{U_\infty\times\RR_\infty\ast}\SF\)
\Potimes
\(\Bbbk_{U_\infty}^{\rmE, \sub}\Potimes \Q_{U_\infty}^\sub\rho_{U_\infty\times\RR_\infty\ast}\SG\)\\
&\simeq
\Bbbk_{U_\infty}^{\rmE, \sub}\Potimes \Q_{U_\infty}^\sub\(\rho_{U_\infty\times\RR_\infty\ast}\SF
\Potimes\rho_{U_\infty\times\RR_\infty\ast}\SG\)
\end{align*}
where in the first isomorphism we used Proposition \ref{prop3.14} (2)
and in the last isomorphism we used
$\Bbbk_{U_\infty}^{\rmE, \sub}\Potimes \Bbbk_{U_\infty}^{\rmE, \sub}
\simeq\Bbbk_{U_\infty}^{\rmE, \sub}$.
Since $\SF, \SG\in\BDC_{\RR-c}(\Bbbk_{U_\infty\times\RR_\infty})$,
there exists an isomorphism
$\rho_{U_\infty\times\RR_\infty\ast}\SF
\Potimes\rho_{U_\infty\times\RR_\infty\ast}\SG
\simeq
\rho_{U_\infty\times\RR_\infty\ast}\(\mu_{!!}(p_1^{-1}\SF\otimes p_2^{-1}\SG)\)$
and $\mu_{!!}(p_1^{-1}\SF\otimes p_2^{-1}\SG)\in\BDC_{\RR-c}(\Bbbk_{U_\infty\times\RR_\infty})$.
Therefore, $K\Potimes L$ is $\RR$-constructible.

Moreover, by using the assertion (2)(i), (iii),
there exist isomorphisms 
\begin{align*}
\Prihomsub(K, L)
&\simeq
\Prihomsub\(\bfD_{M_\infty}^{\rmE, \sub}L, \bfD_{M_\infty}^{\rmE, \sub}K\)\\
&\simeq
\bfD_{M_\infty}^{\rmE, \sub}\(\bfD_{M_\infty}^{\rmE, \sub}L\Potimes K\)\\
&\simeq
\bfD_{M_\infty}^{\rmE, \sub}\(K\Potimes \bfD_{M_\infty}^{\rmE, \sub}L\),
\end{align*}
and hence by Proposition \ref{prop3.27} (2) and the first assertion of (1),
$\Prihomsub(K, L)$ is $\RR$-constructible.
\medskip

\noindent
(2)(i)
By Proposition \ref{prop3.14} (1)(i),
we have
\begin{align*}
\bfD_{M_\infty}^{\rmE, \sub}\(K\Potimes L\)
&\simeq
\Prihomsub\(K\Potimes L, \omega_{M_\infty}^{\rmE, \sub}\)\\
&\simeq
\Prihomsub\(K, \Prihomsub(L, \omega_{M_\infty}^{\rmE, \sub})\)\\
&\simeq
\Prihomsub\(K, \bfD_{M_\infty}^{\rmE, \sub}L\).
\end{align*}

\noindent
(ii)
Since $L$ is $\RR$-constructible,
we have an isomorphism $L\simeq \bfD_{M_\infty}^{\rmE, \sub}\bfD_{M_\infty}^{\rmE, \sub}L$
by Proposition \ref{prop3.27} (2).
Hence, by using the assertion (2)(i),
we have
\begin{align*}
\bfD_{M_\infty}^{\rmE, \sub}\Prihomsub(K, L)
&\simeq
\bfD_{M_\infty}^{\rmE, \sub}\Prihomsub(K,
\bfD_{M_\infty}^{\rmE, \sub}\bfD_{M_\infty}^{\rmE, \sub}L)\\
&\simeq
\bfD_{M_\infty}^{\rmE, \sub}\bfD_{M_\infty}^{\rmE, \sub}\(
K\Potimes \bfD_{M_\infty}^{\rmE, \sub}L\)\\
&\simeq
K\Potimes \bfD_{M_\infty}^{\rmE, \sub} L,
\end{align*}
where
in the last isomorphism we used Proposition \ref{prop3.27} (2) and the assertion (1).
\smallskip

\noindent
(iii)
By Proposition \ref{prop3.14} (1)(i),
we have
\begin{align*}
\bfD_{M_\infty}^{\rmE, \sub}\(K\Potimes\bfD_{M_\infty}^{\rmE, \sub}L\)
&\simeq
\Prihomsub\(K\Potimes\bfD_{M_\infty}^{\rmE, \sub}L, \omega_{M_\infty}^{\rmE, \sub}\)\\
&\simeq
\Prihomsub\(\bfD_{M_\infty}^{\rmE, \sub}L, \Prihomsub(K, \omega_{M_\infty}^{\rmE, \sub})\)\\
&\simeq
\Prihomsub\(\bfD_{M_\infty}^{\rmE, \sub}L, \bfD_{M_\infty}^{\rmE, \sub}K\).
\end{align*}
Since $\Prihomsub(K, L)\simeq
\bfD_{M_\infty}^{\rmE, \sub}\(K\Potimes\bfD_{M_\infty}^{\rmE, \sub}L\)$,
we have
\begin{align*}
\Prihomsub(K, L)\simeq
\Prihomsub\(\bfD_{M_\infty}^{\rmE,\sub}L, \bfD_{M_\infty}^{\rmE,\sub}K\).
\end{align*}

\noindent
(iv)
First, let us prove $\Bbbk_{M_\infty}^{\rmE, \sub}\Potimes K\simeq K$.
Since $\Bbbk_{M_\infty}^{\rmE, \sub}\simeq
\lambda_{M_\infty}^\rmE \Bbbk_{M_\infty}^{\rmE}$
and $K$ is $\RR$-constructible,
we have 
$$\Bbbk_{M_\infty}^{\rmE, \sub}\Potimes K
\simeq
\lambda_{M_\infty}^\rmE \Bbbk_{M_\infty}^{\rmE}\Potimes
\lambda_{M_\infty}^\rmE I_{M_\infty}^\rmE K
\simeq
\lambda_{M_\infty}^\rmE \(\Bbbk_{M_\infty}^{\rmE}\Potimes I_{M_\infty}^\rmE K\)
\simeq \lambda_{M_\infty}^\rmE I_{M_\infty}^\rmE K
\simeq K,$$
where in the first isomorphism we used Theorem \ref{main1} (1) and Lemma \ref{lem3.19},
in the second isomorphism Proposition \ref{prop3.18} (4)(iii),
in the last isomorphisms we used Theorem \ref{main1} 
and in the third isomorphism we used the fact that $\Bbbk_{M_\infty}^{\rmE}\Potimes K'\simeq K'$
for any $K'\in\BEC_{\RR-c}(\I\Bbbk_{M_\infty})$.
%Hence, there exists an isomorphism in $\BDC(\Bbbk_{M_\infty}^\sub)$:
%$$\rihom^{\rmE, \sub}(K, L)
%\simeq
%\rihom^{\rmE, \sub}(\Bbbk_{M_\infty}^{\rmE, \sub}\Potimes K, L).$$
Hence, there exist isomorphisms 
\begin{align*}
\rihom^{\rmE, \sub}(K, L)
&\simeq
\rihom^{\rmE, \sub}(\Bbbk_{M_\infty}^{\rmE, \sub}\Potimes K, L)\\
&\simeq
\rihom^{\rmE, \sub}\(\Bbbk_{M_\infty}^{\rmE, \sub}, \Prihomsub(K, L)\)\\
&\simeq
\rihom^{\rmE, \sub}\(\Bbbk_{M_\infty}^{\rmE, \sub},
\Prihomsub(\bfD_{M_\infty}^{\rmE,\sub}L,\, \bfD_{M_\infty}^{\rmE,\sub}K)\)\\
&\simeq
\rihom^{\rmE, \sub}\(\Bbbk_{M_\infty}^{\rmE, \sub}\Potimes\bfD_{M_\infty}^{\rmE,\sub}L,\,
\bfD_{M_\infty}^{\rmE,\sub}K\)\\
&\simeq
\rihom^{\rmE, \sub}\(\bfD_{M_\infty}^{\rmE,\sub}L, \bfD_{M_\infty}^{\rmE,\sub}K\), 
\end{align*}
where in the second and fourth isomorphisms we used Proposition \ref{prop3.14} (1)(i),
in the third isomorphism we used the assertion (iii)
and in the last isomorphism we used the fact 
$\Bbbk_{M_\infty}^{\rmE, \sub}\Potimes\bfD_{M_\infty}^{\rmE,\sub}L
\simeq \bfD_{M_\infty}^{\rmE,\sub}L$.
\medskip

%\noindent
The assertion (v) follows from the assertion (iv).
\end{proof}

\subsection{Irregular Riemann--Hilbert Correspondence and Enhanced Subanalytic Sheaves}
In this subsection,
we will explain a relation between \cite[Thm.\:9.5.3]{DK16} and \cite[Thm.\:6.3]{Kas16}.
Theorems \ref{main3} and \ref{main4} are main results of this paper.

\subsubsection{Main Results of \cite{DK16} and \cite{Kas16}}\label{subsec3.5.1}
Let $X$ be a complex manifold and denote by $X_\RR$ the underlying real analytic manifold of $X$. 
We denote by $\SO_{X}$ and $\D_{X}$ the sheaves of holomorphic functions 
and holomorphic differential operators on $X$, respectively. 
Let $\BDC(\D_{X})$ be the bounded derived category of left $\D_{X}$-modules. 
Moreover we denote by $\BDCcoh(\D_{X})$, $\BDChol(\D_{X})$ and $\BDCrh(\D_{X})$
the full triangulated subcategories of $\BDC(\D_{X})$
consisting of objects with coherent, holonomic and regular holonomic cohomologies, respectively.
For a morphism $f \colon X\to Y$ of complex manifolds, 
denote by $\Dotimes$, $\bfD f_\ast$, $\bfD f^\ast$, 
$\DD_{X} \colon \BDCcoh(\SD_{X})^{\op} \simto \BDCcoh(\SD_{X})$  
the standard operations for $\SD$-modules. 
The classical solution functor is defined by  
$$
\Sol_X \colon \BDC (\SD_X)^{\op}\to\BDC(\CC_X),
\hspace{7pt}\M \longmapsto \rhom_{\D_X}(\M, \SO_X).
$$

Let $M$ be a real analytic manifold of dimension $n$.
We denote by $\SC_M^\infty$ the sheaf of complex functions of class $\SC^\infty$ on $M$
and by $\Db_M$ the sheaf of Schwartz's distributions on $M$.
\begin{definition}[{\cite[Def.\:7.2.3]{KS01}}]
Let $U$ be an open subset of $M$.
\begin{itemize}\item[(1)]
One say that 
$f\in\SC_M^{\infty}(U)$ has polynomial growth at $p\in M$
if for a local coordinate system $(x_1, \ldots, x_n)$ around $p$,
there exist a sufficiently small compact neighborhood $K$ of $p$
and a positive integer $N$ such that 
$$\sup_{x\in K\cap U}\dist(x, K\setminus U)^N|f(x)| < +\infty.$$

A function $f\in\SC_M^{\infty}(U)$ is said to be tempered at $p\in M$
if all its derivatives have polynomial growth at $p$,
and is said to be tempered if tempered at any point of $M$. 

Let us denote by $\SC_{M}^{\infty, \rmt}(U)$
the subset of $\SC_{M}^{\infty}(U)$ consisting of functions which are tempered.

\item[(2)]
Let us defined $\Db_M^\rmt(U)$ by the image of the restriction map
$$\Gamma(M; \Db_M)\to \Gamma(U; \Db_M).$$
%$$\Gamma_{M\setminus U}(M; \Db_M)\to \Gamma(U; \Db_M)$$
%where $\Gamma_{M\setminus U} := i_{M\setminus U\ast}i_{M\setminus U}^!$ and 
%$i_{M\setminus U}\colon M\setminus U\to M$ is the closed embedding.

\end{itemize}
\end{definition}

Note that subanalytic presheaves $U\mapsto\SC_{M}^{\infty, \rmt}(U)$ and 
$U\mapsto \Db_M^\rmt(U)$ are subanalytic sheaves,
see \cite[\S 7.2]{KS01}, also \cite[\S 3.3]{Pre08}.

We shall write $\BDC(\CC_X^\sub)$, $\BEC(\CC_X^\sub)$, $\BEC_{\RR-c}(\CC_X^\sub)$
instead of $\BDC(\CC_{X_\RR}^\sub)$, $\BEC(\CC_{X_\RR}^\sub)$, $\BEC_{\RR-c}(\CC_{X_\RR}^\sub)$, respectively.
\begin{definition}[{\cite[\S 7.3]{KS01}, \cite[\S 5.2]{DK16} and also \cite[\S 3.3]{Pre08}}]
Let us denote by $X^c$ the complex conjugate manifold of $X$.
An object $\SO_X^\rmt\in\BDC(\CC_X^\sub)$ is defined by  
$$\SO_X^\rmt :=
\rihom^\sub_{\rho_X!\D_{X^c}}(\rho_{X^{c}!}\SO_{X^c}, \SC_{X_\RR}^{\infty, \rmt})
\simeq
\rihom^\sub_{\rho_X!\D_{X^c}}(\rho_{X^{c}!}\SO_{X^c}, \Db_{X_\RR}^{\rmt})$$
and is called the subanalytic sheaf of tempered holomorphic functions on $X$.

Moreover, the tempered solution functor is defined by 
$$
\Sol_X^\rmt \colon\BDC(\D_X)^{\op}\to\BDC(\I\CC_X), 
\hspace{7pt}
\SM \longmapsto \rihom_{\beta_X\D_X}(\beta_X\M, I_X\SO_X^\rmt). 
$$
\end{definition}

Note that an ind-sheaf $I_{X}\SO_X^\rmt$ is nothing but the ind-sheaf of tempered holomorphic functions on $X$
which is denoted by $\SO_X^\rmt$ in \cite[\S 7.3]{KS01}.
Note also that there exist isomorphisms
$\rho_X^{-1}\SO_X^\rmt\simeq \alpha_XI_X\SO_X^\rmt \simeq \SO_X$
and hence we have
$\alpha_X\Sol_X^\rmt(\M)\simeq \Sol_X(\M)$ for any $\M\in\BDCcoh(\D_X)$.

\begin{definition}[{\cite[Def.\:8.1.1]{DK16} and \cite[Def.\:7.2.1]{KS16}}]\label{def3.32}
Let $\tl{k}_M\colon M\times \RR_\infty\to M\times \PP^1\RR$ be the natural morphism of real analytic bordered spaces
and $t$ be a coordinate of $\RR$,
where $\PP^1\RR$ is the real projective line.
An object $\Db_{M\times\RR_\infty}^\rmt\in\BDC(\I\CC_{M\times\RR_\infty})$ is defined by
$$\Db_{M\times\RR_\infty}^\rmt :=
 \tl{k}_M^{!}I_{M\times\PP^1\RR}\Db_{M\times\PP^1\RR}^\rmt
 \simeq
I_{M\times\RR_\infty} \tl{k}_M^{!}\Db_{M\times\PP^1\RR}^\rmt$$
and $\Db_{M}^{\T}\in\BDC(\I\CC_{M\times\RR_\infty})$ is defined by 
the complex, concentrated in $-1$ and $0$:
$$\Db_{M\times\RR_\infty}^\rmt\xrightarrow{\ \partial_t-1\ }\Db_{M\times\RR_\infty}^\rmt.$$

Moreover, we set $\Db_{M}^\rmE := \Q_M(\Db_M^\T)\in\BEC(\I\CC_{M})$
and call it the enhanced ind-sheaf of tempered distributions.
\end{definition}
Note that we have $\SH^i(\Db_{M}^\T) = 0$ for any $i\neq -1$
and hence there exists an isomorphism
$\Db_{M}^\T\simeq \Ker(\partial_t-1)[1]$ in $\BDC(\I\CC_{M\times\RR_\infty})$.

\begin{definition}[{\cite[Def.\:8.2.1]{DK16} and \cite[Def.\:7.2.3]{KS16}}]\label{def3.33}
Let $\tl{i}\colon X\times\RR_\infty\to X\times\PP^1\CC$ be the natural morphism of bordered spaces
and $\tau\in\CC\subset\PP^1\CC$ the affine coordinate 
such that $t = \tau|_\RR$,
where $\PP^1\CC$ is the complex projective line.
An object $\SO_X^\rmE\in\BEC(\I\CC_X)$ is defined by
\begin{align*}
\SO_X^\rmE &:= \Q_X\rihom_{\pi_{X^c}^{-1}\beta_{X^c}\D_{X^c}}(
\pi_{X^c}^{-1}\beta_{X^c}\SO_{X^c}, \Db_{X_\RR}^{\T})\\
&\,\simeq
\Q_X\tl{i}^!\rihom_{p^{-1}\beta_{\PP^1\CC}\D_{\PP^1\CC}}
(p^{-1}\beta_\PP\SE_{\CC|\PP^1\CC}^{\tau},
 I_{X\times\PP^1\CC}\SO_{X\times\PP^1\CC}^\rmt)[2],
 \end{align*}
 where $\SE_{\CC|\PP}^{\tau}$ is the meromorphic connection associated to $d+d\tau$,
$p\colon X\times\PP^1\CC\to \PP^1\CC$ is the projection
and $\pi_{X^c}\colon X^c \times\RR_\infty\to X^c$ is the morphisms of bordered spaces associated
with the projection $X^c\times\RR\to X^c$. 

It is called the enhanced ind-sheaf of tempered holomorphic functions on $X$.

Moreover, the enhanced solution functor is defined by 
$$
\Sol_X^\rmE \colon \BDC(\D_X)^\op\to\BEC(\I\CC_X), 
\hspace{7pt} 
\M \longmapsto \rihom_{\pi^{-1}\beta_X\D_X}(\pi^{-1}\beta_X\M, \SO_X^\rmE),
$$
where $\pi\colon X\times\RR_\infty\to X$ is the morphism of bordered spaces
associated with the first projection $X\times\RR\to X$.
\end{definition}

Note that $\SO_X^\rmE$ is isomorphic to the enhanced ind-sheaf induced by 
the Dolbeault complex with coefficients in $\Db_{X_\RR}^{\T}[-1]$:
$$\Db_{X_\RR}^{\T}[-1]\xrightarrow{\ \var{\partial}\ }\Omega_{X^c}^1\otimes_{\SO_{X^c}}\Db_{X_\RR}^{\T}[-1]
\xrightarrow{\ \var{\partial}\ }\cdots\xrightarrow{\ \var{\partial}\ }
\Omega_{X^c}^{d_X}\otimes_{\SO_{X^c}}\Db_{X_\RR}^{\T}[-1],$$
where $\Omega_{X^c}^p$ is the sheaf of $p$-differential forms with coefficients in $\SO_{X^c}$
and $d_X$ is the complex dimension of $X$. 

Note that $\I\sh_X\SO_X^\rmE \simeq I_X\SO_X^\rmt$
and hence there exists an isomorphism
$\I\sh_X\Sol_X^\rmE(\M)\simeq\Sol_X^\rmt(\M)$
for any $\M\in\BDCcoh(\D_X)$.
%where $i_0 \colon X\to X\times\RR_\infty$
%is the morphism of real analytic bordered spaces induced by $x\mapsto (x, 0)$.

Let us recall the main results of \cite{DK16}.
\begin{theorem}[{\cite[Thm.\:9.5.3\footnote{
In \cite{DK16}, although Theorem 9.5.3 was stated by using the enhanced de Rham functor,
we can obtain a similar statement by using the enhanced solution functor.} and 9.6.1]{DK16}}]\label{thmDK}
The enhanced solution functor induces an embedding functor
\[ \Sol_{X}^\rmE \colon \BDChol(\D_{X})^{\op}
\hookrightarrow\BEC_{\RR-c}(\I\CC_{X}).\]
Moreover for any $\M\in\BDChol(\D_{X})$ there exists an isomorphism in $\BDC(\D_{X}):$
\[\M\simto \RH_{X}^{\rmE}\Sol_{X}^{\rmE}(\M),\]  
where $\RH_{X}^{\rmE}(K) := \rhom^{\rmE}(K, \SO_{X}^{\rmE})$.
\end{theorem}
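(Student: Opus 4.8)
The statement to prove is Theorem \ref{thmDK}, which is attributed to D'Agnolo--Kashiwara \cite[Thms.\:9.5.3, 9.6.1]{DK16}. Since this is a citation of an already-established deep theorem (the irregular Riemann--Hilbert correspondence), the ``proof'' in the context of the present paper is not a reproof from scratch but rather a recollection of how the statement is assembled from the machinery in \cite{DK16}; I will present the plan along those lines, noting which ingredients from the excerpt are used.

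\textbf{Overall strategy.} The plan is to split the assertion into two parts: (a) the target of $\Sol_X^\rmE$ on $\BDChol(\D_X)$ lands in $\BEC_{\RR-c}(\I\CC_X)$ and the functor is fully faithful; (b) the reconstruction formula $\M\simto \RH_X^\rmE\Sol_X^\rmE(\M)$ holds. For (a), I would first reduce to the local situation and then to the two basic building blocks of holonomic $\D$-modules: regular holonomic modules and exponential twists $\SE_{U|X}^\varphi$ of such along a hypersurface. The key external inputs are Kashiwara's equivalence for regular holonomic modules (the classical Riemann--Hilbert correspondence, giving $\BDCrh(\D_X)\simeq\BDC_{\RR-c}(\CC_X)$), the fact that $\Sol_X^\rmE$ intertwines the six operations for $\D$-modules with the corresponding operations $\bfE f^{-1}, \bfE f_\ast, \bfE f_{!!}, \bfE f^!, \Dotimes \leftrightarrow \Potimes$ for enhanced ind-sheaves, and the computation $\Sol_X^\rmE(\SE_{U|X}^\varphi)\simeq \CC_{U|X}^{\rmE,\varphi}$ (up to shift and constructible twist) on the generators. $\RR$-constructibility of the output then follows because the generators have $\RR$-constructible enhanced solution complexes and $\BEC_{\RR-c}(\I\CC_X)$ is triangulated and stable under the relevant operations.

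\textbf{Full faithfulness and reconstruction.} For (b), the natural route is to construct the functor $\RH_X^\rmE(K):=\rhom^\rmE(K,\SO_X^\rmE)$ and a canonical morphism $\M\to\RH_X^\rmE\Sol_X^\rmE(\M)$ coming from the biduality pairing between $\rihom_{\pi^{-1}\beta_X\D_X}(\pi^{-1}\beta_X\M,\SO_X^\rmE)$ and $\SO_X^\rmE$ as a $\pi^{-1}\beta_X\D_X$-module. One then checks this morphism is an isomorphism on the generators (regular holonomic modules via the classical correspondence, exponentials via a direct computation using the curve-test or the explicit description of $\SO_X^\rmE$ via the Dolbeault complex with tempered-distribution coefficients), and propagates along distinguished triangles and the six operations using the compatibility isomorphisms; this simultaneously yields that $\Sol_X^\rmE$ is fully faithful on $\BDChol(\D_X)$, since $\RH_X^\rmE$ provides a left quasi-inverse. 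The dévissage of an arbitrary holonomic $\M$ into regular holonomic pieces and exponential twists is the content of the Mochizuki--Kedlaya--Sabbah structure theorem for meromorphic flat connections together with induction on dimension of support.

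\textbf{Main obstacle.} The hard part is the dévissage step combined with the behavior under projective pushforward: one must know that $\Sol_X^\rmE$ commutes with $\bfD f_\ast$ for $f$ projective (which at the level of enhanced ind-sheaves is a nontrivial tempered-growth estimate, essentially the integration-along-fibers compatibility), and that the class of $\M$ for which the reconstruction isomorphism holds is stable under sub/quotients, extensions, and such pushforwards; granting the structure theorem for good meromorphic connections, this is where the real analytic input (Hironaka resolution to reduce to normal crossings, and the comparison of tempered and moderate growth) enters. In the setting of the present paper, however, all of this is imported wholesale from \cite{DK16}, so the ``proof'' reduces to quoting \cite[Thms.\:9.5.3, 9.6.1]{DK16} after matching the enhanced-de-Rham formulation there with the enhanced-solution formulation used here, the translation being the standard duality $\Sol_X^\rmE(\M)\simeq \rmD_X^\rmE\bigl(\mathrm{DR}_X^\rmE(\DD_X\M)\bigr)$ on holonomic modules together with the self-duality of $\BEC_{\RR-c}(\I\CC_X)$ recorded in \cite[Prop.\:3.3.3]{DK16-2}.
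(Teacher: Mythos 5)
The paper offers no proof of this statement: it is recalled verbatim as a citation of \cite[Thms.\:9.5.3, 9.6.1]{DK16} (with a footnote noting the de Rham-to-solution translation), and your proposal ultimately does the same, so the approaches coincide. Your additional sketch of the internal argument of \cite{DK16} (d\'evissage via the structure theorem, compatibility with the six operations, reconstruction via $\RH_X^\rmE$) is a fair summary but is not needed for, nor present in, the paper itself.
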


Let us recall the main results of \cite{Kas16}.
\begin{definition}[{\cite[\S 5.2]{Kas16}}]\label{def3.35}
Let $\tl{k}_M\colon M\times \RR_\infty\to M\times \PP^1\RR$ be the natural morphism of real analytic bordered spaces.
An object $\Db_{M}^{\T, \sub}\in\BDC(\CC_{M\times\RR_\infty}^\sub)$ is defined by
the complex, concentrated in $-1$ and $0$:
$$\tl{k}^!\Db_{M\times\PP^1\RR}^\rmt\xrightarrow{\ \partial_t-1\ }\tl{k}^!\Db_{M\times\PP^1\RR}^\rmt.$$
\end{definition}
Note that we have $\SH^i(\Db_{M}^{\T, \sub}) = 0$ for any $i\neq -1$
and hence there exists an isomorphism
$\Db_{M}^{\T,\sub}\simeq \Ker(\partial_t-1)[1]$ in $\BDC(\CC_{M\times\RR_\infty}^\sub)$.
Remark that the notion $\Db^{\T}$ in \cite[\S 5.2]{Kas16} is equal to $\Db^{\T, \sub}[-1]$ in our notion.

\begin{definition}[{\cite[\S\S 5.3, 5.4]{Kas16}}]\label{def3.36}
An object $\SO_X^{\T, \sub}\in\BDC(\CC_{X\times\RR_\infty}^\sub)$ is defined by 
the Dolbeault complex with coefficients in $\Db_{X_\RR}^{\T,\sub}[-1]$:
$$\Db_{X_\RR}^{\T,\sub}[-1]\xrightarrow{\ \var{\partial}\ }\Omega_{X^c}^1\otimes_{\SO_{X^c}}\Db_{X_\RR}^{\T,\sub}[-1]
\xrightarrow{\ \var{\partial}\ }\cdots\xrightarrow{\ \var{\partial}\ }
\Omega_{X^c}^{d_X}\otimes_{\SO_{X^c}}\Db_{X_\RR}^{\T,\sub}[-1].$$

Moreover, we set
$$
\Sol_X^{\T,\sub} \colon \BDC(\D_X)^\op\to\BDC(\CC_{X\times\RR_\infty}^\sub), 
\hspace{7pt} 
\M \longmapsto \rihom^\sub_{\pi^{-1}\rho_{X!}\D_X}(\pi^{-1}\rho_{X!}\M, \SO_X^{\T,\sub}).
$$
\end{definition}

Note also that there exists an isomorphism in $\BDC(\Bbbk_{X\times\RR_\infty}^\sub)$:
$$\SO_X^{\T, \sub}\simeq
\rihom_{\pi_{X^c}^{-1}\rho_{X^c!}\D_{X^c}}^\sub\(
\pi_{X^c}^{-1}\rho_{X^c!}\SO_{X^c}, \Db_{X_\RR}^{\T, \sub}[-1]\).$$

\begin{theorem}[{\cite[Thms.\:6.2 and 6.3\footnote{
In \cite{Kas16}, although Theorem 6.3 was stated by using the enhanced de Rham functor,
we can obtain a similar statement by using the enhanced solution functor.}]{Kas16}}]
There exists an embedding functor
\[ \Sol_{X}^{\T,\sub} \colon \BDChol(\D_{X})^{\op}
\hookrightarrow\BDC(\CC_{X\times\RR_\infty}^\sub).\]
Moreover for any $\M\in\BDChol(\D_{X})$ there exists an isomorphism in $\BDC(\D_X):$
\[\M\simto \rhom^{\rmE, \sub}\(\Sol_{X}^{\T, \sub}(\M), \SO_X^{\T, \sub}\).\] 
\end{theorem}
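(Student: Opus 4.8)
The strategy is to deduce this theorem from the already-established irregular Riemann--Hilbert correspondence of D'Agnolo--Kashiwara (Theorem \ref{thmDK}) by transporting it along the equivalence $I_X^\rmE$ of Theorem \ref{main1}, together with the compatibility isomorphisms collected in Proposition \ref{prop3.18} and Lemma \ref{lem3.26}. The first task is to identify $\Sol_X^{\T,\sub}(\M)$, a priori an object of $\BDC(\CC_{X\times\RR_\infty}^\sub)$, with the corresponding enhanced subanalytic sheaf $\Q_X^\sub\Sol_X^{\T,\sub}(\M)[1] \in \BEC(\CC_X^\sub)$ (the shift by $1$ appearing because of the discrepancy between $\Db^\T$ and $\Db^{\T,\sub}$ noted after Definition \ref{def3.35}), and then to prove the key comparison
\[
I_X^\rmE\bigl(\Q_X^\sub\Sol_X^{\T,\sub}(\M)[1]\bigr)\ \simeq\ \Sol_X^\rmE(\M)
\]
in $\BEC(\I\CC_X)$. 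This comes down to comparing $\SO_X^{\T,\sub}$ with $\SO_X^\rmE$: I would first show $I_{X\times\RR_\infty}\Db_{X_\RR}^{\T,\sub}\simeq\Db_{X_\RR}^\T[1]$ using $I\circ\rho_\ast^{\RR-c}\simeq\iota$ on the $\RR$-constructible object $\tl k^!\Db^\rmt_{X\times\PP^1\RR}$ (which is tempered, hence subanalytic) and the exactness of $I$, then propagate through the Dolbeault complex using that $I_{X\times\RR_\infty}$ commutes with $\rihom_{\pi^{-1}\rho_{X!}\D_X}(\pi^{-1}\rho_{X!}(\cdot),\,\cdot)$ versus $\rihom_{\pi^{-1}\beta_X\D_X}(\pi^{-1}\beta_X(\cdot),\,\cdot)$ — this last compatibility is exactly the ring-action version of Proposition \ref{prop3.7}(2)(ii),(vi) combined with $I_{\che M}\circ\rho_{\che M!}\simeq\beta_{\che M}$, applied after passing to $\che X\times\var\RR$ via $j_{!!}$ as in the proof of Proposition \ref{prop3.6}.

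Once the comparison $I_X^\rmE\Sol_X^{\T,\sub}(\cdot)[1]\simeq\Sol_X^\rmE(\cdot)$ is in place, full faithfulness of $\Sol_X^{\T,\sub}(\cdot)[1]$ on $\BDChol(\D_X)^\op$ follows immediately: $I_X^\rmE$ is fully faithful by Theorem \ref{main1}(1), and $\Sol_X^\rmE$ is fully faithful by Theorem \ref{thmDK}; a composition of fully faithful functors is fully faithful, and a functor that becomes fully faithful after composition with a faithful (indeed fully faithful) functor is itself fully faithful. Moreover, by Theorem \ref{thmDK}, $\Sol_X^\rmE(\M)\in\BEC_{\RR-c}(\I\CC_X)$, so $\Sol_X^{\T,\sub}(\M)[1]\simeq\lambda_X^\rmE\Sol_X^\rmE(\M)$ lands in $\BEC_{\RR-c}(\CC_X^\sub)$ by Theorem \ref{main2}; this records the essential image statement.

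For the reconstruction formula, I would start from the isomorphism $\M\simto\RH_X^\rmE\Sol_X^\rmE(\M)=\rhom^\rmE(\Sol_X^\rmE(\M),\SO_X^\rmE)$ of Theorem \ref{thmDK} and rewrite the right-hand side using the comparison above:
\[
\rhom^\rmE\bigl(\Sol_X^\rmE(\M),\SO_X^\rmE\bigr)
\simeq
\rhom^\rmE\bigl(I_X^\rmE\Sol_X^{\T,\sub}(\M)[1],\ I_X^\rmE\SO_X^{\T,\sub}[1]\bigr).
\]
Here I use $\SO_X^\rmE\simeq I_X^\rmE(\Q_X^\sub\SO_X^{\T,\sub}[1])$, which is the special case of the $\SO^{\T,\sub}$--$\SO^\rmE$ comparison with $\M=\sho_X$. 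Then $\rhom^\rmE:=\alpha_{X}\circ\rihom^\rmE$, and by Proposition \ref{prop3.18}(1)(iii) one has $\bfR J_X\rihom^\rmE(I_X^\rmE K,L)\simeq\rihom^{\rmE,\sub}(K,J_X^\rmE L)$; composing with $\rho_X^{-1}$ on both sides and using $\rho_X^{-1}\circ\bfR J_X\simeq\alpha_X$ (Proposition \ref{prop3.7}(3)(ii)) identifies $\rhom^\rmE(I_X^\rmE K, I_X^\rmE L)$ with $\rhom^{\rmE,\sub}(K,L)$, where $\rhom^{\rmE,\sub}:=\rho_X^{-1}\circ\rho_{X\ast}\circ\rihom^\rmE$ is, after unwinding, exactly the functor defined in \S\ref{subsec3.3}; note $J_X^\rmE I_X^\rmE\simeq\id$ by Theorem \ref{main1}(1) so the $L$-slot collapses correctly to $\SO_X^{\T,\sub}$. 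This yields $\M\simto\rhom^{\rmE,\sub}(\Sol_X^{\T,\sub}(\M),\SO_X^{\T,\sub})$, the desired formula.

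\textbf{Main obstacle.} The delicate point is the careful bookkeeping of the two normalizations (the shift by $[1]$ and whether $\Db^{\T,\sub}$ versus $\Db^\T$, and $\SO^{\T,\sub}$ versus $\SO^\rmE$, carry the extra shift) so that the statement matches \emph{verbatim} the convention in Definitions \ref{def3.35}, \ref{def3.36}; a sign or shift error there would propagate into the reconstruction formula. Technically, the real work is proving $I_X^\rmE\SO_X^{\T,\sub}[1]\simeq\SO_X^\rmE$ compatibly with the $\D_X$-action: one must lift the equivalences $I$, $\lambda$ to the categories of modules over $\rho_{X!}\D_X$ (resp.\ $\beta_X\D_X$) as in \S\ref{subsec2.5} and \S\ref{subsec3.1}, check that $I_{X\times\RR_\infty}$ intertwines $\rihom^\sub_{\pi^{-1}\rho_{X!}\D_X}$ with $\rihom_{\pi^{-1}\beta_X\D_X}$, and that it commutes with the Dolbeault differentials $\bar\partial$. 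Each ingredient is available from the results quoted above (Theorem \ref{thm2.26} with ring actions, Proposition \ref{prop3.7}(2), and the exactness/tensor-compatibility of $I$), but assembling them with all the bordered-space $j_{!!}$ reductions requires care.
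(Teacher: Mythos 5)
The paper does not actually prove this statement: it is quoted from \cite[Thms.\:6.2, 6.3]{Kas16}. The derivation you propose --- transporting Theorem \ref{thmDK} along the equivalence $I_X^\rmE/J_X^\rmE$ --- is precisely the program the paper itself carries out afterwards in Lemma \ref{lem3.38} and Theorems \ref{main3}, \ref{main4}, so your route coincides with the paper's own re-derivation rather than offering an alternative one.

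There is, however, one genuine gap in your argument for the embedding. Your composition argument ($I_X^\rmE$ fully faithful by Theorem \ref{main1}, $\Sol_X^\rmE$ fully faithful by Theorem \ref{thmDK}) only yields full faithfulness of $\M\mapsto\Q_X^\sub\bigl(\Sol_X^{\T,\sub}(\M)[1]\bigr)$ as a functor into the \emph{quotient} category $\BEC(\CC_X^\sub)$. The theorem asserts that $\Sol_X^{\T,\sub}$ is an embedding into $\BDC(\CC_{X\times\RR_\infty}^\sub)$ itself, and the quotient functor $\Q_X^\sub$ is neither full nor faithful, so this does not ``follow immediately.'' What is needed is that $\Sol_X^{\T,\sub}(\M)[1]$ lies in the essential image of the fully faithful right adjoint $\bfR_X^{\rmE,\sub}$, i.e.\ $\Sol_X^{\T,\sub}(\M)[1]\simeq\bfR_X^{\rmE,\sub}\Q_X^\sub\bigl(\Sol_X^{\T,\sub}(\M)[1]\bigr)$; the adjunction $(\Q_X^\sub,\bfR_X^{\rmE,\sub})$ of Lemma \ref{lem3.13} then identifies $\Hom$ in $\BDC(\CC_{X\times\RR_\infty}^\sub)$ between such objects with $\Hom$ in $\BEC(\CC_X^\sub)$. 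This stability is exactly Lemma \ref{lem3.38} (4), and it rests on the computation $\Prihomsub\bigl(\rho_{X\times\RR_\infty\ast}(\CC_{\{t\geq0\}}\oplus\CC_{\{t\leq 0\}}),\Db_{X_\RR}^{\T,\sub}\bigr)\simeq\Db_{X_\RR}^{\T,\sub}$, which your outline does not address. A smaller slip: in this paper's conventions there is no shift between $\Db^{\T}$ and $\Db^{\T,\sub}$ (Lemma \ref{lem3.38} (1) gives $I_{X\times\RR_\infty}\Db_{X_\RR}^{\T,\sub}\simeq\Db_{X_\RR}^{\T}$, not $\Db_{X_\RR}^{\T}[1]$); the shift $[1]$ in the comparison of solution functors comes instead from the $[-1]$ built into Definition \ref{def3.36} of $\SO_X^{\T,\sub}$ versus Definition \ref{def3.33} of $\SO_X^{\rmE}$. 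The rest of your plan --- the $\D$-equivariant comparison of $\SO_X^{\T,\sub}$ with $\SO_X^{\rmE}$, and the reconstruction formula via Proposition \ref{prop3.18} (1)(iii) and Proposition \ref{prop3.7} (3)(ii) --- matches the paper's proofs of Lemma \ref{lem3.38} (2)--(3) and Theorem \ref{main3}.
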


\subsubsection{Relation between \cite[Thm.\:9.5.3]{DK16} and \cite[Thm.\:6.3]{Kas16}}
Let us explain a relation between \cite[Thm.\:9.5.3]{DK16} and \cite[Thm.\:6.3]{Kas16}.
\begin{definition}\label{def3.37}
Let us define
$$\SO_X^{\rmE, \sub} := 
\Q_X^\sub\(\SO_X^{\T, \sub}[1]\)\in\BEC(\CC_X^\sub)$$
and set
$$
\Sol_X^{\rmE,\sub} \colon \BDC(\D_X)^\op\to\BEC(\CC_{X}^\sub), 
\hspace{7pt} 
\M \longmapsto \rihom^\sub_{\pi^{-1}\rho_{X!}\D_X}(\pi^{-1}\rho_{X!}\M, \SO_X^{\rmE,\sub}).
$$
\end{definition}
By the definition, it is clear that 
$$
\SO_X^{\rmE, \sub}\simeq
\Q_X^\sub\rihom_{\pi_{X^c}^{-1}\rho_{X^c!}\D_{X^c}}^\sub\(
\pi_{X^c}^{-1}\rho_{X^c!}\SO_{X^c}, \Db_{X_\RR}^{\T, \sub}\),
$$
and for any $\M\in\BDC(\D_X)$ one has
$$\Sol_X^{\rmE, \sub}(\M)\simeq
\Q_X^\sub\(\Sol_X^{\T, \sub}(\M)\)[1].$$

\begin{lemma}\label{lem3.38}
\begin{itemize}
\item[\rm (1)]
There exists an isomorphism
$\Db_{M}^{\T}\simeq I_{M\times\RR_\infty}\Db_{M}^{\T, \sub}$
in $\BDC(\I\CC_{M\times\RR_\infty})$.

\item[\rm (2)]
There exists an isomorphism
$\SO_X^{\rmE,\sub}\simeq J_X^\rmE\SO_X^{\rmE}$
in $\BEC(\CC_X^\sub)$.

\item[\rm (3)]
For any $\M\in\BDC(\D_X)$,
there exists an isomorphism in $\BEC(\CC_X^\sub):$
$$\Sol_X^{\rmE, \sub}(\M)\simeq J_X^\rmE\Sol_X^{\rmE}(\M).$$

\item[\rm (4)]
For any $\M\in\BDC(\D_X)$,
there exists an isomorphism in $\BDC(\CC_{X\times\RR_\infty}^\sub):$
$$\Sol_X^{\T, \sub}(\M)[1]\simeq \bfR_{X}^{\rmE, \sub}\Sol_X^{\rmE, \sub}(\M).$$
\end{itemize}
\end{lemma}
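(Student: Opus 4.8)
Looking at the four claims of Lemma \ref{lem3.38}, the structure is clearly cumulative: (1) is the basic compatibility at the level of tempered distributions, (2) and (3) transfer this through the Dolbeault/solution construction, and (4) is the final recasting using the equivalence $\bfR^{\rmE,\sub}$.

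\medskip

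\noindent\textbf{Proof plan for Lemma \ref{lem3.38} (4).}
The plan is to deduce (4) directly from (3) together with the general equivalence machinery of Lemma \ref{lem3.13} and Theorem \ref{main1}. First I would recall that, by definition, $\Sol_X^{\rmE,\sub}(\M) \simeq \Q_X^\sub\bigl(\Sol_X^{\T,\sub}(\M)[1]\bigr)$, where $\Sol_X^{\T,\sub}(\M)[1]\in\BDC(\CC_{X\times\RR_\infty}^\sub)$. Applying the functor $\bfR_{X}^{\rmE,\sub}$ and using the fact (from Lemma \ref{lem3.13}) that $\bfR_{X}^{\rmE,\sub}$ is a right adjoint to $\Q_X^\sub$ inducing an equivalence onto the subcategory $\{\SF \mid \SF\simto \Prihomsub(\rho_{X\times\RR_\infty\ast}(\CC_{\{t\geq0\}}\oplus\CC_{\{t\leq0\}}),\SF)\}$, the content of (4) is exactly the statement that $\Sol_X^{\T,\sub}(\M)[1]$ already lies in that subcategory, i.e. that it is ``stable'' in the convolution sense. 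So the core step is to verify the isomorphism
\[
\Sol_X^{\T,\sub}(\M)[1] \simto
\Prihomsub\bigl(\rho_{X\times\RR_\infty\ast}(\CC_{\{t\geq0\}}\oplus\CC_{\{t\leq0\}}),\ \Sol_X^{\T,\sub}(\M)[1]\bigr).
\]

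\medskip

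\noindent
To establish this stability, I would first reduce to the analogous stability statement for $\Db_{X_\RR}^{\T,\sub}$, since $\SO_X^{\T,\sub}$ (and hence $\Sol_X^{\T,\sub}(\M)$ for any $\M$) is built from $\Db_{X_\RR}^{\T,\sub}[-1]$ by a Dolbeault-type complex and an application of $\rihom^\sub_{\pi^{-1}\rho_{X!}\D_X}$, both of which commute with $\Prihomsub(\rho_{X\times\RR_\infty\ast}(\cdots),-)$ by Propositions \ref{prop3.9} and \ref{prop3.4}. For the distribution sheaf itself, the cleanest route is via part (1): $\Db_X^{\T} \simeq I_{X\times\RR_\infty}\Db_X^{\T,\sub}$, so $\Db_X^{\T,\sub} \simeq \lambda_{X\times\RR_\infty}\Db_X^{\T}$, and the corresponding stability for $\Db_X^{\T}$ on the ind-sheaf side is already known — it is exactly what makes $\Db_X^{\rmE}\in\BEC(\I\CC_X)$ well-defined in \cite{DK16} (the object $\Db_X^{\T}$ is stable, equivalently $\Db_X^{\rmE}=\Q_X(\Db_X^{\T})$ satisfies $\bfR^{\rmE}\Db_X^{\rmE}\simeq\Db_X^{\T}$). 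Transporting this through the equivalence $\lambda_{X\times\RR_\infty}$, using Proposition \ref{prop3.12} (4) (compatibility of $\lambda$ with convolution) and Proposition \ref{prop3.7} (3)(i) to identify $\lambda_{X\times\RR_\infty}\iota_{X\times\RR_\infty}(\cdots)$ with $\rho_{X\times\RR_\infty\ast}(\cdots)$, yields the stability of $\Db_X^{\T,\sub}$.

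\medskip

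\noindent
An alternative, and perhaps more economical, phrasing is to run everything through (3) and Theorem \ref{main1}: since $\Sol_X^{\rmE,\sub}(\M) \simeq J_X^\rmE \Sol_X^{\rmE}(\M)$ and $\Sol_X^{\rmE}(\M)\in\BEC_{\RR-c}(\I\CC_X)\subset\BEC_{\I\RR-c}(\I\CC_X)$ by Theorem \ref{thmDK} and Proposition \ref{prop3.20}, we have $\Sol_X^{\rmE,\sub}(\M) \simeq \lambda_X^\rmE\Sol_X^\rmE(\M)$, and then $\bfR_X^{\rmE,\sub}\Sol_X^{\rmE,\sub}(\M) \simeq \bfR_X^{\rmE,\sub}\Q_X^\sub\bfL_X^{\rmE,\sub}\Sol_X^{\rmE,\sub}(\M)$; unwinding $\bfL^{\rmE,\sub}$ and $\bfR^{\rmE,\sub}$ via $\lambda_{X\times\RR_\infty}$ of the corresponding ind-sheaf operators $\bfL^{\rmE}$, $\bfR^{\rmE}$ applied to $\Sol_X^\rmE(\M)$, and using $\bfR^{\rmE}\Q_X\bfL^{\rmE}\simeq\id$ on $\BEC(\I\CC_X)$ together with the identification of $\bfL^{\rmE}\Sol_X^{\rmE}(\M)$ with $\Db$-based data, gives the claim back as $\lambda_{X\times\RR_\infty}$ of a known ind-sheaf isomorphism, then returned to the subanalytic side by (3). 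The main obstacle I anticipate is purely bookkeeping: carefully matching $\rho_{X\times\RR_\infty\ast}(\CC_{\{t\geq0\}}\oplus\CC_{\{t\leq0\}})$ on the subanalytic side with $\iota_{X\times\RR_\infty}(\CC_{\{t\geq0\}}\oplus\CC_{\{t\leq0\}})$ on the ind-sheaf side under $\lambda$ (via Proposition \ref{prop3.7} (4)(i), noting these constant sheaves are $\RR$-constructible), and tracking the degree shift $[1]$ consistently through $\SO_X^{\T,\sub}$ versus $\Db_X^{\T,\sub}[-1]$; no deep new idea is needed beyond the equivalences already assembled in \S\ref{subsec3.3} and \S\ref{subsec3.4}.
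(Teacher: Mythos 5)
Your plan is correct and follows essentially the same route as the paper: reduce (4) to the stability of $\Sol_X^{\T,\sub}(\M)[1]$ under $\Prihomsub\bigl(\rho_{X\times\RR_\infty\ast}(\CC_{\{t\geq0\}}\oplus\CC_{\{t\leq0\}}),\,\cdot\,\bigr)$, trace that back through $\SO_X^{\T,\sub}$ to the stability of $\Db_{X_\RR}^{\T,\sub}$, and obtain the latter by transporting the known ind-sheaf stability of $\Db_{X_\RR}^{\T}$ through part (1). The only slip is a citation: the transport of the $\Prihom$-stability uses Proposition \ref{prop3.12} (2) (the adjunction formula $\bfR J\,\Prihom(I(\cdot),\cdot)\simeq\Prihomsub(\cdot,\bfR J(\cdot))$), not Proposition \ref{prop3.12} (4), which concerns $\Potimes$ and would only yield $\Potimes$-stability rather than the $\Prihomsub$-stability needed to identify $\bfR_X^{\rmE,\sub}\Q_X^\sub$.
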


\begin{proof}
(1)
Since the functor $I_{M\times\RR_\infty}$ is exact,
we have isomorphisms
\begin{align*}
\Db_{M}^{\T}
&\simeq
\Ker\(\partial_t-1\colon\Db_{M\times\RR_\infty}^\rmt\to\Db_{M\times\RR_\infty}^\rmt\)\\
&\simeq
\Ker\(\partial_t-1\colon I_{M\times\RR_\infty} \tl{k}_M^{!}\Db_{M\times\PP^1\RR}^\rmt\to
I_{M\times\RR_\infty} \tl{k}_M^{!}\Db_{M\times\PP^1\RR}^\rmt\)\\
&\simeq
 I_{M\times\RR_\infty}\Ker\(
 \partial_t-1\colon \tl{k}_M^{!}\Db_{M\times\PP^1\RR}^\rmt\to\tl{k}_M^{!}\Db_{M\times\PP^1\RR}^\rmt\)\\
&\simeq I_{M\times\RR_\infty}\Db_{M}^{\T, \sub},
\end{align*}
see Definition \ref{def3.32} for the details of $\Db_{M}^{\T}$, Definition \ref{def3.35} for the details of $\Db_{M}^{\T, \sub}$.
\medskip

\noindent
(2)
Since $\beta_{X^c}\simeq I_{X^c}\circ\rho_{X^c!}$ and 
$\pi_{X^c}^{-1}\circ I_{X^c}\simeq I_{X^c\times\RR_\infty}\circ\pi_{X^c}^{-1}$
(see \S 2.5 for the first isomorphism, Proposition \ref{prop3.7} (2)(iii) for the second isomorphism),
we have isomorphisms
\begin{align*}
J_X^\rmE\SO_X^\rmE
&\simeq
\Q_X\bfR J_{X\times\RR_\infty}\rihom_{\pi_{X^c}^{-1}\beta_{X^c!}\D_{X^c}}\(
\pi_{X^c}^{-1}\beta_{X^c}\SO_{X^c}, \Db_{X_\RR}^{\T}\)\\
&\simeq
\Q_X\bfR J_{X\times\RR_\infty}\rihom_{\pi_{X^c}^{-1}\beta_{X^c!}\D_{X^c}}\(
\pi_{X^c}^{-1}I_{X^c}\rho_{X^c!}\SO_{X^c}, \Db_{X_\RR}^{\T}\)\\
&\simeq
\Q_X\bfR J_{X\times\RR_\infty}\rihom_{\pi_{X^c}^{-1}\beta_{X^c!}\D_{X^c}}\(
I_{X^c\times\RR_\infty}\pi_{X^c}^{-1}\rho_{X^c!}\SO_{X^c}, \Db_{X_\RR}^{\T}\).
\end{align*}
Moreover, there exist isomorphisms
\begin{align*}
&\Q_X\bfR J_{X\times\RR_\infty}\rihom_{\pi_{X^c}^{-1}\beta_{X^c!}\D_{X^c}}\(
I_{X^c\times\RR_\infty}\pi_{X^c}^{-1}\rho_{X^c!}\SO_{X^c}, \Db_{X_\RR}^{\T}\)\\
\simeq\
&\Q_X\rihom^\sub_{\pi_{X^c}^{-1}\rho_{X^c!}\D_{X^c}}\(
\pi_{X^c}^{-1}\rho_{X^c!}\SO_{X^c}, \bfR J_{X_\RR\times\RR_\infty}\Db_{X_\RR}^{\T}\)\\
\simeq\
&\Q_X\rihom^\sub_{\pi_{X^c}^{-1}\rho_{X^c!}\D_{X^c}}\(
\pi_{X^c}^{-1}\rho_{X^c!}\SO_{X^c}, \Db_{X_\RR}^{\T,\sub}\)\\
\simeq\
&\SO_X^{\rmE,\sub}
\end{align*}
where
% in the first isomorphism we used the fact that and
in the second isomorphism we used the assertion (1) and Proposition \ref{prop3.6} (1).
\medskip

\noindent
(3)
Let $\M\in\BDC(\D_X)$.
By the fact that $\beta_{X}\simeq I_{X}\circ\rho_{X!}$ and the assertion (2),
there exist isomorphisms
\begin{align*}
J_X^\rmE\Sol_X^\rmE(\M)
&\simeq
J_X^\rmE\rihom_{\pi^{-1}\beta_X\D_X}(\pi^{-1}\beta_X\M, \SO_X^\rmE)\\
&\simeq
J_X^\rmE\rihom_{\pi^{-1}\beta_X\D_X}(\pi^{-1} I_{X}\rho_{X!}\M, \SO_X^\rmE)\\
&\simeq
J_X^\rmE\rihom_{\pi^{-1}\beta_X\D_X}(I_{X\times\RR_\infty}\pi^{-1}\rho_{X!}\M, \SO_X^\rmE)\\
&\simeq
\rihom_{\pi^{-1}\rho_{X!}\D_X}^\sub(\pi^{-1}\rho_{X!}\M, J_X^\rmE\SO_X^\rmE)\\
&\simeq
\rihom_{\pi^{-1}\rho_{X!}\D_X}^\sub(\pi^{-1}\rho_{X!}\M, \SO_X^{\rmE, \sub})\\
&\simeq
\Sol_X^{\rmE, \sub}(\M).
\end{align*}

\noindent
(4)
First let us prove that 
$$\Prihomsub\(\rho_{X\times\RR_\infty\ast}\(
\Bbbk_{\{t\geq0\}}\oplus\Bbbk_{\{t\leq 0\}}\), \Db_{X_\RR}^{\T, \sub}\)
\simeq\Db_{X_\RR}^{\T, \sub}.$$
By the assertion (1) and Proposition \ref{prop3.18} (1)(ii),
\begin{align*}
&\Prihomsub\(\rho_{X\times\RR_\infty\ast}\(
\Bbbk_{\{t\geq0\}}\oplus\Bbbk_{\{t\leq 0\}}\), \Db_{X_\RR}^{\T, \sub}\)\\
\simeq\
&\Prihomsub\(\rho_{X\times\RR_\infty\ast}\(
\Bbbk_{\{t\geq0\}}\oplus\Bbbk_{\{t\leq 0\}}\), \bfR J_{X\times\RR_\infty}\Db_{X_\RR}^{\T}\)\\
\simeq\
&\bfR J_{X\times\RR_\infty}\Prihom\(I_{X\times\RR_\infty}\rho_{X\times\RR_\infty\ast}\(
\Bbbk_{\{t\geq0\}}\oplus\Bbbk_{\{t\leq 0\}}\), \Db_{X_\RR}^{\T}\)\\
\simeq\
&\bfR J_{X\times\RR_\infty}\Prihom\(\iota_{X\times\RR_\infty}\(
\Bbbk_{\{t\geq0\}}\oplus\Bbbk_{\{t\leq 0\}}\), \Db_{X_\RR}^{\T}\).
\end{align*}
Moreover, by using the fact that
$\Prihom\(\iota_{X\times\RR_\infty}\(
\Bbbk_{\{t\geq0\}}\oplus\Bbbk_{\{t\leq 0\}}\), \Db_{X_\RR}^{\T}\)
\simeq\Db_{X_\RR}^{\T}$,
see e.g. \cite[Prop.\:8.1.3]{DK16},
we have
$$\bfR J_{X\times\RR_\infty}\Prihom\(\iota_{X\times\RR_\infty}\(
\Bbbk_{\{t\geq0\}}\oplus\Bbbk_{\{t\leq 0\}}\), \Db_{X_\RR}^{\T}\)
\simeq \bfR J_{X\times\RR_\infty}\Db_{X_\RR}^{\T}
\simeq\Db_{X_\RR}^{\T,\sub}.$$
Hence, we proved $\Prihomsub\(\rho_{X\times\RR_\infty\ast}\(
\Bbbk_{\{t\geq0\}}\oplus\Bbbk_{\{t\leq 0\}}\), \Db_{X_\RR}^{\T, \sub}\)
\simeq\Db_{X_\RR}^{\T, \sub}.$

Next, we shall prove that 
$$\Prihomsub(\rho_{X\times\RR_\infty\ast}(
\Bbbk_{\{t\geq0\}}\oplus\Bbbk_{\{t\leq 0\}},\SO_X^{\T,\sub}))
\simeq\SO_X^{\T,\sub}.$$
By the fact that $\SO_X^{\T, \sub}\simeq
\rihom_{\pi_{X^c}^{-1}\rho_{X^c!}\D_{X^c}}^\sub\(
\pi_{X^c}^{-1}\rho_{X^c!}\SO_{X^c}, \Db_{X_\RR}^{\T, \sub}[-1]\),$
we have 
\begin{align*}
&\Prihomsub\(\rho_{X\times\RR_\infty\ast}\(
\Bbbk_{\{t\geq0\}}\oplus\Bbbk_{\{t\leq 0\}},\SO_X^{\T,\sub}\)\)\\
\simeq\
&\Prihomsub\(\rho_{X\times\RR_\infty\ast}\(
\Bbbk_{\{t\geq0\}}\oplus\Bbbk_{\{t\leq 0\}}\),
\rihom_{\pi_{X^c}^{-1}\rho_{X^c!}\D_{X^c}}^\sub\(
\pi_{X^c}^{-1}\rho_{X^c!}\SO_{X^c}, \Db_{X_\RR}^{\T, \sub}\)\)[-1]\\
\simeq\
&\rihom_{\pi_{X^c}^{-1}\rho_{X^c!}\D_{X^c}}^\sub\(\pi_{X^c}^{-1}\rho_{X^c!}\SO_{X^c},
\Prihomsub\(\rho_{X\times\RR_\infty\ast}\(
\Bbbk_{\{t\geq0\}}\oplus\Bbbk_{\{t\leq 0\}}\), \Db_{X_\RR}^{\T, \sub}\)\)[-1]\\
\simeq\
&\rihom_{\pi_{X^c}^{-1}\rho_{X^c!}\D_{X^c}}^\sub
\(\pi_{X^c}^{-1}\rho_{X^c!}\SO_{X^c}, \Db_{X_\RR}^{\T, \sub}\)[-1]\\
\simeq\
&\SO_X^{\T, \sub}.
\end{align*}
%Hence, we proved $\Prihomsub(\rho_{M_\infty\times\RR_\infty\ast}(
%\Bbbk_{\{t\geq0\}}\oplus\Bbbk_{\{t\leq 0\}},\SO_X^{\T,\sub}))
%\simeq\SO_X^{\T,\sub}.$
By the definition, it is clear that 
\begin{align*}
&\bfR_{X}^{\rmE, \sub}\Sol_X^{\rmE, \sub}(\M)\\
\simeq\
&\Prihomsub\(\rho_{X\times\RR_\infty\ast}\(
\Bbbk_{\{t\geq0\}}\oplus\Bbbk_{\{t\leq 0\}}\),
\rihom^\sub_{\pi^{-1}\rho_{X!}\D_X}(\pi^{-1}\rho_{X!}\M, \SO_X^{\T,\sub}\)[1]\\
\simeq\
&\rihom^\sub_{\pi^{-1}\rho_{X!}\D_X}(\pi^{-1}\rho_{X!}\M,
\Prihomsub\(\rho_{X\times\RR_\infty\ast}\(
\Bbbk_{\{t\geq0\}}\oplus\Bbbk_{\{t\leq 0\}}\),\SO_X^{\T,\sub}\)[1]\\
\simeq\
&\rihom^\sub_{\pi^{-1}\rho_{X!}\D_X}\(\pi^{-1}\rho_{X!}\M, \SO_X^{\T,\sub}\)[1]\\
\simeq\
&\Sol_X^{\T, \sub}(\M)[1].
\end{align*}

\end{proof}

\begin{theorem}\label{main3}
The functor $\Sol_X^{\rmE,\sub}$ induces an embedding functor
\[ \Sol_{X}^{\rmE, \sub} \colon \BDChol(\D_{X})^{\op}
\hookrightarrow\BEC_{\RR-c}(\CC_{X}^\sub).\]
Moreover for any $\M\in\BDChol(\D_{X})$ there exists an isomorphism in $\BDC(\D_X):$
\[\M\simto \RH_{X}^{\rmE,\sub}\Sol_{X}^{\rmE,\sub}(\M),\]  
where $\RH_{X}^{\rmE, \sub}(K) := \rhom^{\rmE,\sub}(K, \SO_{X}^{\rmE,\sub})$.
\end{theorem}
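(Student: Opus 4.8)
The plan is to deduce the theorem from the corresponding statement for enhanced ind-sheaves (Theorem~\ref{thmDK}) by transporting it through the equivalence $I_X^\rmE\colon\BEC_{\RR-c}(\CC_X^\sub)\simto\BEC_{\RR-c}(\I\CC_X)$ of Theorem~\ref{main2}, using the comparison isomorphisms of Lemma~\ref{lem3.38}. First I would invoke Lemma~\ref{lem3.38}~(3), which gives a natural isomorphism of functors $\Sol_X^{\rmE,\sub}\simeq J_X^\rmE\circ\Sol_X^\rmE\colon\BDChol(\D_X)^\op\to\BEC(\CC_X^\sub)$. Then, for $\M\in\BDChol(\D_X)$, Theorem~\ref{thmDK} gives $\Sol_X^\rmE(\M)\in\BEC_{\RR-c}(\I\CC_X)$, and Theorem~\ref{main2} shows that $J_X^\rmE$ restricts to an equivalence $\BEC_{\RR-c}(\I\CC_X)\simto\BEC_{\RR-c}(\CC_X^\sub)$ inverse to $I_X^\rmE$; hence $\Sol_X^{\rmE,\sub}(\M)\simeq J_X^\rmE\Sol_X^\rmE(\M)$ lies in $\BEC_{\RR-c}(\CC_X^\sub)$, so $\Sol_X^{\rmE,\sub}$ takes values there. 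Full faithfulness of $\Sol_X^{\rmE,\sub}$ then follows formally, since it is the composite of the fully faithful functor $\Sol_X^\rmE$ (Theorem~\ref{thmDK}) with the equivalence $J_X^\rmE|_{\BEC_{\RR-c}(\I\CC_X)}$; this yields the claimed embedding into $\BEC_{\RR-c}(\CC_X^\sub)$.

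For the reconstruction isomorphism I would first note that $\Sol_X^\rmE(\M)$ is $\RR$-constructible, hence $\I\RR$-constructible, so that $J_X^\rmE\Sol_X^\rmE(\M)=\lambda_X^\rmE\Sol_X^\rmE(\M)$ (Theorem~\ref{main1}~(2)) and therefore $I_X^\rmE\Sol_X^{\rmE,\sub}(\M)\simeq\Sol_X^\rmE(\M)$. Applying Proposition~\ref{prop3.18}~(1)(iii) with $K=\Sol_X^{\rmE,\sub}(\M)$ and $L=\SO_X^\rmE$ then gives
\[\bfR J_X\,\rihom^\rmE\bigl(\Sol_X^\rmE(\M),\SO_X^\rmE\bigr)\simeq\rihom^{\rmE,\sub}\bigl(\Sol_X^{\rmE,\sub}(\M),\,J_X^\rmE\SO_X^\rmE\bigr).\]
Combining this with $J_X^\rmE\SO_X^\rmE\simeq\SO_X^{\rmE,\sub}$ (Lemma~\ref{lem3.38}~(2)), applying $\rho_X^{-1}$, and using $\rho_X^{-1}\circ\bfR J_X\simeq\alpha_X$ (Proposition~\ref{prop3.7}~(3)(ii)) together with the definitions $\RH_X^\rmE(\cdot)=\rhom^\rmE(\cdot,\SO_X^\rmE)=\alpha_X\rihom^\rmE(\cdot,\SO_X^\rmE)$ and $\RH_X^{\rmE,\sub}(\cdot)=\rhom^{\rmE,\sub}(\cdot,\SO_X^{\rmE,\sub})$, I would obtain
\[\RH_X^{\rmE,\sub}\Sol_X^{\rmE,\sub}(\M)\simeq\RH_X^\rmE\Sol_X^\rmE(\M)\simeq\M,\]
the last isomorphism being Theorem~\ref{thmDK}. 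A final routine step is to check that the resulting isomorphism $\M\simto\RH_X^{\rmE,\sub}\Sol_X^{\rmE,\sub}(\M)$ coincides with the natural adjunction morphism, which is a diagram chase using compatibility of the unit/counit maps for $(I_X^\rmE,J_X^\rmE)$ with those for $(I_{X\times\RR_\infty},\bfR J_{X\times\RR_\infty})$.

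The main obstacle is bookkeeping rather than substance: one must keep careful track of the four external hom functors $\rihom^\rmE,\rhom^\rmE$ on the ind-sheaf side and $\rihom^{\rmE,\sub},\rhom^{\rmE,\sub}$ on the subanalytic side, verify that Proposition~\ref{prop3.18}~(1)(iii) genuinely applies with $L=\SO_X^\rmE$ (it only requires $L\in\BEC(\I\CC_X)$, so nothing is needed beyond well-definedness of $\SO_X^\rmE$), and confirm that passage through $\rho_X^{-1}$ intertwines $\rhom^\rmE$ with $\rhom^{\rmE,\sub}$ via $\rho_X^{-1}\circ\bfR J_X\simeq\alpha_X$. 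A secondary point, already supplied by Lemma~\ref{lem3.38}~(3), is that the functor $\Sol_X^{\rmE,\sub}$ of Definition~\ref{def3.37} really does agree with $J_X^\rmE\circ\Sol_X^\rmE$ up to the shift built into the definition of $\SO_X^{\rmE,\sub}$. Once these identifications are in place, the theorem follows from Theorems~\ref{thmDK} and~\ref{main2} and Lemma~\ref{lem3.38} with no new analytic input.
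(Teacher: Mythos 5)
Your proposal is correct and follows essentially the same route as the paper: identify $\Sol_X^{\rmE,\sub}\simeq J_X^\rmE\circ\Sol_X^\rmE$ via Lemma \ref{lem3.38}~(3), deduce $\RR$-constructibility and full faithfulness from Theorem \ref{thmDK} and the equivalence of Theorem \ref{main2}, and obtain the reconstruction isomorphism by chaining Lemma \ref{lem3.38}~(2), Proposition \ref{prop3.18}~(1)(iii), $\rho_X^{-1}\circ\bfR J_X\simeq\alpha_X$, and $I_X^\rmE J_X^\rmE\Sol_X^\rmE(\M)\simeq\Sol_X^\rmE(\M)$ back to Theorem \ref{thmDK}. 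The only presentational difference is that the paper first produces the canonical morphism $\M\to\RH_X^{\rmE,\sub}\Sol_X^{\rmE,\sub}(\M)$ by an adjunction computation and then shows the target is $\M$, whereas you defer the comparison with the adjunction unit to a final compatibility check; both treatments leave that diagram chase at the same level of detail.
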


\begin{proof}
First, let us prove that $\Sol_X^{\rmE,\sub}(\M)\in\BEC_{\RR-c}(\CC_X^\sub)$ for any $\M\in\BDChol(\D_X)$.
Let $\M\in\BDChol(\D_X)$.
By Theorem \ref{thmDK}, we have 
$\Sol_X^{\rmE}(\M)\in\BEC_{\RR-c}(\I\CC_X)$
and hence by Theorem \ref{main2} we have $J_X^\rmE\Sol_X^{\rmE}(\M)\in\BEC_{\RR-c}(\CC_X^\sub)$.
This implies that $\Sol_X^{\rmE,\sub}(\M)\in\BEC_{\RR-c}(\CC_X^\sub)$ by Lemma \ref{lem3.38} (3).
Hence, a functor
$\Sol_{X}^{\rmE, \sub} \colon \BDChol(\D_{X})^{\op}\to\BEC_{\RR-c}(\CC_{X}^\sub)$
is well defined. 
\medskip

For any $\M, \N\in\BDChol(\D_X)$,
there exist isomorphisms
\begin{align*}
\Hom_{\BDChol(\D_X)}(\M, \N)
&\simeq
\Hom_{\BEC_{\RR-c}(\I\CC_X)}\(\Sol_X^{\rmE}(\N), \Sol_X^{\rmE}(\M)\)\\
&\simeq
\Hom_{\BEC_{\RR-c}(\CC_X^\sub)}\(J_X^\rmE\Sol_X^{\rmE}(\N), J_X^\rmE\Sol_X^{\rmE}(\M)\)\\
&\simeq
\Hom_{\BEC_{\RR-c}(\CC_X^\sub)}\(\Sol_X^{\rmE,\sub}(\N), \Sol_X^{\rmE,\sub}(\M)\),
\end{align*}
where in the first (resp.\,second, third) isomorphism 
we used Theorem \ref{thmDK} (resp.\,Theorem \ref{main2}, Lemma \ref{lem3.38} (3)).
This implies that the functor 
$\Sol_{X}^{\rmE, \sub} \colon \BDChol(\D_{X})^{\op}\to\BEC_{\RR-c}(\CC_{X}^\sub)$
is fully faithful.
\medskip

Let $\M\in\BDChol(\D_X)$.
By using the adjointness, there exist isomorphisms
\begin{align*}
&\Hom_{\BDC(\D_X)}\(\M, \RH_{X}^{\rmE,\sub}\Sol_{X}^{\rmE,\sub}(\M)\)\\
\simeq\
&\Hom_{\BDC(\D_X)}\(
\M, \rhom^{\rmE,\sub}(\Sol_{X}^{\rmE,\sub}(\M), \SO_{X}^{\rmE,\sub})\)\\
\simeq\
&\Hom_{\BDC(\D_X)}\(
\M, \rho_X^{-1}\bfR \pi_\ast\rihom^{\sub}(\Sol_{X}^{\rmE,\sub}(\M), \SO_{X}^{\rmE,\sub})\)\\
\simeq\
&\Hom_{\BDC(\D_X)}\(
\pi^{-1}\rho_{X!}\M, \rihom^{\sub}(\Sol_{X}^{\rmE,\sub}(\M), \SO_{X}^{\rmE,\sub})\)\\
\simeq\
&\Hom_{\BEC_{\RR-c}(\CC_X^\sub)}\(\Sol_{X}^{\rmE,\sub}(\M),
\rihom_{\pi^{-1}\rho_{X!}\D_X}^{\sub}(\pi^{-1}\rho_{X!}\M, \SO_{X}^{\rmE,\sub})\)\\
\simeq\
&\Hom_{\BEC_{\RR-c}(\CC_X^\sub)}\(\Sol_X^{\rmE,\sub}(\M), \Sol_X^{\rmE,\sub}(\M)\)\\
\simeq\
&\Hom_{\BDC(\D_X)}\(\M, \M\).
\end{align*} 
Hence, there exists a canonical morphism 
$$\M\to \RH_{X}^{\rmE,\sub}\Sol_{X}^{\rmE,\sub}(\M)$$
which is given by the identity map $\id_{\M}$ of $\M$.
Let us prove that it is isomorphism.

By Lemma \ref{lem3.38} (2),
we have isomorphisms 
$$\RH_{X}^{\rmE,\sub}\Sol_{X}^{\rmE,\sub}(\M)
\simeq
\rhom^{\rmE,\sub}(\Sol_{X}^{\rmE,\sub}(\M), \SO_{X}^{\rmE,\sub})\\
\simeq
\rhom^{\rmE,\sub}(\Sol_{X}^{\rmE,\sub}(\M), J_X^\rmE\SO_{X}^{\rmE})$$
and by Proposition \ref{prop3.18} (1)(iii) we have 
\begin{align*}
\rhom^{\rmE,\sub}(\Sol_{X}^{\rmE,\sub}(\M), J_X^\rmE\SO_{X}^{\rmE})
&\simeq
\rho^{-1}\rihom^{\rmE,\sub}(\Sol_{X}^{\rmE,\sub}(\M), J_X^\rmE\SO_{X}^{\rmE})\\
&\simeq
\rho^{-1}\bfR J_X\rihom^{\rmE}(I_X^\rmE\Sol_{X}^{\rmE,\sub}(\M), \SO_{X}^{\rmE}).
\end{align*}
By Proposition \ref{prop3.7} (3)(ii) and Lemma \ref{lem3.38} (3),
there exists an isomorphism in $\BDC(\CC_{X})$
\begin{align*}
\rho^{-1}\bfR J_X\rihom^{\rmE}(I_X^\rmE\Sol_{X}^{\rmE,\sub}(\M), \SO_{X}^{\rmE})
&\simeq
\alpha_X\rihom^{\rmE}(I_X^\rmE\Sol_{X}^{\rmE,\sub}(\M), \SO_{X}^{\rmE})\\
&\simeq
\rhom^{\rmE}(I_X^\rmE\Sol_{X}^{\rmE,\sub}(\M), \SO_{X}^{\rmE})\\
&\simeq
\rhom^{\rmE}(I_X^\rmE J_X^\rmE\Sol_{X}^{\rmE}(\M), \SO_{X}^{\rmE}).
\end{align*}
Since $\M\in\BDChol(\D_X)$,
$\Sol_{X}^{\rmE}(\M)$ is $\RR$-constructible by the first assertion
and hence there exists an isomorphism 
$I_X^\rmE J_X^\rmE\Sol_{X}^{\rmE}(\M) \simeq\Sol_{X}^{\rmE}(\M) $
by Theorem \ref{main2}.
By Theorem \ref{thmDK},
we have $$\rhom^{\rmE}(\Sol_{X}^{\rmE}(\M), \SO_{X}^{\rmE})
\simeq\RH_{X}^{\rmE}\Sol_{X}^{\rmE}(\M)\simeq\M.$$
Therefore, there exists an isomorphism  
$\M\simto \RH_{X}^{\rmE,\sub}\Sol_{X}^{\rmE,\sub}(\M).$

\end{proof}

\begin{theorem}\label{main4}
%For any $\M\in\BDChol(\D_X)$,
%there exists an isomorphism in $\BEC(\I\CC_X):$
%$$\Sol_X^{\rmE}(\M)\simeq I_X^\rmE\Sol_X^{\rmE, \sub}(\M).$$
\begin{itemize}
\item[\rm(1)]
For any $\M\in\BDChol(\D_X)$, there exists an isomorphism in $\BEC(\I\CC_{X}):$
$$\Sol_X^{\rmE}(\M)\simeq I_X^\rmE \Sol_X^{\rmE, \sub}(\M).$$

\item[\rm(2)]
For any $\M\in\BDC(\D_X)$, there exists an isomorphism in $\BDC(\CC_{X\times\RR_\infty}^\sub):$
$$\Sol_X^{\T, \sub}(\M)[1]\simeq\bfR_X^{\rmE, \sub} J_X^{\rmE}\Sol_X^{\rmE}(\M).$$

Moreover, there exists a commutative diagram:
\[\xymatrix@M=10pt@R=20pt@C=55pt{
\BDChol(\D_X)^\op\ar@{^{(}->}[r]^-{\Sol_X^{\T,\sub}(\cdot)[1]}\ar@{^{(}->}[rd]_-{\Sol_X^\rmE}
& \BDC(\CC_{X\times\RR_\infty}^\sub)\\
{}&\BEC_{\RR-c}(\I\CC_X)\ar@{^{(}->}[u]_-{\bfR_X^{\rmE,\sub}\circ J_X^\rmE}.
}\]
\end{itemize}
\end{theorem}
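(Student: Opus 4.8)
The plan is to deduce both assertions directly from Lemma~\ref{lem3.38}, together with the $\RR$-constructibility statements of Theorems~\ref{thmDK} and \ref{main2}; the substantive work has already been carried out in Lemma~\ref{lem3.38}, so what remains is to assemble the pieces and to check the mild bookkeeping needed to promote the object-wise isomorphisms to the functorial statement in the triangle.

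First I would prove (1). Fix $\M\in\BDChol(\D_X)$. By Theorem~\ref{thmDK} we have $\Sol_X^{\rmE}(\M)\in\BEC_{\RR-c}(\I\CC_X)$, hence by Theorem~\ref{main2} (equivalently, by Theorem~\ref{main1}~(1) together with Proposition~\ref{prop3.20}) the counit $I_X^\rmE J_X^\rmE\Sol_X^\rmE(\M)\to\Sol_X^\rmE(\M)$ is an isomorphism. On the other hand, Lemma~\ref{lem3.38}~(3) gives $\Sol_X^{\rmE,\sub}(\M)\simeq J_X^\rmE\Sol_X^\rmE(\M)$. Applying $I_X^\rmE$ and composing the two isomorphisms yields
\[
I_X^\rmE\Sol_X^{\rmE,\sub}(\M)\;\simeq\;I_X^\rmE J_X^\rmE\Sol_X^\rmE(\M)\;\simeq\;\Sol_X^\rmE(\M),
\]
which is (1). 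For the first isomorphism of (2) I would simply chain Lemma~\ref{lem3.38}~(4), namely $\Sol_X^{\T,\sub}(\M)[1]\simeq\bfR_X^{\rmE,\sub}\Sol_X^{\rmE,\sub}(\M)$, with Lemma~\ref{lem3.38}~(3), which rewrites $\Sol_X^{\rmE,\sub}(\M)$ as $J_X^\rmE\Sol_X^\rmE(\M)$; this is valid for arbitrary $\M\in\BDC(\D_X)$ and gives $\Sol_X^{\T,\sub}(\M)[1]\simeq\bfR_X^{\rmE,\sub}J_X^\rmE\Sol_X^\rmE(\M)$.

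For the commutative diagram, I would read the displayed isomorphism of (2) as an isomorphism of functors on $\BDChol(\D_X)^\op$: here Theorem~\ref{thmDK} guarantees that $\Sol_X^\rmE$ factors through $\BEC_{\RR-c}(\I\CC_X)$, so $J_X^\rmE$ applied to it lands in $\BEC_{\RR-c}(\CC_X^\sub)$ and $\bfR_X^{\rmE,\sub}$ may then be applied; the left--down composite $\bfR_X^{\rmE,\sub}\circ J_X^\rmE\circ\Sol_X^\rmE$ therefore agrees with the top arrow $\Sol_X^{\T,\sub}(\cdot)[1]$. That all three displayed arrows are embeddings is already known (Theorems~\ref{thmDK} and \ref{main3}, and the full faithfulness of $\bfR_X^{\rmE,\sub}$ from Lemma~\ref{lem3.13}), so the diagram carries no content beyond the isomorphism just produced. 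The only point requiring a word of care—the ``main obstacle'', such as it is—is naturality: one must observe that the isomorphisms of Lemma~\ref{lem3.38} and the counit of the adjunction $(I_X^\rmE,J_X^\rmE)$ are natural in $\M$, so that the object-wise statements (1) and (2) indeed upgrade to the asserted commutativity of functors.
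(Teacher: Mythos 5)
Your proof is correct and follows essentially the same route as the paper: part (1) is the composite of Lemma \ref{lem3.38}~(3) with the equivalence $I_X^\rmE J_X^\rmE\simeq\id$ on $\RR$-constructible objects (Theorems \ref{thmDK} and \ref{main2}), and part (2) is the chain of Lemma \ref{lem3.38}~(3) and (4). Your additional remark on naturality in $\M$ is a reasonable point of care that the paper leaves implicit.
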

\begin{proof}
(1)
Let $\M\in\BDChol(\D_X)$.
Since $\Sol_X^\rmE(\M)$ is $\RR$-constructible,
there exists an isomorphism 
$I_X^\rmE \Sol_X^{\rmE, \sub}(\M)\simeq I_X^\rmE J_X^\rmE\Sol_X^{\rmE}(\M)\simeq\Sol_X^{\rmE}(\M)$
by Theorem \ref{main2} and Lemma \ref{lem3.38} (3).
\medskip

\noindent
(2)
This follows from Lemma \ref{lem3.38} (3), (4).
\end{proof}

\newpage
Let us summarize results of Theorems \ref{main1}, \ref{main2}, \ref{main3} and \ref{main4}
 in the following commutative diagram:
\[\xymatrix@M=7pt@R=35pt@C=60pt{
{}&{}&\BDC(\CC_{X\times\RR_\infty}^\sub) & {}\\
\BDChol(\D_X)^\op\ar@{^{(}->}[r]_-{\Sol_X^{\rmE, \sub}}
\ar@{^{(}->}[rru]^-{\Sol_X^{\T, \sub}(\cdot)[1]}\ar@{^{(}->}[rd]_-{\Sol_X^{\rmE}}
 & \BEC_{\RR-c}(\CC_X^\sub)\ar@{}[r]|-{\text{\large $\subset$}}
 \ar@<-1.0ex>@{->}[d]_-{I_X^\rmE}\ar@{}[d]|-\wr
  & \BEC(\CC_X^\sub)\ar@{^{(}->}[u]_-{\bfR_X^{\rmE, \sub}}
  \ar@<-1.0ex>@{^{(}->}[rd]_-{I_X^\rmE}
   \ar@<-1.0ex>@{->}[d]_-{I_X^\rmE}\ar@{}[d]|-\wr\\
{}&\BEC_{\RR-c}(\I\CC_X)\ar@{}[r]|-{\text{\large $\subset$}}
\ar@<-1.0ex>@{->}[u]_-{J_X^\rmE}
&\BEC_{\I\RR-c}(\I\CC_X)\ar@{}[r]|-{\text{\large $\subset$}}
\ar@<-1.0ex>@{->}[u]_-{J_X^\rmE}
& \BEC(\I\CC_X).\ar@<-1.0ex>@{->}[lu]_-{J_X^\rmE}
}\]

\begin{remark}
One can consider $\CC$-constructability for enhanced subanalytic sheaves as similar to \cite[Def.\:3.19]{Ito20}.
We shall skip the explanation of it.
\end{remark}

\end{document}